\numberwithin{equation}{section}
\theoremstyle{plain}
\newtheorem{theorem}{Theorem}[section]
\newtheorem{proposition}[theorem]{Proposition}
\newtheorem{lemma}[theorem]{Lemma}
\newtheorem{corollary}[theorem]{Corollary}
\newtheorem{definition}[theorem]{Definition}
\newtheorem{example}[theorem]{Example}
\newtheorem{remark}[theorem]{Remark}
\newtheorem{convention}[theorem]{Convention}
\newtheorem{problem}[theorem]{Problem}
\newtheorem{conjecture}[theorem]{Conjecture}
\newenvironment{proof}{{\noindent \textbf{Proof}\,\,}}{\hspace*{\fill}$\Box$\medskip}
\title{On monodromy  eigenfunctions of Heun equations and boundaries of phase-lock areas in a model of overdamped Josephson effect}
\author{V.M.Buchstaber\thanks{ Permanent address: Steklov Mathematical Institute, 8, Gubkina street, 119991, Moscow, Russia.  Email: buchstab@mi.ras.ru}
\thanks{All-Russian Scientific Research Institute for Physical and Radio-Technical Measurements (VNIIFTRI),}
\thanks{Supported by part by RFBR grant 14-01-00506.}, 
A.A.Glutsyuk\thanks{ CNRS, France (UMR 5669 (UMPA, ENS de Lyon) and UMI 2615 (Lab. J.-V.Poncelet)), Lyon, France. 
Email:
aglutsyu@ens-lyon.fr}, 
\thanks{National Research University Higher School of Economics (HSE), Moscow, Russia}
 \thanks{Supported by part by RFBR grants 13-01-00969-a, 16-01-00748, 16-01-00766 
 and ANR grant ANR-13-JS01-0010.}}
\begin{document}
\maketitle
\def\la{\lambda}
\def\cc{\mathbb C}
\def\oc{\overline\cc}
\def\rr{\mathbb R}
\def\zz{\mathbb Z}
\def\cp{\mathbb{CP}}
\def\diag{\operatorname{diag}}
\def\mcl{\mathcal L}
\def\nn{\mathbb N}
\def\var{\varepsilon}
\def\mcr{\mathcal R}

\begin{abstract} We study a family of double confluent Heun equations of the form $\mcl E=0$, where $\mcl=\mcl_{\la,\mu,n}$ 
is a family of differential operators of order two  acting on germs of holomorphic functions in one complex variable. They depend on 
complex parameters $\la$, $\mu$, $n$. Its restriction to real parameters satisfying the inequality $\la+\mu^2>0$ 
is a linearization of the family of nonlinear equations on two-torus 
modeling the Josephson effect in superconductivity.  
We show that 
for every $b,n\in\cc$ satisfying a certain  ``non-resonance condition'' and every parameter values $\la,\mu\in\cc$, $\mu\neq0$  
there exists an entire function $f_{\pm}:\cc\to\cc$ (unique up to  constant factor) 
such that $z^{-b}\mcl(z^b f_{\pm}(z^{\pm1}))=d_{0\pm}+d_{1\pm}z$ for some 
$d_{0\pm},d_{1\pm}\in\cc$. The constants $d_{j,\pm}$ are expressed as functions of the parameters. 
This result has several applications. First of all, it gives the description of those  values $\la$, $\mu$, $n$, $b$ 
for which the monodromy operator of the corresponding Heun equation has  eigenvalue $e^{2\pi i b}$. It also gives the description 
of those values  $\la$, $\mu$, $n$ for which the monodromy is parabolic: has multiple eigenvalue. We consider  the rotation 
number $\rho$ of the dynamical system on two-torus as a function of parameters restricted to a 
surface $\la+\mu^2=const$. 
The phase-lock areas are its level sets having non-empty interiors. For general families of dynamical systems 
the problem to describe the boundaries of the phase-lock areas is known to be very complicated. 
In the present paper we include the results in this direction that were obtained by methods of complex variables. 
In our case the phase-lock areas 
exist only for integer rotation numbers 
 (quantization effect), and the complement to them is an open set.   On their complement the rotation number function is an 
 analytic submersion that induces its fibration by analytic curves.
The above-mentioned result on parabolic 
monodromy implies the explicit description of the union of boundaries of the phase-lock areas as solutions of an explicit transcendental 
functional equation. 
For every $\theta\notin\zz$ we get a  description of the set $\{\rho\equiv\pm\theta(mod2\zz)\}$.
\end{abstract} 

\tableofcontents

\section{Introduction: statement of results, sketch of proof and plan of the paper}

We study the problem to find those 
solutions of {\it non-homogeneous} double-confluent Heun equations that are monodromy eigenfunctions. 
Our study is motivated by applications 
to nonlinear equations modeling the Josephson effect in superconductivity. The main results, the existence and uniqueness of the 
above solutions 
(Theorems \ref{noncom} and \ref{xi=0}) are stated in Subsection 1.1. Applications to monodromy eigenfunctions and eigenvalues of homogeneous 
double confluent Heun equations and to nonlinear equations modeling Josephson effect 
are presented in Subsections 1.1 and 1.3, Sections 4  and 5. 

Each eigenfunction is the product of a monomial $z^b$ and a function $f(z)$ holomorphic on $\cc^*$. The  Heun equation  is equivalent to recurrence relations on the Laurent coefficients of the function $f$. The proofs of the above-mentioned results are based on studying 
the latter recurrence relations. We prove  existence and uniqueness Theorem \ref{cons} for converging solutions of a more general 
class of recurrence relations (stated in Subsection 1.2 and proved in Section 2). Its proof is based on ideas from hyperbolic 
dynamics and a fixed point argument for appropriate contracting mapping.

\subsection{Main result: existence and uniqueness of monodromy eigenfunctions of non-homogeneous double confluent Heun equations} 
We consider the family of double confluent Heun equations 
\begin{equation} \mcl E=z^2E''+(nz+\mu(1-z^2))E'+(\la-\mu nz)E=0; \ \ \ n,\la,\mu\in\cc, \ \mu\neq0.\label{heun}\end{equation}
They are well-known linear differential equations, see \cite[formula (3.1.15)]{sl} that have singular points only 
at zero and infinity, both of them are irregular. 
Our goal is to study existence of the eigenfunctions of their monodromy operators (see \cite[chapter 7, subsection 3.2]{arnil} and 
the definition in Subsection 4.1) 
with a given eigenvalue $e^{2\pi i b}$, $b\in\cc$: the latter 
functions are solutions of equation (\ref{heun}) having the form 
\begin{equation} E(z)=z^bf(z), \ f(z) \text{ is holomorphic on } \cc^*.\label{multi}\end{equation}
The converging Laurent series of the function $f(z)$ is split into two parts, $f(z)=f_+(z)+f_-(z^{-1})$, where $f_{\pm}$ are holomorphic functions on $\cc$ and $f_+(0)=0$. These functions satisfy  non-homogeneous equations of the type 
\begin{equation} z^{-b}\mcl(z^b f_{\pm}(z^{\pm1}))=d_{0\pm}+d_{1\pm}z\label{nonhom} \end{equation}
One of our main results is the following.

\begin{theorem} \label{noncom} For every $(n,\la,\mu,b)\in U$, 
\begin{equation} U=\{ (n,\la,\mu,b)\in\cc^4 \ | \ \mu\neq0, \  b, b+n\notin\zz\},\label{defu}\end{equation}
there exist holomorphic  functions $f_{\pm}(z)$ on a neighborhood of zero, $f_+(0)=0$ such that the  functions 
$f_{\pm}(z^{\pm1})$  satisfy equations (\ref{nonhom}) for 
appropriate $d_{0\pm}(n, \la, \mu,b)$, $d_{1\pm}(n,\la,\mu,b)$. The functions $f_{\pm}$ are unique 
up to constant factors (depending on the parameters),  and they are 
 entire functions: holomorphic on $\cc$. For every sign index $\pm$ the corresponding vector $(d_{0\pm},d_{1\pm})$ is uniquely defined 
 up to scalar factor depending on parameters. The above constant factors can be chosen so that both $f_{\pm}$ and $d_{j\pm}$ depend 
 holomorphically on $(n,\la,\mu,b)\in U$ and $f_{\pm}(z)$ are real-valued in $z\in\rr$ for real parameter values. 
\end{theorem}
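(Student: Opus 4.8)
The plan is to translate the functional equations (\ref{nonhom}) into three-term recurrence relations on the Taylor coefficients of $f_{\pm}$ and to produce the required coefficient sequences by a contraction-mapping argument. First I would compute the action of $\mcl$ on monomials: a direct calculation gives
\[
z^{-b}\mcl(z^{b+k})=P(b+k)\,z^{k}+\mu(b+k)\,z^{k-1}-\mu(b+k+n)\,z^{k+1},\qquad P(m):=m^{2}+(n-1)m+\la.
\]
Writing $f_+(z)=\sum_{k\ge1}a_kz^{k}$ and $f_-(w)=\sum_{j\ge0}c_jw^{j}$ and collecting powers of $z$, the demand that the right-hand side of (\ref{nonhom}) reduce to $d_{0\pm}+d_{1\pm}z$ becomes, for $f_+$, the homogeneous recurrence
\[
\mu(b+k+1)\,a_{k+1}+P(b+k)\,a_k-\mu(b+k+n-1)\,a_{k-1}=0\quad(k\ge2),
\]
together with the analogous recurrence for $(c_j)_{j\ge1}$ in the case of $f_-$; the two exceptional indices $k=0,1$ carry no constraint and merely read off $d_{0\pm},d_{1\pm}$. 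The hypothesis $b,b+n\notin\zz$ guarantees that the outer coefficients $\mu(b+k+1)$ and $\mu(b+k+n-1)$ never vanish, so the recurrences are non-degenerate.

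Next I would isolate the asymptotic dichotomy. Since $P(b+k)\sim k^{2}$ while the outer coefficients are $\sim\mu k$, the transfer matrices of the recurrence are hyperbolic for large $k$: one (expanding) regime has $a_{k+1}/a_k\sim -k/\mu$ and produces factorially growing solutions, while one (contracting) regime has $a_{k+1}/a_k\sim\mu/k$ and produces solutions decaying like $\mu^{k}/k!$. The entire solutions are precisely the recessive ones, occupying the one-dimensional stable subspace of this non-autonomous system. Thus Theorem \ref{noncom} reduces to the existence and uniqueness, up to scale, of a recessive solution of each recurrence, which is exactly what Theorem \ref{cons} supplies once its hypotheses are verified.

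The analytic heart, and the step I expect to be the main obstacle, is the construction behind Theorem \ref{cons}: realizing the recessive solution as a fixed point. I would normalize a leading coefficient ($a_1=1$ for $f_+$, $c_0=1$ for $f_-$) and introduce a contracting operator on a weighted Banach space of sequences decaying at the recessive rate, e.g. those with $\sup_k|a_k|\,k!\,|\mu|^{-k}<\infty$; equivalently one may iterate the ratio recurrence $r_{k-1}=\mu(b+k+n-1)\big/\big(\mu(b+k+1)r_k+P(b+k)\big)$, whose convergence is a continued-fraction incarnation of the same contraction. The widening gap between the expanding and contracting rates makes the operator contract with constant $O(1/k)\to0$ on tails, so a unique fixed point exists and is automatically entire because the weight forces $\limsup_k|a_k|^{1/k}=0$. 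The delicate points are selecting the weight and the exact form of the operator so that the contraction is genuine and uniform while the coefficients remain non-autonomous.

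It remains to assemble the conclusions. Uniqueness up to a constant factor is immediate: any entire solution is recessive, hence lies in the one-dimensional stable subspace and is proportional to the one constructed. For holomorphic dependence on $(n,\la,\mu,b)\in U$ I would note that the contraction operator depends holomorphically on the parameters and contracts uniformly on compact subsets of $U$, so its fixed point is holomorphic by the usual Cauchy-estimate argument, provided the normalizing coefficient is chosen non-vanishing on $U$. Reality follows because for real parameters the recurrence has real coefficients, so the real normalization yields real $a_k,c_j$ and hence $f_{\pm}$ real-valued on $\rr$. Finally $d_{0\pm},d_{1\pm}$ are obtained by substituting the solution into the exceptional equations $k=0,1$; they inherit the holomorphy and are fixed up to the same scalar factor, which completes the argument.
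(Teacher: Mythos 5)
Your reduction and your analytic mechanism coincide with the paper's. The recurrence you derive is exactly the paper's (\ref{recur}), and your backward ratio iteration $r_{k-1}=\mu(b+k+n-1)\bigl/\bigl(\mu(b+k+1)r_k+P(b+k)\bigr)$ is precisely the projectivized inverse transfer dynamics on which the paper's Theorem \ref{cons} is built: for large $k$ these maps uniformly contract a fixed disk in $\cp^1$, a Hadamard--Perron-type fixed point argument (Theorem \ref{metric}) produces the unique bounded-ratio (``recessive'') orbit, and the dominant/recessive dichotomy yields both entirety and uniqueness. (One wording point: uniqueness must be proved among germs with positive radius of convergence, not only among entire solutions; but your observation that dominant solutions grow factorially, i.e.\ have zero radius, is exactly the needed statement.) Up to this point your proposal is sound and is essentially the paper's proof.

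The genuine gap is the final claim of the theorem: that $f_\pm$ and $d_{j\pm}$ can be normalized to depend holomorphically on $(n,\la,\mu,b)$ on \emph{all} of $U$. Your contraction produces, holomorphically in the parameters, only the ratios $x_k=a_{k+1}/a_k\in\cp^1$; to pass to coefficients you set $a_1=1$, and your proviso ``provided the normalizing coefficient is chosen non-vanishing on $U$'' is precisely the unproved step. Nothing in the construction prevents the recessive solution from having $a_1=0$ at some parameters (Remark \ref{run} only forbids two \emph{consecutive} coefficients from vanishing), so this normalization is a priori only meromorphic. Soft arguments cannot repair this: patching local normalizations into a global holomorphic one is governed by a class in $H^1(U,\mathcal O^*)\cong H^2(U,\zz)$ (the isomorphism holds since $U$ is Stein), and $H^2(U,\zz)\neq0$ because $U$ is biholomorphic to $\cc\times\cc^*\times(\cc\setminus\zz)^2$; so the vanishing of the obstruction must be exhibited, not inferred. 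This is exactly why the paper's Addendum to Theorem \ref{cons} claims only meromorphic dependence in general, and why the paper proves global holomorphy by a separate explicit construction (Subsections 3.2 and 3.5): the Pochhammer rescaling $c_k=a_k(b)_{k+1}/\mu^k$ turns the transfer matrices into summable perturbations $M_k=P+O(k^{-2})$ of a projector, the infinite products $R_k=M_kM_{k+1}\cdots$ converge holomorphically on all of $U$ (Lemma \ref{lemkp}, Corollary \ref{clem}), and $a_k=\mu^kR_{k,21}/(b)_{k+1}$ is then a globally holomorphic normalization, real for real parameters, from which (\ref{d+f}) and (\ref{d-f}) give $d_{j\pm}$. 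To complete your proof you would need this or an equivalent globally non-degenerate normalization; your weighted Banach-space variant has the same defect, since its fixed point, too, is canonical only after a choice of normalization.
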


\begin{corollary} \label{cordon} Let $(n,\la,\mu,b)\in U$. The corresponding equation (\ref{heun})   has a monodromy eigenfunction with eigenvalue $e^{2\pi i b}$, $b\in\cc$, 
if and only if the corresponding vectors 
$d_{\pm}=(d_{0\pm},d_{1\pm})$ are proportional: 
\begin{equation} d_{0+}d_{1-}-d_{0-}d_{1+}=0.\label{schivka}\end{equation}
\end{corollary}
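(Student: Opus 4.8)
The plan is to reduce the existence of a monodromy eigenfunction to the solvability of a $2\times2$ homogeneous linear system governed exactly by \eqref{schivka}, using Theorem \ref{noncom} to identify the two relevant one-dimensional families of building blocks. First I would record that $E=z^bf$ with $f$ holomorphic on $\cc^*$ is a monodromy eigenfunction with eigenvalue $e^{2\pi ib}$ precisely when $z^{-b}\mcl(z^bf)\equiv0$, and that any such $f$ admits a unique splitting $f(z)=g_+(z)+g_-(z^{-1})$ into entire functions with $g_+(0)=0$ (split the Laurent series of $f$ at its constant term, which is assigned to the $g_-$-part).

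The key computational step is to expand the conjugated operator $g\mapsto z^{-b}\mcl(z^bg)$ in Laurent monomials and to check its grading behaviour on the two pieces. A direct computation of this operator shows that $z^{-b}\mcl(z^bg_+(z))$ is a power series in $z$ (only powers $z^k$ with $k\ge0$ occur), while $z^{-b}\mcl(z^bg_-(z^{-1}))$ is a Laurent series containing only powers $z^k$ with $k\le1$. Consequently, writing $z^{-b}\mcl(z^bf)=z^{-b}\mcl(z^bg_+)+z^{-b}\mcl(z^bg_-(z^{-1}))$, the coefficient of $z^k$ for $k\ge2$ comes solely from the $g_+$-part, and the coefficient of $z^k$ for $k\le-1$ comes solely from the $g_-$-part. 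I expect this decoupling to be the one place requiring genuine (if routine) calculation; it is what makes the two pieces interact only in degrees $0$ and $1$.

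Given this, the equation $z^{-b}\mcl(z^bf)\equiv0$ breaks into three groups of conditions. Vanishing of all coefficients of $z^k$ with $k\ge2$ says exactly that $z^{-b}\mcl(z^bg_+)$ has degree $\le1$, i.e. $g_+$ solves \eqref{nonhom}; vanishing of all coefficients of $z^k$ with $k\le-1$ says that $g_-$ solves \eqref{nonhom}. By the uniqueness part of Theorem \ref{noncom}, $g_+$ and $g_-$ must then be scalar multiples $g_+=\alpha f_+$, $g_-=\beta f_-$ of the canonical solutions, with associated vectors $\alpha(d_{0+},d_{1+})$ and $\beta(d_{0-},d_{1-})$. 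The two remaining conditions, vanishing of the coefficients of $z^0$ and of $z^1$ in the sum, then read
\[\alpha d_{0+}+\beta d_{0-}=0,\qquad \alpha d_{1+}+\beta d_{1-}=0.\]

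Finally I would observe that $f_+(z)$ and $f_-(z^{-1})$ involve disjoint sets of monomials (strictly positive powers versus non-positive powers) and are both nonzero, so $f=\alpha f_++\beta f_-(z^{-1})$ vanishes identically only for $\alpha=\beta=0$. Hence a nonzero eigenfunction exists if and only if the homogeneous system above has a nontrivial solution $(\alpha,\beta)$, which happens exactly when its determinant $d_{0+}d_{1-}-d_{0-}d_{1+}$ vanishes; for the converse direction, a given nonzero eigenfunction produces such pieces $g_\pm$ and hence a nontrivial $(\alpha,\beta)$ satisfying the system, again forcing the determinant to vanish. This is precisely \eqref{schivka}, completing the proof.
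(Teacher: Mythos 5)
Your proof is correct and follows essentially the same route as the paper's: the splitting $f=g_+(z)+g_-(z^{-1})$, the observation that the conjugated operator $z^{-b}\mcl(z^b\,\cdot\,)$ decouples the two pieces outside degrees $0$ and $1$ (this is exactly the paper's Remark \ref{remnoncom}), and the uniqueness statement of Theorem \ref{noncom} reduce everything to the gluing condition in degrees $0$ and $1$. Your packaging of that condition as a homogeneous $2\times 2$ linear system in $(\alpha,\beta)$ whose determinant is (\ref{schivka}) is just a rephrasing of the paper's step in which the two polynomials $d_\pm$ are rescaled so as to cancel, so the two arguments coincide in substance.
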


Theorem \ref{noncom} will be proved in  the next subsection and Section 2. Corollary \ref{cordon} will be proved in the next subsection. 
The explicit formulas for the functions $f_{\pm}$ and 
$d_{j\pm}$ will be given in Section 3. Equivalent versions of equation (\ref{schivka}) as explicit functional equations in 
parameters obtained in a more direct way 
will be presented in Section 4. 

\begin{theorem} \label{xi=0} For every $n\in\cc\setminus\zz_{\leq0}$ and $(\la,\mu)\in\cc^2$, $\mu\neq0$     there exists a unique function 
$E(z)\not\equiv0$ (up to constant factor) holomorphic on a neighborhood of zero such that $\mcl E=c$, where $c$ is independent on 
$z$. The function $E$ 
is entire and can be normalized so that $c=\xi_{n-1}(\la,\mu)$ depends holomorphically on $(n,\la,\mu)$ and 
$E$ also depends holomorphically on $(n,\la,\mu)$.
\end{theorem}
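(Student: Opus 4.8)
The plan is to reduce the equation $\mcl E=c$ to a three-term recurrence on the Taylor coefficients of $E$ and then invoke the general recurrence theorem. Writing $E(z)=\sum_{k\ge0}a_kz^k$ and substituting into (\ref{heun}), I would collect the coefficient of each power $z^k$. A direct computation gives
\begin{equation}
\mcl E=(\mu a_1+\la a_0)+\sum_{k\ge1}\Big(\mu(k+1)a_{k+1}+(k^2+(n-1)k+\la)a_k-\mu(k+n-1)a_{k-1}\Big)z^k.
\label{plan-expand}
\end{equation}
Hence $\mcl E$ is a constant $c$ if and only if the bracketed expression vanishes for every $k\ge1$, in which case $c=\mu a_1+\la a_0$. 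Since $\mu\neq0$, the factor $\mu(k+1)$ never vanishes, so this is a genuine three-term recurrence expressing $a_{k+1}$ through $a_k$ and $a_{k-1}$, with two free initial data $a_0,a_1$; thus the space of formal power-series solutions is two-dimensional.

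Next I would pin down the role of the hypothesis $n\notin\zz_{\le0}$. The coefficient multiplying $a_{k-1}$ in (\ref{plan-expand}) is $\mu(k+n-1)$, which vanishes for some $k\ge1$ exactly when $n=1-k\in\zz_{\le0}$. Therefore $n\notin\zz_{\le0}$ is precisely the ``non-resonance'' condition guaranteeing that the recurrence stays honestly three-term for all $k\ge1$, which is what places it inside the class covered by Theorem \ref{cons}. I would then apply Theorem \ref{cons} to obtain a solution $(a_k)$ whose series $\sum a_kz^k$ converges, unique up to a scalar factor, thereby producing $E\not\equiv0$.

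To upgrade convergence to entireness and to get uniqueness, I would analyze the two characteristic ratios of the recurrence, which behave asymptotically like $a_{k+1}/a_k\sim -k/\mu$ (dominant) and $\sim\mu/k$ (recessive). The convergent solution furnished by Theorem \ref{cons} is the recessive one, for which $a_{k+1}/a_k\to0$; hence $\limsup|a_k|^{1/k}=0$, the radius of convergence is infinite, and $E$ is entire. Conversely, any admixture of the dominant solution grows like $k!$ and destroys convergence, so every holomorphic solution of $\mcl E=\mathrm{const}$ lies on the recessive line, which gives uniqueness up to a constant factor and identifies $c$ once the scalar is fixed.

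Finally, for the parametric statement I would note that the recurrence coefficients in (\ref{plan-expand}) are polynomial in $(n,\la,\mu)$ and that the contracting-map construction underlying Theorem \ref{cons} depends holomorphically on parameters; choosing the free scalar so that the normalization is holomorphic over the whole domain $\{n\notin\zz_{\le0},\ \mu\neq0\}$ then makes both $E$ and the constant $c=:\xi_{n-1}(\la,\mu)=\mu a_1+\la a_0$ holomorphic in $(n,\la,\mu)$, while real initial data force $E$ to be real-valued on $\rr$. I expect the only genuine difficulty to be this last point: the analytic core is imported from Theorem \ref{cons}, so what remains delicate is selecting the scalar factor consistently and nowhere-trivially across all admissible parameters so that the normalization (and hence $\xi_{n-1}$) is well defined and holomorphic, rather than the mere existence of the solution.
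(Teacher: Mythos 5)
Your core reduction is exactly the paper's. Expanding $\mathcal{L}E$ for $E=\sum_{k\geq0}a_kz^k$ reproduces recurrence (\ref{recur}) with $b=0$ (note $k^2+(n-1)k=k(k+n-1)$), the constant term is $c=\lambda a_0+\mu a_1$ as in (\ref{xil}), and the hypothesis $n\notin\mathbb{Z}_{\leq0}$ is precisely inequality (\ref{kko}) with $k_0=0$: it makes the coefficient $-\mu(k+n-1)$ of $a_{k-1}$ nonvanishing for all $k\geq1$, which is condition (\ref{cond3}) of Theorem \ref{cons}. Existence, uniqueness up to scalar, and entireness then come straight from Theorem \ref{cons}, which is how the paper argues as well (it routes the application through Theorem \ref{conv}, but that theorem is nothing other than Theorem \ref{cons} applied to these recurrences). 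Your dominant/recessive ratio discussion is dispensable intuition: both the convergence on all of $\mathbb{C}$ and the uniqueness among series with positive radius are already conclusions of Theorem \ref{cons}, so you need not (and should not) rely on a separate Poincar\'e--Perron type argument.

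The gap is exactly where you suspected, and it is genuine: the addendum to Theorem \ref{cons} cannot deliver the global holomorphic normalization on the domain $\{n\notin\mathbb{Z}_{\leq0},\ \mu\neq0\}$. The addendum gives in general only a \emph{meromorphic} normalization; its holomorphic conclusion requires the parameter manifold to be Stein and contractible (or, per the paper's footnote, to have vanishing second integer cohomology, which is what allows writing the pole divisor as the zero locus of a global holomorphic function). Here the parameter domain is $(\mathbb{C}\setminus\mathbb{Z}_{\leq0})\times\mathbb{C}\times\mathbb{C}^*$, and by the K\"unneth formula $H_2$ of this domain contains $H_1(\mathbb{C}\setminus\mathbb{Z}_{\leq0})\otimes H_1(\mathbb{C}^*)\neq0$, hence $H^2\neq0$ and the hypothesis fails. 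The paper closes this gap not by a clever abstract choice of scalar but by the explicit construction of Section 3 (see Subsection 3.5 and the matrices preceding Theorem \ref{thol}): the coefficients are defined through the convergent infinite matrix products $R_k=M_kM_{k+1}\cdots$ of Corollary \ref{clem}, whose factors are holomorphic exactly where the denominators $k(k+n-1)$, $k\geq1$, do not vanish, i.e.\ for $n\notin\mathbb{Z}_{\leq0}$; the normalization $a_0=R_{1,11}$, $a_k=\mu^kR_{k,21}/k!$ is nowhere degenerate (since $R_k$ tends to the rank-one projector, $R_{k,21}\to1$, so the resulting series is never identically zero), and it yields the manifestly holomorphic formula $\xi_{n-1}(\lambda,\mu)=\lambda a_0+\mu a_1=\lambda R_{1,11}+\mu^2R_{1,21}$. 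To complete your proof you must replace the appeal to the addendum by this (or an equivalent) globally defined normalization.
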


Theorem \ref{xi=0} will be proved in the next subsection. 

\begin{remark} Theorem \ref{xi=0} is closely related to the question of the existence of a solution holomorphic at 0 
of equation (\ref{heun}) (such a solution is automatically entire, i.e., holomorphic on $\cc$). This question was studied 
by V.M.Buchstaber and S.I.Tertychnyi in 
\cite{bt1}. The existence of a solution $E$  from Theorem \ref{xi=0} and explicit expressions  for  $E$ and the corresponding 
function $\xi_{n-1}(\la,\mu)$ (analytic in $(\la,\mu)\in\cc^2$) were given  in \cite[pp. 337--338]{bt1}. (This was done for $n\in\nn$, but 
 these results remain valid for all $n\in\cc\setminus\zz_{\leq0}$.)  
The existence result implies that if $\xi_{n-1}(\la,\mu)=0$, then 
the homogeneous equation (\ref{heun}), i.e., $\mcl E=0$ has a solution holomorphic on $\cc$. 
A conjecture stated by V.M.Buchstaber and S.I.Tertychnyi  in loc. cit. (under the additional assumption that 
$n\in\nn$, which, in fact, can be omitted) said that the converse is true: 
if equation $(\ref{heun})$ has a holomorphic solution at 0, then $\xi_{n-1}(\la,\mu)=0$. This conjecture was studied for 
$n\in\nn$ in loc. cit. 
 and \cite{bt2}, where it was reduced to a series of conjectures on polynomial solutions of auxiliary Heun equations and 
 non-vanishing of determinants of 
 modified Bessel functions of the first kind. All these conjectures were solved in \cite{bg}. As the next corollary shows, 
 Theorem \ref{xi=0} implies the conjecture 
 of Buchstaber and Tertychnyi immediately for all $n\in\cc\setminus\zz_{\leq0}$. 
 \end{remark}
 
 \begin{remark} The method used in \cite{bt1, bt2, bg} was to consider 
 a ``conjugated'' family of Heun equations, for which equations having polynomial solutions  were described by an explicit algebraic equation on parameters in \cite{bt0}.  The proof of 
 the above-mentioned Buchstaber--Tertychnyi conjecture was obtained in \cite{bg} from a solution of their 
 problem about polynomial solutions. The method of the present paper allows to prove the conjecture directly, without using the 
 conjugated family and polynomial solutions. A possibility to prove the conjecture without using the condition of non-vanishing 
 of the determinants  of  modified Bessel functions  was mentioned in \cite{bt1} at the place, where the conjecture 
 was stated. We realize this possibility in the present paper. 
As is explained in \cite{bg}, 
positivity of Bessel determinants proved there is a result interesting itself. 
 \end{remark}
 
 \begin{corollary} \label{cxi} (cf. \cite[theorem 3.5]{bg}) For every $n\in\cc\setminus\zz_{\leq0}$ and $\mu\neq0$   equation 
 (\ref{heun}) has an entire solution, if and only if $\xi_{n-1}(\la,\mu)=0$, where $\xi_{n-1}(\la,\mu)$ is the 
 function from Theorem \ref{xi=0}  introduced  in \cite[formula (31), p. 337]{bt1}, 
 see also formula (\ref{xil}) in Subsection 4.3 below.
 \end{corollary}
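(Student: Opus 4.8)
The plan is to derive the corollary directly from the existence-and-uniqueness statement of Theorem \ref{xi=0}, exploiting the fact that the class of germs at $0$ which $\mcl$ maps to constants is a one-dimensional linear space. First I would dispose of the easy implication. Fix $n\in\cc\setminus\zz_{\leq0}$ and $\mu\neq0$, and let $E$ be the entire function furnished by Theorem \ref{xi=0}, normalized so that $\mcl E=\xi_{n-1}(\la,\mu)$. If $\xi_{n-1}(\la,\mu)=0$, then $\mcl E=0$ while $E\not\equiv0$ is entire, so $E$ is the desired entire solution of (\ref{heun}).

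For the converse, suppose (\ref{heun}) has an entire solution $\tilde E\not\equiv0$, that is, $\mcl\tilde E=0$. The key observation is that $\tilde E$ is in particular holomorphic on a neighborhood of $0$ and satisfies $\mcl\tilde E=c$ with the constant $c=0$, so $\tilde E$ belongs to the very class of germs addressed by Theorem \ref{xi=0}. By the uniqueness clause of that theorem — the germ holomorphic at $0$ whose $\mcl$-image is a constant is unique up to a scalar factor — we must have $\tilde E=\kappa E$ for some $\kappa\in\cc^*$. Applying $\mcl$ then yields $0=\mcl\tilde E=\kappa\,\mcl E=\kappa\,\xi_{n-1}(\la,\mu)$, and since $\kappa\neq0$ this forces $\xi_{n-1}(\la,\mu)=0$.

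Since essentially all the analytic work is packaged inside Theorem \ref{xi=0}, I do not expect a genuine obstacle here; the one point deserving care is the precise logical role played by the uniqueness statement. I would make explicit that the germs $E$ at $0$ with $\mcl E$ constant form a linear space — because $\mcl$ is linear and the constants are closed under linear combinations — that Theorem \ref{xi=0} asserts this space to be exactly one-dimensional, and that consequently any entire solution of the homogeneous equation, being a nonzero element of this space, must be proportional to the distinguished generator $E$. This proportionality is what converts the qualitative statement ``an entire solution exists'' into the vanishing of the scalar $\xi_{n-1}(\la,\mu)$, and it is the crux of the equivalence.
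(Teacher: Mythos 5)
Your proof is correct and follows essentially the same route as the paper: both directions rest entirely on Theorem \ref{xi=0}, with the converse obtained from its uniqueness clause forcing any entire solution of the homogeneous equation to be proportional to the distinguished germ $E$, whence $\xi_{n-1}(\la,\mu)=\mcl E/\kappa$ must vanish. The paper phrases the converse as a proof by contradiction (assuming $\xi_{n-1}\neq0$ and deriving $\mcl\hat E=0$ for the normalized germ), but this is only a cosmetic difference from your direct argument.
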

 
 \begin{proof} Let $\xi_{n-1}(\la,\mu)=0$. Then the function $E$ from Theorem \ref{xi=0} is an entire solution 
 of equation (\ref{heun}): $\mcl E=0$.  Conversely, let equation (\ref{heun}) have a solution $E$ 
 holomorphic at 0.  If $\xi_{n-1}(\la,\mu)\neq0$, then there exists a holomorphic function 
 $\hat E$ on a neighborhood of zero satisfying the equation $\mcl \hat E=\xi_{n-1}(\la,\mu)\neq0$, by Theorem \ref{xi=0}. 
 This together with the uniqueness 
 statement of Theorem \ref{xi=0} implies that $\hat E= E$ up to constant factor, hence $\mcl\hat E=0$. The contradiction thus obtained proves the corollary. 
\end{proof}

\subsection{Solutions of three-term recurrence relations}
Let us look for {\it formal} solutions of equation (\ref{heun}) of type (\ref{multi}). 
Equation (\ref{heun}) is equivalent to  the recurrence relations 
\begin{equation}((k+b)(k+b+n-1)+\la)a_k-\mu(k+b+n-1)a_{k-1}+\mu(k+b+1)a_{k+1}=0,\label{recur}\end{equation}
which can be written in the matrix form 

$$\left(\begin{matrix} a_{k}\\  a_{k+1}\end{matrix}\right)=A_k\left(\begin{matrix}  a_{k-1}\\  a_k
\end{matrix}\right),$$
\begin{equation}A_k=\frac{k+b+n-1}{k+b+1}\left(\begin{matrix}  0 & \frac{k+b+1}{k+b+n-1}\\ 
 1 & -\frac{\la+(k+b)(k+b+n-1)}{\mu(k+b+n-1)}\end{matrix}\right).\label{mat1}\end{equation}

\begin{remark} \label{remnoncom} 
A function $f_+(z)=\sum_{k\geq1}a_kz^k$ satisfies equation (\ref{nonhom}) for some $d_{j+}$, if and only if its Taylor coefficients $a_k$ 
satisfy (\ref{recur}), or equivalently, (\ref{mat1}) for $k\geq2$. Similarly, a function $f_-(z^{-1})=\sum_{k\leq0}a_kz^{k}$ 
satisfies (\ref{nonhom}), if and only if its coefficients satisfy (\ref{recur}) for $k\leq-1$. 
\end{remark}

\begin{proof} {\bf of Corollary \ref{cordon}.} Let $E(z)=z^b\sum_{k\in\zz}a_kz^k$ be a solution of equation (\ref{heun}) having type (\ref{multi}). Then 
\begin{equation} E(z)=z^b(f_+(z)+f_-(z^{-1})), \ \ f_+(z)=\sum_{k\geq1}a_kz^k, \ f_-(z)=\sum_{k\geq0}a_{-k}z^k.
\label{defef}\end{equation}
The coefficients $a_k$ satisfy (\ref{recur}) for all $k$. This together with the above remark implies that the functions 
$f_{\pm}(z^{\pm1})$ satisfy (\ref{nonhom}). The corresponding expressions $d_{\pm}=d_{0\pm}+d_{1\pm}z$ should cancel out, since 
$E$ is a solution of the homogeneous equation. This implies (\ref{schivka}). Conversely, let $f_{\pm}(z^{\pm1})$ be solutions of 
(\ref{nonhom}), and let (\ref{schivka}) hold: that is, the vectors $d_+$ and $d_-$ are proportional. 
 Then we can normalize the latter vectors, and hence, the  corresponding solutions by constant factors (not both vanishing simultaneously) so that 
$d_++d_-=0$. Then the function $E$ given by (\ref{defef}) is a solution of equation (\ref{heun}). 
\end{proof} 

As it is shown below, Theorem \ref{noncom} is implied by the following general theorem 

\begin{theorem} \label{cons}  Consider recurrence relations 
\begin{equation} f_ka_{k-1}+g_ka_k+h_ka_{k+1}=0\label{rec}\end{equation}
in sequence $a_k$, $k\in\zz_{\geq0}$, where sequences $g_k, f_k, h_k\in\cc$  satisfy the following conditions:   
\begin{equation} f_k,h_k\neq0,  \text{ for every } k,\label{cond3}\end{equation}

%
\begin{equation} f_k,h_k=o(g_k), \text{ as } k\to\infty.\label{condi}\end{equation}
Then there exists a unique series $\sum_{k=0}^{+\infty} a_kz^k\not\equiv0$ (up to constant factor) 
with $a_k$ satisfying  (\ref{rec}) for $k\geq1$ 
and having non-zero convergence radius. Moreover, this series  
converges on all of $\cc$. 
\end{theorem}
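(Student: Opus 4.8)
The plan is to exploit the hyperbolic structure forced by the decay condition (\ref{condi}). Rewriting (\ref{rec}) for $k\ge1$ as a transfer-matrix system for the vectors $(a_{k-1},a_k)$, the companion matrix has characteristic equation $h_k\lambda^2+g_k\lambda+f_k=0$; by (\ref{condi}) its two roots split as a dominant one $\sim -g_k/h_k\to\infty$ and a recessive one $\sim -f_k/g_k\to0$. The recessive direction is exactly the one producing a convergent (indeed entire) power series, and the whole theorem amounts to showing that this recessive solution exists, is unique up to scale, and decays super-geometrically.

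First I would construct the recessive solution by a contraction argument on ratios. Writing $v_k=a_k/a_{k-1}$, relation (\ref{rec}) becomes $f_k+g_kv_k+h_kv_kv_{k+1}=0$, which I solve \emph{backwards} as $v_k=-f_k/(g_k+h_kv_{k+1})$. Choosing $N$ so large that $|f_k|,|h_k|\le\frac14|g_k|$ for $k\ge N$ (possible by (\ref{condi})), I would consider the closed subset $X=\{(v_k)_{k\ge N}\colon |v_k|\le 2|f_k|/|g_k|\}$ of the space of bounded sequences with the sup-norm, and the map $(\Phi v)_k=-f_k/(g_k+h_kv_{k+1})$. A direct estimate shows $\Phi$ maps $X$ into itself and contracts it (the contraction factor is controlled by $\sup_k |f_kh_k|/|g_k|^2\to0$), so Banach's fixed point theorem yields a unique $v^\ast\in X$. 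Setting $a_{N-1}=1$ and $a_k=v^\ast_k a_{k-1}$ for $k\ge N$ produces a solution of (\ref{rec}) for $k\ge N$; since $f_k\ne0$ by (\ref{cond3}), I extend it backwards via $a_{k-1}=-(g_ka_k+h_ka_{k+1})/f_k$ down to $a_0$, obtaining a genuine solution of (\ref{rec}) for all $k\ge1$. Because $|v^\ast_k|\le 2|f_k/g_k|\to0$, the tail $|a_k|\le\prod_{j=N}^k|v^\ast_j|$ decays faster than any geometric sequence, so $\limsup|a_k|^{1/k}=0$ and the series is entire.

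For uniqueness I would use the discrete Wronskian. If $(a_k)$ is the recessive solution and $(b_k)$ any linearly independent solution, the Casoratian $W_k=a_kb_{k+1}-a_{k+1}b_k$ satisfies $W_k=(f_k/h_k)W_{k-1}$, hence $W_k=W_0\prod_{j=1}^k(f_j/h_j)$ with $W_0\ne0$. Telescoping $b_k/a_k$ via $b_{k+1}/a_{k+1}-b_k/a_k=W_k/(a_ka_{k+1})$ and inserting the estimate $v^\ast_j\sim -f_j/g_j$, I would show that $|b_k|^{1/k}\to\infty$, i.e. the series $\sum b_kz^k$ has radius of convergence zero; the key point is that $|g_j/h_j|\to\infty$ forces the resulting product to grow super-geometrically. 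Consequently any solution $\alpha a_k+\beta b_k$ with $\beta\ne0$ is dominated by $\beta b_k$ and diverges everywhere, so the only solutions with positive convergence radius are the scalar multiples of $(a_k)$. This yields existence, uniqueness up to a constant factor, and convergence on all of $\cc$.

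The main obstacle I anticipate is the bookkeeping in the contraction step: choosing the weighted bounding set $X$ and the norm so that $\Phi$ is simultaneously a self-map and a contraction, with explicit constants coming from (\ref{condi}). Once this is set up correctly, the fixed point, its super-geometric decay, and the Wronskian growth estimate for the complementary solution all follow by routine comparison.
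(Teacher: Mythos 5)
Your proof is correct, and its two halves relate to the paper's argument in different ways. For existence you are doing essentially what the paper does, only in explicit coordinates: your ratio map $(\Phi v)_k=-f_k/(g_k+h_kv_{k+1})$ is exactly the projectivization $P_k$ of the inverse transfer matrix $\Lambda_k^{-1}$ written in the affine chart $w=a_k/a_{k-1}$, and your application of Banach's theorem to $\Phi$ on the sequence set $X$ with the sup-norm is precisely the paper's proof of its Hadamard--Perron-type Theorem \ref{metric} (a contraction on the space of sequences), specialized to this situation. What your version buys is self-containedness and sharper bookkeeping: the paper establishes contraction of $P_k$ on a fixed disk $\overline D_C$ via uniform convergence of holomorphic maps plus Cauchy estimates (Proposition \ref{pkconv}) and then extracts the decay $a_{k+1}/a_k\to0$ from convergence of the fixed points, whereas your weighted constraint $|v_k|\le 2|f_k|/|g_k|$ hands you the super-geometric decay, hence entirety, immediately. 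For uniqueness your route is genuinely different: the paper argues dynamically --- any solution whose ratio ever leaves $\overline D_C$ is pushed to $\infty$ by the inverse maps $P_k^{-1}$ (second statement of Proposition \ref{pkconv}), so $a_{l+1}/a_l\to\infty$ and the radius of convergence is zero --- while you use the Casoratian identity $W_k=(f_k/h_k)W_{k-1}$ and the telescoping of $b_k/a_k$ to show that any solution independent of the recessive one grows essentially like $\prod_j|g_j|/|h_j|$, which is super-geometric since $|g_j|/|h_j|\to\infty$. Both are valid; your Casoratian argument is the classical Poincar\'e--Perron-style alternative, and to make it airtight you need only the two small observations that $a_k\neq0$ for $k\geq N-1$ (true, since $f_k\neq0$ by (\ref{cond3}) forces $v^*_k\neq0$) and that the super-geometrically growing terms $W_j/(a_ja_{j+1})$ cannot cancel in the telescoped sum (true, since each term eventually dominates the sum of all previous ones together with the constant $b_N/a_N$).
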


{\bf Addendum to Theorem \ref{cons}.} {\it Let in the conditions of Theorem \ref{cons} the coefficients $f_k$, $g_k$, $h_k$ 
depend holomorphically on   parameters that represent a point of a finite-dimensional complex manifold. Let 
asymptotics (\ref{condi}) hold uniformly on compact 
subsets  in the parameter manifold. Then the function $\sum_{k=0}^{+\infty}a_kz^k$ can be normalized to 
depend meromorphically on the parameters. In the case, when the parameter manifold is Stein and contractible\footnote{The condition of contractibility  may be weakened to the condition of vanishing of the second cohomology group with integer coefficients.  
The latter condition implies in particular that the quotient of the fundamental group by its commutant 
has trivial torsion. Vanishing of the second cohomology 
 together with Stein property  is needed to guarantee that each analytic hypersurface in the parameter manifold 
is the zero locus of a holomorphic function \cite[chapter VII, section B, proposition 13]{gr}.}, 
 this function can be normalized to be holomorphic in the parameters.}

\medskip
Theorem \ref{cons} and its addendum will be proved in the next section.

\begin{remark} \label{run} In the series  $\sum_ka_kz^k$ from Theorem  \ref{cons} for every $k\geq0$ the two neighbor coefficients 
$a_k$, $a_{k+1}$ 
do not vanish simultaneously: hence, they present a point $(a_k:a_{k+1})\in\cp^1$. Indeed, 
each pair of neighbor coefficients $(a_k,a_{k+1})$ 
determines a unique sequence satisfying (\ref{rec}). This  follows from the fact that for every $k\geq1$ the coefficient 
$a_{k\pm1}$ is expressed as a linear combination of $a_{k\mp1}$ and $a_k$ by (\ref{rec}), since $f_k,h_k\neq0$. 
Hence, if some two 
neighbor coefficients $a_{k-1}, a_k$ vanish, then  all the coefficients vanish, and the series is zero, -- a contradiction. 
\end{remark}

\begin{theorem} \label{conv} Let $b,n\in\cc$, 
%
Then for  every $k_0\in\zz$ such that 
\begin{equation} k+b+n-1,\ k+b+1\neq0 \text{ for every } k>k_0,\label{kko}\end{equation}
 for every $\la,\mu\in\cc$, $\mu\neq0$  there exists and unique  nonzero 
 one-sided series $\sum_{k\geq k_0}a_kz^k$  (up to constant factor) converging on some punctured disk centered at 0 
that satisfies recurrence relations (\ref{recur}) (or equivalently, (\ref{mat1})) for $k> k_0$. (In what follows this solution of relations (\ref{recur}) is called the {\bf forward solution.})
Similarly, for every $k_0\in\zz$ such that 
\begin{equation} k+b+n-1,\ k+b+1\neq0 \text{ for every } k< k_0\label{kko2}\end{equation}
  there exists and unique  
one-sides series $\sum_{k\leq k_0}a_kz^k$ (up to multiplicative constant) that satisfies recurrence relations (\ref{recur}) for 
$k<k_0$ and converges outside some disk 
centered at 0.  In what follows this solution of relations (\ref{recur}) is called the {\bf backward solution.})  
Both series converge  on the whole punctured complex line $\cc^*$. They can be normalized to depend 
holomorphically on those parameters $(n,\la,\mu, b)$ for which inequality (\ref{kko}) (respectively, (\ref{kko2})) 
holds for the given number $k_0$. 
\end{theorem}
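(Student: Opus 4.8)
The plan is to derive Theorem \ref{conv} from Theorem \ref{cons} and its Addendum, by recognizing (\ref{recur}) as an instance of the three-term relation (\ref{rec}) and reducing each one-sided problem to the half-line $\zz_{\geq0}$: the forward solution by an index shift, the backward solution by an index reversal. Writing $f_k=-\mu(k+b+n-1)$, $g_k=(k+b)(k+b+n-1)+\la$, $h_k=\mu(k+b+1)$ puts (\ref{recur}) into the form (\ref{rec}). For the forward solution I would set $\tilde a_j:=a_{k_0+j}$ for $j\geq0$, so that (\ref{recur}) for $k>k_0$ becomes a relation of type (\ref{rec}) in $(\tilde a_j)_{j\geq0}$ with $\tilde f_j=f_{k_0+j}$, $\tilde g_j=g_{k_0+j}$, $\tilde h_j=h_{k_0+j}$ holding for $j\geq1$. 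Here condition (\ref{cond3}) ($\tilde f_j,\tilde h_j\neq0$) is exactly hypothesis (\ref{kko}) together with $\mu\neq0$, while (\ref{condi}) holds because $g_k$ grows quadratically in $k$ and $f_k,h_k$ only linearly, so $\tilde f_j,\tilde h_j=o(\tilde g_j)$. Theorem \ref{cons} then gives a unique (up to constant factor) nonzero series $\sum_{j\geq0}\tilde a_jw^j$ of positive radius, convergent on all of $\cc$. Since $\sum_{k\geq k_0}a_kz^k=z^{k_0}\sum_{j\geq0}\tilde a_jz^j$, this produces the forward solution, unique up to a constant among series converging on a punctured neighbourhood of $0$ and in fact convergent on $\cc^*$.

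For the backward solution I would reverse the index, setting $\hat a_j:=a_{k_0-j}$ for $j\geq0$. Reading (\ref{recur}) at $k=k_0-j$ and collecting terms shows that $(\hat a_j)_{j\geq0}$ satisfies a relation of type (\ref{rec}) for $j\geq1$, but now with the roles of $f$ and $h$ interchanged: $\hat f_j=h_{k_0-j}$, $\hat g_j=g_{k_0-j}$, $\hat h_j=f_{k_0-j}$. Consequently condition (\ref{cond3}) for $(\hat a_j)$ is precisely hypothesis (\ref{kko2}) (again using $\mu\neq0$), and (\ref{condi}) holds for the same quadratic-versus-linear growth reason as $k\to-\infty$. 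Theorem \ref{cons} again yields a unique series $\sum_{j\geq0}\hat a_jw^j$ convergent on $\cc$; substituting $w=z^{-1}$ and multiplying by $z^{k_0}$ gives $\sum_{k\leq k_0}a_kz^k$, unique up to a constant and convergent for all $z\in\cc^*$, hence outside every disk centred at $0$. In both cases the uniqueness and the convergence on the whole of $\cc^*$ are inherited verbatim from Theorem \ref{cons}.

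For the holomorphic dependence on $(n,\la,\mu,b)$ I would invoke the Addendum to Theorem \ref{cons}. The coefficients $\tilde f_j,\tilde g_j,\tilde h_j$ (respectively $\hat f_j,\hat g_j,\hat h_j$) are holomorphic in the parameters, and the asymptotics (\ref{condi}) are uniform on compact subsets of the relevant domain, since on a compact set $\mu$ is bounded and the quadratic leading term of $g_k$ dominates uniformly. The parameter domain $\{\mu\neq0,\ (\ref{kko})\text{ holds}\}$ is the complement of a countable union of hyperplanes in $\cc^4$, i.e.\ the complement of an analytic hypersurface, hence Stein; the Addendum then already provides a meromorphic normalization. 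I expect the passage from meromorphic to holomorphic normalization to be the main obstacle, because the Addendum upgrades it only under a topological hypothesis (contractibility, or vanishing of $H^2(\,\cdot\,;\zz)$). In the coordinates $(\la,\mu,b,b+n)$ the domain splits as a product $\cc\times\cc^{*}\times(\cc\setminus S_1)\times(\cc\setminus S_2)$ with $S_1,S_2$ discrete, whose second cohomology need not vanish; so the holomorphic normalization ultimately requires showing that the solution line bundle $p\mapsto[\tilde a_0(p):\tilde a_1(p)]$, which is well defined by Remark \ref{run}, is holomorphically trivial. By the Oka--Grauert principle over the Stein base this reduces to the vanishing of its first Chern class, which I would establish either by exhibiting a global nonvanishing holomorphic section or by a direct Chern-class computation; this is the step I anticipate to require the most care.
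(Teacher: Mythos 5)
Your first two paragraphs (existence, uniqueness, convergence on $\cc^*$) are correct and coincide with the paper's own argument: the paper likewise sets $f_k=-\mu(k+b+n-1)$, $g_k=(k+b)(k+b+n-1)+\la$, $h_k=\mu(k+b+1)$, notes that (\ref{cond3}) follows from (\ref{kko}) (respectively (\ref{kko2})) together with $\mu\neq0$ and that (\ref{condi}) is obvious from the quadratic-versus-linear growth, and applies Theorem \ref{cons}; the index shift for the forward case and the index reversal (swapping the roles of $f$ and $h$) for the backward case, which you spell out, are exactly what is implicit in the paper's one-line reduction. The local holomorphic dependence via the Addendum to Theorem \ref{cons} also matches the paper.

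The gap is in your third paragraph, i.e.\ in the global holomorphic normalization, which is part of the statement of Theorem \ref{conv}. You correctly diagnose the problem: the Addendum by itself only yields a meromorphic normalization, because the parameter domain is a product with factors of the type $\cc^*$ and $\cc$ minus a discrete set, whose second cohomology need not vanish, so the Addendum's topological hypothesis fails. But your plan to close this — Oka--Grauert over the Stein base plus vanishing of the first Chern class of the line bundle $p\mapsto(\tilde a_0(p):\tilde a_1(p))$ — is left entirely unexecuted, and of the two routes you suggest, one (exhibiting a global nonvanishing holomorphic section) is circular: such a section is precisely the globally holomorphic normalization to be produced. The paper fills this step by a different, constructive route, deferred to Section 3 (Subsection 3.5 together with Theorems \ref{th24} and \ref{th25}): after rescaling by Pochhammer symbols, $c_k=a_k(b)_{k+1}/\mu^k$ in the non-resonant case or the shifted variant (\ref{cknew}) adapted to $k_0$ in general, the transfer matrices become the matrices $M_k$ of (\ref{mat3}), which tend to a rank-one projector with summable error; by Lemma \ref{lemkp}, Corollary \ref{clem} and their addenda the infinite products $R_k=M_kM_{k+1}\cdots$ converge, depend holomorphically on the parameters, and their entries give coefficients $a_k$ that are holomorphic and not all vanishing on the whole domain where (\ref{kko}) holds. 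In other words, the explicit matrix product \emph{is} the global nonvanishing section your bundle argument needs; without it (or an actual Chern-class computation in its place), your proposal does not prove the holomorphic-dependence clause of Theorem \ref{conv}.
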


\begin{example} \label{exkko} Let in the conditions of Theorem \ref{conv} one have  $b, n+b\notin\zz$ (cf. (\ref{defu})). 
Then its statements hold  for all $k_0\in\zz$, since 
inequalities (\ref{kko}) hold for all $k\in\zz$. Otherwise, if either $b\in\zz$, or $b+n\in\zz$, then the statements of Theorem \ref{conv} 
\begin{equation} \text{ hold for } k> k_0 \text{ whenever } k_0\geq\max\{ m \in \{ -1-b, 1-b-n\} \ | \ m\in\zz\}\label{kko+}
\end{equation}
\begin{equation} \text{ hold for } k< k_0 \text{ whenever } k_0\leq\min\{ m \in \{ -1-b, 1-b-n\} \ | \ m\in\zz\}\label{kko-}
\end{equation}
\end{example} 
Theorem \ref{conv} together with Remark \ref{remnoncom} and the first statement of Example \ref{exkko} 
imply Theorem \ref{noncom}. 
%

\begin{proof} {\bf of Theorems \ref{conv} and \ref{noncom} (existence and uniqueness of solutions).} The coefficients 
$$f_k=-\mu(k+b+n-1), \ g_k=(k+b)(k+b+n-1)+\la, \  h_k=\mu(k+b+1)$$
of recurrence relations (\ref{recur}) satisfy the conditions of Theorem \ref{cons} for $k>k_0$ ($k<k_0$). Indeed, the asymptotics (\ref{condi}) is obvious. Inequalities $f_k,h_k\neq0$ follow from (\ref{kko}) (respectively, (\ref{kko2})).
This together with Theorem \ref{cons} implies the statement of  Theorem \ref{conv}, and hence, Theorem \ref{noncom} on existence and 
uniqueness of solutions. The local 
holomorphicity in the parameters follows from the addendum to Theorem \ref{cons}. The global holomorphicity will be proved 
later on, in Subsection 3.5. 
\end{proof} 

\begin{proof} {\bf of Theorem \ref{xi=0}.} Let $b=0$, $n\in\cc\setminus\zz_{\leq0}$. Then inequalities (\ref{kko}) hold for 
 $k>k_0=0$. Therefore, there exists a unique series $E(z)=\sum_{k=0}^{+\infty}a_kz^k$ converging on a 
neighborhood of the origin, whose coefficients satisfy (\ref{recur}) for $k\geq1$, and it converges on all of $\cc$ (Theorem \ref{conv}). 
The system of relations (\ref{recur}) for $k\geq1$ is equivalent to the statement that $\mcl E=const$. This proves Theorem \ref{xi=0}. 
Holomorphicity on the parameters follows from the analogous statement of Theorem \ref{conv}. 
\end{proof}

\subsection{Historical remarks, applications and plan of the paper}

Our results are motivated by applications to the family 
 \begin{equation}\frac{d\phi}{dt}=-\sin \phi + B + A \cos\omega t, \ A,\omega>0, \ B\geq0.\label{josbeg}\end{equation}
  of  nonlinear equations, which arises in several models in physics, mechanics and geometry: in a model  of the 
Josephson junction  in superconductivity (our main motivation), see \cite{josephson, stewart, mcc, bar, schmidt}; 
 in  planimeters, see  \cite{Foote, foott}. 
Here $\omega$ is a fixed constant, and $(B,A)$ are the parameters. Set 
$$\tau=\omega t, \ l=\frac B\omega, \ \mu=\frac A{2\omega}.$$
The variable change $t\mapsto \tau$ transforms (\ref{josbeg}) to a 
non-autonomous ordinary differential equation on the two-torus $\mathbb T^2=S^1\times S^1$ with coordinates 
$(\phi,\tau)\in\rr^2\slash2\pi\zz^2$: 
\begin{equation} \dot \phi=\frac{d\phi}{d\tau}=-\frac{\sin \phi}{\omega} + l + 2\mu \cos \tau.\label{jostor}\end{equation}
The graphs of its solutions are the orbits of the vector field 
\begin{equation}\begin{cases} & \dot\phi=-\frac{\sin \phi}{\omega} + l + 2\mu \cos \tau\\
& \dot \tau=1\end{cases}\label{josvec}\end{equation}
on $\mathbb T^2$. The {\it rotation number} of its flow, see \cite[p. 104]{arn},  is a function $\rho(B,A)=\rho(B,A;\omega)$ 
of the parameters of the vector field. It is given by the formula
$$\rho(B,A;\omega)=\lim_{k\to+\infty}\frac{\phi(2\pi k)}k,$$
where $\phi(\tau)$ is an arbitrary solution of equation (\ref{jostor}). 

The  {\it phase-lock areas} are the level subsets of the rotation number in the $(B,A)$-plane 
with non-empty interior. They have been studied 
by V.M.Buchstaber, O.V.Karpov, S.I.Tertychnyi et al, see \cite{bktje}--\cite{bg}, \cite{4} and references therein.
Each phase-lock area is an infinite chain of adjacent domains separated by points called the 
{\it adjacency points} (or briefly, {\it adjacencies}). The description of their coordinates 
as solutions of analytic functional equations was conjecturally stated by V.M.Bushstaber and S.I.Tertychnyi in  \cite{bt1} and 
proved by the authors of the present paper in \cite{bg}. 
Namely, the family of non-linear equations  was reduced in \cite{bt0, tert} to the two following 
families  of second order linear differential equations of double confluent Heun type: equation (\ref{heun}) with
$$n=l+1, \ \la=\frac1{4\omega^2}-\mu^2$$
 and the equation   
\begin{equation} \mcl E=z^2E''+((-l+1)z+\mu(1-z^2))E'+(\la+\mu(l-1)z)E=0.\label{heun2*}\end{equation}
Equation (\ref{heun2*}) is obtained from equation (\ref{heun}) via the substitution $l=1-n$. 

\begin{remark} Heun equations (\ref{heun}) and (\ref{heun2*}) corresponding to  the family (\ref{josvec}) of dynamical systems 
on torus are those corresponding to real parameters $n$, $\omega$, $\mu$, and thus, real $\la$. In the present paper we 
are studying general Heun equation (\ref{heun}) with arbitrary complex parameters $n$, $\la$, $\mu$.
\end{remark}

It was shown in \cite{4} that $l=\frac B{\omega}\in\zz$ at all the adjacencies. 
In the case, when $l\geq0$, Buchstaber and Tertychnyi 
have  shown that the adjacencies correspond exactly to those parameter values, 
for which $l$ is integer and  equation (\ref{heun})  has a non-trivial holomorphic solution at 0 (which is automatically an entire solution: 
holomorphic on $\cc$); see the statement in \cite[p.332, paragraph 2]{bt1} and the proof in \cite[theorem 3.3 and subsection 3.2]{bg}. 
They have explicitly constructed a family of 
holomorphic solutions for parameters satisfying an explicit functional equation $\xi_l(\la,\mu)=0$, see Corollary \ref{cxi}. They have conjectured that the latter functional 
equation describes the adjacencies completely. They have reduced this conjecture to another one saying that if equation (\ref{heun2*}) 
has a polynomial solution (which may happen only for $l\in\nn$), then equation (\ref{heun}) does not have an entire solution. Later they have shown that the second conjecture follows from the 
third one saying that appropriate determinants formed by modified Bessel functions of the first type do not 
vanish on the positive semiaxis.  This third conjecture together with the two previous conjectures 
were proved in \cite{bg}. The statement of the above-mentioned 
conjecture of Buchstaber and Tertychnyi on functional equation describing the adjacencies follows from Corollary \ref{cxi} 
and  their correspondence to  entire solutions of Heun equations. 

V.M.Buchstaber and S.I.Tertychnyi have constructed symmetries of double confluent Heun equation (\ref{heun}) \cite{bt1, bt3}. 
The symmetry $\#:E(z)\mapsto 2\omega z^{-l-1}(E'(z^{-1})-\mu E(z^{-1}))$, which is an involution of its solution space, was constructed 
in \cite[equations (32), (34)]{tert2}. It corresponds to the symmetry $(\phi,t)\mapsto(\pi -\phi,-t)$ of the nonlinear equation (\ref{josbeg}); 
the latter symmetry was found in \cite{RK}. In \cite{bt3} they have found new nontrivial symmetries in the case, when $l\in\zz$ and 
equation 
(\ref{heun2*}) does not have polynomial solutions. 


\begin{convention} Everywhere in the paper by {\it formal} solution $(a_k)_{k\geq k_0}$ (or $(a_k)_{k\leq k_0}$)  of linear recurrence 
relation $f_ka_{k-1}+g_ka_k+h_ka_{k+1}=0$ we mean a (one- or two-sided) sequence of complex numbers $a_k$ satisfying the relation 
for all $k>k_0$ (respectively,  $k<k_0$). (Here one may have two-sided infinite sequences.) 
If in addition, the power series $\sum_ka_kz^k$ converges on some annulus centered at 0 (for all the relations under consideration, this would automatically imply convergence on all of $\cc^*$) then the formal solution under question is called simply {\it a solution}: the adjective ``converging'' is omitted for simplicity. 
\end{convention}
In Section 3 we write down explicit formulas for  solutions of recurrence relations (\ref{recur}) using the proof of 
Theorem \ref{cons}. Then in Section 4 we deduce  explicit  functional equations satisfied by monodromy eigenvalues of double confluent Heun equations (explicit versions of Corollary \ref{cordon}). 

In Section 5 we apply  results of Sections 3 and 4 to phase-lock areas in the model of Josephson effect. 

 \begin{remark} The problem to describe the boundaries of the phase-lock areas for the considered system was studied in 
  \cite{bt0, bt1, bg}. Special points of the boundaries (adjacencies and points corresponding to equations  
  (\ref{heun2*}) with polynomial solutions) were described in \cite{bt1} and \cite{bt0} respectively. 
  In the present paper the union of boundaries is described by an explicit transcendental analytic equation 
  (Corollary \ref{cboun} in Subsection 5.4). 
  It is known that the ratio of the monodromy eigenvalues of the corresponding 
equation (\ref{heun}) equals $e^{\pm2\pi i\rho(A,B)}$ and their product 
 equals $e^{-2\pi i l}$.  The union of  boundaries  coincides with the set where 
 the monodromy has multiple eigenvalue and   is described by the condition that the monodromy of equation (\ref{heun}) has 
eigenvalue $\pm e^{-\pi i l}$. We get a similar description of non-integer level curves of the rotation number function. Namely, 
for $\theta\notin\zz$ the above relation between monodromy eigenvalues and the rotation number
together with the results of Sections 3, 4 imply an explicit functional equation satisfied by 
 the set $\{\rho\equiv\pm\theta(mod2\zz)\}$ (Theorem \ref{rhonon} in Subsection 5.3). 
%
  \end{remark}
Open problems on phase-lock areas and  possible approaches to some of them 
using the above description of boundaries are discussed in Subsections 5.5--5.8.  

The following new result will be also proved in Section 5 using results of Section 4. 

\begin{theorem} \label{poteig0} Let $\omega>0$, $(B,A)\in\rr^2$, $B,A>0$, $l=\frac B\omega$, $\mu=\frac A{2\omega}$, 
$\la=\frac1{4\omega^2}-\mu^2$, $\rho=\rho(B,A)$. The double confluent Heun equation (\ref{heun2*}) corresponding to the above  
$\la$, $\mu$ and $l$ has a polynomial solution, if and only if $l,\rho\in\zz$, $\rho\equiv l(mod 2\zz)$, $0\leq\rho\leq l$, 
the point $(B,A)$ lies in the boundary of a phase-lock area  and is not an adjacency. In other terms, 
the points $(B,A)\in\rr_+^2$ corresponding to equations (\ref{heun2*}) with polynomial solutions lie in boundaries of 
phase-lock areas and  are exactly their intersection points with the lines $l=\frac B\omega\equiv\rho(mod 2\zz)$, $0\leq\rho\leq l$ 
that are not  adjacencies. 
\end{theorem}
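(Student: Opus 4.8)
The plan is to convert the polynomial-solution question into a monodromy-eigenvalue question for the Heun equation (\ref{heun}) with $n=l+1$, whose eigenvalue ratio is $e^{\pm2\pi i\rho}$ and whose eigenvalue product is $e^{-2\pi il}$, and to exploit the reflection symmetry in $B$. First I would examine the recurrence (\ref{recur}) for (\ref{heun2*}), i.e.\ (\ref{heun}) with $n=1-l$ at $b=0$. For a candidate solution $P(z)=\sum_{k=0}^{d}a_kz^k$ with $a_d\ne0$, the relation at index $k=d+1$ reads $-\mu(d+1-l)a_d=0$, forcing $d=l-1$ and $l\in\nn$; this is the source of the quantization. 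Being a polynomial, $P$ is a single-valued entire solution, hence a monodromy eigenfunction of (\ref{heun2*}) with eigenvalue $1$. The reflection $\phi\mapsto-\phi,\ \tau\mapsto\tau+\pi$ carries the field (\ref{josvec}) to itself with $B$ replaced by $-B$, so the rotation number is odd, $\rho\mapsto-\rho$, and (\ref{heun2*}) is exactly the equation (\ref{heun}) of the drift-$(-l)$ system; its monodromy thus has product of eigenvalues $e^{2\pi il}$, ratio $e^{\pm2\pi i\rho}$, and hence eigenvalues $e^{\pi i(l\pm\rho)}$. The presence of eigenvalue $1$ forces $l\pm\rho\in2\zz$, so $\rho\in\zz$ and $\rho\equiv l\ (\mathrm{mod}\,2)$; then both eigenvalues equal $1$, the monodromy of (\ref{heun}) is parabolic, and $(B,A)$ lies on a boundary of a phase-lock area. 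That it is not an adjacency is precisely the Buchstaber--Tertychnyi implication proved in \cite{bg}: a polynomial solution of (\ref{heun2*}) forbids an entire solution of (\ref{heun}) with $n=l+1$.

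It remains, in this direction, to establish $0\le\rho\le l$. Positivity of the drift ($B>0$) gives $\rho\ge0$. For the upper bound I would read $\rho$ as the winding number of the developing (M\"obius-valued) map of (\ref{heun}) along $z=e^{i\tau}$, $\tau\in[0,2\pi]$, which computes the integer $\rho$ at a parabolic point; after the reflection in $B$ this map is built from the degree-$(l-1)$ polynomial $P$ together with a second, independent solution of (\ref{heun2*}), so an argument-principle count bounds the winding by $l$ and yields $\rho\le l$. With the parity this leaves exactly $\rho\in\{l,l-2,\dots\}\cap[0,l]$.

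For the converse, assume $l,\rho\in\zz$, $\rho\equiv l\ (\mathrm{mod}\,2)$, $0\le\rho\le l$, with $(B,A)$ on a boundary and not an adjacency. Parabolicity and the parity make the common eigenvalue of (\ref{heun2*}) equal to $1$, so it has a single-valued eigenfunction on $\cc^*$; the crux is to show this eigenfunction is a polynomial. I would do this through the explicit evaluation of Corollary \ref{cordon} from Section 4 at $b=0$. Because of the two resonances of (\ref{heun2*}) at $b=0$, at the indices $k=l$ (where $f_l=0$) and $k=-1$ (where $h_{-1}=0$), the eigenvalue-$1$ determinant factors, and its vanishing splits into two alternatives: either the forward expansion terminates, $a_l=0$, so that the eigenfunction is a polynomial of degree $l-1$; or the companion factor vanishes, which coincides with the adjacency condition $\xi_l=0$ of Corollary \ref{cxi}, i.e.\ with the existence of an entire solution of (\ref{heun}) with $n=l+1$. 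Since the point is not an adjacency, $\xi_l\ne0$; the determinant nevertheless vanishes, so $a_l=0$ and the eigenfunction is the desired polynomial.

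The principal obstacle is exactly this resonant splitting, together with the sharp endpoints of the range. Since $b=0$ is a double resonance ($b,\,b+n\in\zz$) lying outside the domain $U$ of Theorem \ref{noncom}, the normalized data $d_{0\pm},d_{1\pm}$ and the functions $f_{\pm}$ must first be meromorphically continued to $b=0$ by the explicit formulas of Section 3 and their poles cancelled before the determinant (\ref{schivka}) can be read as the product of a termination factor (vanishing iff $a_l=0$) and an adjacency factor (vanishing iff $\xi_l=0$). The same limiting analysis is what should make the winding computation rigorous and decide the endpoints in $0\le\rho\le l$. I therefore expect the heart of the proof to be the evaluation of $d_{0\pm},d_{1\pm}$ on a punctured neighbourhood of $b=0$, from which the dichotomy, the converse, and the range all follow.
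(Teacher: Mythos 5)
Your skeleton for the converse matches the paper's (monodromy eigenvalue bookkeeping via the determinant $e^{-2\pi in}$ of Proposition \ref{monprod}, the eigenvalue--rotation-number relation of Proposition \ref{roteig}, and ``not an adjacency $\Leftrightarrow$ no entire solution'' via Theorem \ref{tadj}), but the crux is missing: you never prove that the single-valued eigenfunction of (\ref{heun2*}) is a polynomial. Your proposed mechanism --- meromorphic continuation of $d_{j\pm}$ and of the pasting determinant (\ref{schivka}) to $b=0$, followed by a factorization into a ``termination factor'' ($a_l=0$) and an ``adjacency factor'' ($\xi_l=0$) --- is exactly the content that requires proof, and you only say you ``expect'' it; note that Theorem \ref{noncom} and Corollary \ref{cordon} are simply not applicable at $b=0$, which is the double resonance $b,\,b+n\in\zz$ excluded from $U$. (Even granting such a factorization, $a_l=0$ kills only the forward tail; you would still have to show the negative Laurent tail vanishes.) The paper proves this dichotomy by an entirely different, essentially finite-dimensional mechanism (Theorem \ref{talt}): for a solution of (\ref{heun}) holomorphic on $\cc^*$, the Laurent relations with indices $k=-l,\dots,-1$ close up into the tridiagonal system conjugate to $H+\la\,Id$ (Proposition \ref{trconj}); a nonzero middle block forces $\det(H+\la Id)=0$ and hence a polynomial solution of (\ref{heun2}) by Theorem \ref{tpol} (from \cite{bt0}), while a vanishing middle block yields an entire solution, and ``trivial monodromy $\Leftrightarrow$ entire solution'' is handled by the Stokes-matrix/normal-form argument. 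Nothing in your plan substitutes for this step.

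There is a second genuine gap in your forward direction: the bound $\rho\le l$. ``An argument-principle count bounds the winding by $l$'' is a hope, not an argument --- no developing map is constructed and no count is performed; this inequality is a nontrivial theorem, which the paper does not reprove but cites (\cite[corollary 6 and theorem 5]{bt0}, together with \cite[theorem 3.10]{bg} for non-adjacency). Finally, a smaller, fixable slip: you invoke the eigenvalue formula (ratio $e^{\pm2\pi i\rho}$) before knowing that $(B,A)$ lies outside the interiors of the phase-lock areas, whereas Proposition \ref{roteig} assumes exactly that (at interior points the projectivized monodromy is hyperbolic and the formula fails). The correct order is: termination gives $l\in\nn$, so the monodromy has unit determinant; the polynomial gives eigenvalue $1$, so both eigenvalues equal $1$ and the monodromy is parabolic; hence $(B,A)$ is a boundary point by Proposition \ref{pjcell}, and only then does Proposition \ref{roteig} yield $\rho\in\zz$ and $\rho\equiv l\ (mod\ 2\zz)$.
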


\begin{remark} V.M.Buchstaber and S.I.Tertychnyi have shown in \cite{bt0}
that if a point $(B,A)\in\rr_+^2$ corresponds to equation (\ref{heun2*}) with a polynomial solution, then $l$, $\rho$ are integers, 
$0\leq\rho\leq l$ and $\rho\equiv l(mod 2\zz)$. 
\end{remark}

\subsection{A sketch of proof of Theorem \ref{cons}.} 

For every initial condition $(a_0,a_1)$ there exists a unique sequence $(a_k)_{k\geq0}$ satisfying recurrence relations (\ref{rec}), 
by Remark \ref{run}. But in general, the series $\sum_k a_kz^k$ may diverge. We have to prove that it converges for appropriately chosen unknown initial condition. 
To do this, we use the following trick: we run the recursion in the opposite direction,  ``from infinity to zero''. That is,  take a 
big $k$ and a given ``final  condition'' $q_k=(a_k,a_{k+1})$. Then the inverse recursion gives all $a_j=a_j(q_k)$, 
$0\leq j\leq k$. It appears that the initial condition $(a_0,a_1)$ we are looking for can be obtained as a limit of the initial conditions 
$(a_0(q_k),a_1(q_k))$ obtained by the above inverse recursion (after rescaling), as $k\to\infty$. The latter holds for appropriate choice of 
the final vector $q_k$: it should be appropriately normalized by scalar factor and its 
projectivization $[q_k]=(a_k:a_{k+1})\in\cp^1$ should avoid some small explicitly specified ``bad region'', which contracts to the point 
$(0:1)$, as $k\to\infty$. 

The projectivized inverse recursion 
$$P_k:\cp^1\to\cp^1: \ [q_k]=(a_{k}:a_{k+1})\mapsto [q_{k-1}]=(a_{k-1}:a_k)$$ 
 defined by (\ref{rec}) can be considered  as the dynamical system 
$$T:(\nn_{\geq2}\cup\{\infty\})\times\cp^1\to(\nn\cup\{\infty\})\times\cp^1, \ \nn_{\geq 2}=\nn\cap[2,+\infty),$$
where for every $x\in\cp^1$ and $k\in\nn_{\geq2}$ one has 
$$ T:(k,x)\mapsto(k-1,P_k(x)); \  \ T:\infty\times\cp^1\mapsto\infty\times(1:0).$$
It appears that for every $k$ large enough $P_k$ has a strongly attracting fixed point tending to $(1:0)$ and a strongly repelling fixed point 
tending to $(0:1)$, as $k\to\infty$. This together with the ideas from basic theory of hyperbolic dynamics implies that the 
fixed point 
$p_{\infty}=\infty\times(1:0)$ of the transformation $T$ should have a unique unstable manifold: an invariant sequence $(k,[q_k])$ converging to $p_{\infty}$.  We show that  a solution $(a_k)$ of recurrence relations (\ref{rec}) gives a converging Taylor series 
$\sum_ka_kz^k$ on some neighborhood of zero, if and only if  $(a_k:a_{k+1})=[q_k]$ for all $k$, and then the series converge 
everywhere. This will prove Theorem \ref{cons}.

The existence and uniqueness of the above-mentioned unstable manifold is implied by the following discrete 
analogue of  the classical Hadamard--Perron Theorem on 
 the unstable manifold of a dynamical system at a hyperbolic fixed point.

\begin{theorem} \label{metric} Let $E_1,E_2,\dots$ be a sequence of complete metric spaces with uniformly bounded diameters. For brevity, the distance on each of them will be denoted 
$d$. Let $P_k:E_k\to E_{k-1}$ be a sequence of uniformly contracting mappings: 
there exists a  $\la$, $0<\la<1$ such that $d(P_k(x),P_k(y))<\la d(x,y)$ for every $x,y\in E_k$ and $k\geq2$. Then there exists a unique 
sequence of points 
$x_k\in E_k$ such that $x_{k-1}=P_k(x_k)$ for all $k\geq2$. One has 
\begin{equation} x_{k-1}=\lim_{m\to\infty}P_k\circ\dots\circ P_m(x),\label{convxk}\end{equation}
and the convergence is uniform in $x$: for every $\var>0$ there exists some $l\in\nn$ such that for every $m\geq l$ and every 
$x\in E_m$ one has $d(P_k\circ\dots\circ P_m(x),x_{k-1})<\var$. If in addition the spaces $E_k$ coincide with one and the same 
space $E$ and the fixed points of the mappings $P_k$ tend to some $x_{\infty}\in E$, as $k\to\infty$, then 
\begin{equation} \lim_{k\to\infty}x_k=x_{\infty}.\label{limxk}\end{equation}
\end{theorem}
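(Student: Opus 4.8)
The plan is to construct the sequence $(x_k)$ as a limit of ``pushed-forward'' points and to read off every assertion from a single contraction estimate. Write $D=\sup_k\operatorname{diam}(E_k)<\infty$. Since each $P_j$ is $\la$-contracting, the composition $P_k\circ\cdots\circ P_m\colon E_m\to E_{k-1}$ is $\la^{m-k+1}$-contracting, so its image has diameter at most $\la^{m-k+1}D$. First I would fix $k$ and show that for arbitrary choices $y_m\in E_m$ the points $P_k\circ\cdots\circ P_m(y_m)\in E_{k-1}$ form a Cauchy sequence. Indeed, for $m<m'$ one absorbs the extra maps $P_{m+1}\circ\cdots\circ P_{m'}$ into a single argument $y'\in E_m$, so the two images differ by at most $\la^{m-k+1}D$; this bound is independent of the chosen points and tends to $0$. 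By completeness of $E_{k-1}$ the limit exists, is independent of the argument, and defines $x_{k-1}$, giving (\ref{convxk}); passing to the limit in the same estimate yields $d(P_k\circ\cdots\circ P_m(x),x_{k-1})\le\la^{m-k+1}D$, which is the asserted uniform convergence.

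Next I would verify consistency and uniqueness. Since $P_k$ is continuous (being contracting), applying it to $x_k=\lim_m P_{k+1}\circ\cdots\circ P_m(x)$ commutes with the limit and gives $P_k(x_k)=\lim_m P_k\circ\cdots\circ P_m(x)=x_{k-1}$, so the constructed sequence satisfies the recursion. For uniqueness, if $(x_k')$ also satisfies $x_{k-1}'=P_k(x_k')$, then iterating the relation gives $x_{k-1}'=P_k\circ\cdots\circ P_m(x_m')$ for every $m\ge k$; letting $m\to\infty$ and invoking the uniform estimate with argument $x_m'$ forces $x_{k-1}'=x_{k-1}$.

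Finally, for the supplementary statement (\ref{limxk}) I would use that each $P_j$ has a fixed point $p_j$ with $P_j(p_j)=p_j$ and that $p_j\to x_\infty$. Given $\var>0$, choose $N$ with $d(p_j,x_\infty)<\var$ for $j\ge N$. Then for any $y\in E$ and $j\ge N$ the contraction and the triangle inequality give $d(P_j(y),x_\infty)\le\la\,d(y,p_j)+d(p_j,x_\infty)<\la\,d(y,x_\infty)+(1+\la)\var$. Applying this repeatedly along the chain computing $x_{k-1}=\lim_m P_k\circ\cdots\circ P_m(x_\infty)$, starting from distance $0$, yields $d(x_{k-1},x_\infty)\le\frac{1+\la}{1-\la}\var$ for all $k\ge N$; since $\var$ is arbitrary this gives (\ref{limxk}).

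I expect the one genuinely delicate point to be this last step: unlike a single hyperbolic map, here the attracting fixed points $p_j$ drift with $j$, so one cannot simply contract toward a fixed target. The resolution is the geometric-series estimate above, showing that composing many maps whose fixed points all lie within $\var$ of $x_\infty$ confines the orbit to a ball of radius $\frac{1+\la}{1-\la}\var$ about $x_\infty$. Everything else reduces to the uniform diameter bound together with completeness, so the argument is a discrete, nonautonomous version of the standard proof that a contraction has an attracting fixed point.
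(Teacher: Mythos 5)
Your proof is correct, but it takes a genuinely different route from the paper's. The paper runs the classical Hadamard--Perron argument: it forms the sequence space $S$ of all $(x_k)_{k\in\nn}$, $x_k\in E_k$, with the sup-distance $D(X,Y)=\sup_k d(x_k,y_k)$ (finite by the uniform diameter bound), observes that the shift-type operator $T(x_1,x_2,\dots)=(P_2(x_2),P_3(x_3),\dots)$ is a $\la$-contraction of $S$, and gets existence and uniqueness in one stroke from the Banach fixed point theorem; the uniform convergence (\ref{convxk}) is read off from uniform convergence of the iterates of $T$, and (\ref{limxk}) is obtained by an invariance trick --- the subspace of sequences tending to $x_\infty$ is complete and $T$-invariant (this uses $p_k\to x_\infty$ exactly as in your estimate), so the unique fixed point lies in it. You instead work entirely inside the spaces $E_k$: the composition $P_k\circ\cdots\circ P_m$ is $\la^{m-k+1}$-contracting, so its image has diameter at most $\la^{m-k+1}D$, which makes the pushed-forward points Cauchy uniformly in all choices of arguments; existence, uniqueness and the uniform estimate $d(P_k\circ\cdots\circ P_m(x),x_{k-1})\le\la^{m-k+1}D$ all follow from this single inequality, and the drifting-fixed-point issue in (\ref{limxk}) is handled by your geometric-series bound $d(x_{k-1},x_\infty)\le\frac{1+\la}{1-\la}\var$ for $k$ large. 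The trade-off: the paper's argument is shorter, conceptually uniform (one application of Banach settles everything), and transparently parallel to the Hadamard--Perron theorem it generalizes; yours is more elementary (no auxiliary space, no appeal to an abstract fixed point theorem for the main construction) and yields explicit quantitative rates, which the paper's soft argument does not exhibit. Both hypotheses (completeness, uniform diameters) are used in the same essential way in each version.
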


\begin{proof} The proof repeats the argument of the classical proof of Hadamard--Perron Theorem. 
Consider the space $S$ of all sequences $X=(x_k)_{k\in\nn}$, $x_k\in E_k$, equipped with the distance 
$$D(X,Y)=\sup_kd(x_k,y_k).$$
The transformation 
$$T:S\to S, \ (x_1,x_2,\dots)\mapsto(P_2(x_2),P_3(x_3),\dots)$$
is a contraction. Therefore, it has a unique fixed point, which is exactly the sequence we are looking for. The second statement of the 
theorem on the uniform convergence of compositions to $x_{k-1}$ follows from the uniform convergence of iterations of the 
contracting map $T$ to its fixed point.  In the last condition of Theorem \ref{metric} statement (\ref{limxk}) follows by the above fixed point 
argument in the subspace  in $S$ of the sequences 
$(x_k)$ tending to $x_{\infty}$, as $k\to\infty$: this is a complete $T$-invariant  metric subspace in $S$, and hence, $T$ has a fixed 
point there, which coincides with the previous sequence $(x_k)$ by uniqueness. Theorem \ref{metric} is proved.
\end{proof} 

\section{Proof of Theorem \ref{cons} and its addendum} 
\def\La{\Lambda}

Recurrence relations (\ref{rec}) can be written in the matrix form 
\begin{equation} \left(\begin{matrix}  a_{k}\\  a_{k+1}\end{matrix}\right)=\La_k\left(\begin{matrix}  a_{k-1}\\  a_k
\end{matrix}\right), \ \La_k=
h_k^{-1}\left(\begin{matrix}  0 & h_k\\ 
 -f_k & -g_k\end{matrix}\right).\label{mat4}\end{equation}
Consider the inverse matrices 
\begin{equation}\La_k^{-1}=\left(\begin{matrix}  -\frac{g_k}{f_k} & -\frac{h_k}{f_k}\\  1 & 0\end{matrix}\right)\label{lak}\end{equation}
 and their projectivizations $P_k:\cp^1\to\cp^1$ acting on the projective line $\cp^1=\oc$ with 
homogeneous coordinates $(z_1:z_2)$. In  the affine chart $\{z_1\neq0\}\subset\cp^1$ with the 
coordinate $w=\frac{z_2}{z_1}$ for every $C>0$ we denote 
$$D_C=\{ |w|<C\}\subset\cc\subset\oc.$$ 
%

\begin{proposition} \label{pkconv} The transformations $P_k$ converge to the constant mapping $\cp^1\mapsto\{ w=0\}$ 
uniformly on every closed  disk $\overline D_C$, 
$C>0$, as $k\to\infty$. 
Their inverses converge to the constant mapping $\cp^1\mapsto\{ w=\infty\}$ uniformly on the complement of every disk $D_C$. 
\end{proposition}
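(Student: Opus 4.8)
The plan is to write each $P_k$ explicitly as a fractional-linear transformation in the affine coordinate $w=z_2/z_1$, and then read off both limits directly from the asymptotics (\ref{condi}). Since the two limiting constant maps have values $w=0$ and $w=\infty$, I would measure convergence in the spherical metric on $\cp^1$ and, for each of the two statements, pass to the affine chart centered at the relevant target value.

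First I would compute the action of $P_k$ on the chart $\{z_1\neq0\}$. Applying $\La_k^{-1}$ from (\ref{lak}) to $(z_1:z_2)=(1:w)$ gives the image $(-(g_k+h_kw)/f_k:1)$, so in the $w$-coordinate
$$P_k(w)=\frac{-f_k}{g_k+h_kw};$$
this is well defined because $f_k\neq0$ by (\ref{cond3}). For $w\in\overline D_C$ the asymptotics (\ref{condi}) give $|h_k|C<\tfrac12|g_k|$ for all $k$ large, whence $|g_k+h_kw|\geq\tfrac12|g_k|$ and therefore $|P_k(w)|\leq 2|f_k|/|g_k|$. The right-hand side tends to $0$ and is independent of $w$, which yields uniform convergence of $P_k$ to the constant map with value $w=0$ on $\overline D_C$.

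For the inverse maps I would switch to the coordinate $v=1/w$ near $w=\infty$, where the target value is $v=0$. Projectivizing $\La_k$ from (\ref{mat4}) and passing to $v$ gives, for points with $|w|\geq C$,
$$v\bigl(P_k^{-1}(w)\bigr)=\frac{h_k}{-g_k-f_k/w}.$$
Here $|f_k/w|\leq|f_k|/C$, which is $<\tfrac12|g_k|$ for $k$ large by (\ref{condi}), so $|g_k+f_k/w|\geq\tfrac12|g_k|$ and hence $|v(P_k^{-1}(w))|\leq 2|h_k|/|g_k|\to0$ uniformly in $w$. The boundary value $w=\infty$ is covered directly, since $\La_k(0:1)=(1:-g_k/h_k)$ gives $v=-h_k/g_k\to0$. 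This proves uniform convergence of $P_k^{-1}$ to the constant map $w=\infty$ on $\oc\setminus D_C$.

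I do not expect any serious obstacle here: once the fractional-linear form is written down, each statement reduces to a one-line estimate. The only points that require care are the two bookkeeping issues just used, namely that convergence must be checked in the chart centered at each target value (hence the switch to $v=1/w$ for the inverses), and that the denominators $g_k+h_kw$ and $g_k+f_k/w$ must be bounded away from $0$ on the relevant region for large $k$; both follow at once from (\ref{condi}) together with $f_k,h_k\neq0$.
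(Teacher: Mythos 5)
Your proof is correct and follows essentially the same route as the paper's: both compute the action of $\La_k^{-1}$ (resp.\ $\La_k$) on $(1:w)$ and read off the uniform limits from the asymptotics (\ref{condi}), your explicit lower bound $|g_k+h_kw|\geq\tfrac12|g_k|$ (resp.\ $|g_k+f_k/w|\geq\tfrac12|g_k|$) being just a spelled-out version of the paper's ``$v_k/u_k\to0$, $s_k/t_k\to0$ uniformly.'' The only cosmetic difference is that you phrase the second statement in the chart $v=1/w$ and treat the point $w=\infty$ separately, while the paper works with the ratio of homogeneous coordinates, which amounts to the same thing.
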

\begin{proof} The image of a vector $(1,w)$ with $|w|\leq C$ under the matrix $\La_k^{-1}$ is the vector 
$$(u_k(w),v_k(w))=-\frac{g_k}{f_k}(1+\frac{h_k}{g_k}w, -\frac{f_k}{g_k}).$$
 Recall that $\frac{h_k}{g_k}, \frac{f_k}{g_k}\to0$, see (\ref{condi}), hence, 
$g_k\neq0$ for all $k$ large enough. The latter asymptotics and formula together 
imply that $\frac{v_k(w)}{u_k(w)}\to0$ uniformly on $\overline D_C$ 
and prove the first statement of the proposition. Let us prove its second statement. For every fixed $C>0$ and every 
$k$ large enough (dependently on $C$)  one has 
$P_k^{-1}(\cp^1\setminus D_C)\subset \cp^1\setminus D_C$, by the first statement of the proposition.  
The image of a vector $(1,w)$  under the matrix $\La_k$ is 
$(s_k(w),t_k(w))=(w,-\frac{g_kw+f_k}{h_k})$. This together with (\ref{condi}) implies that $\frac{s_k(w)}{t_k(w)}\to0$ uniformly on 
$|w|\geq C$, as $k\to\infty$; or equivalently, $P_k\to\infty$ uniformly on $\cp^1\setminus D_C$. The proposition is proved. 
\end{proof} 

\begin{proof} {\bf of  Theorem \ref{cons}.} Let $C>1$,  $E_C$ denote  the closed disk $\overline D_C\subset\cp^1$ 
equipped with the Euclidean  distance. 
There exist a $\la$, $0<\la<1$ and a $N=N(\la,C)\in\nn$ such that for every $k>N$ one has $P_k(E_C)\subset E_C$ and 
the mapping   $P_k:E_C\to E_C$ is a $\la$-contraction: 
$|P_k(x)-P_k(y)|<\la|x-y|$. This follows from the first statement of the proposition and the fact that uniform convergence of holomorphic 
functions implies uniform convergence of their derivatives on compact sets (Cauchy bound): here uniform convergence of the mappings 
$P_k$ to the constant mapping on $D_{C'}$ with $C'>C$ implies uniform convergence of their derivatives to zero on $\overline D_C$. 
The fixed point of the mapping $P_k|_{E_C}$ tends to 0, as $k\to\infty$, by uniform convergence (Proposition \ref{pkconv}). This together with 
Theorem \ref{metric} implies that there exists a unique sequence $(x_k)_{k\geq N}$ such that $P_k(x_k)=x_{k-1}$ for 
all $k>N$ and $|x_k|\leq C$. The latter sequence corresponds to a unique sequence $(a_k)_{k\geq N}$ 
(up to multiplicative constant) such that 
$x_k=(a_k:a_{k+1})$; one has 
$|w(x_k)|=|\frac{a_{k+1}}{a_k}|\leq C$ for every $k\geq N$. The sequence $(a_k)$ satisfies relations (\ref{rec}) 
for $k>N$, which are equivalent to the equalities $P_k(x_k)=x_{k-1}$. It extends to a unique sequence $(a_k)_{k\geq0}$ satisfying 
(\ref{rec}) for $k\geq1$, as in Remark \ref{run}. 
In  addition, $x_k\to0$,  i.e., $\frac{a_{k+1}}{a_k}\to0$, as 
$k\to\infty$, by (\ref{limxk}) and since the attracting fixed points of the mappings 
$P_k$ converge to 0, by Proposition \ref{pkconv}. Therefore, 
the series $\sum_{k\geq0}a_kz^k$ converges on the whole complex line $\cc$. The existence is proved. Now let us prove the uniqueness. Let, by contradiction, there exist a series  $\sum a_kz^k$ 
satisfying relations (\ref{rec}), having a positive convergence radius and not coinciding with the one constructed above. Then 
there exists a $k>N$ such that $|\frac{a_{k+1}}{a_k}|>C$, i.e., $x_{k}\notin \overline D_C$. 
For every $l>k$ one has $x_l=P_l^{-1}\circ \dots\circ P_{k+1}^{-1}(x_k)\to\infty$, that is, $\frac{a_{l+1}}{a_l}\to\infty$, 
as $l\to\infty$, by the second statement of Proposition \ref{pkconv}. 
Hence the series diverges everywhere: has zero convergence radius. The contradiction thus obtained proves 
Theorem \ref{cons}.
\end{proof} 

\begin{proof} {\bf of the addendum to Theorem \ref{cons}.} The transformations $P_k$ from Proposition \ref{pkconv} depend 
holomorphically on the parameters. The convergence in the proposition is uniform on compact subsets in the parameter manifold, 
by similar uniform convergence of the sequences $\frac{h_k}{g_k}$ and  $\frac{f_k}{g_k}$ to zero 
(see the condition of the addendum). Then for every 
compact subset $K$ in the parameter manifold there exists a $N>1$ such that for every $k>N$ the mapping $P_k$ are 
contractions of the closed disk $\overline D_C$ for all the parameters from the set $K$, 
with one and the same uniform bound  $\la=\la(K)<1$ for the contraction rate: the proofs of Proposition \ref{pkconv} and 
Theorem \ref{cons} remain valid with uniform convergence in the parameters from the set $K$. The expression under the limit (\ref{convxk}) 
is well-defined and holomorphic in the parameters from the set $K$, and the  convergence in (\ref{convxk}) is uniform on $K$, 
by uniformness of the contraction. This together with Weierstrass Theorem implies that the limit is also holomorphic in 
the parameters from the set $K$. Finally, the sequence $(x_k)$, $x_k\in\cp^1=\oc$ 
depends holomorphically on the parameters, and thus, for every 
$k$ the ratio $x_k=\frac{a_{k+1}}{a_k}$ is a $\oc$-valued holomorphic, hence 
 meromorphic function in the parameters. Fix a $k$ for which it is not identically 
equal to $\infty$ and put $a_k\equiv1$, $a_{k+1}=x_k$. Then the vector $(a_k,a_{k+1})$ depends meromorphically on the 
parameters, and hence, so do all the $a_j$, which are expressed via $(a_k,a_{k+1})$ by linear recurrence relations. 
The poles of the functions $a_j$ are obviously 
contained in the pole divisor of the function $a_{k+1}$. In the case, when the parameter manifold is Stein and contractible, 
every analytic hypersurface (e.g., the pole divisor under question) is the zero locus of a holomorphic function $\Phi$, see 
\cite[chapter VIII, section B, lemma 12]{gr}. 
Replacing all $a_j$ by $\Phi a_j$ yields a series $\sum_ka_kz^k$ depending 
holomorphically on the parameters. The addendum is proved. 
\end{proof} 

%
%

\def\ol{\left(\begin{matrix}  0 & 1\end{matrix}\right)}
\def\lo{\left(\begin{matrix} 1 & 0\end{matrix}\right)}
\def\loc{\left(\begin{matrix}  1 \\  0\end{matrix}\right)}

\section{Explicit formulas for  solutions and the coefficients $d_{j\pm}$}

Here we present explicit formulas for the unique converging series from Theorem \ref{conv} solving recurrence relations (\ref{recur}). 
First in Subsection 3.1 we  provide a general method for writing them, which essentially repeats and slightly generalizes the method 
from \cite[section 3, pp. 337--338]{bt1}. Then we write the above-mentioned formulas 
for $k\to+\infty$, and afterwards for $k\to-\infty$. At the end of the 
section we prove the statement of Theorems \ref{noncom} and \ref{conv} on global holomorphic dependence of the solutions on the 
parameters.

\subsection{Solution of recurrence relation via infinite matrix product:  a general method}

Here we consider a  solution of general recurrence relations (\ref{rec}) 
from Theorem \ref{cons}. Let $g_k$, $f_k$, $h_k$ be the 
coefficients in (\ref{rec}). Let $P_k:\cp^1\to\cp^1$ be the projectivizations of the transformations $\La_k^{-1}$, see 
(\ref{lak}). Let $\sum_ka_kz^k$ be a  solution to (\ref{rec}). Recall that  
$x_k=(a_k:a_{k+1})\in\cp^1\simeq\oc$, in the standard coordinate $w$ on $\oc$ one has $x_k=\frac{a_{k+1}}{a_k}$. We have  
$x_{k-1}=P_k(x_k)$, and for every $k$ the infinite product $P_kP_{k+1}\dots$  converges to $x_{k-1}$. More precisely,  
$P_k\circ\dots\circ P_m(z)\to x_{k-1}$, as $m\to\infty$ uniformly on compact subsets in $\cc=\oc\setminus\{\infty\}$, as in the 
proof of Theorem \ref{cons}. 
One can then deduce  that there exists a number sequence $r_k$ such that for every $k$ the infinite matrix product 
$(r_k\La_k^{-1})(r_{k+1}\La_{k+1}^{-1})\dots$ converges to a rank 1 matrix $\mcr_k$ such that 
\begin{equation}\left(\begin{matrix}  a_{k-1} \\  a_k\end{matrix}\right)=\mathcal R_k\left(\begin{matrix}  1\\  0\end{matrix}
\right).\label{eqm}\end{equation}
The latter relation allows to write an explicit formula for the  solution $\sum_ka_kz^k$ in the following way. 
The infinite product of the matrices $\La_k^{-1}$ themselves diverges, since 
their terms $-\frac{g_k}{f_k}$ tend to infinity: one has to find a priori unknown normalizing constants $r_k$. To construct a converging matrix product explicitly, we will consider a rescaled sequence $a_k$, that is 
 $$c_k=q_ka_k, \ q_{k}\in\cc, \ \frac{q_{k-1}}{q_k}\simeq-\frac{f_k}{g_k}\to0, \text{ as } k\to\infty.$$
 Rewriting relations (\ref{rec}) in terms of the new sequence $c_k$ yields
 \begin{equation} \left(\begin{matrix}  c_{k-1}\\  c_k\end{matrix}\right)=M_k\left(\begin{matrix}  c_k\\  c_{k+1}\end{matrix}\right), 
 \label{bmk}\end{equation} 
 $$M_k=\left(\begin{matrix}  -\frac{q_{k-1}}{q_k}\frac{g_k}{f_k} & -\frac{q_{k-1}}{q_{k+1}} \frac{h_k}{f_k}\\ 1 & 0\end{matrix}\right)=\left(\begin{matrix}  1+o(1) & o(1)\\  1 & 0\end{matrix}\right).$$
 The matrices $M_k$ converge to the projector 
 $$P:\cc^2\to\cc^2, \ P=\left(\begin{matrix}  1 & 0\\  1 & 0\end{matrix}\right).$$ 
 Our goal is to choose the rescaling factors $q_k$ so that the infinite products 
 $$R_k=M_kM_{k+1}\dots$$
 converge: then the limit is a one-dimensional operator $R_k$ with $\ker R_k$ being generated by the vector $(0,1)$. 
It appears that one can achieve the latter convergence by appropriate choice of  normalizing constants $q_k$.

We use the following sufficient conditions of convergence of products of almost projectors $M_k$. 

\begin{lemma} \label{lemkp}  Let $H$ be either a finite dimensional, or a Hilbert space. Let $M_k:H\to H$ be a sequence of bounded operators 
that tend (in the norm) to an orthogonal projector $P:H\to H$. Let 
\begin{equation} M_k=P+S_k, \ \sum_k||S_k||<\infty.\label{condin}
\end{equation}
Then the infinite product $R_k=M_kM_{k+1}\dots$ converges in the norm, and  $\ker P \subset \ker  R_k$. One has $R_k\to P$, 
as $k\to\infty$, in the operator norm, and $\ker R_k=\ker P$ for every $k$ large enough. 
 \end{lemma}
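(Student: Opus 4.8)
The plan is to work with the finite partial products $R_{k,m}=M_kM_{k+1}\cdots M_m$ and to prove that they converge in operator norm as $m\to\infty$, the limit being the desired $R_k$. The first step is a uniform bound: from (\ref{condin}) one has $\|R_{k,m}\|\le\prod_{j\ge k}(1+\|S_j\|)\le\exp\bigl(\sum_{j\ge k}\|S_j\|\bigr)=:B_k<\infty$, and moreover $B_k\to1$ as $k\to\infty$. Since the bounded operators on $H$ form a Banach space, it then suffices to establish $\sum_m\|R_{k,m+1}-R_{k,m}\|<\infty$.

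The increment is $R_{k,m+1}-R_{k,m}=R_{k,m}(M_{m+1}-I)$, and here lies the main obstacle: as $m\to\infty$ one has $M_{m+1}-I\to P-I=-(I-P)$, which is \emph{not} small, so a naive telescoping estimate fails — the product ``ends in projectors'', not in near-identities. The key observation that rescues the argument is that $P$ annihilates $I-P$: writing $M_{m+1}-I=-(I-P)+S_{m+1}$ and using $M_m(I-P)=(P+S_m)(I-P)=S_m(I-P)$, I obtain $\|R_{k,m}(I-P)\|=\|R_{k,m-1}S_m(I-P)\|\le B_k\|S_m\|$. Hence $\|R_{k,m+1}-R_{k,m}\|\le B_k(\|S_m\|+\|S_{m+1}\|)$, which is summable by (\ref{condin}). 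This yields norm convergence $R_{k,m}\to R_k$.

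With convergence in hand the remaining assertions are short. For $\ker P\subseteq\ker R_k$: any $v$ with $Pv=0$ satisfies $v=(I-P)v$, so $R_kv=\lim_m R_{k,m}(I-P)v$, and $\|R_{k,m}(I-P)\|\le B_k\|S_m\|\to0$ forces $R_kv=0$. Summing the increment bound from $m=k$ (with $R_{k,k}=M_k$) gives $\|R_k-M_k\|\le 2B_k\sum_{j\ge k}\|S_j\|$, whence $\|R_k-P\|\le 2B_k\sum_{j\ge k}\|S_j\|+\|S_k\|\to0$ as $k\to\infty$, which is the claim $R_k\to P$.

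Finally, for $\ker R_k=\ker P$ when $k$ is large, I would fix $k$ with $\|R_k-P\|<1$ (possible by the previous step) and prove the reverse inclusion $\ker R_k\subseteq\ker P$. Given $v\in\ker R_k$, decompose $v=Pv+(I-P)v$; the second summand lies in $\ker P\subseteq\ker R_k$, so $w:=Pv$ lies in $\ker R_k$ as well. Since $w\in\operatorname{im}P$ one has $R_kw=w+(R_k-P)w$, so $R_kw=0$ forces $\|w\|\le\|R_k-P\|\,\|w\|$, and $\|R_k-P\|<1$ gives $w=0$, i.e. $v=(I-P)v\in\ker P$. Unlike a rank count, this last argument is insensitive to $\dim H$ and therefore covers the Hilbert-space case uniformly; together with the inclusion from the third paragraph it gives $\ker R_k=\ker P$.
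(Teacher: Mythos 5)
Your proof is correct and is essentially the paper's own argument: your telescoping identity $R_{k,m+1}-R_{k,m}=R_{k,m-1}S_m(P-I)+R_{k,m}S_{m+1}$, obtained from $M_m(I-P)=S_m(I-P)$, is exactly the identity the paper derives from $P^2=P$, yielding the same summable bound of the form $\mathrm{const}\cdot(\|S_m\|+\|S_{m+1}\|)$ and the same conclusions $R_k\to P$ and $\ker P\subset\ker R_k$. Your final step for $\ker R_k=\ker P$ (decomposing $v=Pv+(I-P)v$ and using $\|R_k-P\|<1$ on $w=Pv\in P(H)\cap\ker R_k$) is the paper's contradiction argument phrased directly, so no substantive difference there either.
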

 
 \begin{proof} Fix a $k$ and set $T_n=T_{k,n}=M_k\dots M_n$ for $n\geq k$; $T_k=M_k$.  One has 
 $$T_{n+1}=T_nM_{n+1}=T_n(P+S_{n+1}), \ T_{n+1}-T_nP=T_nS_{n+1}.$$
 The latter equality implies that 
$$ ||T_{n+1}||\leq||T_n||(1+||S_{n+1}||)\leq e^{||S_{n+1}||}||T_n||.$$
This implies that 
\begin{equation}||T_{k,n}||\leq C_k, \ C_k=e^{\sum_{j\geq k}||S_j||}||M_k||.\label{intk}\end{equation}
Now one has 
$$T_{k,n}=T_{k,n-1}P+T_{k,n-1}S_{n},$$
$$T_{k,n+1}=T_{k,n}P+T_{k,n}S_{n+1}=T_{k,n-1}P^2+T_{k,n-1}S_{n}P+T_{k,n}S_{n+1}$$
$$= T_{k,n-1}P+T_{k,n-1}S_{n}P+T_{k,n}S_{n+1}:$$
here we have used the fact that $P$ is a projector, that is,  $P^2=P$. 
The two latter formulas together with (\ref{intk}) 
imply that 
$$||T_{k,n+1}-T_{k,n}||\leq C_k(2||S_{n}||+||S_{n+1}||).$$
The latter right-hand side being a converging series in $n$, the sum of the left-hand sides in $n$ converges and so does 
$T_{k,n}$, as $n\to\infty$, in the operator norm. This also implies that the norm distance of each $T_{k,n}$ 
to the limit $R_k=\lim_{n\to\infty}T_{k,n}$  
 is bounded from above by $\Delta_{k,n}=3C_k\sum_{j\geq n}||S_j||$. Applying this estimate to $T_{k,k}=M_k$, we get 
$dist(M_k,R_k)\leq \Delta_{k,k}=3C_k\sum_{j\geq k}||S_j||$. 
 One has 
$\Delta_{k,n}\to0$, as $n\to\infty$ uniformly in $k$, and also $\Delta_{k,n}\to0$, as $k,n\to\infty$ so that $k\leq n$. This implies that 
$R_k$ and $M_k$ converge to the same limit $P$ in the operator norm, as $k\to\infty$. 
For every $v\in \ker P$ one has $M_nv=S_nv\to0$. Hence, $T_{k,n}v=T_{k,n-1}(S_nv)\to0$, as $n\to\infty$. Therefore, 
$R_kv=0$ and $\ker P\subset\ker R_k$. Let $N>0$ be such that for every $k>N$ one has $||R_k-P||<1$. Let us show 
that $\ker R_k=\ker P$ for these $k$. Indeed, suppose the contrary: $\ker R_k$ is strictly bigger than $\ker P$ for some $k>N$. 
Note that $H=\ker P\oplus P(H)$ (orthogonal decomposition), since $P$ is an orthogonal projector.  Therefore, there exists 
a vector $u_k\in P(H)$ such that $R_k(u_k)=0$. Hence, 
$$||P(u_k)||=||(P-R_k)(u_k)||<||u_k||,$$
while $P(u_k)=u_k$, since $P$ is a projector. The contradiction thus obtained proves the lemma.
\end{proof}

{\bf Addendum to Lemma \ref{lemkp}.} {\it Let in Lemma \ref{lemkp} the operators $S_n$ depend holomorphically on some parameters so 
that the series $\sum_n||S_n||$ converges uniformly on compact subsets in the parameter space. Then the infinite products 
$R_k$ are also holomorphic in the parameters.}

\begin{proof} The  above proof  implies that the sequence $T_{k,n}$ converges uniformly on compact 
subsets in the parameter space. This together with  the Weierstrass Theorem implies the holomorphicity of the limit.
\end{proof}

\begin{corollary} \label{clem} Let 
\begin{equation}
M_k=\left(\begin{matrix}  1+\delta_{11,k} & \delta_{12,k}\\  1 & 0\end{matrix}\right), \ \sum_k|\delta_{ij,k}|<\infty \text{ for  } 
(ij)=(11), (12).
\label{mkdk}\end{equation}
Then the infinite product $R_k=M_kM_{k+1}\dots$ converges, and the  right column of the limit product matrix  $R_k$ vanishes. 
In the case, when $\delta_{12,k}\neq0$ for all $k$, 
the limit matrix $R_k$ has rank 1 for all $k$: its kernel is generated by the vector $(0,1)$. 
In the case, when $\delta_{ij,k}$ depend holomorphically on some parameters and the convergence of the corresponding 
series is uniform on compact sets in the parameter manifold,  the limit $R_k$ is also holomorphic.
 \end{corollary}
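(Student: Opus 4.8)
The plan is to derive Corollary \ref{clem} directly from Lemma \ref{lemkp} and its addendum. First I would split $M_k$ into its limit and a summable perturbation, writing $M_k=P+S_k$ with
$$P=\left(\begin{matrix} 1 & 0 \\ 1 & 0 \end{matrix}\right), \quad S_k=\left(\begin{matrix} \delta_{11,k} & \delta_{12,k} \\ 0 & 0 \end{matrix}\right),$$
so that the hypothesis $\sum_k|\delta_{ij,k}|<\infty$ gives $\sum_k\|S_k\|<\infty$, matching condition (\ref{condin}); in particular $\|S_k\|\to0$, so $M_k\to P$ in norm. The matrix $P$ is a projector ($P^2=P$) whose image is spanned by $(1,1)$ and whose kernel is spanned by $(0,1)$.

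The main obstacle is that this $P$ is \emph{not} an orthogonal projector (its image and kernel are not orthogonal), whereas Lemma \ref{lemkp} is stated only for orthogonal projectors, and its last step uses orthogonality in an essential way. I would remove this obstacle by conjugation. Every rank-one projector in $\cc^2$ is conjugate to the standard orthogonal projector $P_0=\diag(1,0)$, so there is a fixed invertible matrix $g$ (for instance $g=\left(\begin{matrix} 1 & 0 \\ -1 & 1 \end{matrix}\right)$) with $gPg^{-1}=P_0$. Then $gM_kg^{-1}=P_0+gS_kg^{-1}$ and $\sum_k\|gS_kg^{-1}\|\leq\|g\|\,\|g^{-1}\|\sum_k\|S_k\|<\infty$, so Lemma \ref{lemkp} applies to the sequence $gM_kg^{-1}$. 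Since conjugation by the fixed $g$ telescopes through the product, $\prod_{j\geq k}gM_jg^{-1}=gR_kg^{-1}$, the lemma yields convergence of $gR_kg^{-1}$, hence of $R_k$ itself, together with $\ker P_0\subset\ker(gR_kg^{-1})$ and equality for all large $k$. Translating back (using $\ker(gR_kg^{-1})=g(\ker R_k)$ and $\ker P_0=g(\ker P)$) gives $\ker P\subset\ker R_k$; as $(0,1)\in\ker P$, this already shows that the right column of each $R_k$ vanishes.

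For the rank statement under the assumption $\delta_{12,k}\neq0$, I would observe that $\det M_k=-\delta_{12,k}\neq0$, so each $M_k$ is invertible. Consequently multiplication by $M_k$ does not change kernels, and from the identity $R_k=M_kR_{k+1}$ one gets $\ker R_k=\ker R_{k+1}$. Starting from the large-$k$ equality $\ker R_k=\ker P=\langle(0,1)\rangle$ supplied by the lemma and running this equality downward in $k$, I conclude $\ker R_k=\langle(0,1)\rangle$, i.e. $R_k$ has rank exactly one, for \emph{every} $k$.

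Finally, the holomorphic dependence follows from the addendum to Lemma \ref{lemkp}. Since $g$ is a constant matrix, $gS_kg^{-1}$ depends holomorphically on the parameters whenever the $\delta_{ij,k}$ do, and $\sum_k\|gS_kg^{-1}\|$ converges uniformly on compact subsets of the parameter manifold. The addendum then makes $gR_kg^{-1}$ holomorphic, and therefore $R_k=g^{-1}(gR_kg^{-1})g$ is holomorphic as well, completing the proof.
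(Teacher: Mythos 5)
Your proof is correct and follows essentially the same route as the paper: both reduce the corollary to Lemma \ref{lemkp} and its addendum via the decomposition $M_k=P+S_k$, obtain $\ker P\subset\ker R_k$ with equality for large $k$, and then propagate rank one to all $k$ using $\det M_k=-\delta_{12,k}\neq0$. The only difference is cosmetic: where the paper re-metrizes $\cc^2$ with a scalar product making $P$ orthogonal, you conjugate by a fixed $g$ sending $P$ to $\diag(1,0)$ — these are the same device, since such a scalar product is exactly the pullback of the standard one under $g$.
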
 
 
 {\bf Addendum  to Corollary \ref{clem}.} {\it In the conditions of the corollary set 
 \begin{equation} c_k=(0, 1) R_k\left(\begin{matrix}  1\\  0\end{matrix}\right).\label{setck}\end{equation}
 Then the sequence $c_k$ is a solution of recurrence relations (\ref{bmk}) such that $\frac{c_k}{c_{k-1}}\to1$, as $k\to\infty$, 
 and one has} 
 \begin{equation} \left(\begin{matrix}  c_{k-1}\\  c_k\end{matrix}\right)=R_k\left(\begin{matrix}  1\\  0\end{matrix}\right).\label{ckk}
 \end{equation}

 \begin{proof} {\bf of Corollary \ref{clem}.} 
 This is the direct application of the lemma and its addendum for the norm induced by appropriate  
 scalar product: the latter product should make the matrix 
 $$P=\left(\begin{matrix}  1 & 0\\  1 & 0\end{matrix}\right)$$
 an orthogonal projector. The kernel $\ker R_k$ contains the kernel $\ker P$, which is generated by the vector $(0,1)$; 
 $\ker R_k=\ker P$, i.e., $rk(R_k)=1$ for all $k$ large enough, by the lemma. In particular, the right column in each $R_k$ vanishes. 
  Now it remains to note that 
 $rk(R_k)=1$  for all $k$, since the matrices $M_k$ are all non-degenerate: $\delta_{12,k}\neq0$.  The corollary is proved.
 \end{proof}

 \begin{proof} {\bf of the Addendum to Corollary \ref{clem}.} Consider the affine chart $\cc=\cp^1\setminus\{(1:-1)\}$ with the  coordinate 
 $w=\frac{z_1-z_2}{z_1+z_2}$ centered at $(1:1)$. 
 The projectivizations $P_k$ of the linear operators $M_k:\cc^2\to\cc^2$ converge to the constant mapping 
 $\cp^1\mapsto(1:1)$ uniformly on compact subsets in $\cc$. Hence, for every $C>0$ there exist a $N=N(C)>0$ and a $\la$, $0<\la<1$ 
 such that for every $k\geq N$  one has 
$P_k(\overline{D_C})\Subset D_C$, and $P_k$ is a $\lambda$-contraction of the disk $\overline{D_C}$, as in the proof of 
Theorem \ref{cons} in the previous section. This together with Theorem \ref{metric} implies that there exists a  sequence 
$(x_k)_{k\geq N(C)}$, $x_k\in\oc=\cp^1$, $w(x_k)\to 0$, as $k\to\infty$, such that $P_k(x_k)=x_{k-1}$ and 
$P_k\circ\dots\circ P_m$ converges to the constant mapping $\cp^1\mapsto x_{k-1}$ uniformly on compact subsets in $\cc$, as $m\to\infty$. Convergence at $(1:0)$ 
implies that $x_{k-1}=(R_{k,11}:R_{k,21})$. Moreover, $R_{k,21}=R_{k+1,11}$, since $R_k=M_kR_{k+1}$ and the matrix $M_k$ has lower 
raw $(1,0)$. The two last statements together imply that the sequence $c_k=R_{k,21}$ satisfies recurrence relations (\ref{bmk}) and 
formulas (\ref{setck}), (\ref{ckk}). 
One has $\frac{c_{k+1}}{c_{k}}\to1$, since $x_k=(c_{k}:c_{k+1})\to(1:1)$. This proves the addendum. 
\end{proof} 

\begin{corollary} \label{coral} Consider recurrence relations (\ref{rec}). Let $q_k\in\cc$ be a sequence such that the  rescaling 
$c_k=q_ka_k$ transforms (\ref{rec}) to (\ref{bmk}). Let the corresponding matrices $M_k$ from (\ref{bmk}) be the same, as in 
(\ref{mkdk}). Let $c_k$ be the same, as in (\ref{setck}). Then the sequence 
$$a_k=q_k^{-1}c_k$$
is a solution of relations (\ref{rec}) for $k\geq1$ 
such that the series $\sum_{k\geq0}a_kz^k$ converges on all of $\cc$. 
\end{corollary}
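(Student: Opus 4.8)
The plan is to read off Corollary \ref{coral} as an essentially immediate consequence of the Addendum to Corollary \ref{clem} together with the prescribed asymptotics of the rescaling factors $q_k$; no new analytic machinery is needed, only careful bookkeeping with the two recurrences and the ratio test.

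First I would check that $a_k = q_k^{-1}c_k$ indeed solves (\ref{rec}). By the Addendum to Corollary \ref{clem}, the sequence $c_k$ defined by (\ref{setck}) satisfies the rescaled recurrence (\ref{bmk}), and equivalently (\ref{ckk}) holds. By hypothesis the substitution $c_k = q_k a_k$ is exactly the change of variable carrying (\ref{rec}) into (\ref{bmk}); since $f_k \neq 0$, this substitution is invertible relation by relation, so (\ref{rec}) and (\ref{bmk}) are equivalent. Hence applying the inverse substitution $a_k = q_k^{-1} c_k$ to the solution $c_k$ of (\ref{bmk}) produces a solution of (\ref{rec}) for $k \geq 1$.

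Next I would prove that $\sum_{k\geq0} a_k z^k$ converges on all of $\cc$ by the ratio test, i.e. by showing $a_{k+1}/a_k \to 0$. Writing
$$\frac{a_{k+1}}{a_k} = \frac{q_k}{q_{k+1}}\cdot\frac{c_{k+1}}{c_k},$$
I would treat the two factors separately. The Addendum to Corollary \ref{clem} gives $c_{k+1}/c_k \to 1$; in particular $c_k \neq 0$ for all large $k$, so these ratios are well defined. For the first factor, recall that the $q_k$ are chosen so that the matrices $M_k$ of (\ref{bmk}) take the normalized form (\ref{mkdk}) with $\delta_{11,k}\to0$. Comparing with the explicit entry $-\frac{q_{k-1}}{q_k}\frac{g_k}{f_k}$ of $M_k$ forces $\frac{q_{k-1}}{q_k}\frac{g_k}{f_k}\to -1$, that is $\frac{q_{k-1}}{q_k}\sim -\frac{f_k}{g_k}$. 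Since $f_k = o(g_k)$ by (\ref{condi}), shifting the index yields $\frac{q_k}{q_{k+1}}\sim -\frac{f_{k+1}}{g_{k+1}}\to 0$. Therefore $a_{k+1}/a_k \to 0$, the radius of convergence is infinite, and the series converges on all of $\cc$.

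I expect the only point demanding care to be the treatment of the normalizing factors $q_k$: one must confirm that the asymptotic $\frac{q_{k-1}}{q_k}\sim -f_k/g_k$ used above is a genuine consequence of the hypothesis that the rescaling produces matrices of shape (\ref{mkdk}) (rather than an independent assumption), and that the $c_k$ are eventually nonzero so that the ratio test applies. Both follow directly from the cited statements, so I anticipate no real obstacle beyond this bookkeeping.
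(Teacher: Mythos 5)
Your proposal is correct and follows essentially the same route as the paper: both rest on the Addendum to Corollary \ref{clem} (giving that $c_k$ solves (\ref{bmk}) with $c_k/c_{k-1}\to1$) together with the asymptotics $\frac{q_{k-1}}{q_k}\simeq-\frac{f_k}{g_k}\to0$, combined via the ratio $\frac{a_{k+1}}{a_k}=\frac{q_k}{q_{k+1}}\cdot\frac{c_{k+1}}{c_k}\to0$ to get convergence on all of $\cc$. The only (harmless) difference is that you derive the $q$-ratio asymptotics by comparing the $(1,1)$-entry of $M_k$ with the normalized form (\ref{mkdk}), whereas the paper treats it as part of the definition of the rescaling; your bookkeeping there is a valid justification of exactly the step the paper asserts.
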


\begin{proof} The sequence $(a_k)$ is a solution of (\ref{rec}), by construction and the Addendum to Corollary \ref{clem}. 
One has $ \frac{q_{k-1}}{q_k}\simeq-\frac{f_k}{g_k}\to0, \text{ as } k\to\infty$, since the above sequence rescaling 
transforms (\ref{rec}) to (\ref{bmk}). Therefore,  $\frac{a_{k}}{a_{k-1}}\to0$, by the latter statement and since 
$\frac{c_k}{c_{k-1}}\to1$, as was shown above. This implies the convergence of the series $\sum_{k\geq0}a_kz^k$ on $\cc$ 
and proves the corollary. 
\end{proof}

\subsection{Forward  solutions  from Theorems \ref{noncom} and  \ref{conv}}
Here we give explicit formulas for  the solution $\sum_ka_kz^k$ of recurrence relations (\ref{recur}) converging, as $k\to+\infty$. 

{\bf Case 1): $b, b+n\notin\zz$ (i.e., the conditions of Theorem \ref{noncom} hold). } 
Let us invert matrix relation (\ref{mat1}). We get 
\begin{equation}\left(\begin{matrix}  a_{k-1}\\  a_{k}\end{matrix}\right)=W_k \left(\begin{matrix}  a_{k}\\  a_{k+1}
\end{matrix}\right),\label{mat2}\end{equation}
$$W_k=\left(\begin{matrix}  \frac{k+b}{\mu}(1+\frac{\la}{(k+b)(k+b+n-1)})  & 
\frac{k+b+1}{k+b+n-1}\\ 
 1 & 0\end{matrix}\right).$$
To obtain an explicit formula for  solution of relation (\ref{recur}), we will use results of Subsection 3.1. 
To do this, we reduce equation 
(\ref{mat2}) to a similar equation with the matrix in the right-hand side converging to a projector. This is done by   renormalizing the sequence $a_k$ by multiplication by appropriate constants depending on $k\geq0$. Namely, set  
$$c_k=\frac{a_k(b)_{k+1}}{\mu^k}, \ (b)_{l}:=b\dots(b+l-1)=\frac{\Gamma(b+l)}{\Gamma(b)}.$$
Recall that the symbol $(b)_l$ is called the {\it Pochhammer symbol}. Translating relations (\ref{mat2}) in terms of the sequence $c_k$ yields 


$$\left(\begin{matrix} c_{k-1}\\ c_{k}\end{matrix}\right)=M_k
\left(\begin{matrix} c_{k}\\ c_{k+1}
\end{matrix}\right),$$
\begin{equation}M_k=\left(\begin{matrix} 1+\frac{\la}{(k+b)(k+b+n-1)} & 
\frac{\mu^2}{(k+b)(k+b+n-1)}\\ 1 & 0\end{matrix}\right)\label{mat3}\end{equation}
$$=\left(\begin{matrix}  \frac{(b)_{k}}{\mu^{k-1}} & 0\\  0 &\frac{(b)_{k+1}}{\mu^{k}}\end{matrix}\right) W_k
\left(\begin{matrix}  \frac{\mu^k}{(b)_{k+1}} & 0\\  0 &\frac{\mu^{k+1}}{(b)_{k+2}}\end{matrix}\right).$$
The infinite matrix product 
\begin{equation} R_k=M_kM_{k+1}\dots\label{rkmk}\end{equation}
converges and depends analytically on $(\la,\mu, n, b)$ at those points, where the denominators in its definition do not vanish, by 
Corollary \ref{clem}.  
\begin{theorem} \label{th24} Let $b, b+n\notin\zz$. For $k\geq0$ set 
\begin{equation}c_k=\left(\begin{matrix}  0 & 1\end{matrix}\right)R_k\left(\begin{matrix}  1\\  0\end{matrix}\right),\label{ckrk}
\end{equation}
\begin{equation}a_k=\frac{\mu^k}{(b)_{k+1}}c_k\label{ak}\end{equation}
The coefficients $a_k$ satisfy recurrence relations (\ref{recur}) for all $k\geq1$, and the series 
\begin{equation} f_+(z)=\sum_{k\geq1}a_kz^k\label{f+z}\end{equation}
 converges on all of $\cc$.
 \end{theorem}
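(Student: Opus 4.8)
The plan is to obtain Theorem \ref{th24} as a direct application of the general machinery assembled in Subsection 3.1, namely Corollary \ref{clem} together with Corollary \ref{coral}. All of the genuine analytic content (convergence of the infinite matrix product) is already contained in those statements and in Lemma \ref{lemkp}; what remains is to check that the rescaled relations fit the required template and to carry out the elementary bookkeeping with the Pochhammer factors.

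First I would record that the matrices $M_k$ defined in (\ref{mat3}) are precisely of the form (\ref{mkdk}), with
$$\delta_{11,k}=\frac{\la}{(k+b)(k+b+n-1)},\qquad \delta_{12,k}=\frac{\mu^2}{(k+b)(k+b+n-1)}.$$
The hypothesis $b,b+n\notin\zz$ guarantees that neither $k+b$ nor $k+b+n-1$ vanishes for any integer $k$, so the denominators are nonzero for all $k\geq0$, are bounded away from zero for the finitely many small indices, and grow like $k^2$. Hence both $\sum_k|\delta_{11,k}|$ and $\sum_k|\delta_{12,k}|$ converge, and since $\mu\neq0$ one has $\delta_{12,k}\neq0$ for every $k$. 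Corollary \ref{clem} then applies verbatim: the infinite product $R_k=M_kM_{k+1}\dots$ of (\ref{rkmk}) converges, has rank one, and is holomorphic in $(\la,\mu,n,b)$ away from the vanishing locus of the denominators; moreover, by the Addendum to Corollary \ref{clem}, the sequence $c_k$ of (\ref{ckrk}) solves the rescaled relations (\ref{bmk}) and satisfies $\frac{c_k}{c_{k-1}}\to1$.

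Next I would confirm that the substitution $c_k=\frac{(b)_{k+1}}{\mu^k}a_k$ is exactly the rescaling $c_k=q_ka_k$ carrying the inverted relations (\ref{mat2}) into (\ref{bmk}) with the matrix $M_k$ of (\ref{mat3}); this is precisely the diagonal conjugation displayed on the last line of (\ref{mat3}). The identity $(b)_{k+1}=(b)_k(b+k)$ gives $\frac{q_{k-1}}{q_k}=\frac{\mu}{k+b}$, which tends to $0$ and is asymptotic to $-\frac{f_k}{g_k}=\frac{\mu(k+b+n-1)}{(k+b)(k+b+n-1)+\la}$, consistently with the setup of Corollary \ref{coral}. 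Applying that corollary then yields that the sequence $a_k=\frac{\mu^k}{(b)_{k+1}}c_k$ of (\ref{ak}) solves the original recurrence (\ref{recur}) for all $k\geq1$, with $\frac{a_k}{a_{k-1}}\to0$, so that $f_+(z)=\sum_{k\geq1}a_kz^k$ of (\ref{f+z}) converges on all of $\cc$.

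I do not expect a serious obstacle here; the only points demanding care are formal. One must verify that the diagonal conjugation in (\ref{mat3}) is computed correctly, so that the off-diagonal entry really becomes $\frac{\mu^2}{(k+b)(k+b+n-1)}$ and does not acquire a stray power of $\mu$ or a shifted Pochhammer factor, and one must use the non-resonance condition $b,b+n\notin\zz$ precisely to keep \emph{every} denominator nonzero over the full range $k\geq0$ rather than merely for large $k$. Both are routine, so the substance of Theorem \ref{th24} is entirely inherited from Corollaries \ref{clem} and \ref{coral}.
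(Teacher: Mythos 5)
Your proposal is correct and follows essentially the same route as the paper: the paper's own proof also invokes the Addendum to Corollary \ref{clem} to get that $c_k$ solves the rescaled relations (\ref{mat3}) with $\frac{c_k}{c_{k-1}}\to1$, and then Corollary \ref{coral} to transfer back through the Pochhammer rescaling and conclude convergence of (\ref{f+z}) on all of $\cc$. The only difference is that you spell out the verification of the hypotheses (summability of the $\delta_{ij,k}$, non-vanishing of denominators via $b,b+n\notin\zz$, and the identity $\frac{q_{k-1}}{q_k}=\frac{\mu}{k+b}\simeq-\frac{f_k}{g_k}$), which the paper leaves implicit in the discussion preceding the theorem.
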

 
 \begin{proof} The sequence $c_k$ satisfies 
 relations (\ref{mat3}), and $\frac{c_k}{c_{k-1}}\to1$, as $k\to\infty$, by the Addendum  to Corollary \ref{clem}. 
 This implies that $a_k$ satisfy (\ref{recur}).
 The series (\ref{f+z}) converges on all of $\cc$, by Corollary \ref{coral}.  This proves the theorem.
 \end{proof}

{\bf Case 2): some of the numbers $b$ or $b+n$ is an integer.} Set 
\begin{equation}k_{0+}=\max\{ m \in \{ -1-b, 1-b-n\} \ | \ m\in\zz\}.\label{kogeq}\end{equation}
Note that now the product $(b)_l=b(b+1)\dots(b+l-1)$ 
can be equal to zero, and thus, the sequence $a_k$ defined by (\ref{ak}) is not necessarily 
well-defined. Let us modify the above rescaling coefficients relating $a_k$ and $c_k$ as follows. For every $s\leq l+1$ set   
\begin{equation}(b)_{s,l+1}=(b+s)\dots(b+l)=\frac{(b)_{l+1}}{(b)_s}; \ (b)_{0,l+1}=(b)_{l+1}; \ (b)_{s,s}=1.\label{bsl}\end{equation}
Set 
\begin{equation}c_k=a_k\mu^{k_{0+}+1-k}(b)_{k_{0+}+2, k+1} \text{ for every } k> k_{0+}; \ c_{k_{0+}+1}=a_{k_{0+}+1}.\label{cknew}\end{equation}
The  sequence $(a_k)$ satisfies (\ref{mat1}), if and only if the sequence $(c_k)$  satisfies (\ref{mat3}). The above formulas remain 
valid with the same matrices $M_k$, which are well-defined for $k\geq k_{0+}+2$: 
the denominators in its fractions do not vanish. 
Therefore, the infinite product $R_k=M_kM_{k+1}\dots$ is well-defined for the same $k$ in the case under consideration. 
\begin{theorem} \label{th25} Let $b$, $n$, $k_{0+}$ be as above, $M_k$ be as in (\ref{mat3}), $R_k=M_kM_{k+1}\dots$, 
\begin{equation} c_k=R_{k,21} \text{ for } k\geq k_{0+}+2,  \ 
c_{k_{0+}+1}=R_{k_{0+}+2,11},\label{cck}\end{equation}
\begin{equation} a_k=\mu^{k-k_{0+}-1}\frac{c_k}{(b)_{k_{0+}+2,k+1}} \text{ for } k\geq k_{0+}+1, \label{ak01}\end{equation}
\begin{equation}a_{k_{0+}}=\frac{((k_{0+}+b+1)(k_{0+}+b+n)+\la)a_{k_{0+}+1}+\mu(k_{0+}+b+2)a_{k_{0+}+2}}{\mu(k_{0+}+b+n)}.\label{ako2}\end{equation}
The sequence $(a_k)_{k\geq k_{0+}}$ satisfies recurrence relations (\ref{recur}) for $k>k_{0+}$. The series $\sum_{k=k_{0+}}^{+\infty}a_kz^k$ 
converges on all of $\cc^*$. 
\end{theorem}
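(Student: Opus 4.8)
The plan is to prove Theorem \ref{th25} by reducing it to the already-established Theorem \ref{th24} and its supporting machinery (Corollaries \ref{clem} and \ref{coral}), adapting them to the situation where the Pochhammer symbol $(b)_{l}$ can vanish. The essential point is that the matrices $M_k$ in (\ref{mat3}) depend on $b$ and $n$ only through the products $(k+b)(k+b+n-1)$ appearing in their entries, and these products are nonzero precisely for $k\geq k_{0+}+2$, by the definition (\ref{kogeq}) of $k_{0+}$. Hence the infinite product $R_k=M_kM_{k+1}\cdots$ is well-defined and of rank $1$ for every $k\geq k_{0+}+2$, and the right column of each $R_k$ vanishes with $\ker R_k$ generated by $(0,1)$, exactly as in Corollary \ref{clem}. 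This is the structural fact on which everything rests, and it requires no new argument beyond checking that the denominators are nonzero in the stated range.

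First I would verify that the modified rescaling (\ref{cknew}) does indeed transform the matrix recurrence (\ref{mat1}) into (\ref{mat3}) with the same matrices $M_k$. This is a direct computation: the shifted Pochhammer symbols $(b)_{s,l+1}$ defined in (\ref{bsl}) are built precisely so that consecutive ratios $\frac{q_{k-1}}{q_k}$ produce the entries of $M_k$, and since the rescaling only involves factors $(b+s)$ with $s\geq k_{0+}+2$, none of these factors vanishes. Thus (\ref{mat3}) holds for all $k\geq k_{0+}+2$, which is the range in which $M_k$ is well-defined. Next I would invoke the Addendum to Corollary \ref{clem} to conclude that the sequence $c_k=R_{k,21}$ defined in (\ref{cck}) satisfies the recurrence (\ref{bmk}) for $k>k_{0+}+1$ and that $\frac{c_k}{c_{k-1}}\to1$ as $k\to\infty$. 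Undoing the rescaling via (\ref{ak01}) then produces a sequence $(a_k)_{k\geq k_{0+}+1}$ satisfying (\ref{recur}) for $k>k_{0+}+1$, and the asymptotics $\frac{c_k}{c_{k-1}}\to1$ together with the decay of the rescaling factors $\mu^{k-k_{0+}-1}/(b)_{k_{0+}+2,k+1}$ forces $\frac{a_k}{a_{k-1}}\to0$, so that $\sum a_kz^k$ converges on all of $\cc$ by the argument of Corollary \ref{coral}.

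The one place requiring genuine care—and the main obstacle—is the lowest index: the recurrence relation (\ref{recur}) at $k=k_{0+}+1$ involves $a_{k_{0+}}$, which is not produced by the infinite-product construction (that construction only yields coefficients for $k\geq k_{0+}+1$). Here the role of formula (\ref{ako2}) is to \emph{define} $a_{k_{0+}}$ so that the relation (\ref{recur}) holds at $k=k_{0+}+1$. I would solve the relation (\ref{recur}) at that index for $a_{k_{0+}}$ in terms of $a_{k_{0+}+1}$ and $a_{k_{0+}+2}$, which is legitimate precisely because the coefficient of $a_{k_{0+}}$, namely $\mu(k_{0+}+b+n)$, is nonzero: this is guaranteed by the maximality in the definition (\ref{kogeq}) of $k_{0+}$, since $k_{0+}+b+n-1 = 0$ would force $k_{0+}+1 = 1-b-n+1$, contradicting that $k_{0+}$ is the largest of the two candidate integers. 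One must check that the shift of index between (\ref{recur}) and (\ref{ako2}) is handled correctly, but this is routine bookkeeping.

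Finally I would note that with $a_{k_{0+}}$ so defined, the full sequence $(a_k)_{k\geq k_{0+}}$ satisfies (\ref{recur}) for all $k>k_{0+}$, and convergence of $\sum_{k\geq k_{0+}}a_kz^k$ on $\cc^*$ follows since appending finitely many low-order terms and allowing a shift by a power of $z$ does not affect the convergence established for the tail. The holomorphic dependence on parameters, should it be needed, transfers from the Addendum to Corollary \ref{clem} since all the rescaling factors and the single algebraic formula (\ref{ako2}) are holomorphic where the relevant denominators are nonzero. In summary, the proof is a careful transcription of the $b,b+n\notin\zz$ argument, with the only substantive new ingredient being the separate algebraic determination of the boundary coefficient $a_{k_{0+}}$ via (\ref{ako2}).
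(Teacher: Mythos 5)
Your proposal is correct and follows essentially the same route as the paper: rescale by the shifted Pochhammer symbols to reduce to Corollary \ref{clem} and its Addendum, obtain the tail $(a_k)_{k\geq k_{0+}+1}$ satisfying (\ref{recur}) for $k>k_{0+}+1$ with convergence via Corollary \ref{coral}, and define $a_{k_{0+}}$ by solving (\ref{recur}) at $k=k_{0+}+1$, the denominator $\mu(k_{0+}+b+n)$ being nonzero by maximality in (\ref{kogeq}) (the paper phrases this as: obvious when $b+n\notin\zz$, and $k_{0+}+b+n\geq1$ when $b+n\in\zz$). The only blemish is an index slip in your nonvanishing argument: the vanishing to be ruled out is $k_{0+}+b+n=0$, not $k_{0+}+b+n-1=0$, and it would make $1-b-n=k_{0+}+1$ an integer candidate exceeding $k_{0+}$ --- exactly the contradiction with maximality that you intend.
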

\begin{proof} The sequence $c_k$ satisfies relations (\ref{mat3}) for $k\geq k_{0+}+2$, by the Addendum to Corollary \ref{clem}. 
Therefore, $a_k$ satisfy relations (\ref{mat1}), 
which are equivalent to (\ref{recur}), see the previous discussion. Formula (\ref{ako2}) is equivalent to 
relation (\ref{recur}) for $k=k_{0+}+1$. The denominator $\mu(k_{0+}+b+n)$ in (\ref{ako2})  does not vanish. In the case, when $b+n\notin\zz$, 
this is obvious. In the case, when $b+n\in\zz$, one has $k_{0+}+b+n\geq1$, by (\ref{kogeq}). 
 The series $\sum_{k\geq k_0} a_kz^k$ converges on $\cc^*$, 
by Corollary \ref{coral}. The theorem is proved.
\end{proof}

\subsection{Backward  solutions}
\def\ha{\hat a}
\def\hc{\hat c}
\def\haa{\hat A}
Here we give explicit formulas for  the solution $\sum_ka_kz^{-k}$ of recurrence relations (\ref{recur}) with $k\to-\infty$.
 Set 
$$m=-k,  \ \ha_m=a_{-m}.$$ 
Relation (\ref{mat1}) in new variables $m$ and $\ha_m$ takes the matrix form 
$$\left(\begin{matrix}  \ha_m\\  \ha_{m-1}\end{matrix}\right)=A_{-m}\left(\begin{matrix}  \ha_{m+1}\\  \ha_{m}
\end{matrix}\right).$$
Writing the latter equation with permuted order of vector components 
(we place $\ha_s$ having smaller indices above) yields the same equation with the new matrix obtained 
from $A_{-m}$ by permutation of lines and columns:
\begin{equation}
\left(\begin{matrix}  \ha_{m-1}\\  \ha_{m}\end{matrix}\right)=
\haa_m \left(\begin{matrix}  \ha_m\\  \ha_{m+1}\end{matrix}\right),\label{hamm}\end{equation}
\begin{equation}\haa_m=\frac{b+n-m-1}{b-m+1}\left(\begin{matrix}  -\frac{\la+(b-m)(b-m+n-1)}{\mu(b-m+n-1)} & 1\\  
\frac{b-m+1}{b-m+n-1} & 0\end{matrix}
\right).\label{mat5}\end{equation}

\medskip
{\bf Case 1): $b, b+n\notin\zz$, as in the conditions of Theorem \ref{noncom}. }
Let us renormalize the sequence $\ha_m$: for $m\geq0$ set 
$$\hc_m=\frac{\ha_m(2-n-b)_{m+1}}{\mu^m}.$$
Translating equation (\ref{hamm}) in terms of the sequence $\hc_m$ yields
\begin{equation} 
\left(\begin{matrix}  \hc_{m-1}\\  \hc_m\end{matrix}\right)=S_m\left(\begin{matrix}  \hc_{m}\\  \hc_{m+1}\end{matrix}\right),
\label{csm}\end{equation}
\begin{equation}S_m=S_m(b,n)=\left(\begin{matrix}  1+\frac{\la-n+2}{(b-m+1)(b-m+n-2)} & \frac{\mu^2(b-m+n-1)}{(b-m+1)(b-m+n-2)(b-m+n-3)}\\
 1 & 0\end{matrix}\right)\label{defsm}\end{equation}
$$=\left(\begin{matrix}  \frac{(2-n-b)_{m}}{\mu^{m-1}} & 0\\  0 & \frac{(2-n-b)_{m+1}}{\mu^{m}}
\end{matrix}\right)\haa_m\left(\begin{matrix}  \frac{\mu^m}{(2-n-b)_{m+1}} & 0\\  0 & 
\frac{\mu^{m+1}}{(2-n-b)_{m+2}}\end{matrix}\right).$$
\begin{theorem} \label{conv-} Let $b,b+n\notin\zz$.  Let the matrices $S_m$ be as above, 
\begin{equation} T_m=S_mS_{m+1}\dots,\label{tmprod}\end{equation}
\begin{equation}\hc_m=\left(\begin{matrix} 0 &1\end{matrix}\right)T_m\left(\begin{matrix} 1\\ 0\end{matrix}\right), \ 
\ha_m=\frac{\hc_m\mu^m}{(2-n-b)_{m+1}}, \ a_k=\ha_{-k}.
\label{cm}\end{equation} 
The sequence $(a_k)$ satisfies recurrence relations (\ref{recur}) for $k\leq-1$, and the series 
\begin{equation} f_-(z)=\sum_{k\leq0}a_kz^{-k}\label{f-z}\end{equation}
converges on $\cc$.
\end{theorem}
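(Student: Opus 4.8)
The plan is to mirror the proof of Theorem \ref{th24} verbatim, transported through the substitution $m=-k$, $\ha_m=a_{-m}$ that has already brought the backward tail of (\ref{recur}) into the matrix form (\ref{hamm}) and, after the Pochhammer rescaling $\hc_m=\frac{\ha_m(2-n-b)_{m+1}}{\mu^m}$, into the normalized form (\ref{csm}) with transfer matrices $S_m$ given by (\ref{defsm}). Everything then reduces to checking that the $S_m$ meet the hypotheses of Corollary \ref{clem} and invoking that corollary together with Corollary \ref{coral}; no new analytic idea is needed, since all the heavy lifting sits already in Theorem \ref{metric}, Lemma \ref{lemkp}, and Corollaries \ref{clem} and \ref{coral}.

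First I would verify that $S_m$ has the shape (\ref{mkdk}) with summable perturbations. Writing $S_m=\left(\begin{matrix}1+\delta_{11,m}&\delta_{12,m}\\1&0\end{matrix}\right)$, one reads off from (\ref{defsm}) that $\delta_{11,m}=\frac{\la-n+2}{(b-m+1)(b-m+n-2)}=O(m^{-2})$ and $\delta_{12,m}=\frac{\mu^2(b-m+n-1)}{(b-m+1)(b-m+n-2)(b-m+n-3)}=O(m^{-2})$ as $m\to\infty$; hence both series $\sum_m|\delta_{ij,m}|$ converge, uniformly on compact subsets of the parameter space, so the summability condition (\ref{condin}) of Lemma \ref{lemkp} holds. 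The hypothesis $b,b+n\notin\zz$ guarantees that the denominators never vanish and, together with $\mu\neq0$ and $b-m+n-1\neq0$, that $\delta_{12,m}\neq0$ for every $m$, so each $S_m$ is nondegenerate.

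With these verifications Corollary \ref{clem} applies: the infinite product $T_m=S_mS_{m+1}\cdots$ of (\ref{tmprod}) converges to a rank-one matrix whose kernel is spanned by $(0,1)$, and it depends holomorphically on $(\la,\mu,n,b)$ on the locus $b,b+n\notin\zz$. By the Addendum to Corollary \ref{clem}, the sequence $\hc_m$ defined in (\ref{cm}) solves the recurrence (\ref{csm}) and satisfies $\hc_m/\hc_{m-1}\to1$ as $m\to\infty$. Since (\ref{csm}) is by construction equivalent to (\ref{hamm}), which is the matrix form of (\ref{recur}) for $k\leq-1$, the sequence $a_k=\ha_{-k}$ with $\ha_m=\frac{\hc_m\mu^m}{(2-n-b)_{m+1}}$ satisfies recurrence relations (\ref{recur}) for all $k\leq-1$.

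It remains to establish convergence of $f_-(z)=\sum_{k\leq0}a_kz^{-k}=\sum_{m\geq0}\ha_mz^m$ on all of $\cc$, which follows from Corollary \ref{coral}. Writing $\ha_m=\hc_m/q_m$ with rescaling factor $q_m=\frac{(2-n-b)_{m+1}}{\mu^m}$, one computes $\frac{q_{m-1}}{q_m}=\frac{\mu}{m+2-n-b}\to0$, whence $\frac{\ha_m}{\ha_{m-1}}=\frac{\hc_m}{\hc_{m-1}}\cdot\frac{q_{m-1}}{q_m}\to1\cdot0=0$, so the power series has infinite radius of convergence. There is no genuine obstacle: the statement is the exact backward analogue of Theorem \ref{th24}. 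The only point requiring care is the bookkeeping of the Pochhammer rescaling — confirming that $q_{m-1}/q_m\to0$ and that the entries of $S_m$ decay like $m^{-2}$ rather than more slowly — and this is precisely what secures the summability needed to feed Corollary \ref{clem}.
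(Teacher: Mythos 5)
Your proposal is correct and follows essentially the same route as the paper's own proof: both reduce Theorem \ref{conv-} to Corollary \ref{clem}, its Addendum, and the rescaling argument of Corollary \ref{coral}, exactly as in the forward case of Theorem \ref{th24}. The only difference is that you spell out the details the paper dispatches with ``as in the previous subsection'' --- the $O(m^{-2})$ summability of the entries of $S_m$, the non-vanishing of the denominators under $b,b+n\notin\zz$, and the computation $\frac{q_{m-1}}{q_m}=\frac{\mu}{2-n-b+m}\to0$ --- all of which check out.
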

\begin{proof}  
The above matrix product converges,  and the sequence $\hc_m$ satisfies equation (\ref{csm}),
by Corollary \ref{clem} and its addendum. This implies that 
  the corresponding sequence $\ha_m$ satisfies (\ref{hamm}), the sequence 
$a_k$ satisfies (\ref{recur}) and the series $f_-(z)$ converges, as in the previous subsection. 
This proves the theorem.
\end{proof}

{\bf Case 2): some of the numbers $b$ or $b+n$ is an integer.} Let 
\begin{equation}k_{0-}=\min\{ r \in \{ -1-b, 1-b-n\} \ | \ r\in\zz\}, \ m_0=-k_{0-}.\label{koleq}\end{equation}
The above Pochhammer symbol may be not well-defined in the case, when $n+b\in\zz$, $2-n-b<0$. We use the 
inequalities
 $$b-m+1\neq0 \text{ for every } m>m_0;$$
 \begin{equation} 2-n-b+m, 3-n-b+m\neq0 \text{ for every } m\geq m_0, 
\label{ineqm}\end{equation}
which follow immediately from (\ref{koleq}). The sequence rescaling 
$$\hc_m=\frac{\ha_m(2-n-b)_{m_0, m+1}}{\mu^m}$$
is well-defined and invertible for all $m\geq m_0$, by (\ref{ineqm}). It 
differs from the previous sequence rescaling from Case 1) 
by multiplication by constant independent on $m$, and hence, transforms (\ref{hamm}) to (\ref{csm}), as 
above. The matrices $S_m$ are well-defined for $m>m_0$: the denominators in their fractions do not vanish, by (\ref{ineqm}).  
Let $T_m$ be their products (\ref{tmprod}) defined for $m>m_0$. 
\begin{theorem} \label{thko-} Let $S_m$ and $T_m$ be the same, as in   (\ref{tmprod}), 
$$\hc_m=\left(\begin{matrix}0 &1\end{matrix}\right)T_m\left(\begin{matrix} 1\\  0\end{matrix}\right) \text{ for } 
 m>m_0, \ c_{m_0}=T_{m_0+1,\ 11},$$
 \begin{equation}
\ha_m=\frac{\hc_m\mu^m}{(2-n-b)_{m_0,m+1}} \text{ for } m\geq m_0, \ a_k=\ha_{-k}.
\label{cm2}\end{equation} 
The sequence $(a_k)_{k\leq k_{0-}}$  satisfies recurrence relations (\ref{recur}) for $k<k_{0-}$, 
and the series $\sum_{k\leq k_{0-}}a_kz^{-k}$ 
converges on $\cc^*$.
\end{theorem}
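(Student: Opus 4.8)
The plan is to treat Theorem \ref{thko-} exactly as the ``Case 2'' analogue of Theorem \ref{conv-}, in complete parallel with how Theorem \ref{th25} was derived from Theorem \ref{th24} in the forward setting. The key point is that the rescaling coefficients have been chosen precisely so that the matrices $S_m$ in \eqref{defsm} are unchanged: the new rescaling $\hc_m = \ha_m (2-n-b)_{m_0,m+1}/\mu^m$ differs from the Case 1 rescaling only by an overall constant factor (the ratio $(2-n-b)_{m_0}$, interpreted via \eqref{bsl}), which is independent of $m$ and hence conjugates \eqref{hamm} into the same system \eqref{csm} with the same $S_m$. Thus the first step is simply to record that the substitution transforms the backward matrix relation \eqref{hamm} into \eqref{csm} for $m>m_0$, with the $S_m$ given by \eqref{defsm}, and that these $S_m$ are well-defined for $m>m_0$ because the denominators $(b-m+1)$, $(b-m+n-2)$, $(b-m+n-3)$ do not vanish there, by the inequalities \eqref{ineqm}.

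Next I would invoke Corollary \ref{clem} and its Addendum. The matrices $S_m$ have the form \eqref{mkdk}: lower row $(1,0)$, upper-left entry $1 + o(1)$, upper-right entry $o(1)$, and the off-diagonal entries decay like $O(m^{-2})$ so that $\sum_m |\delta_{ij,m}| < \infty$. Hence the infinite product $T_m = S_m S_{m+1}\cdots$ of \eqref{tmprod} converges for every $m > m_0$, has rank one, and (by the Addendum) the sequence $\hc_m = T_{m,21}$ defined in \eqref{cm2} solves the recurrence \eqref{csm} with $\hc_m/\hc_{m-1}\to 1$ as $m\to\infty$, together with the matching identity $\hc_{m_0} = T_{m_0+1,11}$ coming from $T_{m_0+1} = S_{m_0+1} T_{m_0+2}$ and the fact that $S_{m_0+1}$ has lower row $(1,0)$. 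Undoing the rescaling via $\ha_m = \hc_m \mu^m / (2-n-b)_{m_0,m+1}$ then produces a sequence satisfying \eqref{hamm} for $m>m_0$, i.e.\ the original sequence $a_k = \ha_{-k}$ satisfies \eqref{recur} for $k<k_{0-}$.

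The convergence of the series $\sum_{k\leq k_{0-}} a_k z^{-k}$ on $\cc^*$ follows from Corollary \ref{coral}, applied to the backward recurrence: the rescaling ratios satisfy $q_{m-1}/q_m \simeq -f_m/g_m \to 0$ while $\hc_m/\hc_{m-1}\to 1$, so $\ha_m/\ha_{m-1}\to 0$, which forces the radius of convergence of $\sum_m \ha_m w^m$ (with $w = z^{-1}$) to be infinite; equivalently the series in $z^{-1}$ converges for all $z \in \cc^*$. This is the exact mirror of the argument closing the proof of Theorem \ref{th25}.

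I do not expect a genuine obstacle here; the theorem is the backward, integer-parameter counterpart of results already established, and the whole difficulty was front-loaded into the choice of rescaling \eqref{bsl}--\eqref{cm2} and into Corollary \ref{clem}. The one point demanding care is bookkeeping at the boundary index $m_0$: one must check that $c_{m_0} = T_{m_0+1,11}$ is the correct initialization making $\ha_{m_0}$ consistent with the recurrence for the first valid index $m = m_0+1$, and that the denominator $(2-n-b)_{m_0,m_0+1}=1$ (by the convention $(b)_{s,s}=1$ in \eqref{bsl}) so that $\ha_{m_0}=\hc_{m_0}$ is well-defined. Verifying that \eqref{ineqm} indeed guarantees non-vanishing of every denominator entering $S_m$ for $m>m_0$, and that the boundary relation is not a spurious extra constraint, is the only place where one could slip; but it is a routine check analogous to the treatment of the denominator $\mu(k_{0+}+b+n)$ in the proof of Theorem \ref{th25}.
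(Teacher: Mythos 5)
Your proposal is correct and follows essentially the same route as the paper: the paper disposes of Theorem \ref{thko-} by noting it repeats the proof of Theorem \ref{conv-} (i.e.\ apply Corollary \ref{clem} and its Addendum to the matrices $S_m$, undo the rescaling, and get convergence as in the forward case via Corollary \ref{coral}), which is exactly your argument, including the observation that the modified rescaling differs from the Case 1 one by an $m$-independent factor and so yields the same matrices $S_m$, well-defined for $m>m_0$ by (\ref{ineqm}). Your extra bookkeeping at the boundary index $m_0$ mirrors the paper's treatment in Theorem \ref{th25} and is the right check to make.
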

 The proof of Theorem \ref{thko-} repeats the proof of Theorem \ref{conv-} with obvious changes. 

\subsection{Theorem \ref{noncom}: formulas for $d_{0\pm}$ and $d_{1\pm}$} 

\begin{lemma} Let $(n,\la,\mu,b)\in U$, see (\ref{defu}). 
Let $f_+(z)=\sum_{k\geq1}a_kz^k$ and 
$f_-(z)=\sum_{k\leq0}a_kz^{-k}$ be the functions from (\ref{f+z}) and (\ref{f-z}) 
constructed in the two previous subsections, case 1). Then 
\begin{equation}z^{-b}\mcl(z^bf_{\pm}(z^{\pm1}))=d_{0\pm}+d_{1\pm}z,\label{diffop}\end{equation}
\begin{equation} d_{0+}=\mu(b+1)a_1; \ d_{1+}=((b+1)(b+n)+\la)a_1+\mu(b+2)a_2,\label{d+f}\end{equation}
where $a_1$ and $a_2$ are the same, as in (\ref{ak});
\begin{equation} d_{0-}=(b(b+n-1)+\la)a_0-\mu(b+n-1)a_{-1}; \ d_{1-}=-\mu(b+n)a_0,\label{d-f}\end{equation}
where $a_{-1}$ and $a_0$ are the same, as in (\ref{cm}).
\end{lemma}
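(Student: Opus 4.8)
The plan is to compute directly the action of the operator $\mcl$ on $z^b f_\pm(z^{\pm1})$ using the recurrence relations satisfied by the coefficients $a_k$, and to read off the coefficients $d_{0\pm}, d_{1\pm}$ as precisely the ``boundary residuals'' of these recurrences at the ends of the index range. The key observation, already recorded in Remark~\ref{remnoncom}, is that the equation $\mcl E=0$ for $E=z^b\sum_k a_k z^k$ is equivalent to the three-term recurrence~(\ref{recur}) holding for \emph{all} $k$, whereas the one-sided sums $f_\pm$ satisfy~(\ref{recur}) only for $k$ in the appropriate half-range ($k\geq 2$ for $f_+$, since $f_+(z)=\sum_{k\geq1}a_kz^k$ starts at $k=1$; and $k\leq -1$ for $f_-$). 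Thus when I apply $\mcl$ and multiply by $z^{-b}$, all the ``interior'' terms cancel by the recurrence, and only the finitely many terms at the truncation boundary survive, producing the polynomial $d_{0\pm}+d_{1\pm}z$.

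Concretely, for the $f_+$ case I would write $z^{-b}\mcl(z^b z^k)$ explicitly as a Laurent polynomial in $z$: a short computation gives
\begin{equation}
z^{-b}\mcl(z^bz^k)=\mu(k+b)z^{k-1}+\bigl((k+b)(k+b+n-1)+\la\bigr)z^k-\mu(k+b+n)z^{k+1},
\end{equation}
so that the coefficient of $z^m$ in $z^{-b}\mcl(z^bf_+(z))$ equals $\mu(m+b+1)a_{m+1}+\bigl((m+b)(m+b+n-1)+\la\bigr)a_m-\mu(m+b+n-1)a_{m-1}$, which is exactly the left-hand side of~(\ref{recur}) with $k=m$. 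For $m\geq 2$ this vanishes since~(\ref{recur}) holds there, and for $m\geq 2$ involving $a_1$ as the lowest term it is automatically zero; the only surviving orders are $m=0$ and $m=1$, where the absence of $a_0$ (because $f_+$ starts at $k=1$) and of $a_{-1}$, $a_0$ leaves the truncated expressions~(\ref{d+f}). I expect the $m=0$ coefficient to reduce to $\mu(b+1)a_1=d_{0+}$ and the $m=1$ coefficient to $((b+1)(b+n)+\la)a_1+\mu(b+2)a_2=d_{1+}$, matching the claim.

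For the $f_-$ case the same mechanism applies after re-expressing $\mcl$ acting on the negative-index monomials; here $f_-(z^{-1})=\sum_{k\leq 0}a_k z^k$ and~(\ref{recur}) holds for $k\leq -1$, so the residual terms are concentrated at the \emph{top} of the range, at orders $k=0$ and $k=1$. Using the backward solution from Theorem~\ref{conv-} (case 1) and the symmetry between $A_k$ and $\haa_m$ encoded in~(\ref{mat5}), I would verify that the surviving coefficients collapse to~(\ref{d-f}): $d_{0-}=(b(b+n-1)+\la)a_0-\mu(b+n-1)a_{-1}$ (the $k=0$ relation, missing its $a_1$ term) and $d_{1-}=-\mu(b+n)a_0$ (the $k=1$ relation, missing both $a_1$ and $a_2$, hence only the $a_0$ contribution remains).

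The computation is essentially bookkeeping, so the \textbf{main obstacle} is purely organizational rather than conceptual: keeping the index shifts consistent between the two conventions $f_+(z)=\sum_{k\geq1}a_kz^k$ and $f_-(z^{-1})=\sum_{k\leq 0}a_kz^k$, and correctly identifying which of the three recurrence terms is ``cut off'' at each boundary order. I would guard against sign and shift errors by checking the formal consistency statement from Corollary~\ref{cordon}, namely that when the full two-sided recurrence holds (the homogeneous case) the vectors $d_+$ and $d_-$ must be proportional; reproducing~(\ref{schivka}) from~(\ref{d+f}) and~(\ref{d-f}) under that hypothesis is a reliable internal sanity check that the truncation terms have been assembled correctly.
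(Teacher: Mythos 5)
Your proposal is correct and follows essentially the same route as the paper's proof: both identify the coefficient of $z^m$ in $z^{-b}\mcl(z^bf_{\pm}(z^{\pm1}))$ with the left-hand side of recurrence relation (\ref{recur}) at $k=m$, observe that these vanish in the range where the recurrence holds by construction ($k\geq2$ for $f_+$, $k\leq-1$ for $f_-$), and read off $d_{0\pm}$, $d_{1\pm}$ as the boundary residuals at $m=0$ and $m=1$ with the missing coefficients ($a_{-1},a_0$ for the plus case; $a_1,a_2$ for the minus case) set to zero. The paper states this in three sentences; your version merely makes the term-by-term computation explicit, and your formulas check out.
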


\begin{proof} The left-hand side in (\ref{diffop}) with index ``$+$'' is a Taylor series with coefficients at $z^k$ being 
equal to the left-hand side of the corresponding recurrence relation (\ref{recur}). The latter relation holds for all $k\geq2$, by construction. 
This implies (\ref{diffop}) with $d_{0+}$, $d_{1+}$ being equal to the left-hand sides of relations (\ref{recur}) for $k=0$ and $k=1$ 
respectively. This implies (\ref{d+f}). The proof for the index ``$-$'' is analogous.
\end{proof}

\subsection{End of proof of Theorems \ref{noncom} and \ref{conv}: holomorphic dependence of solutions on the parameters}

\begin{proposition} The solutions $(a_k)$ of recurrence relations (\ref{recur}) 
constructed above via infinite matrix products depend holomorphically 
on the parameters from the domain, where all the factors  of the matrix product are well-defined. 
\end{proposition}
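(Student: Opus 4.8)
The plan is to read the holomorphic dependence directly off the explicit infinite-product representations of Subsections 3.2--3.3, feeding them into the holomorphicity addendum to Corollary \ref{clem} (which itself rests on the addendum to Lemma \ref{lemkp}). The conceptual point distinguishing this from the merely meromorphic conclusion of the addendum to Theorem \ref{cons} is that the products $R_k=M_kM_{k+1}\cdots$ and $T_m=S_mS_{m+1}\cdots$ are honest limits of convergent operator products: their entries are built by matrix multiplication and a norm-convergent limit, with no division anywhere, so those entries are genuinely holomorphic on the whole domain where the factors are defined, rather than holomorphic only up to a pole divisor.

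First I would check the hypotheses of the addendum to Corollary \ref{clem} for the forward matrices $M_k$ of (\ref{mat3}). There $\delta_{11,k}=\frac{\la}{(k+b)(k+b+n-1)}$ and $\delta_{12,k}=\frac{\mu^2}{(k+b)(k+b+n-1)}$ are holomorphic in $(n,\la,\mu,b)$ wherever $(k+b)(k+b+n-1)\neq0$, and both are $O(k^{-2})$. Since the scalar product making the limiting projector $P$ orthogonal is fixed once and for all, the bound $\sum_k\|S_k\|<\infty$ reduces to $\sum_k|\delta_{ij,k}|<\infty$, and on any compact $K$ in the domain the parameters stay bounded while the denominators stay bounded away from $0$ for all large $k$, uniformly over $K$; hence the series converges uniformly on $K$. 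The addendum then gives that $R_k$, and thus each entry of $R_k$, is holomorphic, so the normalized sequence $c_k=R_{k,21}$ of (\ref{ckrk}) (resp. (\ref{cck})) is holomorphic. The identical argument applies to the backward matrices $S_m$ of (\ref{defsm}), whose off-projector entries are again $O(m^{-2})$, yielding holomorphicity of $T_m$ and hence of $\hat c_m$.

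Finally I would pass from $c_k$ (resp. $\hat c_m$) to $a_k$ through the explicit rescalings $a_k=\frac{\mu^k}{(b)_{k+1}}c_k$ of (\ref{ak}) and their Pochhammer-shifted analogues (\ref{ak01}), (\ref{cm}), (\ref{cm2}). On the relevant domain these rescaling factors are holomorphic and nonvanishing --- the shifts $k_{0+}$ and $m_0$ were chosen precisely so that the Pochhammer products appearing in the denominators avoid zero --- so multiplying by them preserves holomorphicity and creates no poles, which is exactly why this route produces honest holomorphic dependence. The finitely many separately defined boundary coefficients, such as $a_{k_{0+}}$ in (\ref{ako2}), are explicit rational expressions in $a_{k_{0+}+1}$, $a_{k_{0+}+2}$ and the parameters, with denominator $\mu(k_{0+}+b+n)$ already shown nonvanishing in Theorem \ref{th25}, hence holomorphic as well. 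The one step carrying real analytic content, and the only place I expect to have to be careful, is the uniform-on-compacts estimate $\sum_k|\delta_{ij,k}|<\infty$ that powers the Weierstrass argument; but this is governed by the quadratic growth of the denominators, uniform on compacts away from the discrete resonance locus, so I do not anticipate a genuine obstacle, and everything else is assembly of results already in place.
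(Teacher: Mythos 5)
Your proposal is correct and follows essentially the same route as the paper: the paper's own proof consists of the single observation that the claim ``follows immediately from construction and Corollary \ref{clem},'' and what you have written is precisely that argument spelled out --- verifying the uniform-on-compacts summability hypothesis of the holomorphicity addendum to Corollary \ref{clem} for the matrices $M_k$ of (\ref{mat3}) and $S_m$ of (\ref{defsm}), and then passing to the coefficients $a_k$ through the nonvanishing Pochhammer rescalings and the explicit boundary formulas such as (\ref{ako2}). Your added remark explaining why this yields genuine holomorphy (no divisions in the limit construction), in contrast with the merely meromorphic conclusion of the addendum to Theorem \ref{cons}, is a faithful account of why the paper organizes the argument this way.
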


The proposition follows immediately from construction and Corollary \ref{clem}. It implies the statements of Theorems \ref{noncom} 
and \ref{conv} on holomorphic dependence of the corresponding solutions on the parameters.

\section{Application:  monodromy eigenvalues}

Here we study the eigenfunctions of the monodromy operator of  Heun equation (\ref{heun}). Recall that the monodromy operator 
of a linear differential equation on the Riemann sphere acts on the space of germs of  its solutions at a nonsingular point 
$z_0$. Namely, fix a closed path $\alpha$ starting at $z_0$ in the complement to the singular points of the equation. 
The monodromy operator along the path $\alpha$ sends each germ to the result of its analytic extension along the path $\alpha$. 
It is completely determined by the homotopy class of the path $\alpha$ in the complement to the singular points of the equation. 
In the case under consideration of double confluent Heun equations (\ref{heun}) there are exactly two singular points: zero and 
infinity. By the {\it monodromy operator of double confluent Heun equation} (\ref{heun}) we mean the monodromy operator along a 
counterclockwise circuit around zero. Each monodromy eigenfunction with an eigenvalue $e^{2\pi i b}$ 
has the form of a series 
\begin{equation} E(z)=\sum_{k\in\zz}a_kz^{k+b}, \ b\in\cc,\label{solb}\end{equation}
 converging on $\cc^*$. Here we write down an explicit analytic equation on 
those $b$, for which the latter solution $E(z)$ of equation (\ref{heun}) exists, i.e., 
there exists a bi-infinite sequence $(a_k)_{k\in\zz}$ satisfying recurrence relations (\ref{recur}) such that the  
the bi-infinite series $\sum_{k\in\zz}a_kz^k$ converges on $\cc^*$. 

We consider different cases, but the method of finding the above $b$ is general for all of them. The coefficients $a_k$ 
with $k\to+\infty$ should form a unique converging series 
(up to constant factor) that satisfies recurrence relations (\ref{recur}). 
Similarly, its coefficients with $k\to-\infty$ should form a unique converging series satisfying (\ref{recur}). 
Finally, the above positive and negative parts of the series should paste 
together and form a solution of Heun equation. In the simplest, non-resonant case, when $b,b+n\notin\zz$, the pasting equation is given by  (\ref{schivka}). 
The coefficients $a_k$, $k\geq1$ satisfying (\ref{recur}) for $k\geq2$ and forming a converging series are given by formulas (\ref{ak}); 
the sequence $(a_k)_{k\leq0}$, $a_k=\ha_{-k}$, satisfying (\ref{recur}) for $k<0$ and forming a converging series is given by formula (\ref{cm}). 

It appears that substituting the above-mentioned formulas for $a_k$ to formulas  (\ref{d+f}) and (\ref{d-f}) 
for $d_{j\pm}$ and then substituting the latter formulas  to (\ref{schivka}) 
yields a rather complicated pasting equation. To obtain a simpler formula, we proceed as follows. In the non-resonant case we 
extend the sequence $(a_k)_{k\geq1}$ to $k=0$ by putting appropriate $\alpha\in\cc$ instead of $a_0$ (we get $\alpha, a_1,a_2,\dots$)
so that the longer sequence thus obtained satisfies (\ref{recur}) also for $k=1$. Similarly, we extend the sequence $(a_k)_{k\leq0}$ 
to $k=1$ by putting appropriate $\beta\in\cc$ instead of $a_1$ (we get $\dots a_{-1}, a_0,\beta$)
 in order to satisfy equation (\ref{recur}) for $k=0$. The positive and negative series 
 thus constructed paste together to a converging bi-infinite series $\sum_{k\in\zz}a_kz^k$ satisfying (\ref{recur}) (after their rescaling by 
 constant factors), if and only if 
 \begin{equation}\alpha\beta=a_0a_1.\label{aoal}\end{equation}
We obtain an explicit expression for equation (\ref{aoal}). 
%

In what follows, we use the two next propositions. 

\begin{proposition} \label{monprod}  The determinant of the monodromy operator of Heun equation (\ref{heun}) equals $e^{-2\pi i n}$.  
\end{proposition}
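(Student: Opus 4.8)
The plan is to reduce the statement to Liouville's formula (Abel's identity) for the Wronskian together with a one-line residue computation. First I would rewrite equation (\ref{heun}) in the normalized form $E''+p(z)E'+q(z)E=0$ by dividing by the leading coefficient $z^2$, which gives
$$p(z)=\frac{nz+\mu(1-z^2)}{z^2}=\frac nz+\frac{\mu}{z^2}-\mu.$$
The only singular points are $0$ and $\infty$, and the monodromy in question is taken along a small counterclockwise loop $\alpha$ encircling $0$.

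Next I would recall that for any fundamental system of solutions $(E_1,E_2)$ the Wronskian $W=E_1E_2'-E_1'E_2$ obeys the first-order equation $W'=-p(z)W$, hence $W(z)=W(z_0)\exp(-\int_{z_0}^z p(\zeta)\,d\zeta)$. On one hand, analytic continuation of $W$ along $\alpha$ multiplies it by the factor $\exp(-\oint_\alpha p(\zeta)\,d\zeta)$, since this is the only source of multivaluedness. On the other hand, the fundamental matrix $(E_1,E_2)$ is transformed, under continuation along $\alpha$, by multiplication by the monodromy matrix $M$, so its determinant $W$ is multiplied by $\det M$. Comparing these two descriptions of one and the same continuation yields $\det M=\exp(-\oint_\alpha p(\zeta)\,d\zeta)$.

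It then remains to evaluate the contour integral by residues. The term $-\mu$ is holomorphic at $0$ and contributes nothing; the term $\mu/z^2$ is the derivative of $-\mu/z$ and hence has vanishing residue; only the simple pole $n/z$ contributes, with residue $n$. Thus $\oint_\alpha p(\zeta)\,d\zeta=2\pi i\operatorname{Res}_{0}p=2\pi i n$, and therefore $\det M=e^{-2\pi i n}$, as claimed.

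I do not expect a serious obstacle here: the argument is entirely standard, and the only points requiring care are bookkeeping — fixing the orientation of $\alpha$ and the side on which $M$ acts so that the sign in the exponent comes out correctly, and checking that the double pole $\mu/z^2$ genuinely contributes no residue. Both are immediate.
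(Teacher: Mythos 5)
Your argument is correct: Abel's identity gives $W(z)=W(z_0)\exp(-\int_{z_0}^z p)$, continuation along the counterclockwise loop multiplies $W$ both by $\det M$ and by $\exp(-\oint_\alpha p)$, and the residue of $p(z)=\frac nz+\frac{\mu}{z^2}-\mu$ at the origin is indeed $n$ (the double-pole term and the constant contribute nothing), so $\det M=e^{-2\pi in}$. This is, however, not the route the paper takes as its primary proof: there the monodromy is written as the product of the formal monodromy $\diag(1,e^{-2\pi in})$ with two unipotent matrices (the inverses of the Stokes matrices, citing \cite{4}), whence the determinant is read off from the formal monodromy alone. The paper does remark that ``another possible proof'' would use the Wronskian formula of \cite{bt1}, namely that $W$ equals $z^{-n}$ times a function holomorphic on $\cc^*$ --- and that is exactly your proof, except that you derive this form of $W$ from scratch via Liouville's formula instead of citing it; indeed your computation gives $W(z)=Cz^{-n}e^{\mu z+\mu/z}$ explicitly. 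What your approach buys: it is elementary and self-contained (no Stokes theory, no external reference), and it is uniform in $\mu$, so the paper's preliminary reduction ``prove it for $\mu\neq0$, then pass to $\mu=0$ by continuity'' becomes unnecessary, since the terms involving $\mu$ never enter the residue. What the paper's approach buys: the finer structural fact that the monodromy is formal monodromy times unipotent Stokes factors is reused elsewhere (for instance in Proposition \ref{pjcell2} and in the proof of Theorem \ref{talt}), so invoking it here costs nothing and yields more than the determinant alone.
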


\begin{proof} We prove the statement of the proposition for $\mu\neq0$: then it will follow automatically for $\mu=0$, by continuity. 
  The monodromy matrix is the product of the formal monodromy matrix $\diag(1, e^{-2\pi i n})$ and a pair of unipotent 
matrices: the inverse to the Stokes matrices, cf. \cite[formulas (2.15) and (3.2)]{4}. Therefore, its determinant equals $e^{-2\pi i n}$. 
Another possible proof would be to use the formula for Wronskian of two linearly independent solutions of equation (\ref{heun}) 
from \cite[p. 339, proof of theorem 4]{bt1}. It shows that the Wronskian equals $z^{-n}$ times a function holomorphic on $\cc^*$, and 
hence, it gets multiplied by $e^{-2\pi in}$ after analytic continuation along a positive circuit around zero. 
\end{proof}

Recall, see \cite[equations (32), (34)]{tert2}, \cite[p. 336, lemma 1]{bt1} that the transformation $\#:E\mapsto\# E$: 
\begin{equation}(\#E)(z):=2\omega z^{-n}(E'(z^{-1})-\mu E(z^{-1})), \ \la+\mu^2=\frac1{4\omega^2}, \ \omega>0\label{defdi}\end{equation}
is an involution acting on the space of solutions of equation (\ref{heun}). 

\begin{proposition} \label{propdiez} Let the monodromy operator of Heun equation have distinct eigenvalues. Then the involution $\#$ permutes the 
corresponding eigenfunctions.
\end{proposition}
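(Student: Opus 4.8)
The plan is to show that the involution $\#$ conjugates the monodromy operator to a scalar multiple of its inverse, and then to read off its action on eigenlines from the determinant identity of Proposition \ref{monprod}. Write $M$ for the monodromy operator of (\ref{heun}) (counterclockwise circuit around $0$); both $\#$ and $M$ are linear operators on the two-dimensional solution space, so the whole argument reduces to $2\times2$ linear algebra once the right intertwining relation between $\#$ and $M$ is in place. Concretely, I aim to prove
\begin{equation} \#\circ M\circ\#=e^{-2\pi i n}\,M^{-1}.\label{eq:conjrel}\end{equation}

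First I would establish (\ref{eq:conjrel}) by tracking the two ingredients of the formula (\ref{defdi}) for $\#E$ under analytic continuation along a counterclockwise loop $\gamma$ around $0$. The prefactor $z^{-n}$ acquires the factor $e^{-2\pi i n}$. In the arguments $E'(z^{-1})$ and $E(z^{-1})$ the point $w=z^{-1}$ runs over a loop around $0$ that is \emph{clockwise} when $z$ runs counterclockwise over $\gamma$; since analytic continuation commutes with differentiation, this replaces $E$ by its continuation along the inverse generator of $\pi_1(\cp^1\setminus\{0,\infty\})$, namely $M^{-1}E$. Putting these together gives $M(\#E)=e^{-2\pi i n}\,\#(M^{-1}E)$, i.e. $M\circ\#=e^{-2\pi i n}\,\#\circ M^{-1}$, which yields (\ref{eq:conjrel}) because $\#^2=\mathrm{id}$. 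The step I expect to be the main obstacle is precisely this orientation bookkeeping: one must verify carefully that a counterclockwise circuit in $z$ corresponds, via $w=1/z$, to the inverse generator in the $w$-plane, so that $M$ is replaced by $M^{-1}$ (and not by $M$), and that the power-factor $e^{-2\pi i n}$ comes out with the correct sign.

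With (\ref{eq:conjrel}) available the conclusion is immediate. Let $\nu_1\neq\nu_2$ be the eigenvalues of $M$, with eigenfunctions $E_1,E_2$. By Proposition \ref{monprod} we have $\det M=e^{-2\pi i n}$, so $\nu_1\nu_2=e^{-2\pi i n}$. Applying $M$ to $\#E_j$ and using (\ref{eq:conjrel}) (or equivalently $M\circ\#=e^{-2\pi i n}\,\#\circ M^{-1}$),
\begin{equation} M(\#E_j)=e^{-2\pi i n}\,\#(M^{-1}E_j)=e^{-2\pi i n}\nu_j^{-1}\,\#E_j=\nu_{3-j}\,\#E_j,\label{eq:swap}\end{equation}
since $e^{-2\pi i n}\nu_j^{-1}=\nu_1\nu_2\nu_j^{-1}=\nu_{3-j}$. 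Thus $\#E_j$ is again a monodromy eigenfunction, with eigenvalue $\nu_{3-j}$. Because $\nu_1\neq\nu_2$, the function $\#E_j$ cannot be proportional to $E_j$, so it is proportional to $E_{3-j}$; hence $\#$ interchanges the two eigenlines and permutes the corresponding eigenfunctions, as claimed.
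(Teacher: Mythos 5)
Your proof is correct and follows essentially the same route as the paper's: you track the three ingredients of $\#$ under a counterclockwise circuit (the prefactor $z^{-n}$ contributing $e^{-2\pi i n}$, the substitution $z\mapsto z^{-1}$ reversing the loop and hence producing the inverse monodromy), and then identify $e^{-2\pi i n}\nu_j^{-1}$ as the other eigenvalue via Proposition \ref{monprod}. The only difference is cosmetic: you package the computation as the operator identity $\#\circ M\circ\#=e^{-2\pi i n}M^{-1}$, whereas the paper applies the same reasoning directly to a single eigenfunction.
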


\begin{proof} The involution under question is a composition of transformation of a function to 
its linear combination with its derivative,  the variable change $z\mapsto z^{-1}$ and  multiplication by $z^{-n}$. Let now $E$ be a monodromy eigenfunction with eigenvalue 
$\la_1$. The composition of the first and second operations transforms $E$ to a function, whose analytic extension along 
a counterclockwise closed path  around zero multiplies it by $\la_1^{-1}$: the second operation inverses the direction of the path. 
The multiplication by $z^{-n}$ multiplies the above result of analytic extension by $e^{-2\pi i n}$. Therefore, $\# E$ is a monodromy 
eigenfunction with the eigenvalue $\la_2=\la_1^{-1}e^{-2\pi i n}$.  It coincides with the second monodromy eigenvalue, 
since it is found by the condition that $\la_1\la_2=e^{-2\pi i n}$, see Proposition \ref{monprod}. This proves the proposition.
\end{proof}


\subsection{Nonresonant case:  $b, b+n\notin\zz$}
In this case the denominators in formulas (\ref{mat3}) and (\ref{defsm}) for the matrices $M_k$ and $S_m$ respectively 
are nonzero for all integer $k$ and $m$, and hence, the matrices are well-defined together with the infinite products 
$R_k=M_kM_{k+1}\dots$, $T_m=S_mS_{m+1}\dots$. 

\begin{theorem} \label{eight}  Let  $b, b+n\notin\zz$. Equation (\ref{heun}) has a monodromy eigenfunction with eigenvalue 
$e^{2\pi i b}$, $b\in\cc$, if and only if 
\begin{equation}(b+1)(b+n-2)R_{1,11}T_{0,11}+\mu^2R_{1,21}T_{0,21}=0.\label{paste}\end{equation}
\end{theorem}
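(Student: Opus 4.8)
The plan is to make the pasting criterion (\ref{aoal}) explicit, by expressing each of the four quantities $\alpha$, $\beta$, $a_0$, $a_1$ occurring in it through the entries of the convergent matrix products $R_1=M_1M_2\cdots$ and $T_0=S_0S_1\cdots$.

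First I would explain why the eigenfunction exists exactly when (\ref{aoal}) holds. By (\ref{solb}), a solution of (\ref{heun}) is a monodromy eigenfunction with eigenvalue $e^{2\pi ib}$ precisely when its coefficients form a bi-infinite sequence satisfying (\ref{recur}) for all $k\in\zz$ whose Laurent series converges on $\cc^*$. Since $b,b+n\notin\zz$, Theorem \ref{conv} and Example \ref{exkko} guarantee that the convergent forward solution (satisfying (\ref{recur}) for $k\geq1$, from Theorem \ref{th24}) and the convergent backward solution (satisfying (\ref{recur}) for $k\leq0$, from Theorem \ref{conv-}) are each unique up to a scalar. Hence a bi-infinite convergent solution exists iff appropriate scalar multiples of these two one-sided solutions coincide at the two overlapping indices $k=0,1$; by Remark \ref{run} the relevant pairs of neighbouring coefficients never vanish simultaneously, so this gluing is exactly the proportionality $\alpha\beta=a_0a_1$ of (\ref{aoal}). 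Here $\alpha$ and $a_1$ are the values at $k=0$ and $k=1$ of the forward solution, while $a_0$ and $\beta$ are the value at $k=0$ and the extended value at $k=1$ of the backward solution.

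Next I would read off the four coefficients. For the forward solution, formula (\ref{ak}) together with $c_0=R_{1,11}$ and $c_1=R_{1,21}$ (from the relation $\left(\begin{matrix}c_0\\c_1\end{matrix}\right)=R_1\loc$) yields $\alpha=a_0=\frac{R_{1,11}}{b}$ and $a_1=\frac{\mu}{b(b+1)}R_{1,21}$. For the backward solution, formula (\ref{cm}) together with $\hc_{-1}=T_{0,11}$ and $\hc_0=T_{0,21}$ (from $\left(\begin{matrix}\hc_{-1}\\\hc_0\end{matrix}\right)=T_0\loc$) and the normalisation $\ha_m=\hc_m\mu^m/(2-n-b)_{m+1}$, in which $(2-n-b)_0=1$, yields $\beta=a_1=\ha_{-1}=\frac{T_{0,11}}{\mu}$ and $a_0=\ha_0=\frac{T_{0,21}}{2-n-b}$. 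The points needing care here are that $S_0$, hence $T_0$, is well defined in the non-resonant range (all denominators in (\ref{defsm}) are nonzero because $b,b+n\notin\zz$) and that the row $(1,0)$ of $S_0$ makes $T_0$ encode precisely relation (\ref{recur}) at $k=0$, so that the value $\ha_{-1}$ furnished by $T_0$ is genuinely the extension $\beta$.

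Finally I would substitute these expressions into (\ref{aoal}), turning it into $\frac{R_{1,11}T_{0,11}}{b\mu}=\frac{\mu R_{1,21}T_{0,21}}{b(b+1)(2-n-b)}$. Clearing the denominators $b$, $b+1$, $\mu$, $2-n-b$, all nonzero since $b,b+n\notin\zz$, gives $(b+1)(2-n-b)R_{1,11}T_{0,11}=\mu^2R_{1,21}T_{0,21}$, and rewriting $2-n-b=-(b+n-2)$ produces (\ref{paste}). I expect the computation itself to be routine; the main obstacle is the bookkeeping --- keeping the two distinct Pochhammer normalisations of the forward and backward constructions apart and matching each matrix entry to the correct coefficient or extension --- together with invoking the uniqueness of Theorem \ref{conv} carefully enough to make the criterion an honest ``if and only if''.
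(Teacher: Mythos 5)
Your proposal is correct and follows essentially the same route as the paper's own proof: the paper likewise extends the forward solution by $\alpha=\frac{1}{b}R_{1,11}$ and the backward one by $\beta=\mu^{-1}T_{0,11}$, takes $a_1=\frac{\mu}{b(b+1)}R_{1,21}$ and $a_0=\frac{T_{0,21}}{2-n-b}$ from (\ref{ak}) and (\ref{cm}), and substitutes these into the pasting relation (\ref{aoal}) to obtain (\ref{paste}). The only difference is presentational: you spell out inside the proof the justification of the pasting criterion (\ref{aoal}) via uniqueness of the one-sided convergent solutions (Theorem \ref{conv}, Remark \ref{run}), which the paper establishes in the preamble of Section 4 rather than in the proof of the theorem itself.
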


\begin{proof} Let $f_+(z)=\sum_{k\geq1}a_kz^k$ be a converging series satisfying (\ref{recur}) for $k\geq2$.  
Recall, see (\ref{ak}), that 
$$a_1=\frac{\mu}{b(b+1)}\ol R_1\loc.$$
Let us extend formula (\ref{ak}) to $k=0$: set 
$$\alpha=\frac1b\ol R_0\loc=\frac1b\lo R_1\loc.$$
The sequence $\alpha, a_1,a_2,\dots$ satisfies (\ref{recur}) for $k\geq1$, by Theorem \ref{th24}. Recall, see (\ref{cm}), that 
$$\hc_0=\left(\begin{matrix} 0 &1\end{matrix}\right)T_0\left(\begin{matrix} 1\\  0\end{matrix}\right), \ 
\ha_0=a_{0}=\frac{\hc_0}{2-n-b}.$$
Let us extend formula (\ref{cm}) to $k=1$, $m=-1$: set 
$$\beta=\mu^{-1}\ol T_{-1}\loc= \mu^{-1}\left(\begin{matrix} 1 &0\end{matrix}\right)T_0\left(\begin{matrix} 1\\  0\end{matrix}\right).$$
The sequence $\dots,a_{-2},a_{-1},a_0,\beta$ satisfies (\ref{recur}) for all $k\leq0$, by Theorem \ref{conv-}. Substituting the above  
formulas for $a_1$, $\alpha$, $a_0$, $\beta$ to  pasting equation (\ref{aoal}) yields (\ref{paste}). The theorem is proved.
\end{proof} 

%

\subsection{Resonant case: at least one of the numbers $b$, $b+n$ is integer}
\begin{proposition} \label{p1res} Let at least one of the numbers $b$, $b+n$ be integer. 
Then  Heun equation (\ref{heun}) has a solution of type 
$E(z)=z^bf(z)$ with  $f(z)$ being a holomorphic function on $\cc^*$, if and only 
if it has a solution holomorphic on $\cc^*$, i.e., corresponding to $b=0$.  In this case the monodromy eigenvalues are 1 and 
$e^{-2\pi i n}$. 
\end{proposition}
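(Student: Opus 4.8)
The plan is to translate everything into a statement about the two eigenvalues of the monodromy operator and then read it off from Proposition~\ref{monprod}.

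First I would set up the dictionary between solutions and eigenvalues. A solution of the form $E(z)=z^bf(z)$ with $f$ holomorphic on $\cc^*$ is exactly a monodromy eigenfunction with eigenvalue $e^{2\pi i b}$: analytic continuation of $E$ along a counterclockwise loop around $0$ multiplies $z^b$ by $e^{2\pi i b}$ while leaving the single-valued factor $f$ unchanged. Conversely, if $E$ is a monodromy eigenfunction with eigenvalue $e^{2\pi i b}$, then $z^{-b}E(z)$ has trivial monodromy, hence is single-valued; since the only finite singular point of (\ref{heun}) is $0$, it is holomorphic on $\cc^*$, so $E$ has the required form. Taking $b=0$, a solution holomorphic on $\cc^*$ is the same as an eigenfunction with eigenvalue $1$. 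Because every root of the characteristic polynomial of the $2\times2$ monodromy matrix admits an eigenvector, ``the equation has a solution of type $z^bf$'' is equivalent to ``$e^{2\pi i b}$ is a root of that characteristic polynomial,'' and likewise for the eigenvalue $1$; this phrasing lets me ignore whether the monodromy is diagonalizable. The claim thus reduces to: under the resonance hypothesis, $e^{2\pi i b}$ is an eigenvalue if and only if $1$ is.

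Next I would feed in Proposition~\ref{monprod}, which says the two eigenvalues $\la_1,\la_2$ satisfy $\la_1\la_2=e^{-2\pi i n}$, so that the two roots always form a pair $\{\nu,\,e^{-2\pi i n}\nu^{-1}\}$. For the forward direction, if $e^{2\pi i b}$ is an eigenvalue then the companion root is $e^{-2\pi i(n+b)}$; the hypothesis $b\in\zz$ makes $e^{2\pi i b}=1$, while $b+n\in\zz$ makes $e^{-2\pi i(n+b)}=1$, so in either case $1$ is a root. For the converse, if $1$ is an eigenvalue the roots are $1$ and $e^{-2\pi i n}$, and resonance gives $e^{2\pi i b}=1$ when $b\in\zz$ and $e^{2\pi i b}=e^{-2\pi i n}$ when $b+n\in\zz$ (using $e^{2\pi i(b+n)}=1$); either way $e^{2\pi i b}$ is one of the two roots. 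This proves the equivalence and simultaneously shows that whenever such a solution exists the eigenvalue pair is exactly $\{1,e^{-2\pi i n}\}$.

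I expect the only point needing care to be the dictionary step, i.e.\ checking in both directions that a root of the characteristic polynomial genuinely yields a solution of type $z^bf$ with the stated convergence and single-valuedness, rather than the arithmetic, which is immediate once Proposition~\ref{monprod} is available. Notably, no separate treatment of the parabolic (repeated-eigenvalue) case is needed: since I argue at the level of characteristic roots counted with multiplicity, the relation $\la_1\la_2=e^{-2\pi i n}$ and the resonance dichotomy apply uniformly, and the coincidence $1=e^{-2\pi i n}$ (which forces $n\in\zz$, whence $b\in\zz\Leftrightarrow b+n\in\zz$) introduces no inconsistency.
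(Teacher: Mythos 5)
Your proposal is correct and takes essentially the same route as the paper: identify solutions of type $z^bf(z)$ with monodromy eigenfunctions of eigenvalue $e^{2\pi i b}$, use Proposition~\ref{monprod} to pair each characteristic root $\nu$ with $e^{-2\pi i n}\nu^{-1}$, and let the resonance hypothesis force one root to equal $1$ (forward direction) or force $e^{2\pi i b}$ to coincide with one of the roots $1$, $e^{-2\pi i n}$ (converse). Your explicit observation that every root of the characteristic polynomial admits an eigenvector, so the parabolic case requires no separate treatment, makes the converse slightly more airtight than the paper's terse version, but the argument is the same.
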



\begin{proof} Let the above solution $E$ exist. Then it is a monodromy eigenfunction with the eigenvalue $e^{2\pi i b}$. 
The other eigenvalue equals $e^{-2\pi i(b+n)}$, by Proposition  \ref{monprod}. At least one   eigenvalue equals one, since 
either $b$, or $b+n$ is integer, by 
assumption. The monodromy eigenfunction corresponding to unit eigenvalue is holomorphic on $\cc^*$. Conversely, a solution 
holomorphic on $\cc^*$ is a solution $E$ as above with $b=0$. It is a monodromy eigenfunction with unit eigenvalue. Then the other 
eigenvalue equals $e^{-2\pi i n}$, by Proposition \ref{monprod}. This proves proposition \ref{p1res}.
\end{proof}

\begin{corollary} 
A solution $E$ as in Proposition \ref{p1res} exists, if and only if the recurrence relations  (\ref{recur}) 
with $b=0$: 
\begin{equation} (k(k+l)+\la)a_k-\mu(k+l)a_{k-1}+\mu(k+1)a_{k+1}=0, \ l=n-1\label{recur0}\end{equation}
 have a solution $(a_k)_{k\in\zz}$ such that the series $\sum_{k\in\zz}a_kz^k$ converges on $\cc^*$. 
\end{corollary}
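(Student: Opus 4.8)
The plan is to read off the corollary directly from Proposition \ref{p1res} combined with the equivalence, recalled in the introduction, between equation (\ref{heun}) and the recurrence relations (\ref{recur}) on Laurent coefficients. No new analytic input is needed; the work is entirely bookkeeping, and the substantive content has already been established.

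First I would apply Proposition \ref{p1res} to collapse the dependence on $b$. Since at least one of $b$, $b+n$ is an integer, that proposition asserts that a solution $E(z)=z^bf(z)$ with $f$ holomorphic on $\cc^*$ exists if and only if equation (\ref{heun}) admits a solution holomorphic on $\cc^*$, namely one of type (\ref{multi}) with $b=0$. This reduces the existence question for arbitrary admissible $b$ to the single case $b=0$ and disposes of all subtleties about which exponent to choose.

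Next I would translate the $b=0$ case into the coefficient language. A function holomorphic on $\cc^*$ is precisely a Laurent series $\sum_{k\in\zz}a_kz^k$ converging on $\cc^*$, and such a series solves (\ref{heun}) if and only if its coefficients satisfy (\ref{recur}) for every $k\in\zz$. Setting $b=0$ in (\ref{recur}) turns $(k+b)(k+b+n-1)+\la$ into $k(k+n-1)+\la$, the factor $\mu(k+b+n-1)$ into $\mu(k+n-1)$, and $\mu(k+b+1)$ into $\mu(k+1)$; writing $l=n-1$ these are exactly the coefficients in (\ref{recur0}). Chaining the two equivalences — existence of $E$ is equivalent to existence of a $\cc^*$-holomorphic solution, which in turn is equivalent to existence of a convergent two-sided solution of (\ref{recur0}) — gives the claim in both directions, since holomorphy on $\cc^*$ is exactly convergence of the Laurent series there.

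I do not anticipate a genuine obstacle: all the real content is carried by Proposition \ref{p1res} and the already-established translation of (\ref{heun}) into (\ref{recur}). The only point demanding a little care is checking that the backward implication is reversible with convergence preserved, which it is, because the passage between a function holomorphic on $\cc^*$ and its Laurent coefficients is a bijection compatible with convergence on all of $\cc^*$; and that the substitution $b=0$ reproduces (\ref{recur0}) with the correct reading $l=n-1$.
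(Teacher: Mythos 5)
Your proposal is correct and matches the paper's intent: the paper states this corollary without a separate proof, treating it as immediate from Proposition \ref{p1res} together with the equivalence (established in Subsection 1.2) between equation (\ref{heun}) for a series of type (\ref{multi}) and the recurrence relations (\ref{recur}) on its Laurent coefficients. Your substitution $b=0$, $l=n-1$ correctly recovers (\ref{recur0}), and the bookkeeping about convergence is exactly what is needed.
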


\begin{proposition} \label{propeq}  Every semiinfinite sequence  
$(a_k)_{k\geq-2}$ satisfying equations (\ref{recur0}) for $k\geq-1$  (without convergence condition) satisfies the relation
\begin{equation}(1-l+\la)a_{-1}-\mu(l-1)a_{-2}=0.\label{a12}\end{equation}
\end{proposition}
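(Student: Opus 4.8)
The plan is to substitute $k=-1$ directly into the recurrence relation (\ref{recur0}), and the entire content of the proposition lies in a single structural observation: the coefficient of $a_{k+1}$ in (\ref{recur0}) equals $\mu(k+1)$, which vanishes precisely at $k=-1$. This is exactly the resonance that singles out the value $b=0$: normally the forward recurrence would determine $a_0$ from $a_{-1}$ and $a_{-2}$, but at the index $k=-1$ the leading coefficient $\mu(k+1)$ degenerates to zero, so the relation at this index no longer involves $a_0$ at all and collapses to a two-term relation between $a_{-1}$ and $a_{-2}$.

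Carrying out the substitution $k=-1$, the coefficient of $a_{-1}$ becomes $(-1)(-1+l)+\la=1-l+\la$, the coefficient of $a_{-2}$ becomes $-\mu(-1+l)=-\mu(l-1)$, and the coefficient of $a_0$ becomes $\mu(-1+1)=0$. Since by hypothesis the sequence $(a_k)_{k\geq-2}$ satisfies (\ref{recur0}) for every $k\geq-1$, in particular for $k=-1$, the relation at this index reads
$$(1-l+\la)a_{-1}-\mu(l-1)a_{-2}=0,$$
which is precisely (\ref{a12}).

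I do not expect any genuine obstacle here: the proof reduces to the one-line evaluation above, the only ingredient being the vanishing of the $a_0$-coefficient at the resonant index $k=-1$. Accordingly I would simply record this substitution and emphasize that no convergence, summation, or limiting argument is needed, since (\ref{a12}) is nothing more than a single instance of the assumed recurrence. The slight subtlety worth flagging is purely notational, namely keeping careful track of the signs in $k(k+l)+\la$ and $-\mu(k+l)$ at $k=-1$, but this is routine.
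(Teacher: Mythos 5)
Your proof is correct and is exactly the paper's own argument: the paper's proof is the one-line observation that (\ref{a12}) coincides with (\ref{recur0}) evaluated at $k=-1$, where the coefficient $\mu(k+1)$ of $a_{k+1}$ vanishes. Your computation of the coefficients and your emphasis on the vanishing of the $a_0$-term at the resonant index match the intended reasoning precisely.
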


\begin{proof} Equation (\ref{a12}) coincides with (\ref{recur0}) for $k=-1$. 
\end{proof}

\begin{proposition} \label{corel} Let $l\notin\zz$, $n=l+1$. 
A solution of Heun equation (\ref{heun}) holomorphic 
on $\cc^*$ exists, if and only if at least one of the two following statements holds:

- either the unique  semiinfinite sequence $(a_k)_{k\leq -1}$  solving relations (\ref{recur0}) for $k\leq-2$  with  series  $\sum_{k=-\infty}^{-1}a_kz^k$ converging on $\cc^*$  satisfies relation (\ref{a12}); 

- or Heun equation (\ref{heun}) has an entire solution: holomorphic on $\cc$. 
\end{proposition}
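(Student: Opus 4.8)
The plan is to exploit a resonance hidden in relations (\ref{recur0}) at the index $k=-1$. In (\ref{recur0}) the coefficient of $a_{k+1}$ equals $\mu(k+1)$, which vanishes exactly at $k=-1$; thus the relation for $k=-1$ does not see $a_0$ at all and collapses to (\ref{a12}), which is precisely Proposition \ref{propeq}. The consequence I would build on is that the block of coefficients with indices $\geq-1$ and the block with indices $\leq-1$ communicate only through the single overlap value $a_{-1}$ and the single relation (\ref{a12}). Recognizing and using this decoupling is the conceptual core, and it is what makes the two alternatives in the statement the natural ones.

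First I would fix the two one-sided building blocks produced by Theorem \ref{conv} with $k_0=-1$; this value is admissible because $l\notin\zz$ forces $k+l\neq0$ for all $k$, while $k+1\neq0$ holds for $k\geq0$ in the forward case and for $k\leq-2$ in the backward case. Write $F=(a_k^+)_{k\geq-1}$ for the forward solution (unique up to scale, satisfying (\ref{recur0}) for $k\geq0$, with $\sum_{k\geq-1}a_k^+z^k$ convergent on $\cc^*$) and $B=(a_k^-)_{k\leq-1}$ for the backward one (satisfying (\ref{recur0}) for $k\leq-2$, with $\sum_{k\leq-1}a_k^-z^k$ convergent on $\cc^*$); the latter is exactly the sequence named in the first alternative. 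Next I would record the elementary observation that $a_{-1}^+=0$ holds if and only if $F$ furnishes an entire solution: when $a_{-1}^+=0$, extending $F$ by $a_k=0$ for $k<0$ satisfies (\ref{recur0}) for every $k$ (internally to $F$ for $k\geq0$, and trivially for $k\leq-1$), so $\sum_{k\geq0}a_k^+z^k$ is holomorphic on $\cc$.

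For sufficiency, if the second alternative (an entire solution) holds, that solution is a fortiori holomorphic on $\cc^*$. If the first alternative holds and $a_{-1}^+=0$, the observation puts us back in the second. If the first alternative holds and $a_{-1}^+\neq0$, I would paste: set $a_k=a_k^+$ for $k\geq0$ and $a_k=\lambda a_k^-$ for $k\leq-1$, choosing $\lambda$ so that $\lambda a_{-1}^-=a_{-1}^+$ (making the value $a_{-1}$ entering the $k=0$ relation consistent with $F$). This is possible since, when $B$ obeys (\ref{a12}), one has $a_{-1}^-\neq0$: otherwise (\ref{a12}) would force $a_{-2}^-=0$ as well (here $l\neq1$ because $l\notin\zz$), contradicting the non-vanishing of neighboring coefficients (cf. Remark \ref{run}). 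The pasted bi-infinite sequence then satisfies (\ref{recur0}) for every $k$ (for $k\geq0$ internally to $F$, for $k\leq-2$ internally to $B$, and for $k=-1$ by (\ref{a12}) applied to $B$), and it converges on $\cc^*$ because its nonnegative part comes from $F$ and its negative part from $\lambda B$.

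For necessity, take a nontrivial solution $(a_k)_{k\in\zz}$ holomorphic on $\cc^*$. Its restriction to $k\geq-1$ is a convergent solution of (\ref{recur0}) for $k\geq0$, hence equals $\lambda^+F$ by the uniqueness in Theorem \ref{conv}, with $\lambda^+\neq0$ (otherwise $a_{-1}=a_0=0$ and Remark \ref{run} forces the zero solution); rescaling, I may assume $a_k=a_k^+$ for $k\geq-1$. If $a_{-1}=0$, the second alternative holds by the observation. If $a_{-1}\neq0$, the restriction to $k\leq-1$ is a convergent solution of (\ref{recur0}) for $k\leq-2$, so it equals $\lambda^-B$ with $\lambda^-\neq0$; since the global solution satisfies (\ref{recur0}) at $k=-1$, that is (\ref{a12}), and this block is a nonzero multiple of $B$, the sequence $B$ satisfies (\ref{a12}), which is the first alternative. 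The main obstacle I anticipate is the careful bookkeeping at the overlap index $k=-1$ together with the degenerate case $a_{-1}^+=0$, where the pasting collapses to an entire solution; one must check that the dichotomy correctly absorbs this degeneracy and invoke Remark \ref{run} at the right moments to exclude simultaneous vanishing of neighboring coefficients.
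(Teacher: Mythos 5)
Your proof is correct and is essentially the paper's argument: both rest on the decoupling of relation (\ref{recur0}) at $k=-1$ (Proposition \ref{propeq}), the existence/uniqueness of the one-sided convergent solutions from Theorem \ref{conv}, the non-vanishing of $a_{-1}$ for the backward solution under (\ref{a12}), and a pasting of the two halves whose degenerate case produces an entire solution. The only difference is bookkeeping: the paper matches the halves by intersecting the line $L_0$ of pairs $(a_0,a_1)$ extendable to convergent forward solutions with the affine line $L_1$ cut out by the $k=0$ relation, whereas you anchor the forward solution at $k_0=-1$ and split on $a_{-1}^+=0$ versus $a_{-1}^+\neq0$; these dichotomies coincide, since $a_{-1}^+=0$ is exactly the paper's parallel-lines case.
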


\begin{proof} Let 
\begin{equation} f(z)=\sum_{k=-\infty}^{-1}a_kz^k\label{k-}\end{equation}
be a semiinfinite solution of recurrence relations (\ref{recur0}) for $k\leq-2$. 
Note that for every $k\leq-1$ its coefficient $a_k$ is 
uniquely determined as a linear combination of the two previous ones $a_{k-2}$ and $a_{k-1}$, see (\ref{recur0}) 
written for $k\leq-2$, since $\mu\neq0$, see (\ref{heun}).  The same holds in the opposite direction: for every $k\leq-3$ the coefficient 
$a_k$ is expressed as a linear combination of the coefficients 
$a_{k+1}$ and $a_{k+2}$ by (\ref{recur0}), since $l\notin\zz$. The two latter  statements together imply that  $a_{-2}$ and 
$a_{-1}$ do not both vanish. Therefore, the above negative semiinfinite series can be extended to positive $k$ as a (may be just formal) 
two-sided solution of (\ref{recur0})  only in the case, when relation (\ref{a12}) holds. Let us show that 
in this case it does extend to a true (not just formal) two-sided solution. 

Note that $a_{-1}\neq0$, by relation (\ref{a12}) and since $a_{-2}$, $a_{-1}$ do not vanish both, $\mu\neq0$   and $l\neq1$. 
Equation (\ref{recur0}) with $k=-1$ has zero multiplier at $a_0$, see (\ref{a12}),  and hence, 
holds for arbitrary $a_0$. The same equation with $k=0$ yields 
\begin{equation}\la a_0-l\mu a_{-1}+\mu a_1=0.\label{a011}\end{equation}
This is a linear non-homogeneous equation on the pair $(a_0,a_1)$. Hence, its solutions form a line $L_1\subset\cc^2$ 
that does not pass through 
the origin: $a_{-1}\neq0$. On the other hand, the
 pairs $(a_0,a_1)$ extendable to true (not just formal) semiinfinite  solutions in positive $k$ exist and 
are all proportional 
(uniqueness of solution up to constant factor and since for every $k\geq1$ the coefficient 
$a_{k\pm1}$ is uniquely determined by $a_k$ and 
$a_{k\mp1}$ via relations (\ref{recur0}), since $l\notin\zz$). Hence, they form a line $L_0$ through 
the origin. Let us choose $(a_0,a_1)$ to be the intersection of the above lines $L_0$ and $L_1$, provided they are not parallel 
(the case of parallel lines is discussed below). Then the pair $(a_0,a_1)$ extends to a semiinfinite  solution of relations 
(\ref{recur0}) in positive $k$, by construction. The complete Laurent series $\sum_{k=-\infty}^{+\infty}a_kz^k$ thus constructed is 
a  solution to equations (\ref{recur0}) and hence, to Heun equation (\ref{heun}).

{\bf Case, when $L_0$ and $L_1$ are parallel.} In this case $L_0=\{\la a_0+\mu a_1=0\}$, and $(a_0,a_1)$ defines a  solution to (\ref{recur0}) with positive 
$k$, if and only if $(a_0,a_1)\in L_0$.  This solution extends to negative $k$ by putting $a_k=0$ for $k<0$, since relation
 (\ref{recur0}) for $k=-1,0$ is equivalent to  (\ref{a12}) and (\ref{a011}) respectively. Finally we obtain a 
{\it converging  Taylor series} satisfying (\ref{recur0}) and hence, presenting a solution of Heun equation (\ref{heun}) 
holomorphic on $\cc$.  Proposition \ref{corel} is proved.
\end{proof}

The next theorem describes those parameter values for which Heun equation (\ref{heun}) has an entire solution. To state it, 
consider the following matrices $M_k$, $R_k$ and numbers $a_k$, $\xi_l$: 
  $$M_k=\left(\begin{matrix}  1+\frac{\la}{k(k+l)} & \frac{\mu^2}{k(k+l)}\\  1 & 0\end{matrix}\right),  \ 
R_k=M_kM_{k+1}\dots \text{ for } k\geq1,$$
$$a_k=\frac{\mu^k}{k!}R_{k,21} \text{ for } k\geq1, \ a_0=R_{1,11},$$
\begin{equation}\xi_l=\xi_l(\la,\mu)=\la a_0+\mu a_1=\la R_{1,11}+\mu^2R_{1,21}.\label{xil}\end{equation}

\begin{theorem} \label{thol} A Heun equation (\ref{heun}) with $n=l+1$, $l\in\cc\setminus\zz_{<0}$  
has an entire solution, if and only if $\xi_l(\la,\mu)=0$.
\end{theorem}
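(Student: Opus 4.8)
The plan is to recognize this as the explicit, matrix-product incarnation of Corollary \ref{cxi}. Since $n=l+1\in\cc\setminus\zz_{\leq0}$, Theorem \ref{xi=0} already produces a unique-up-to-scale entire function $E=\sum_{k\geq0}a_kz^k\not\equiv0$ with $\mcl E=c$ for some constant $c$, and the whole equivalence will follow once I identify $c$ with the number $\xi_l$ of (\ref{xil}). Recall that for $b=0$ the recurrence (\ref{recur}) becomes (\ref{recur0}), that (\ref{recur0}) holding for all $k\geq1$ is equivalent to $\mcl E=const$, and that this constant is the $z^0$-coefficient of $\mcl E$, namely $\la a_0+\mu a_1$ (the left-hand side of (\ref{recur0}) at $k=0$, with $a_{-1}=0$). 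So the core of the proof is to show that the explicit coefficients of (\ref{xil}) produce exactly this $E$.

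First I would check that the matrices $M_k$ in the statement are the matrices (\ref{mat3}) specialized to $b=0$, $n=l+1$ (so $(k+b)(k+b+n-1)=k(k+l)$), and that they are well-defined for all $k\geq1$ precisely because $l\in\cc\setminus\zz_{<0}$. By Corollary \ref{clem} and its Addendum the products $R_k=M_kM_{k+1}\dots$ converge, have vanishing right column and rank one, the sequence $c_k:=R_{k,21}$ satisfies (\ref{bmk}) with $c_k/c_{k-1}\to1$, and $R_{k,21}=R_{k+1,11}$, so that $R_{k,11}=c_{k-1}$ for all $k\geq1$ with $c_0:=R_{1,11}$. Setting $a_k=\frac{\mu^k}{k!}R_{k,21}$ for $k\geq1$ and $a_0=R_{1,11}$, the top-left entry of $R_k=M_kR_{k+1}$ reads $c_{k-1}=(1+\frac{\la}{k(k+l)})c_k+\frac{\mu^2}{k(k+l)}c_{k+1}$ for every $k\geq1$, and substituting $c_k=\frac{k!}{\mu^k}a_k$ turns this into (\ref{recur0}). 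Moreover $a_{k+1}/a_k=\frac{\mu}{k+1}(c_{k+1}/c_k)\to0$, so $\sum a_kz^k$ converges on all of $\cc$ by Corollary \ref{coral}; thus $E$ is entire, and $E\not\equiv0$ because the nonzero first column $(a_0,c_1)^T$ of the rank-one matrix $R_1$ cannot vanish.

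It then remains to read off the constant and assemble the equivalence. Since (\ref{recur0}) holds for all $k\geq1$, one has $\mcl E=\la a_0+\mu a_1=\la R_{1,11}+\mu^2R_{1,21}=\xi_l$. Hence if $\xi_l=0$ then $E$ is a nonzero entire solution of (\ref{heun}). Conversely, if (\ref{heun}) has a nonzero entire solution $\tilde E$, then $\tilde E$ is holomorphic at $0$ and satisfies $\mcl\tilde E=0=const$; by the uniqueness clause of Theorem \ref{xi=0} one gets $\tilde E=\kappa E$ with $\kappa\neq0$, whence $0=\mcl\tilde E=\kappa\xi_l$ forces $\xi_l=0$. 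This is the same argument as in the proof of Corollary \ref{cxi}.

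The step I expect to require the most care is the specialization to $b=0$, which is a resonant value: the Pochhammer normalization $(b)_{k+1}$ of the non-resonant Theorem \ref{th24} degenerates at $b=0$, so one cannot simply quote that theorem. One option is to invoke the resonant construction of Theorem \ref{th25}, tracking $k_{0+}$ from (\ref{kogeq}) (one finds $k_{0+}=-1$ except in the boundary case $l=0$, where $k_{0+}=0$) and checking that its formulas collapse to $a_k=\frac{\mu^k}{k!}R_{k,21}$, $a_0=R_{1,11}$. A cleaner route, which I would prefer, is the direct verification of (\ref{recur0}) from $R_k=M_kR_{k+1}$ sketched above: it is uniform in $l\in\cc\setminus\zz_{<0}$ and avoids any case distinction on whether $l$ is a nonnegative integer.
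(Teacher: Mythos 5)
Your proposal is correct and takes essentially the same route as the paper: the paper likewise constructs $E$ from the infinite matrix products $R_k$, identifies the constant $\mcl E$ with the $k=0$ relation $\la a_0+\mu a_1=\la R_{1,11}+\mu^2R_{1,21}=\xi_l$, and concludes both directions from the uniqueness clause of Theorem \ref{xi=0}, exactly as in Corollary \ref{cxi}. The only difference is bookkeeping: the paper justifies the recurrence and entire convergence by citing the resonant-case Theorem \ref{th25} with $k_{0+}=-1$ (treating $l=0$ in a parenthetical), while you inline that verification via $R_k=M_kR_{k+1}$ and the Addendum to Corollary \ref{clem}, which handles all $l\in\cc\setminus\zz_{<0}$ uniformly.
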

 Theorem \ref{thol} is equivalent to Corollary \ref{cxi}. It was partly proved and conjectured in \cite[section 3, theorem 2]{bt1} and proved completely for entire $l\geq0$ in 
 \cite[subsection 3.1, theorem 3.5]{bg}. 
For completeness of presentation  let us give its direct proof without using results of loc. cit. 
 
  \begin{proof} {\bf of Theorem \ref{thol}.}  The above matrices $M_k$ and numbers $a_k$ coincide with those from (\ref{mat3}) 
  and (\ref{ak01}) respectively constructed for recurrence relations (\ref{recur}) with $b=0$, $n=l+1$, 
  \begin{equation} (k(k+l)+\la)a_k-\mu(k+l)a_{k-1}+\mu(k+1)a_{k+1}=0;\label{b=0}\end{equation}
  here $k_{0+}=-1$. The matrices $M_k$ are well-defined for all $k\in\nn$, whenever $l\notin\zz_{<0}$. 
  (If $l=0$, then $k_{0+}=0$, but the corresponding sequence $a_k$ from (\ref{ak01}) remains the same, as in (\ref{xil}),  
  up to constant factor.) This together with Theorem \ref{th25} implies that 
  the sequence $(a_k)$ satisfies (\ref{b=0}) for $k\geq1$ and the series $E(z)=\sum_{k=0}^{+\infty}a_kz^k$ 
  converges on $\cc$.  Therefore, 
  $\mcl E=const$, and the latter constant is the left-hand side of the relation (\ref{b=0}) corresponding to $k=0$: that is, 
  $\la a_0+\mu a_1=\xi_l(\la,\mu)$. This together with the uniqueness of an entire function $E$ for which $\mcl E=const$ 
  (Theorem \ref{xi=0}) implies the statement of Theorem \ref{thol}.
  \end{proof}


%

\begin{theorem} Let $n\notin\zz$,  $\mu\neq0$,   
$$S_m=\left(\begin{matrix}  1+\frac{\la-n+2}{(1-m)(n-m-2)} & \frac{\mu^2(n-m-1)}{(1-m)(n-m-2)(n-m-3)}\\
 1 & 0\end{matrix}\right) \text{ for } m\geq2,$$
$$T_m=S_mS_{m+1}\dots.$$
Heun equation (\ref{heun}) has a solution holomorphic on $\cc^*$, if and only if either $\xi_l(\la,\mu)=0$, or 
\begin{equation} (2-n+\la)(4-n)T_{2,11}-\mu^2(n-2)T_{2,21}=0.\label{pastreson}\end{equation}
\end{theorem}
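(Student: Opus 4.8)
The plan is to recognize this statement as the resonant analogue of Theorem \ref{eight}, obtained by feeding the backward-solution formulas of Theorem \ref{thko-} into the dichotomy of Proposition \ref{corel}. Since $n\notin\zz$ means $l=n-1\notin\zz$, Proposition \ref{corel} asserts that a solution of (\ref{heun}) holomorphic on $\cc^*$ exists if and only if either (\ref{heun}) has an entire solution, or the backward solution $(a_k)_{k\leq-1}$ of (\ref{recur0}) satisfies relation (\ref{a12}). By Theorem \ref{thol} the first alternative is exactly $\xi_l(\la,\mu)=0$, so everything reduces to showing that the second alternative is equivalent to (\ref{pastreson}).

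First I would specialize Theorem \ref{thko-} to the resonant value $b=0$. Among $\{-1-b,\,1-b-n\}=\{-1,\,1-n\}$ only $-1$ is an integer (as $n\notin\zz$), so $k_{0-}=-1$ and $m_0=1$. Substituting $b=0$ into (\ref{defsm}) turns the matrices $S_m$ into precisely those displayed in the theorem; their denominators $1-m$, $n-m-2$, $n-m-3$ are nonzero for $m\geq2$ (the last two because $n\notin\zz$), so the products $T_m=S_mS_{m+1}\dots$ converge by Corollary \ref{clem}. Reading the normalization (\ref{cm2}) with $\hc_1=T_{2,11}$ and $\hc_2=T_{2,21}$, together with $(2-n)_{1,2}=3-n$ and $(2-n)_{1,3}=(3-n)(4-n)$, gives
$$a_{-1}=\ha_1=\frac{\mu T_{2,11}}{3-n}, \qquad a_{-2}=\ha_2=\frac{\mu^2 T_{2,21}}{(3-n)(4-n)}.$$

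Next I would rewrite relation (\ref{a12}) using $l=n-1$ as $(2-n+\la)a_{-1}-\mu(n-2)a_{-2}=0$ and substitute the two expressions above. The overall factor $\mu/((3-n)(4-n))$, which is nonzero because $\mu\neq0$ and $n\notin\zz$, cancels, and what remains is exactly $(2-n+\la)(4-n)T_{2,11}-\mu^2(n-2)T_{2,21}=0$, i.e.\ equation (\ref{pastreson}). Combined with the identification of the first alternative, this proves the theorem.

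The hard part is not the algebra but the indexing: one must correctly track how the general resonant backward construction of Theorem \ref{thko-}, with its two-index Pochhammer symbols $(b)_{s,l+1}$ and the cutoff $m_0$, collapses at $b=0$ onto the single matrix product $T_2$ and onto the coefficients $a_{-1},a_{-2}$. Once these are pinned down the equivalence of (\ref{a12}) and (\ref{pastreson}) is a one-line cancellation, whose validity rests only on the non-vanishing of the denominators $3-n$, $4-n$ guaranteed by $n\notin\zz$.
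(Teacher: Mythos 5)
Your proposal is correct and follows essentially the same route as the paper: the dichotomy of Proposition \ref{corel}, Theorem \ref{thol} for the entire-solution alternative, and substitution of the backward-solution coefficients $a_{-1},a_{-2}$ from Subsection 3.3 (specialized to $b=0$) into relation (\ref{a12}). The only difference is a harmless normalization: the paper's formulas carry an extra nonzero factor $(2-n)$ in the denominators (Pochhammer $(2-n)_{m+1}$ rather than $(2-n)_{1,m+1}$), which cancels anyway since (\ref{a12}) is homogeneous in $(a_{-1},a_{-2})$.
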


\begin{proof} A solution of Heun equation holomorphic on $\cc^*$ exists if and only if some of the two statements of 
Proposition \ref{corel} holds. The second one, the existence of an entire solution is equivalent to the equation $\xi_l(\la,\mu)=0$, 
by Theorem \ref{thol}. Let us show that the first statement of Proposition \ref{corel}, that is, 
equation (\ref{a12}) on the coefficients $a_{-2}$, 
$a_{-1}$ of the backward solution of recurrence relations (\ref{recur0}) is equivalent to (\ref{pastreson}). To do this, let us recall the 
formulas for the sequence $a_k$ (up to common constant factor), see (\ref{cm2}): 
$$\hc_m=T_{m,21}=T_{m+1,11}, \ \hc_1=T_{2,11},$$
$$\ha_m=\frac{\hc_m\mu^m}{(2-n)_{m+1}}=a_{-m}, \ m\geq1.$$
The sequence $(a_k)_{k\leq-1}$ satisfies recurrence relations (\ref{recur0}) for $k\leq-2$, as in Subsection 3.3, and the series 
$\sum_{k\leq-1}a_kz^{-k}$ converges on $\cc$: here we have rewritten the formulas from Subsection 3.3 for $b=0$. One has 
$$a_{-2}=\frac{\mu^2T_{2,21}}{(2-n)(3-n)(4-n)}, \ a_{-1}=\frac{\mu T_{2,11}}{(2-n)(3-n)},$$
by definition. Substituting the latter formulas and $l=n-1$ to (\ref{a12}) yields (\ref{pastreson}). This together with Proposition 
\ref{corel} proves the theorem.
\end{proof}  

%
%


\def\wt#1{\widetilde#1}
\subsection{Double resonant subcase: $n,b\in\zz$} 

Recall that we study the existence of solution (\ref{solb}) of Heun  equation of the type 
(\ref{heun}). In the case under consideration $b\in\zz$, 
and without loss of generality we can and will consider that $b=0$. In this case a solution we are looking for is holomorphic 
on $\cc^*$ and presented by a Laurent series $E(z)=\sum_{k\in\zz}a_kz^k$ converging on $\cc^*$. Recall that $l=n-1$. 
Without loss of generality we will consider that $l\geq0$. One can achieve this by applying the transformation 
$$\Diamond:E(z)\mapsto e^{\mu(z+z^{-1})}E(-z^{-1}),$$
which is an isomorphism of the solution space of equation (\ref{heun}) and equation 
\begin{equation} \mcl E=z^2E''+((-l+1)z+\mu(1-z^2))E'+(\la+\mu(l-1)z)E=0,\label{heun2}\end{equation}
see \cite[section 4, formula (39)]{bt1}.  It sends solutions of equation (\ref{heun}) 
holomorphic on $\cc^*$ onto solutions of equation (\ref{heun2}) holomorphic on $\cc^*$. 

\begin{theorem} \label{talt} Let $l\in\zz$, $l\geq0$, $\mu\neq0$. Equation (\ref{heun}) with $n=l+1$ has a solution holomorphic on $\cc^*$, if and only if 
its monodromy is unipotent. This happens, if and only if equation (\ref{heun}) satisfies one of the two following statements:

1) either it has an entire solution, i.e., holomorphic on $\cc$; this holds if and only if the monodromy is trivial; 

2) or the corresponding equation (\ref{heun2}) has a nontrivial polynomial solution.

In the case, when the parameters $\la$ and $\mu$ are real and $\mu>0$, statements 1) and 2) are incompatible: if statement 2) 
holds, then the monodromy is nontrivial (a unipotent Jordan cell).
\end{theorem}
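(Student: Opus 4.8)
The plan is to hang everything on Proposition \ref{monprod}: since $n=l+1\in\zz$, the product of the two monodromy eigenvalues is $e^{-2\pi i n}=1$, so they are $\sigma,\sigma^{-1}$. First I would dispose of the equivalence with unipotency. A solution holomorphic on $\cc^*$ is precisely a monodromy eigenfunction of type (\ref{multi}) with $b=0$, i.e. with eigenvalue $1$; its existence forces $\sigma=\sigma^{-1}=1$, hence a unipotent monodromy, and conversely a unipotent monodromy has $1$ as eigenvalue, hence an eigenfunction holomorphic on $\cc^*$. Next I would prove \textbf{entire $\Leftrightarrow$ trivial}. At the irregular point $0$ equation (\ref{heun}) has two formal solution types, a holomorphic one $\sum_{k\ge0}a_kz^k$ and an exponential one $\sim e^{\mu/z}$. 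If $E$ is entire it is of the first type; applying the involution $\#$ of (\ref{defdi}) gives a solution $\#E$, again single-valued on $\cc^*$ but not holomorphic at $0$ (the factor $z^{-n}$ and the substitution $z\mapsto z^{-1}$ produce a pole or an essential singularity), hence independent of $E$, so the monodromy is trivial. Conversely, if the monodromy is trivial the whole two-dimensional solution space consists of convergent Laurent series at $0$; since such solutions are spanned by at most one genuine solution of each formal type, two-dimensionality forces the convergent power series $\sum_{k\ge0}a_kz^k$ (unique by Theorem \ref{conv}) to be genuine, i.e. $\xi_l(\la,\mu)=0$, which by Theorem \ref{thol} means an entire solution exists.

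For the dichotomy I would use the second formal type. By the previous step, a nontrivial unipotent monodromy has a one-dimensional space of solutions on $\cc^*$, spanned by some $E_0$, and admits no entire solution ($\xi_l\neq0$); hence $E_0$ is necessarily of the $e^{\mu/z}$-type at $0$. Applying the isomorphism $\Diamond:E\mapsto e^{\mu(z+z^{-1})}E(-z^{-1})$ from (\ref{heun}) to (\ref{heun2}) cancels this factor, and one checks that $\Diamond E_0$ is holomorphic at $0$ with only polynomial growth at $\infty$, i.e. a nonzero polynomial solution of (\ref{heun2})---statement 2. The converse direction is immediate: a polynomial solution $P$ of (\ref{heun2}) yields $\Diamond P=e^{\mu(z+z^{-1})}P(-z^{-1})$, a solution of (\ref{heun}) holomorphic on $\cc^*$ with an essential singularity at $0$, hence unipotent by the first step and, being non-entire, nontrivial by the second step. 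Combining this with the equivalence entire $\Leftrightarrow$ trivial gives the stated dichotomy: the monodromy is unipotent if and only if statement 1 (equivalently, trivial monodromy) or statement 2 holds.

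For the final, real, statement, the equivalence entire $\Leftrightarrow$ trivial already established reduces the incompatibility of statements 1 and 2 for real $\la,\mu$ with $\mu>0$ to the assertion that a polynomial solution of (\ref{heun2}) forces $\xi_l(\la,\mu)\neq0$, i.e. excludes an entire solution of (\ref{heun}). This is exactly the Buchstaber--Tertychnyi conjecture, whose proof in the real regime rests on the positivity of the relevant determinants of modified Bessel functions. I would invoke that result from \cite{bg} (together with \cite{bt0} and Corollary \ref{cxi}) to conclude that for real $\mu>0$ statement 2 produces a nontrivial Jordan-cell monodromy and hence rules out statement 1.

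The main obstacle I expect is the forward direction of the dichotomy: verifying that the $e^{\mu/z}$-type solution $E_0$ is carried by $\Diamond$ to an \emph{honest polynomial} solution of (\ref{heun2}), and not merely to some solution holomorphic on $\cc^*$. This requires controlling $E_0$ at the second irregular point $z=\infty$ (showing it is of $e^{\mu z}$-type there, so that $\Diamond$ removes both essential singularities and leaves only polynomial growth). I would extract this from the transfer-matrix/recurrence analysis of Section 3 specialized to the doubly-resonant indices $b,b+n\in\zz$, where the relations (\ref{recur0}) decouple at $k=-1$ and $k=-l$; the truncation that produces the polynomial is forced by this decoupling and matches the description of polynomial solutions of (\ref{heun2}) in \cite{bt0}. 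The other genuinely deep input is the reality/positivity estimate underlying the last paragraph, which is precisely where \cite{bg} is indispensable.
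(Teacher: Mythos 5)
Your opening step (a solution holomorphic on $\cc^*$ is exactly an eigenfunction with eigenvalue $1$, and by Proposition \ref{monprod} the eigenvalues multiply to $1$, so this is equivalent to unipotency) is precisely the paper's argument, and your last step (incompatibility of 1) and 2) for real $\mu>0$ via the Bessel-determinant results of \cite{bg}) also matches. Your $\#$-based proof of ``entire $\Rightarrow$ trivial monodromy'' is a reasonable variant of the paper's citation of \cite[lemma 3, statement 6]{bt2}, though it needs two repairs: you must actually prove $\#E\not\propto E$, and $\#$ is only defined when $\la+\mu^2\neq0$ (the theorem allows arbitrary complex $\la$). The genuine problems are elsewhere. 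First, your converse ``trivial monodromy $\Rightarrow$ entire solution'' rests on the claim that the two-dimensional space of single-valued solutions is ``spanned by at most one genuine solution of each formal type.'' This presupposes that every single-valued solution decomposes as (holomorphic at $0$) plus $e^{\mu/z}z^{-l}\cdot$(holomorphic at $0$), which is exactly the assertion that the Stokes phenomenon is absent --- the very thing to be proved. At an irregular singular point, asymptotic expansions hold only sector-wise; single-valuedness of all solutions does not by itself force any solution to be asymptotic to the formal power series in a full punctured neighborhood, nor force that series to converge. The paper closes this step by passing to the system (\ref{ttyy}) and using the factorization of the monodromy as (formal monodromy)$\times$(inverses of Stokes matrices) from \cite{4}: triviality of the product of $\diag(1,e^{-2\pi il})$ with an upper- and a lower-triangular unipotent matrix forces both Stokes matrices to be trivial, hence analytic equivalence to the formal normal form, whose constant solution pulls back to an entire solution. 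Your dimension count is not a substitute for this.

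Second, the core implication ``nontrivial unipotent monodromy $\Rightarrow$ equation (\ref{heun2}) has a polynomial solution'' is not proved in your proposal; you flag it yourself as the main obstacle. Your intended route (show the single-valued solution $E_0$ is of $e^{\mu/z}$-type at $0$ \emph{and} of $e^{\mu z}$-type at $\infty$, then apply $\Diamond$ and Liouville) would require sectorial asymptotic control at both irregular points and persistence of those asymptotics across Stokes lines --- none of which is carried out, and which is at least as hard as the statement itself. The paper's actual argument avoids asymptotics entirely and is purely algebraic: for a solution $\sum_{k\in\zz}a_kz^k$ holomorphic on $\cc^*$, the recurrence relations (\ref{b=0}) for $k=-l,\dots,-1$ involve only $a_{-l},\dots,a_{-1}$ (they decouple at $k=-1$ and $k=-l$), so this block of coefficients is a kernel vector of the tridiagonal $l\times l$ matrix $\wt H$; if it is nonzero, then $\det\wt H=0$, Proposition \ref{trconj} converts this into $\det(H+\la\,Id)=0$, and Theorem \ref{tpol} produces the polynomial solution of (\ref{heun2}); if it is zero, the truncation $\sum_{k\geq0}a_kz^k$ (or, in the degenerate subcase, $\#E$) is an entire solution. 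Your final paragraph correctly guesses that this decoupling is where the proof lives, but the proposal does not execute it, and once executed it \emph{replaces} your asymptotic scheme rather than completing it. As it stands, the proposal establishes the unipotency criterion and the easy converse (via $\Diamond$ applied to a polynomial solution), but both hard directions of the theorem remain open.
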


\begin{remark} The incompatibility of statements 1) and 2)  for real parameter values 
was proved in \cite[theorem 3.10]{bg}. It follows from our result on positivity of determinants of modified Bessel functions \cite[theorem 1.3]{bg} and results of \cite{bt2}. (Incompatibility for real parameters was proved for $\mu>0$, but it holds whenever $\mu\neq0$: 
cases $\mu>0$ and $\mu<0$ are symmetric and one is reduced  to the other via appropriate transformation of the equation.) 
However as it will be shown below in Proposition \ref{pbes}, statements 1) and 2) hold simultaneously for an infinite set of {\it complex} parameter values  already for $l=1$. 
\end{remark}

Theorem \ref{talt} will be proved below. The sets of parameter values for which statements 1) or 2) hold were already described in 
\cite{bt0, bt1, bg}. The set of parameters corresponding to Heun equations (\ref{heun}) having entire solutions is given by 
equation $\xi_l(\la,\mu)=0$, see Theorem \ref{thol}. 
Let us recall the description of the parameters corresponding to equations (\ref{heun2}) with polynomial solutions.
   To do this, consider the three-diagonal 
  $l\times l$-matrix 
 \begin{equation}H=\left(\begin{matrix}  0 & \mu & 0 & 0  & 0 & 0 \dots & 0\\
  \mu(l-1) & 1-l & 2\mu & 0 & 0 & \dots & 0\\
  0 & \mu(l-2) & -2(l-2) & 3\mu & 0 & \dots & 0\\
  \dots & \dots & \dots & \dots & \dots & \dots & \dots\\
  0 &\dots & 0 & 0 & 2\mu & -2(l-2) & (l-1)\mu\\
  0 & \dots & 0 & 0 & 0 & \mu & 1-l \end{matrix}\right):  \label{defh}\end{equation}
$$H_{ij}=0 \text{ if } |i-j|\geq2; \ H_{jj}=(1-j)(l-j+1); \ H_{j,j+1}=\mu j; \ H_{j,j-1}=\mu(l-j+1).$$
 The matrix $H$ belongs to the class of the so-called Abelian matrices that arise in 
 different questions of mathematics and mathematical physics\footnote{Ilyin, V.P.; Kuznetsov, Yu.I. {\it Three-diagonal matrices 
 and their applications.} Moscow, Nauka, 1985.}. 

 \begin{theorem} \label{tpol} \cite[section 3]{bt0} A Heun equation (\ref{heun2})  with $\mu\neq0$  has a polynomial solution, if 
 and only if $l\in\nn$ and the three-diagonal matrix $H+\la Id$ has zero determinant.
 \end{theorem}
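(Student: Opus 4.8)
The plan is to substitute a polynomial $E(z)=\sum_{k=0}^{d}a_kz^k$ with $a_d\neq0$ into equation (\ref{heun2}) and read off the three-term recurrence on its coefficients. Collecting the coefficient of $z^k$ in $\mcl E$ yields
\[
\mu(l-k)a_{k-1}+\bigl(k(k-l)+\la\bigr)a_k+\mu(k+1)a_{k+1}=0,
\]
which must hold for every $k\geq0$ (with the convention $a_{-1}=0$, and $a_k=0$ for $k>d$). First I would establish that $l$ must be a positive integer: applying the relation at $k=d+1$, where $a_{d+1}=a_{d+2}=0$, leaves $\mu(l-d-1)a_d=0$; since $\mu\neq0$ and $a_d\neq0$ this forces $l=d+1$. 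Hence $l\in\nn$ and the polynomial necessarily has degree $l-1$.

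Next I would check that, once $l\in\nn$ and $a_k=0$ is imposed for $k\geq l$, the relations with index $k\geq l$ are automatically satisfied. The crucial structural fact is that at $k=l$ the coefficient $\mu(l-k)$ multiplying $a_{l-1}$ vanishes, so the nonzero block $a_0,\dots,a_{l-1}$ decouples from the vanishing tail; and for $k>l$ all three entries involved are zero. Consequently the only genuine constraints are the relations for $k=0,1,\dots,l-1$, a homogeneous linear system of $l$ equations in the $l$ unknowns $a_0,\dots,a_{l-1}$.

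Finally I would identify the matrix of this system with $H+\la Id$. Indexing rows and columns by $j=k+1\in\{1,\dots,l\}$ and comparing with (\ref{defh}), the relation for $k=j-1$ has subdiagonal entry $\mu(l-j+1)$, diagonal entry $(j-1)(j-1-l)+\la=(1-j)(l-j+1)+\la$, and superdiagonal entry $\mu j$, which are exactly $H_{j,j-1}$, $H_{jj}+\la$, and $H_{j,j+1}$. The boundary rows $j=1$ (where the $a_{-1}$ term drops out) and $j=l$ (where the $a_l$ term drops out) match the first and last rows of $H+\la Id$ with no extra conditions. Therefore a nonzero polynomial solution exists if and only if this $l\times l$ homogeneous system is degenerate, i.e. $\det(H+\la Id)=0$.

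I expect the only real difficulty to be bookkeeping rather than anything conceptual: one must keep track of the sign conventions in the diagonal entries, via the identity $(j-1)(j-1-l)=(1-j)(l-j+1)$, and confirm that the two endpoint relations $k=0$ and $k=l-1$ coincide precisely with the first and last rows of $H+\la Id$ rather than producing spurious boundary constraints. The vanishing of $\mu(l-k)$ at $k=l$ is what simultaneously pins the degree to $l-1$ and guarantees the automatic satisfaction of the tail relations, so verifying it carefully is the heart of the argument.
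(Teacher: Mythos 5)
Your proposal is correct, and it is worth noting that the paper itself does not prove Theorem \ref{tpol}: the statement is quoted from \cite[section 3]{bt0}, and the only argument the paper supplies is the remark following the theorem, which proves just the easy necessity of $l\in\nn$ (for $l\notin\nn$ the coefficients at $a_{k\pm1}$ in the three-term relations never vanish, so a solution cannot terminate). Your write-up therefore supplies a complete, self-contained proof of what the paper delegates to a citation, and it does so by the natural route that the surrounding text presupposes: your recurrence $\mu(l-k)a_{k-1}+(k(k-l)+\la)a_k+\mu(k+1)a_{k+1}=0$ is exactly relation (\ref{b=0}) with $l$ replaced by $-l$, as the paper's remark says; your leading-coefficient argument at $k=d+1$ pins $l=d+1$ (consistent with the paper's later assertion, in the proof of Proposition \ref{pbes}, that polynomial solutions of (\ref{heun2}) have degree at most $l-1$); and your identification of the $l\times l$ system for $(a_0,\dots,a_{l-1})$ with $H+\la Id$, via the sign identity $(j-1)(j-1-l)=(1-j)(l-j+1)$ and the row index $j=k+1$, matches the conventions of (\ref{defh}) — indeed it is the matrix $H+\la Id$ itself, not its transpose, which only enters the paper later through the conjugation of Proposition \ref{trconj} in a different context. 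The two points you flagged as the heart of the matter are exactly the right ones, and you handled both: the vanishing of the subdiagonal coefficient $\mu(l-k)$ at $k=l$ decouples the finite block from the zero tail, so in the converse direction any nonzero kernel vector of $H+\la Id$, extended by zeros, satisfies all relations $k\geq0$ and hence yields a nontrivial polynomial solution.
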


\begin{remark} Let us  explain why 
a Heun equation (\ref{heun2}) cannot have a polynomial solution for $l\notin\nn$, $\mu\neq0$. Indeed, 
 the corresponding three-term relations are of the form (\ref{b=0}) with $l$ replaced by $-l$. For every $k\geq1$ the coefficients at 
 $a_{k\pm1}$ in these relations are non-zero.   Hence their solution cannot be a polynomial, and it is an infinite series. 
 \end{remark}

\begin{proof} {\bf of Theorem \ref{talt}.} The monodromy of equation (\ref{heun}) has unit determinant, since $l\in\zz$ and by 
Proposition \ref{monprod}. Let equation (\ref{heun}) have 
a solution $E(z)$ holomorphic on $\cc^*$.  The latter solution  is a monodromy eigenfunction with eigenvalue one. 
Hence, the other monodromy eigenvalue also equals one, by unimodularity of the monodromy,  Conversely, let the monodromy be 
unipotent. Then it has an eigenfunction with  eigenvalue one, and hence, the  latter eigenfunction is a solution of 
equation (\ref{heun}) holomorphic on $\cc^*$. 

Now let us show that existence of a solution $E(z)$ holomorphic on $\cc^*$ is equivalent to the statement that 
one of the two 
statements 1) or 2) holds. Let the above solution $E=\sum_{k\in\zz}a_kz^k$ exist.   
Let us prove that one of 
the two statements 1) or 2) holds. 

The Laurent series  of the solution $E(z)$ converges on $\cc^*$ and the coefficients $a_k$ satisfy recurrence relations (\ref{b=0}). 
%
For $k=-l$ and $k=-1$ respectively these relations take the form 
\begin{equation}\la a_{-l}+\mu(1-l)a_{1-l}=0,\label{-l}\end{equation}
\begin{equation} (1-l+\la)a_{-1}-\mu(l-1)a_{-2}=0.\label{-1}\end{equation}
In particular, they do not contain $a_j$, $j\notin\{-l,\dots,-1\}.$ 
Therefore, given a solution holomorphic on $\cc^*$ of Heun equation (\ref{heun}), its Laurent 
coefficients $a_k$ with $-l\leq k\leq -1$ should form a vector $(a_{-l},\dots,a_{-1})$ satisfying equations (\ref{-l}), (\ref{-1}) and 
the $l-2$ recurrence equations (\ref{b=0}) for intermediate $k=-l+1,\dots,-2$. In other terms,  the latter vector should be in the kernel of 
the three-diagonal $l\times l$- matrix $\wt H$ of equations (\ref{b=0}) with $k=-1,\dots,-l$: its line number  $-k$ 
consists of the coefficients of the $k$-th relation; the coefficient at $a_{-j}$ stands at the column number $j$. 

\begin{proposition} \label{trconj}  Let  $\wt H$ be the latter matrix, and let $H^t$ be the transposed matrix (\ref{defh}). One has 
$$ \wt H=Q(H^t+\la Id)Q^{-1}, \ Q=\left(\begin{matrix}  0 & 0 & 0 & \dots & 1\\
 0 & \dots & 0 & -1 & 0\\
 \dots & 0 & 1 & 0 & 0\\
\dots & \dots &\dots &\dots &\dots\\
  (-1)^{l-1} & 0 & 0 & \dots & 0\end{matrix}\right).$$
 \end{proposition}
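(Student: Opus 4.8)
The plan is to prove the identity by directly computing both sides as explicit tridiagonal matrices, so that the whole argument reduces to index bookkeeping. First I would read off $\wt H$ from the recurrence relations (\ref{b=0}). Setting $i=-k$ with $i$ ranging over $1,\dots,l$, the $k$-th relation has coefficient $k(k+l)+\la=i(i-l)+\la$ at $a_{-i}$, coefficient $-\mu(k+l)=\mu(i-l)$ at $a_{-(i+1)}$, and coefficient $\mu(k+1)=\mu(1-i)$ at $a_{-(i-1)}$. By the stated convention (line number $-k=i$, with the coefficient of $a_{-j}$ placed in column $j$) this yields
\begin{equation*}
\wt H_{i,i}=i(i-l)+\la,\qquad \wt H_{i,i+1}=\mu(i-l),\qquad \wt H_{i,i-1}=\mu(1-i),
\end{equation*}
all remaining entries being zero. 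No separate treatment of the boundary rows is needed: the out-of-range entries $\wt H_{1,0}=\mu(1-1)$ and $\wt H_{l,l+1}=\mu(l-l)$ vanish automatically, which is precisely why the boundary relations (\ref{-l}) and (\ref{-1}) involve only $a_{-1},\dots,a_{-l}$.

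Next I would examine the conjugator $Q$, whose only nonzero entries lie on the anti-diagonal, $Q_{i,\,l+1-i}=(-1)^{i-1}$; thus $Q$ is a signed reversal, $Qe_j=(-1)^{l-j}e_{l+1-j}$. A one-line computation gives $Q^2=(-1)^{l-1}Id$, hence $Q^{-1}=(-1)^{l-1}Q$ is again anti-diagonal, with $Q^{-1}_{l+1-j,\,j}=(-1)^{j-1}$. Because both factors are anti-diagonal, conjugation by $Q$ reverses row and column indices while inserting a sign:
\begin{equation*}
(QBQ^{-1})_{i,j}=(-1)^{i-1}(-1)^{j-1}B_{l+1-i,\,l+1-j}=(-1)^{i+j}B_{l+1-i,\,l+1-j}
\end{equation*}
for an arbitrary matrix $B$. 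In particular the tridiagonal shape is preserved, so only the three central diagonals need to be matched.

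Finally I would apply this rule to $B=H^t+\la Id$, using $(H^t)_{pp}=(1-p)(l-p+1)$, $(H^t)_{p,p-1}=\mu(p-1)$ and $(H^t)_{p,p+1}=\mu(l-p)$ read off (\ref{defh}). On the diagonal, $p=l+1-i$ gives $(-1)^{2i}\big(i(i-l)+\la\big)=\wt H_{i,i}$; on the super-diagonal, $j=i+1$ forces the entry $(H^t)_{l+1-i,\,l-i}=\mu(l-i)$, which the sign $(-1)^{2i+1}$ converts to $\mu(i-l)=\wt H_{i,i+1}$; symmetrically the sub-diagonal produces $\mu(1-i)=\wt H_{i,i-1}$. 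Since all three diagonals agree and everything off them vanishes, $\wt H=Q(H^t+\la Id)Q^{-1}$. The calculation poses no real obstacle; its only delicate point is synchronizing the reversal $j\mapsto l+1-j$ with the alternating signs of $Q$, and it is exactly this reversal-with-signs that makes the transposed matrix $H^t$, rather than $H$ itself, appear in the conjugation.
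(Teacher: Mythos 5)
Your proof is correct: the entrywise description of $\wt H$ read off from relations (\ref{b=0}) (including the vanishing boundary entries), the rule $(QBQ^{-1})_{i,j}=(-1)^{i+j}B_{l+1-i,\,l+1-j}$ for conjugation by the signed anti-diagonal matrix $Q$, and the matching of all three diagonals with $H^t+\la Id$ are all accurate. The paper gives no argument beyond the remark that the statement ``can be checked immediately,'' and your computation is exactly that direct check, so the approach is the same.
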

 
The statement of Proposition \ref{trconj} can be checked immediately. 

{\bf Case 1). There exists a solution $E(z)$ 
of Heun equation (\ref{heun}) 
 holomorphic on $\cc^*$ 
such that either $l=0$ in (\ref{heun}), or $l\geq1$ and $a_j=0$ for all $j\in\{-l,\dots,-1\}$.} 
Then the series $\hat E(z)=\sum_{k\geq0}a_kz^k$ is an entire solution, i.e.,  holomorphic on $\cc$: it satisfies relations (\ref{b=0}) for all 
$k\in\zz$. Indeed,  relations (\ref{b=0}) hold for all $k>0$, by assumption. For $k<0$ they do not contain $a_j$ with $j\geq0$, by 
(\ref{-1}), and hence, hold automatically, if we put $a_j=0$ for all $j<0$. Relation (\ref{b=0}) for $k=0$ takes the form $\la a_0+\mu a_1=0$, 
and hence, does not contain $a_{-1}$ and holds automatically for the series $\hat E$, as for $E$: for $l=0$ this is obvious; for 
$l\geq1$ this follows from the assumption that $a_{-1}=0$ in $E$. 
A priori, it may happen that the series $\sum_{k=0}^{+\infty}a_kz^k$ is identically zero: in our assumptions, this holds exactly when 
$a_j=0$ for every $j\geq-l$. In this case 
the function $\# E(z)=2\omega z^{-(l+1)}E(z^{-1})$, which is also a solution of equation (\ref{heun}), is an entire solution linearly independent with $E$. It is known that if (\ref{heun}) has an entire solution and $l\in\zz_{\geq0}$, 
then each solution of equation (\ref{heun}) is holomorphic on $\cc^*$,  
and if $l\geq1$, then its Laurent series does not contain monomials $z^j$, $j\in\{-l,\dots,-1\}$, see \cite[lemma 3, statement 6]{bt2}. 
Hence, the monodromy is trivial. 

Let us prove the converse: if the monodromy of equation (\ref{heun}) 
is trivial, then $l\in\zz$ and equation (\ref{heun}) has an entire solution. Indeed, 
 Heun equation (\ref{heun}) is analytically equivalent to the  system of equations 
\begin{equation}\begin{cases} & v'=\frac1{2i\omega z}u\\
& u'=z^{-2}(-(lz+\mu(1+z^2))u+\frac z{2i\omega}v)\end{cases},\label{ttyy}\end{equation}
where $E(z)=e^{\mu z}v(z)$. 
The formal normal form at the origin of system (\ref{ttyy}) is the system 
$$\begin{cases} & \hat v'=0\\
 & \hat u'=-z^{-2}(lz+\mu)\hat u\end{cases}.$$
The monodromy  matrix of system (\ref{ttyy}) written in a canonical sectorial solution base in appropriate sector is the product of three matrices: the 
monodromy $\diag(1,e^{-2\pi i l})$ of the 
formal normal form and two unipotent matrices, one upper-triangular and the other lower-triangular (the inverse to the Stokes matrices). 
See \cite[formulas (2.15) and (3.2)]{4} for more detail.  The latter product is identity, if and only if $l\in\zz$ and the Stokes matrices 
are trivial, as in loc. cit. Triviality of the Stokes matrices is equivalent to the existence of a variable change 
$(v,u)=H(z)(\hat v,\hat u)$ 
transforming system (\ref{ttyy}) to its formal normal form, where $H:\cc\to GL_2(\cc)$ is a holomorphic mapping, 
$H(0)=Id$, as in loc. cit. The formal normal form has an obvious holomorphic 
solution $(\hat v(z),\hat u(z))\equiv(1,0)$. Its image under the latter variable change is a solution $(v(z),u(z))$  of 
system (\ref{ttyy}) holomorphic on $\cc$.  
Therefore, the corresponding solution $E(z)=e^{\mu z}v(z)$ of Heun equation (\ref{heun}) is also holomorphic on $\cc$. The converse statement is proved. 


%
%

{\bf Case 2): $l\geq1$ and there exists a  
solution of Heun equation (\ref{heun}) holomorphic on $\cc^*$ with  $a_{k}\neq0$ for some $k\in\{-l,\dots,-1\}$.} 
In this case the three-diagonal matrix $\wt H$  of relations (\ref{b=0}) with $k=-l,\dots,-1$ has nonzero kernel containing the 
vector $(a_{-1},\dots,a_{-l})$ (see the above arguments), and hence, zero determinant. 
Thus, the matrix $H+\la Id$, whose transposed  is conjugated to $\wt H$ (Proposition \ref{trconj}), also has zero 
determinant. Therefore, equation (\ref{heun2}) has a polynomial solution (Theorem \ref{tpol}). It is known that if the parameters 
$\la$ and $\mu$ are real, $\mu>0$ and equation (\ref{heun2}) has 
a polynomial solution, then the corresponding equation (\ref{heun})  does not have entire solution \cite[theorem 3.10]{bg}: 
cases 1) and 2) are incompatible. Therefore,  if 
$\la,\mu\in\rr$, $\mu>0$ and case 2) takes place, then the monodromy is non-trivial:  it is a unipotent Jordan cell, 

Let us now prove the converse: each statement 1) or 2) implies the existence of a solution $E(z)$ holomorphic on $\cc^*$ of equation 
(\ref{heun}). For statement 1) this is obvious: the solution from 1) is even holomorphic on $\cc$. Let statement 2) hold: equation 
(\ref{heun2}) have a polynomial solution $\wt E$. Let $\Diamond$ be the transformation from the beginning of the subsection, which 
is an  isomorphism between the solution spaces of equations 
(\ref{heun}) and (\ref{heun2}). The function $E=\Diamond^{-1}\wt E$ is a solution of equation 
(\ref{heun}) holomorphic on $\cc^*$, since  $\Diamond$ is an isomorphism of the space of functions holomorphic on 
$\cc^*$. Theorem \ref{talt} is proved.
\end{proof}

 To show that statements 1) and 2) of Theorem \ref{talt} can be compatible, let us recall the definition of modified Bessel functions 
 $I_k(x)$ of the first kind: they are the Laurent coefficients of the family of analytic functions 
 $$g_x(z)=e^{\frac x2(z+\frac 1z)}=\sum_{k=-\infty}^{+\infty}I_k(x)z^k;$$
 $$I_k(x)=i^{-k}J_k(ix),$$
 where $J_k$ is the usual $k$-th Bessel function. 
 Recall that each function $J_k$ of complex variable $x$ has infinite number of zeros (roots), 
 all of them are real (non-zero for $k=0$) and symmetric: that is, if $x$ is a root of the function $J_k$, then 
 so is $-x$. This follows from their infinite product decomposition, see \cite[p.235]{olver}: 
 $$J_k(z)=\frac1{k!}\left(\frac z2\right)^k\prod_{j\geq1}(1-\frac{z^2}{x_{k,j}^2}) \text{ for } k\geq0,$$
 where $x_{k,1}<x_{k,2},\dots\in\rr_+$ are the positive roots of the function $J_k$. 
  This implies that each function $I_k$ has infinite  number of roots, namely, $\pm ix_{k,j}$ (and the additional root $0$, if $k\neq0$).

 \begin{proposition} \label{pbes} 
 Let $l=1$, $n=l+1=2$. Heun equation (\ref{heun2}) has a polynomial solution, if and only if $\la=0$: the latter 
 solution is a constant. For $\la=0$ equation (\ref{heun}) has an entire solution, if and only if $I_1(2\mu)=0$, i.e., 
  $\mu=\pm i\frac{x_{1,j}}2$ for some $j$. Thus, for $l=1$, $n=l+1=2$, $(\la,\mu)=(0,\pm i\frac{x_{1,j}}2)$ equation (\ref{heun2}) has a constant solution  and equation (\ref{heun}) has an entire solution. 
  \end{proposition}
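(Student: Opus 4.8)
The plan is to handle the two equations separately and then read off the simultaneous case. First I would put $l=1$ into equation (\ref{heun2}); since $-l+1=0$ and $\mu(l-1)=0$ it becomes $z^2E''+\mu(1-z^2)E'+\la E=0$. To find its polynomial solutions I would substitute $E=\sum_{j=0}^{d}c_jz^j$ with $c_d\neq0$ and look at the highest power: the only source of $z^{d+1}$ is the term $-\mu z^2E'$, which contributes $-\mu d\,c_d z^{d+1}$, and since $\mu\neq0$ this forces $d=0$. Thus every polynomial solution is a constant, and a nonzero constant solves the equation precisely when $\la=0$. (Equivalently this is Theorem \ref{tpol}: for $l=1$ the matrix $H$ is the $1\times1$ zero matrix, so $\det(H+\la\,Id)=\la$.) This proves the first assertion.

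For the second assertion I would work with $\la=0$, $n=l+1=2$ and use the coefficient recurrence. By Theorem \ref{thol} (equivalently Corollary \ref{cxi}) equation (\ref{heun}) with $l=1$ has an entire solution if and only if $\xi_1(\la,\mu)=0$, and by (\ref{xil}) we have $\xi_1(0,\mu)=\la a_0+\mu a_1=\mu a_1$, where $(a_k)_{k\geq0}$ is the forward (convergent) solution of (\ref{b=0}). Writing (\ref{b=0}) for $l=1$, $\la=0$ and dividing by $k+1$ gives, for $k\geq1$, the relation $a_{k-1}-a_{k+1}=\frac{k}{\mu}a_k$. The key step is to recognize that this is exactly the three-term recurrence $I_{k-1}(x)-I_{k+1}(x)=\frac{2k}{x}I_k(x)$ of the modified Bessel functions evaluated at $x=2\mu$. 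Since $\sum_k I_k(2\mu)z^k=e^{\mu(z+z^{-1})}$ converges and does not vanish identically (and $I_{-k}=I_k$), the sequence $(I_k(2\mu))_{k\geq0}$ is a nonzero convergent solution; by the uniqueness up to a constant factor (Theorem \ref{conv}) the forward solution is $a_k=c\,I_k(2\mu)$ with $c\neq0$.

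It then follows that $\xi_1(0,\mu)=\mu a_1=c\,\mu\,I_1(2\mu)$, which vanishes if and only if $I_1(2\mu)=0$, since $c,\mu\neq0$. Translating through $I_1(x)=-iJ_1(ix)$ and the symmetric nonzero roots $\pm x_{1,j}$ of $J_1$, this is equivalent to $2i\mu=\pm x_{1,j}$, i.e. $\mu=\pm i\,x_{1,j}/2$ (the root $0$ being excluded by $\mu\neq0$); this is the second assertion. Combining the two assertions yields the final claim: at $(\la,\mu)=(0,\pm i\,x_{1,j}/2)$ the value $\la=0$ makes equation (\ref{heun2}) carry a constant solution while $I_1(2\mu)=0$ makes equation (\ref{heun}) carry an entire solution. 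The only genuinely non-routine point is the Bessel identification and the consequent proportionality $\xi_1(0,\mu)\propto I_1(2\mu)$; everything else is a leading-coefficient computation together with Theorems \ref{tpol}, \ref{thol} and \ref{conv}.
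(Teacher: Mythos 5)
Your proof is correct, but in its main (second) part it takes a genuinely different route from the paper's. For the polynomial statement the difference is minor: the paper quotes Theorem \ref{tpol} together with the degree bound $\deg\le l-1$ from \cite{bt0}, whereas you give a direct leading-coefficient argument; both give that the solution is a constant and that $\la=0$. For the entire-solution statement, however, the paper never touches the $\xi_l$-criterion: it applies the transformation $\Diamond$ to the constant solution $\hat E\equiv1$ of (\ref{heun2}), obtaining the explicit solution $E=\Diamond\hat E=e^{\mu(z+z^{-1})}$ of (\ref{heun}) holomorphic on $\cc^*$, reads off from the generating-function definition that its Laurent coefficient at $z^{-1}$ equals $I_1(2\mu)$, and then invokes the dichotomy established in the proof of Theorem \ref{talt}, case 1), together with \cite[lemma 3, statement 6]{bt2}: equation (\ref{heun}) has an entire solution if and only if that coefficient vanishes. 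You instead stay entirely inside the recurrence framework: by Theorem \ref{thol}, an entire solution exists iff $\xi_1(0,\mu)=\mu a_1=0$ for the forward solution $(a_k)_{k\ge0}$ of (\ref{b=0}), and you identify $(a_k)$ with $\bigl(I_k(2\mu)\bigr)_{k\ge0}$ (up to a nonzero factor) by matching the recurrence with the classical identity $I_{k-1}(x)-I_{k+1}(x)=\frac{2k}{x}I_k(x)$ at $x=2\mu$ and appealing to the uniqueness statement of Theorem \ref{conv}; your symmetry argument ($I_{-k}=I_k$, so the one-sided Bessel sequence cannot vanish identically) correctly rules out the degenerate case. Both routes hinge on the same fact --- the Laurent coefficients of $e^{\mu(z+z^{-1})}$ are the $I_k(2\mu)$ --- but you recover it through the three-term recurrence rather than through the generating function. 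What your version buys: it is self-contained within the paper's own machinery (Theorems \ref{thol} and \ref{conv}) plus a standard Bessel identity, avoiding both the symmetry $\Diamond$ and the external reference to \cite{bt2}. What the paper's version buys: it exhibits the relevant solution of (\ref{heun}) in closed form and shows that, when $I_1(2\mu)=0$, the entire solution is precisely the truncation of the Laurent series of $e^{\mu(z+z^{-1})}$ to nonnegative powers.
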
 
 
 \begin{proof} The first statement of the proposition follows from Theorem \ref{tpol}: the corresponding three-diagonal matrix for 
 $l=1$ is the scalar number $\la$. Note that a polynomial solution of equation (\ref{heun2}) has degree at most $l-1$, as was 
 shown in \cite[section 3]{bt0}. Hence, in our case it is a constant, which can be normalized to be $\hat E(z)\equiv 1$. Its 
 image $E=\Diamond\hat E=e^{\mu(z+\frac1z)}$ under the transformation $\Diamond$ from the beginning of the subsection is a solution 
 of equation (\ref{heun}). The Laurent coefficient of the function $E(z)$ at the power $z^{-1}$ equals $I_1(2\mu)$, by definition. 
 If it vanishes, then equation (\ref{heun}) has an entire solution, as in the  above proof of Theorem \ref{talt}, case 1). If it does 
 not vanish, then (\ref{heun}) has no entire solution, as was mentioned at the same place, see  
 \cite[lemma 3, statement 6]{bt2}. This proves the proposition. 
\end{proof}

\section{Applications to phase-lock areas in the model of Josephson effect}

\subsection{Phase-lock areas in  Josephson effect:  brief geometric description and plan of the section}

Here we apply the above results to the family of nonlinear equations (\ref{josbeg}): 
 \begin{equation}\dot \phi=\frac{d\phi}{dt}=-\sin \phi + B + A \cos\omega t, \ A,\omega>0, \ B\geq0.\label{jos}\end{equation}
We fix an arbitrary $\omega>0$ and consider family (\ref{jos}) depending on two variable parameters $(B,A)$. The variable change 
$\tau=\omega t$ 
transforms (\ref{jos}) to differential equation (\ref{jostor}) on the two-torus $\mathbb T^2=S^1\times S^1$ with coordinates 
$(\phi,\tau)\in\rr^2\slash2\pi\zz^2$. Its solutions are tangent to the vector field 
\begin{equation}\begin{cases} & \dot\phi=-\frac{\sin \phi}{\omega} + l + 2\mu \cos \tau\\
& \dot \tau=1\end{cases}, \ \ l=\frac B{\omega}, \ \mu=\frac A{2\omega}\label{josvect}\end{equation}
on the torus. The {\it rotation number} of the equation (\ref{jos}) is, by definition, the rotation number 
of the flow of the field (\ref{josvect}), see \cite[p. 104]{arn}. It is a function $\rho(B,A)$ of the parameters: 
$$\rho(B,A;\omega)=\lim_{k\to+\infty}\frac{\phi(2\pi k)}k.$$
Here $\phi(\tau)$ is a general solution of the first equation of system (\ref{josvec}), whose parameter is the initial condition at 
$\tau=0$. Recall that the rotation number does not depend on the choice of the initial condition \cite[p. 104]{arn}. 

We consider the $B$-axis as the {\it abscissa,} and the $A$-axis as  the {\it ordinate.}

\begin{definition} (cf. \cite[definition 1.1]{4}) The {\it $r$-th phase-lock area} is the level set 
$\{(B,A) \ | \ \rho(B,A)=r\}\subset\rr^2$, provided it has a non-empty interior. 
\end{definition}

\begin{remark}{\bf: phase-lock areas and Arnold tongues.} The behavior of phase-lock areas for small $A$ demonstrates the 
effect discovered by V.I. Arnold \cite[p. 110]{arn}. That is why the phase-lock areas became 
 ``Arnold tongues'', see  \cite[definition 1.1]{4}. 
\end{remark}

Recall that the rotation number of equation (\ref{jos}) has the physical meaning of the mean voltage over a long
time interval. The segments in which the phase-lock areas intersect horizontal lines correspond to
the Shapiro steps on the voltage-current characteristic \cite{bktje}.

It has been shown earlier that

- the phase-lock areas  exist only for  integer values of the rotation number (a ``quantization effect'' observed in \cite{buch2} and later also proved 
in  \cite{IRF, LSh2009}); 

- the boundary of each phase-lock area $\{\rho = r\}$ consists of two analytic curves, which are the graphs of two
functions $B=g_{r,\pm}(A)$ (see \cite{buch1}; this fact was later explained by A.V.Klimenko via symmetry, see \cite{RK}); 

-  the latter functions have Bessel asymptotics (observed and proved on physical level in ~\cite{shap}, see also \cite[chapter 5]{lich},
 \cite[section 11.1]{bar}, ~\cite{buch2006}; proved mathematically in ~\cite{RK}).
 
 - each phase-lock area is an infinite chain of bounded domains going to infinity in the vertical direction, in this chain 
 each two subsequent domains are separated by one point. Those of these points that lie in the horizontal $B$-axis are 
 calculated explicitly,  and 
 we call them {\it exceptional}. The other separation points,  lie outside the horizontal $B$-axis and are called the 
 {\it adjacency points} (or briefly {\it adjacencies)}, see Fig.1.
 
  In the present section we obtain functional equations satisfied by non-integer level curves $\{\rho(B,A)=r\}$ of the rotation number 
 (Subsection 5.4) and the boundaries of the phase-lock areas  (Subsection 5.5) 
 using relation of equation (\ref{jos}) to Heun equation (\ref{heun}) (recalled in Subsection 5.3) 
 and the results on monodromy eigenvalues of Heun equations  from the previous section. The above-mentioned functional equations will be written in the complement to the  adjacencies and 
 the discriminant set (with fixed $\omega$) of the parameter values corresponding to the existence of a polynomial solution of equation (\ref{heun2}). 
 Afterwards we discuss open problems and possible approaches to them using the above-mentioned results on Heun equations. 

Symmetries of the phase-lock area portrait are presented in the next subsection.

 \subsection{Symmetries and the portraits of the phase-lock areas}

 It is known that 
 
 - for every $r\in\zz$ the $r$-th phase-lock area is symmetric to the $-r$-th one with respect to the vertical $A$-axis;
  
 - every phase-lock area is symmetric with respect to the horizontal $B$-axis. 
 
 These symmetry statements follow the fact that the transformations $(\phi,\tau)\mapsto(-\phi,\tau+\pi)$ 
 $(\phi,\tau)\mapsto(\phi,\tau+\pi)$ send system (\ref{josvect}) to the system of the same form, 
 where $B$ is changed to $-B$ in the first case and $A$ is changed to $-A$ in the second case. 

In what follows we present pictures of the phase-lock area portraits. Taking into account the above symmetries, it is enough to 
present only the parts of portraits lying in the upper half-plane. 
  
\begin{figure}[ht]
  \begin{center}
   \epsfig{file=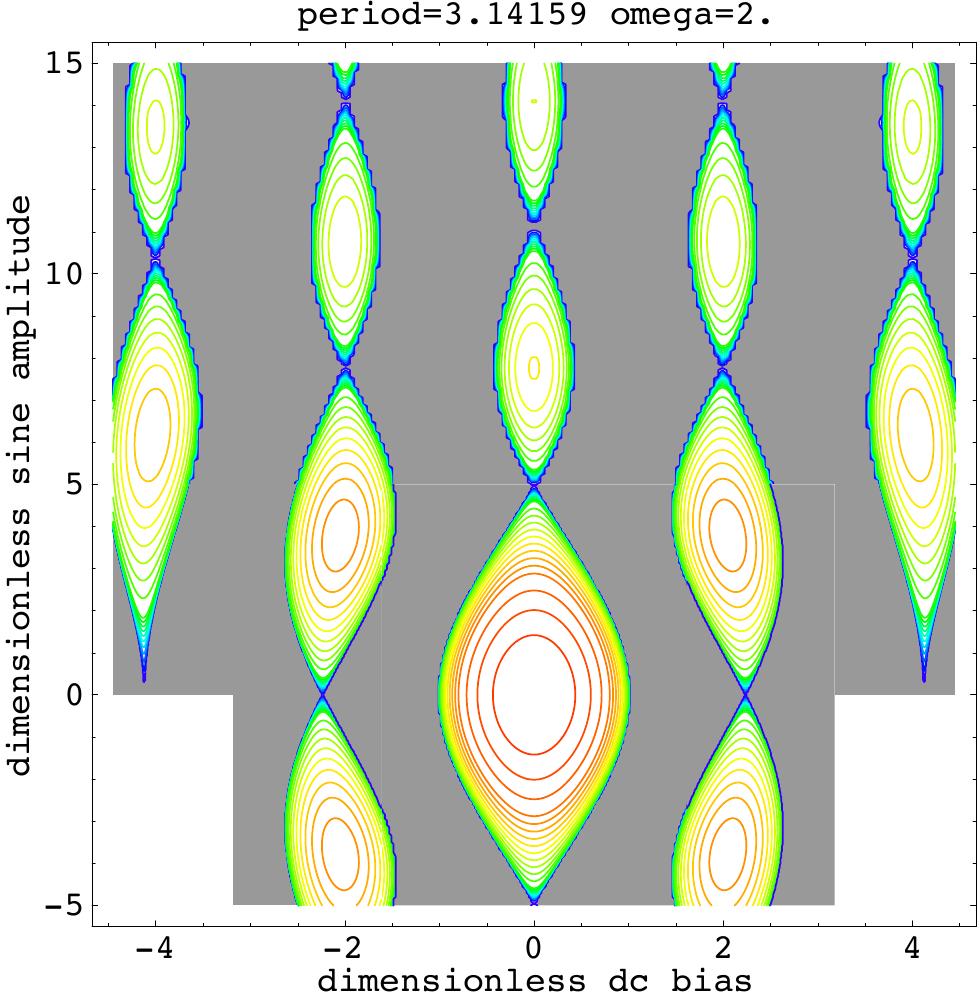}
    \caption{Phase-lock areas and their adjacencies for $\omega=2$. The abscissa is $B$, the ordinate is $A$.}
  \end{center}
\end{figure} 

\begin{figure}[ht]
  \begin{center}
   \epsfig{file=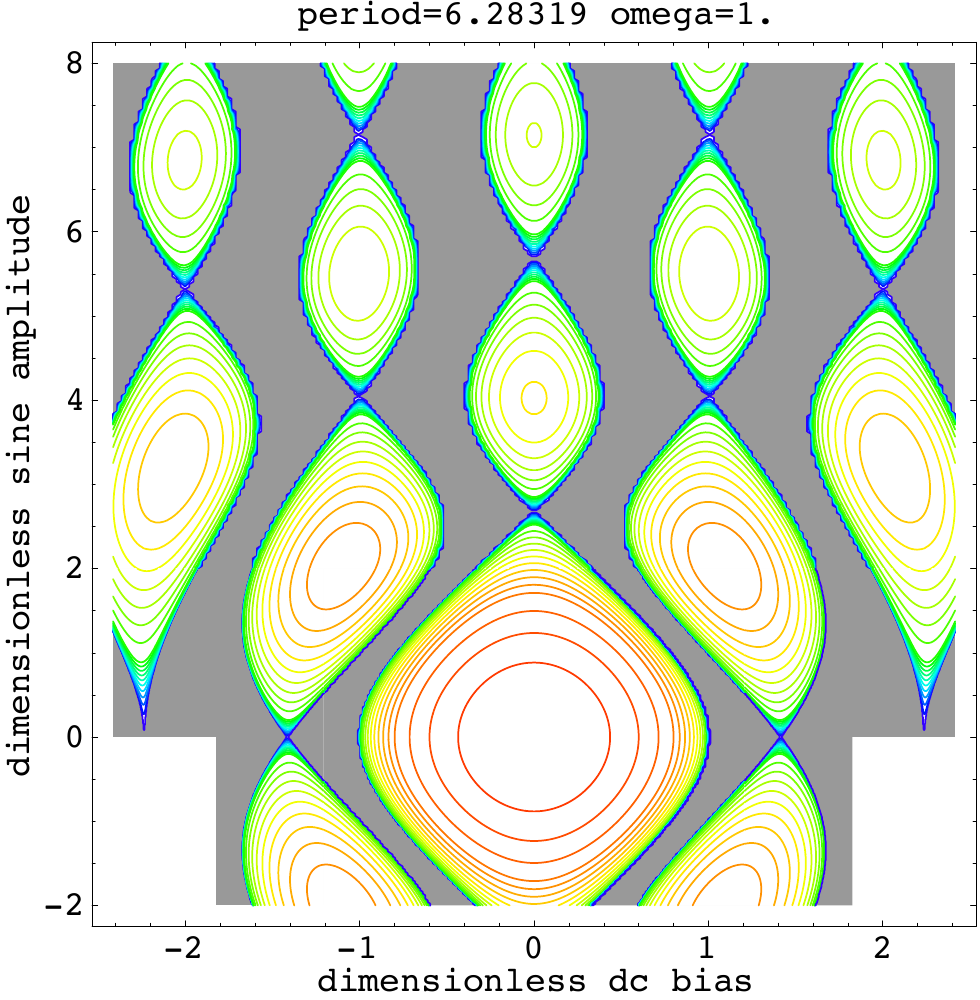}
    \caption{Phase-lock areas and their adjacencies  for $\omega=1$. The abscissa is $B$, the ordinate is $A$.}
  \end{center}
\end{figure} 
 
 \begin{figure}[ht]
  \begin{center}
   \epsfig{file=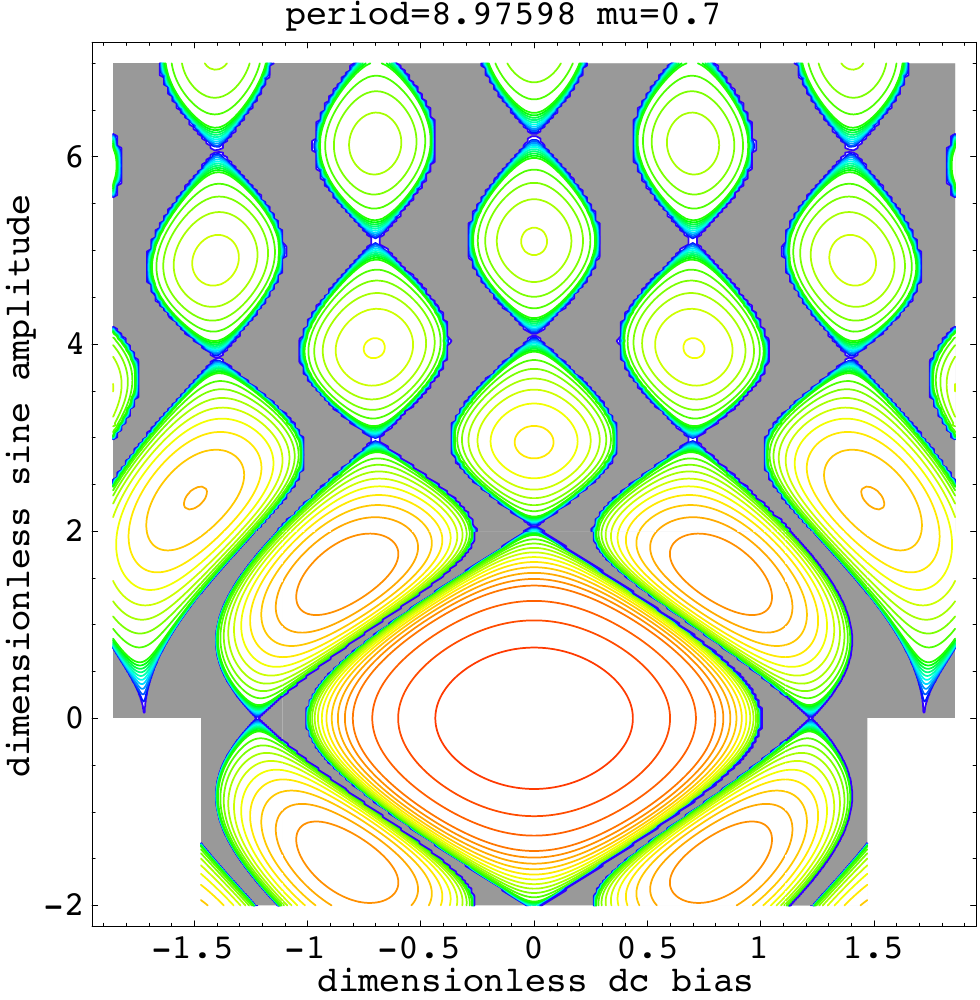}
    \caption{Phase-lock areas and their adjacencies  for $\omega=0.7$. Figure taken from \cite[p. 331]{bt1}.}
  \end{center}
\end{figure} 

 \begin{figure}[ht]
  \begin{center}
   \epsfig{file=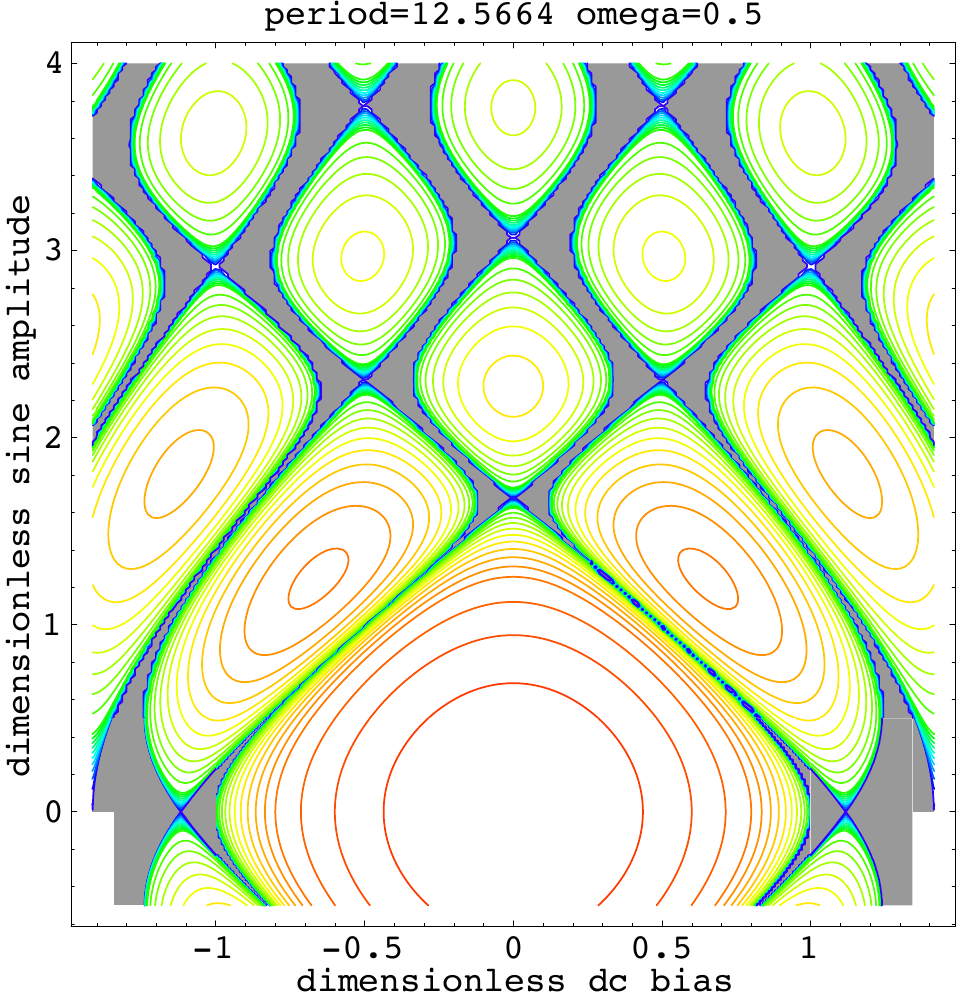}
    \caption{Phase-lock areas and their adjacencies  for $\omega=0.5$.}
  \end{center}
\end{figure} 

\begin{figure}[ht]
  \begin{center}
   \epsfig{file=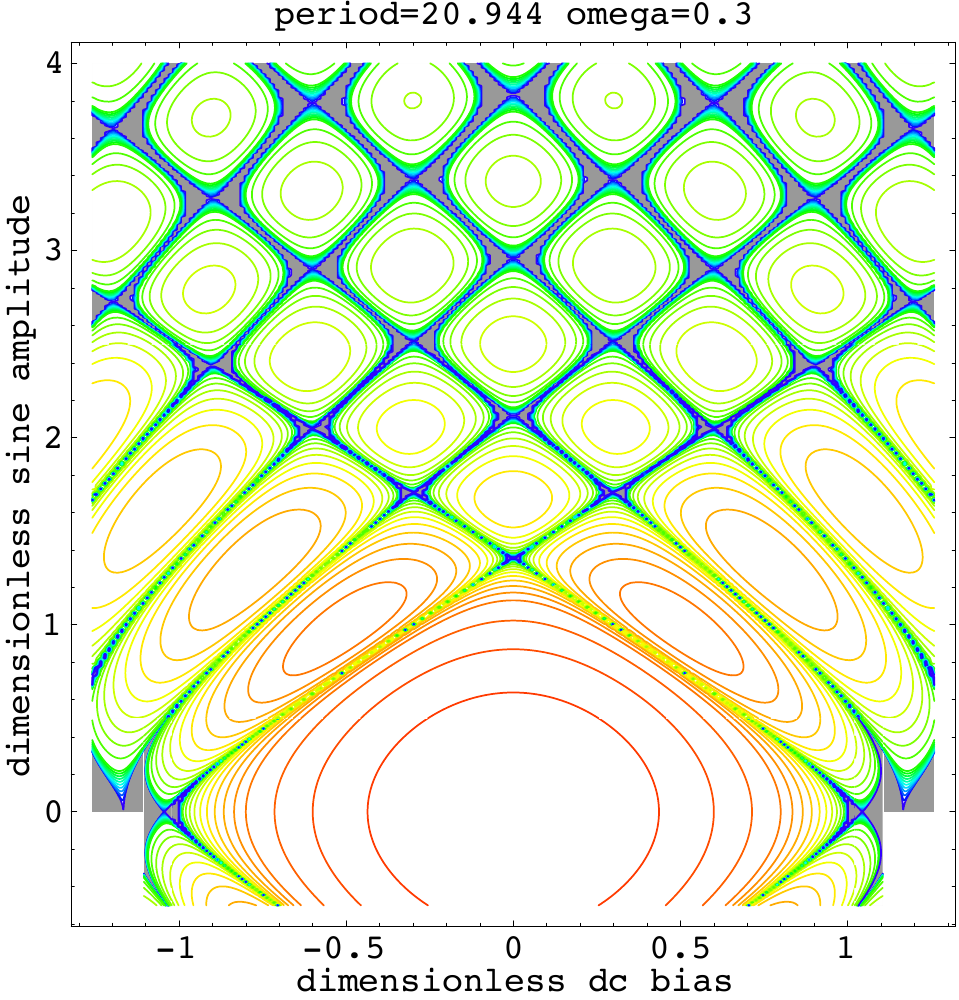}
    \caption{Phase-lock areas and their adjacencies  for $\omega=0.3$.}
  \end{center}
\end{figure} 

 \subsection{Transformation 
 to  double confluent Heun equations. The boundary points of the phase-lock areas corresponding to entire and polynomial solutions 
 of the Heun equation} 

Set  
\begin{equation}\Phi=e^{i\phi}, \ z=e^{i\tau}=e^{i\omega t}.\label{phit}\end{equation}
Considering equation (\ref{jos}) with complex time $t$ we get that transformation (\ref{phit}) sends it to the Riccati equation 
$$\frac{d\Phi}{dz}=z^{-2}((lz+\mu(z^2+1))\Phi-\frac z{2i\omega}(\Phi^2-1)).$$
This equation is the projectivization of the following linear equation in vector function $(u,v)$ with $\Phi=\frac v{u}$, see 
\cite[subsection 3.2]{bg}:
\begin{equation}\begin{cases} & v'=\frac1{2i\omega z}u\\
& u'=z^{-2}(-(lz+\mu(1+z^2))u+\frac z{2i\omega}v)\end{cases}\label{tty}\end{equation}
The above-described reduction to a system of linear equations was earlier obtained in slightly different terms in \cite{bkt1, Foote, bt1, IRF}. 
It is easy to check that a function $v(z)$ is the component of a solution of system (\ref{tty}), if and only if the function $E(z)=e^{\mu z}v(z)$ 
satisfies equation (\ref{heun}) with $n=l+1$ and 
\begin{equation} \lambda=\left(\frac1{2\omega}\right)^2-\mu^2.\label{param}\end{equation} 
The next fact has important applications to the problems discussed in the paper.

\begin{proposition} \label{can} The rotation number function is a
 real-analytic function of the parameters $(B,A)$ on the set $U=\rho^{-1}(\rr\setminus\zz)$, which is the 
 complement to the union of the phase-lock areas of system (\ref{josvect}). Moreover, its restriction to $U$ is an analytic submersion 
 $U\to\rr\setminus\zz$ inducing an analytic fibration by curves 
 $$L_r=\{(B,A)\in\rr^2\ | \ \rho(B,A)=r\},$$
 and these curves are graphs of analytic functions in the variable $A$ defined for all $A\in \rr$. 
\end{proposition}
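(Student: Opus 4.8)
The plan is to reduce everything to the real-analytic dependence of the monodromy of the linear system (\ref{tty}) on the parameters, together with the known relation between its eigenvalues and $\rho$. Recall that for fixed $\omega$ the quantities $n=l+1=\frac{B}{\omega}+1$, $\mu=\frac{A}{2\omega}$ and $\la=\frac1{4\omega^2}-\mu^2$ are real-analytic (indeed polynomial) functions of $(B,A)$, so the entries of the monodromy matrix $M=M(B,A)$ of equation (\ref{heun}) (equivalently of system (\ref{tty})) are real-analytic in $(B,A)$ on all of $\rr^2$. By the known relation between the monodromy eigenvalues and the rotation number (their product is $e^{-2\pi i l}$ and their ratio is $e^{\pm2\pi i\rho}$), the renormalized matrix $\widetilde M=e^{\pi i l}M$ has unit determinant and eigenvalues $e^{\pm\pi i\rho}$, whence
\begin{equation*} G(B,A):=e^{\pi i l}\operatorname{tr}M(B,A)=\operatorname{tr}\widetilde M=2\cos(\pi\rho(B,A)). \end{equation*}
The left-hand side is real-analytic, and it is real-valued since $\rho$ is real. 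On $U=\rho^{-1}(\rr\setminus\zz)$ one has $\rho\notin\zz$, hence $G\in(-2,2)$ and $\sin(\pi\rho)\neq0$. Since $\arccos$ is real-analytic on $(-1,1)$ and $\rho$ is continuous (classical), the local branch $\rho=\frac1\pi\arccos(G/2)$, with the integer and sign ambiguities fixed by continuity of $\rho$, exhibits $\rho$ as a real-analytic function on $U$. This proves the first assertion (and $U$ is open as the $\rho$-preimage of an open set).

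For the submersion property I would differentiate the identity above: $dG=-2\pi\sin(\pi\rho)\,d\rho$, so on $U$ (where $\sin\pi\rho\neq0$) the differential $d\rho$ vanishes exactly where $dG$ does, and it suffices to prove $\partial_B\rho>0$ on $U$. Here I would use that the time-$2\pi$ Poincaré map $P_{B,A}$ of the field (\ref{josvect}) is an orientation-preserving circle diffeomorphism equal to the restriction of a Möbius transformation (the projectivization of $M$), and that the family is strictly monotone in $B$: increasing $B$ increases $\dot\phi$ pointwise, so the comparison principle for ODE gives $\partial_B P_{B,A}(\phi)>0$ for every $\phi$. On $U$ the map $P_{B,A}$ is elliptic (as $\rho\notin\zz$), hence analytically conjugate to a rigid rotation, and the standard integral formula for the derivative of the rotation number of such a monotone family yields $\partial_B\rho>0$; thus $d\rho\neq0$ and $\rho|_U\colon U\to\rr\setminus\zz$ is an analytic submersion. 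I expect this to be the main obstacle: the delicate point is nonvanishing of $\partial_B\rho$ at parameter values with rational non-integer $\rho=p/q$, where $P^{q}=\mathrm{id}$. There one argues that $\partial_B(P^{q})>0$ forces $P^{q}$ to move off the identity to first order in $B$, so $\rho$ crosses $p/q$ with strictly positive speed. This is precisely where the quantization effect enters, guaranteeing that $U$ is the complement of the phase-lock areas and that $\rho$ has no plateaus (no non-integer mode-locking) there.

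Finally, for the fibration by graphs I would fix $A\in\rr$ and consider $B\mapsto\rho(B,A)$. It is continuous and non-decreasing, tends to $\pm\infty$ as $B\to\pm\infty$ (since $l=B/\omega\to\pm\infty$), and is strictly increasing on the slice $U\cap\{A=\mathrm{const}\}$. Hence for every $r\in\rr\setminus\zz$ there is a unique $B=g_r(A)$ with $\rho(g_r(A),A)=r$, defined for all $A\in\rr$ by the intermediate value theorem and strict monotonicity. Because $\partial_B\rho\neq0$ at $(g_r(A),A)$, the analytic implicit function theorem applied to $\rho(B,A)-r=0$ shows that $g_r$ is real-analytic in $A$ and that its graph is the level curve $L_r=\{\rho=r\}$. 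The submersion $\rho|_U$ together with these graphs gives the asserted analytic fibration of $U$ by the curves $L_r$, completing the proof.
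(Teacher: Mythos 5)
Your proposal is correct, and its two central steps are essentially the paper's own: analyticity of $\rho$ on $U$ is obtained from analytic dependence of the monodromy on $(B,A)$ combined with elliptic spectral data (you invert the trace identity $e^{\pi i l}\operatorname{tr}M=2\cos(\pi\rho)$, while the paper uses the multiplier $e^{2\pi i\rho}$ at the interior fixed point of the elliptic M\"obius monodromy --- the same idea in a different guise), and the submersion property is in both cases reduced to $\frac{\partial\rho}{\partial B}>0$ on $U$, proved by conjugating the elliptic Poincar\'e map to a rigid rotation and comparing the monotone perturbation in $B$ with a rotation. Two remarks on your version of these steps: the trace identity carries a global sign ambiguity (the eigenvalues are $\pm e^{\pi i(\pm\rho-l)}$), harmless for analyticity but pinned down in the paper only later (Proposition \ref{roteig}) by a connectivity argument through the adjacencies; and your separate worry about rational non-integer $\rho=p/q$ is unnecessary, since an elliptic M\"obius transformation is conjugate to a rigid rotation whatever its rotation number, so the comparison argument (which is exactly the paper's and subsumes your $P^q=\mathrm{id}$ discussion) applies uniformly on $U$ with no special case. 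Where you genuinely diverge from the paper is the final step, that each $L_r$, $r\notin\zz$, is the graph of a function $g_r$ defined on all of $\rr$: you get existence and uniqueness of $g_r(A)$ for every $A\in\rr$ from global monotonicity of $\rho$ in $B$, the limits $\rho\to\pm\infty$ as $B\to\pm\infty$, and positivity of $\frac{\partial\rho}{\partial B}$ on $U$ (which excludes plateaus at non-integer levels), and then analyticity of $g_r$ from the implicit function theorem. The paper instead shows that $L_r$ cannot escape to infinity horizontally because it is trapped between the phase-lock areas with rotation numbers $[r]$ and $[r]+1$, whose boundaries are graphs of functions defined on all of $\rr$ with Bessel asymptotics --- a fact imported from \cite{RK}. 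Your route is more elementary and self-contained (it needs only comparison and the intermediate value theorem), whereas the paper's leans on the known global structure of the phase-lock areas; both are valid.
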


\begin{proof} The variable change (\ref{phit}) reducing family of equations (\ref{jos})  on the torus to the Riccati equation sends 
the space circle with the coordinate $\phi$  to the unit circle in the Riemann sphere with the coordinate $\Phi$. 
The time $2\pi$ flow mapping (i.e., the first return mapping) 
of the corresponding family of vector fields (\ref{josvect}) on the torus is the restriction 
to the unit circle in the Riemann sphere of a transformation from the M\"obius group: the monodromy transformation of the Riccati equation. 
That is, the restriction to the unit circle of  a conformal automorphism of the unit disk. The rotation number function  $\rho$ 
considered as a function on the group $Aut(D_1)\simeq PSL_2(\rr)$ is analytic on the set of  transformations 
analytically conjugated  to nontrivial rotations. (Recall that these transformations are called {\it elliptic.}). 
This follows from the fact that each elliptic transformation of the unit disk has 
a fixed point inside the disk, its multiplier depends analytically on the transformation and equals $e^{2\pi i\rho}$. 
The set of elliptic transformations coincides with the complement of   the subset in $Aut(D_1)$ consisting of 
transformations whose restrictions to $S^1=\partial D_1$ have integer rotation number. 
The points of the set $U$ correspond to non-integer values of the rotation number, and hence, to Riccati equation with 
elliptic monodromy. This proves analyticity of the rotation number function on the set $U$. 

Now let us prove the second statement of the proposition: the restriction $\rho|_U$ is a submersion, and each its level set $L_r$ is 
the graph $\{ B=g_r(A)\}$ of a function $g_r$ analytic on $\rr$. Indeed, let $r\notin\zz$. The set $L_r$ is an analytic curve, by 
analyticity of the rotation number function on $U$ (the first statement of the proposition).  Let us prove that it is a graph of function. 
To do this, let us check  that $\frac{\partial \rho}{\partial B}(B,A)>0$ 
for every $(B,A)\in U$, and hence, one can apply the Implicit Function Theorem to the function $\rho$. 
The time $2\pi$ flow mapping of the corresponding vector field (\ref{josvect}) has a non-integer rotation number, and hence, is analytically 
conjugated to the rotation by angle $2\pi\rho_0$, $\rho_0=\rho(B,A)$, as was shown above. 
Perturbation of the time $2\pi$ flow mapping via  increasing $B$ by $\delta B$ 
 is conjugated to a perturbed rotation, which is the composition of the above rotation with a diffeomorphism $h$
 close to identity, the difference $h(x)-x>0$ being 
 uniformly bounded from below by $c\delta B$, $c>0$. The latter perturbed rotation has rotation number bounded from below 
 by $\rho_0+\frac c{2\pi}\delta B$, which follows from the definition of the rotation number. This implies the above statement on positivity  
 of the derivative in $B$ of the rotation number function. Thus, the mapping $\rho|_U$ is an analytic submersion, and the set $L_r$ 
 with $r\notin\zz$  is the graph of an analytic function $g_r(A)$.  Now it remains to show that the function $g_r$ is analytic on the 
 whole line $\rr$, i.e., the curve $L_r$ does not go to infinity in the horizontal direction so that the ordinate $A$ remains bounded. This 
 follows from the fact that the curve $L_r$ is contained in the domain between the  phase-lock areas with the rotation 
 numbers $[r]$ and $[r]+1$, and the abscissa $B$ is uniformly bounded on this domain. The latter statement follows from the 
 fact that each phase-lock area $\{\rho=l\}$ tends to infinity in the vertical direction approaching the line $\{ B=l\omega\}$: its 
 boundary consists of graphs of two functions $\{ B=g_{l,\pm}(A)\}$ defined on $\rr$ and having
Bessel asymptotics at infinity \cite{RK}. This proves the proposition.
\end{proof}

%
%

\begin{remark} \label{rktriv} 
The adjacencies of the phase-lock areas of family of equation (\ref{jos}) are characterized by the condition that the 
corresponding period $2\pi$ flow mapping (i.e., the 
Poincar\'e mapping) of  vector field from family (\ref{josvect}) is the identity,  see \cite[proposition 2.2]{4}. It was shown in 
\cite[lemmas 3, 4]{bt1} that if a point $(B,A)$ corresponds to a Heun equation (\ref{heun}) with an entire solution, then 
the Poincar\'e mapping corresponding to this point is the 
identity, and hence, it is an adjacency. The complete result is  the following. 
\end{remark}

\begin{theorem} \label{tadj} (see \cite[theorems 3.3, 3.5]{bg}).  For every $\omega>0$, $l\geq0$  a point $(B,A)\in\rr^2$ with $A\geq0$, 
$B=l\omega$ 
 is an adjacency for family of equations (\ref{jos}), if and only if $l\in\zz$ and 
the corresponding equation (\ref{heun}) with  $n=l+1$ and $\mu$, $\la$ as in (\ref{josvect}) and (\ref{param}) has a nontrivial entire solution, i.e., 
if and only if equation $\xi_l(\la,\mu)=0$ holds; $\xi_l$ is the same, as in (\ref{xil}). 
\end{theorem}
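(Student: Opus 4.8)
The plan is to identify the dynamical condition ``$(B,A)$ is an adjacency'' with the triviality of the monodromy of the associated Heun equation, and then to read off the entire solution from Theorems \ref{talt} and \ref{thol}. The geometric bridge is the substitution (\ref{phit}): it conjugates the period-$2\pi$ Poincar\'e (first return) map of the field (\ref{josvect}) to the restriction to the unit circle $\{|\Phi|=1\}$ of the M\"obius monodromy of the Riccati equation, that is, of the projectivization of the monodromy of the linear system (\ref{tty}); the latter is the monodromy of Heun equation (\ref{heun}) with $n=l+1$ and $\la$ as in (\ref{param}). Since a M\"obius transformation is determined by its restriction to a circle, the Poincar\'e map is the identity if and only if this $2\times2$ monodromy is a scalar matrix. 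I recall from Remark \ref{rktriv} (and \cite[proposition 2.2]{4}) that $(B,A)$ is an adjacency exactly when the Poincar\'e map is the identity, and that $l=B/\omega\in\zz$ at every adjacency \cite{4}.

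For the implication $\xi_l=0\Rightarrow$ adjacency I would argue directly. If $l\in\zz$ and $\xi_l(\la,\mu)=0$, then by Theorem \ref{thol} equation (\ref{heun}) has a nontrivial entire solution, and by statement 1) of Theorem \ref{talt} this is equivalent to the monodromy of (\ref{heun}) being trivial. Hence the M\"obius monodromy of the Riccati equation is trivial, the Poincar\'e map is the identity, and $(B,A)$ is an adjacency; this reproduces \cite[lemmas 3, 4]{bt1}.

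For the converse, let $(B,A)$ with $A\geq0$, $B=l\omega$ be an adjacency. Then $l\in\zz$ by \cite{4}, and $A>0$ since adjacencies lie off the horizontal $B$-axis, so $\mu=A/(2\omega)>0$ and Theorem \ref{talt} applies. The Poincar\'e map is the identity, so the monodromy of (\ref{heun}) is a scalar matrix $cI$; both eigenvalues equal $c$, hence $c^2=e^{-2\pi i n}=e^{-2\pi i l}=1$ by Proposition \ref{monprod}, i.e.\ $c=\pm1$. The adjacency lies in the $l$-th phase-lock area, so $\rho(B,A)=l$; once the correct branch of the eigenvalues is fixed, the value $\rho=l$ forces $c=1$, the monodromy is trivial, and statement 1) of Theorem \ref{talt} then produces a nontrivial entire solution, whence $\xi_l(\la,\mu)=0$ by Theorem \ref{thol}. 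This closes the equivalence.

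The main obstacle is precisely pinning down the sign, i.e.\ proving $c=1$ and excluding the scalar monodromy $-I$. This is genuinely extra information: $-I$ also projects to the identity M\"obius map, so it is invisible to the Poincar\'e map, and the product and ratio of the eigenvalues (which only involve $\det=e^{-2\pi i l}$ and the M\"obius multiplier $e^{2\pi i\rho}$) leave the sign undetermined. I would attack it by continuity from the elliptic region adjacent to the pinch point, where the eigenvalues $\nu_{\pm}$ are analytic and satisfy $\nu_+\nu_-=e^{-2\pi i l}$, $\nu_+/\nu_-=e^{2\pi i\rho}$; the two eigenvalues merge to $\pm1$ as $\rho\to l$, the sign is locally constant, and it suffices to evaluate it at one reference parameter. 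Fixing that reference is exactly where the deeper input of \cite{bg} enters (the reduction to polynomial solutions of (\ref{heun2}) and the positivity of the modified-Bessel determinants), which is why the statement is attributed to \cite[theorems 3.3, 3.5]{bg}; with the sign settled, the theorem follows by assembling Theorems \ref{talt}, \ref{thol}, Proposition \ref{monprod} and the results of \cite{4}.
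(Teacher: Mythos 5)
Your frame (adjacency $\Leftrightarrow$ Poincar\'e map is the identity $\Leftrightarrow$ the $2\times2$ monodromy of (\ref{heun}) is scalar) and your proof of the direction $\xi_l=0\Rightarrow$ adjacency are correct and agree with the machinery available in the paper (Theorem \ref{thol}, statement 1) of Theorem \ref{talt}, Remark \ref{rktriv}); note that the paper itself gives no proof of Theorem \ref{tadj}, quoting it from \cite[theorems 3.3, 3.5]{bg}. But the converse direction of your proposal has a genuine gap, exactly at the point you flag: excluding the scalar monodromy $-Id$. Neither of your two suggested remedies closes it. The continuity argument is circular as stated: to ``evaluate the sign at one reference parameter'' you must know the monodromy at some point of the set under study, which is precisely what is being proved (the paper runs this kind of argument in the proof of Proposition \ref{roteig}, and there the sign at the reference points --- the adjacencies --- is fixed by the very triviality of the monodromy at adjacencies). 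And the appeal to the Bessel-determinant results of \cite{bg} is a misattribution: in this circle of ideas those results serve only to make statements 1) and 2) of Theorem \ref{talt} incompatible for real parameters (trivial monodromy versus unipotent Jordan cell, i.e., adjacencies versus polynomial solutions of (\ref{heun2})); they say nothing about ruling out $-Id$.

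The missing step is structural and is already in the paper: Proposition \ref{pjcell2}, based on \cite[proof of lemma 3.3]{4} and on the same decomposition used to prove Proposition \ref{monprod}. The monodromy of (\ref{heun}) is the product of the formal monodromy $\diag(1,e^{-2\pi i n})$ and two unipotent matrices, one upper- and one lower-triangular (the inverses of the Stokes matrices). If such a product equals $c\,Id$, then the product of the two unipotent factors is diagonal, which forces both of them to be the identity; hence the monodromy coincides with the formal monodromy, whose first eigenvalue is $1$, so $c=1$. Thus a scalar monodromy is automatically trivial, and $-Id$ never occurs. With this in hand your converse goes through with no continuity or positivity input: adjacency $\Rightarrow$ $l\in\zz$ (by \cite{4}) and the monodromy is scalar $\Rightarrow$ the monodromy is trivial $\Rightarrow$ (\ref{heun}) has an entire solution (converse part of statement 1) of Theorem \ref{talt}) $\Rightarrow$ $\xi_l(\la,\mu)=0$ (Theorem \ref{thol}). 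A minor slip to correct along the way: an adjacency lying on the line $B=l\omega$ need not satisfy $\rho(B,A)=l$; by \cite[theorems 1.2, 3.17]{4} (Remark \ref{nwrem}) it belongs to a phase-lock area of number $\rho\equiv l\ (\mathrm{mod}\ 2\zz)$ with $0\le l\le\rho$, and only this congruence is available --- though it is also all that your sign computation would need.
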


%
In what follows we will use the next result. 
\begin{proposition} \label{roteig}  Let  $\omega>0$, $(B,A)\in\rr^2$, and let $\rho=\rho(B,A)$ denote the corresponding rotation number. 
Let $(B,A)$ do not lie in the interior of a phase-lock area: it may lie either in the complement of the union of phase-lock areas, or 
in its boundary. Then  the monodromy operator of the corresponding Heun equation (\ref{heun}) with 
$n=l+1$, $l=\frac B{\omega}$ has eigenvalues $e^{\pi i(\rho-l)}$ and $e^{-\pi i(\rho+l)}$. 
\end{proposition}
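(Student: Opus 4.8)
The plan is to identify the monodromy of Heun equation (\ref{heun}) with the monodromy of the linear system (\ref{tty}), and then extract its eigenvalues from two pieces of data: the determinant, supplied by Proposition \ref{monprod}, and the ratio of eigenvalues, supplied by the rotation number through the elliptic M\"obius geometry in the proof of Proposition \ref{can}. First I would note that since $E=e^{\mu z}v$ with $e^{\mu z}$ single-valued, and since the first equation of (\ref{tty}) recovers $u=2i\omega z v'$ from $v$, the assignment $(v,u)\mapsto E$ is an isomorphism commuting with analytic continuation; hence the monodromy of (\ref{heun}) along a counterclockwise loop around $0$ is conjugate to the monodromy $M$ of system (\ref{tty}) and has the same pair of eigenvalues $\nu_1,\nu_2$. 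By Proposition \ref{monprod} and $n=l+1$ their product is $\nu_1\nu_2=e^{-2\pi i n}=e^{-2\pi i l}$.

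Next, for $(B,A)$ in the complement of the phase-lock areas one has $\rho\notin\zz$, and the proof of Proposition \ref{can} shows that the projectivization of $M$ — the Poincar\'e (monodromy) transformation of the Riccati equation — is an elliptic element of $\mathrm{Aut}(D_1)$ with distinct fixed points, whose multiplier at its interior fixed point equals $e^{2\pi i\rho}$. Since the multiplier of the projectivization of a diagonalizable $2\times2$ matrix at the fixed point attached to an eigenvector is the ratio of the opposite eigenvalue to this one, I would deduce $\nu_2/\nu_1\in\{e^{2\pi i\rho},e^{-2\pi i\rho}\}$. Combining this with $\nu_1\nu_2=e^{-2\pi i l}$ gives, after fixing the labelling, $\nu_1^2=e^{-2\pi i(l+\rho)}$ and $\nu_2^2=e^{2\pi i(\rho-l)}$, so that the unordered pair $\{\nu_1,\nu_2\}$ equals either $\{e^{\pi i(\rho-l)},e^{-\pi i(\rho+l)}\}$ or its negative $\{-e^{\pi i(\rho-l)},-e^{-\pi i(\rho+l)}\}$; note that this pair is insensitive to the sign ambiguity $\rho\mapsto-\rho$ in the ratio. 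Thus everything reduces to fixing one global sign.

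To resolve the sign I would argue by continuity. Both the true eigenvalue set $\{\nu_1,\nu_2\}$ and the candidate set depend continuously on $(B,A)$, and the two candidate branches are disjoint except when $\rho\in\tfrac12+\zz$, where they merge; hence the branch realized by the true eigenvalues is locally constant on $\{\rho\notin\zz\}$, so it is constant along each connected level curve $L_r=\{\rho=r\}$, $r\notin\zz$, and it suffices to evaluate it at one point of $L_r$. By Proposition \ref{can} the curve $L_r$ is the graph of a function of $A$ defined for all $A\in\rr$, so I can slide along $L_r$ to its point with $A=0$, i.e. $\mu=0$; since $r\neq0$ this point has $l>\omega^{-1}$. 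There equation (\ref{heun}) degenerates, with $\la=\omega^{-2}/4$ by (\ref{param}), to the Euler equation $z^2E''+(l+1)zE'+\la E=0$ with indicial roots $r_\pm=\tfrac12(-l\pm\sqrt{l^2-\omega^{-2}})$ and monodromy eigenvalues $e^{2\pi i r_\pm}$; and for the autonomous field (\ref{josvect}) at $\mu=0$ a direct computation gives $\rho=\sqrt{l^2-\omega^{-2}}$, whence $e^{2\pi i r_+}=e^{\pi i(\rho-l)}$ and $e^{2\pi i r_-}=e^{-\pi i(\rho+l)}$. This selects the $+$ branch and proves the formula on $\{\rho\notin\zz\}$; the boundary points of the phase-lock areas, where $\rho\in\zz$, then follow by passing to the limit from the complement, both sides being continuous.

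The main obstacle is exactly this last sign normalization: the determinant and the rotation number only pin the eigenvalues down up to a common factor $\pm1$, and the delicate point is to run the continuity argument along a path staying inside $\{\rho\notin\zz\}$ all the way to an explicitly solvable reference configuration. Using the global graph structure of the level curves from Proposition \ref{can} to reach $A=0$, together with the Euler-equation computation there and the evaluation $\rho=\sqrt{l^2-\omega^{-2}}$ of the rotation number of the autonomous flow, is what makes the normalization rigorous; one must also check that the two sign branches collide only at half-integer $\rho$, so that the realized branch is genuinely locally constant.
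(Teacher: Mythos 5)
Your proof is correct, and its first half is exactly the paper's argument: the elliptic monodromy of the Riccati equation gives the eigenvalue ratio $e^{\pm 2\pi i\rho}$, Proposition \ref{monprod} gives the product $e^{-2\pi i l}$, and together these pin the eigenvalue pair down to $\pm\{e^{\pi i(\rho-l)},e^{-\pi i(\rho+l)}\}$. Where you genuinely diverge is in resolving the common sign. The paper anchors the sign at the adjacencies, where the monodromy is trivial (Remark \ref{rktriv}), and propagates it by path-connectivity of the complement of the union of the interiors of the phase-lock areas (this uses that the phase-lock areas are disjoint closed sets whose interiors are disconnected, one passing from one side to the other through an adjacency). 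You instead anchor the sign on each level curve $L_r$, $r\notin\zz$, at its point with $A=0$, which exists because $L_r$ is a global graph over the $A$-axis (Proposition \ref{can}); there the Heun equation degenerates to an explicitly solvable Euler equation with exponents $\tfrac12\bigl(-l\pm\sqrt{l^2-\omega^{-2}}\bigr)$, while the autonomous flow has rotation number $\sqrt{l^2-\omega^{-2}}$ (this is the normalization actually used throughout the paper), and this selects the $+$ branch; boundary points, where $\rho\in\zz$, are then recovered by continuity, since every such point is a limit of points with non-integer rotation number. Your route buys independence from the facts about adjacencies (their existence and the triviality of the monodromy there, which rest on \cite{bt1} and \cite{4}) and from the connectivity argument; the price is the extra limiting step for boundary points and the need for continuity of the monodromy in the parameters up to $\mu=0$, which does hold because the monodromy is computed along a fixed loop avoiding $z=0,\infty$ (the same continuity the paper invokes in Proposition \ref{monprod}). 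Two small points to state more carefully: the realized branch is locally constant only where the two candidate sets are disjoint, i.e. off $\{\rho\in\tfrac12+\zz\}$, so the clean formulation is the one you effectively use -- along a fixed $L_r$ either $r\notin\tfrac12+\zz$ and the branches never meet, or $r\in\tfrac12+\zz$ and the two candidate sets coincide identically, making the formula hold for free; and for $r<0$ the reference point has $l<-\omega^{-1}$ and $\rho=-\sqrt{l^2-\omega^{-2}}$, which yields the same unordered pair, so the sign convention is consistent there too.
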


\begin{proof} Let $\la_1$, $\la_2$ be the eigenvalues of the above monodromy operator of Heun equation. 
The point $(B,A)$ does not lie in the interior of a phase-lock area. If $r=\rho(B,A)\notin\zz$, then the monodromy of the corresponding Riccati equation 
is an elliptic M\"obius transformation conformally conjugated to the rotation by angle $2\pi r$. Therefore, it has two fixed points with multipliers $e^{\pm2\pi i r}$.   The latter multipliers  are ratios of the eigenvalues $\la_j$, and without loss of generality we consider that $\frac{\la_1}{\la_2}=e^{2\pi i r}$. 
On the other hand, $\la_1\la_2=e^{-2\pi i l}$, by Proposition \ref{monprod}. This implies that the eigenvalues under question are equal to 
$\pm e^{\pi i(r-l)}$, $\pm e^{-\pi i(r+l)}$. In the case, when $r\in\zz$, the point $(B,A)$ lies in the boundary of a phase-lock area and the monodromy of the Riccati equation is either parabolic, or identity. The corresponding 
monodromy of the Heun equation has multiple eigenvalue given by  same 
(now coinciding) formulas.  The correct sign should be the same for all the points $(B,A)$ in the complement of the parameter plane 
to the union of the interiors of phase-lock areas, by path connectivity of the latter complement and continuity of the rotation number 
function. Indeed, the phase-lock areas are disjoint closed subsets in $\rr^2$, the complement of each of them consists of two connected components. The interior of each phase-lock area is disconnected: one can pass from one 
its side to the other via any adjacency. 
The sign is ``$+$'' at each  adjacency, since the corresponding monodromy is trivial (Remark \ref{rktriv}). 
Hence, it is ``$+$'' everywhere. This proves the proposition.
\end{proof}

\begin{corollary} \label{smsl} The pair of eigenvalues from Proposition \ref{roteig}  corresponding to a given point $(B,A)$ is 
the same for all other points $(B,A')$ with  $\rho(B,A')\equiv\pm\rho(B,A)(mod2\zz)$. 
\end{corollary}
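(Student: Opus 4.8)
The plan is to derive the claimed invariance directly from the explicit eigenvalue formula of Proposition \ref{roteig}, exploiting the fact that the comparison is between points $(B,A)$ and $(B,A')$ sharing the same abscissa $B$. First I would recall that, at a point $(B,A)$ lying outside the interiors of the phase-lock areas, Proposition \ref{roteig} gives the two monodromy eigenvalues of the corresponding Heun equation (\ref{heun}) as $e^{\pi i(\rho-l)}$ and $e^{-\pi i(\rho+l)}$, where $\rho=\rho(B,A)$ and $l=\frac B\omega$. Since the corollary fixes the first coordinate $B$, the value $l=\frac B\omega$ is the same for $(B,A)$ and $(B,A')$, and only the rotation number changes, from $\rho$ to $\rho'=\rho(B,A')$. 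I would note explicitly at the outset that Proposition \ref{roteig} is invoked at both points, so the statement presupposes that neither $(B,A)$ nor $(B,A')$ lies in the interior of a phase-lock area; under this assumption the eigenvalue pair of that proposition is well defined at both.

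Next I would split the hypothesis $\rho'\equiv\pm\rho\pmod{2\zz}$ into its two cases and carry out the (short) substitution. In the first case $\rho'=\rho+2m$ with $m\in\zz$; inserting this into the formula and using $e^{2\pi i m}=1$ gives $e^{\pi i(\rho'-l)}=e^{\pi i(\rho-l)}$ and $e^{-\pi i(\rho'+l)}=e^{-\pi i(\rho+l)}$, so the ordered pair is literally unchanged. In the second case $\rho'=-\rho+2m$ with $m\in\zz$; the same substitution yields $e^{\pi i(\rho'-l)}=e^{-\pi i(\rho+l)}$ and $e^{-\pi i(\rho'+l)}=e^{\pi i(\rho-l)}$, so the two eigenvalues are interchanged while the unordered pair is still the same. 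As the corollary concerns the unordered pair of eigenvalues, these two computations together establish the claim.

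The argument is elementary and I anticipate no real obstacle: the entire content is the cancellation $e^{\pm 2\pi i m}=1$ combined with the invariance of $l$ when only $A$ varies. The one point deserving attention, which I would flag rather than prove, is precisely that $l$ depends on $B$ alone, so that keeping $B$ fixed is exactly what makes the dependence on the rotation number the only relevant one; this is why the corollary is stated for points of the form $(B,A')$ with the same abscissa and not for arbitrary pairs of parameter values.
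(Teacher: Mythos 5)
Your proof is correct and is essentially the argument the paper intends: the corollary is stated without proof precisely because it follows immediately from the eigenvalue formula $e^{\pi i(\rho-l)}$, $e^{-\pi i(\rho+l)}$ of Proposition \ref{roteig}, with $l=\frac{B}{\omega}$ unchanged when only $A$ varies, so that $\rho'\equiv\rho\ (\mathrm{mod}\ 2\zz)$ fixes the ordered pair and $\rho'\equiv-\rho\ (\mathrm{mod}\ 2\zz)$ merely swaps it. Your explicit case split and the flagged presupposition that both points lie outside the interiors of the phase-lock areas (so that Proposition \ref{roteig} applies at both) match the paper's implicit reasoning exactly.
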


\begin{theorem} \label{poteig} Let $\omega>0$, $(B,A)\in\rr^2$, $B,A>0$, $l=\frac B\omega$, $\mu=\frac A{2\omega}$, 
$\la=\frac1{4\omega^2}-\mu^2$, $\rho=\rho(B,A)$. The double confluent Heun equation (\ref{heun2}) corresponding to the latter 
$\la$, $\mu$ and $l$ has a polynomial solution, if and only if $l,\rho\in\zz$, $0\leq\rho\equiv l(mod 2\zz)$ and $\rho\leq l$, and in addition, 
the point $(B,A)$ lies in the boundary of the phase-lock area number $\rho$ and is not an adjacency. In other terms, 
the points $(B,A)\in\rr_+^2$ corresponding to equations (\ref{heun2}) with polynomial solutions lie in boundaries of 
phase-lock areas and  are exactly their intersection points with the lines $l=\frac B\omega\equiv\rho(mod 2\zz)$, $0\leq\rho\leq l$, except for 
the adjacencies, see Fig. 6. 
\end{theorem}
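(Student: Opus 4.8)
The plan is to convert the existence of a polynomial solution of equation (\ref{heun2}) into a statement about the Jordan type of the monodromy of the associated equation (\ref{heun}), and then to read off the geometry from the dictionary between the monodromy of the Riccati equation (\ref{tty}) and the phase-lock area portrait. Throughout, $A,B>0$ gives $\mu=\frac A{2\omega}>0$ and $l=\frac B\omega>0$, so we work in the real regime with $\mu>0$ in which Theorem \ref{talt} applies together with its incompatibility clause, and in which Theorem \ref{tadj} identifies adjacencies with entire solutions. The dictionary I shall use, all of it contained in the proofs of Propositions \ref{can} and \ref{roteig} and in Remark \ref{rktriv}, is: the period-$2\pi$ Poincar\'e map is the circle restriction of the M\"obius monodromy of (\ref{tty}), whose multiplier is the ratio of the two eigenvalues of the monodromy of (\ref{heun}); the point $(B,A)$ lies in the interior of a phase-lock area exactly when this M\"obius map is hyperbolic, in a boundary that is not an adjacency exactly when it is parabolic but not the identity, and in an adjacency exactly when it is the identity, i.e.\ the monodromy of (\ref{heun}) is trivial. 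By Proposition \ref{roteig}, at every point not lying in the interior of a phase-lock area the two eigenvalues of the monodromy of (\ref{heun}) are $e^{\pi i(\rho-l)}$ and $e^{-\pi i(\rho+l)}$.

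First I would treat the forward implication. Assume (\ref{heun2}) has a polynomial solution. By Theorem \ref{tpol} this forces $l\in\nn$, so $B=l\omega$ and Theorems \ref{talt} and \ref{tadj} apply. By statement 2) of Theorem \ref{talt} and its incompatibility clause for real $\mu>0$, the monodromy of (\ref{heun}) is a nontrivial unipotent Jordan cell; in particular it is not trivial, so there is no entire solution and, by Theorem \ref{tadj}, $(B,A)$ is not an adjacency. Being unipotent and nontrivial, the corresponding M\"obius map is parabolic and not the identity, so $(B,A)$ does not lie in the interior of a phase-lock area and $\rho=\rho(B,A)\in\zz$; since $\rho$ is an integer the point lies in the level set $\{\rho=\rho(B,A)\}$, which is the phase-lock area of that number, and not being interior it lies in its boundary. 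Finally Proposition \ref{roteig} applies and the (now double) eigenvalue equals $e^{\pi i(\rho-l)}$; unipotence forces $e^{\pi i(\rho-l)}=1$, whence $\rho\equiv l\pmod{2\zz}$. The remaining bounds $0\le\rho\le l$ are precisely the Buchstaber--Tertychnyi inequalities from \cite{bt0} quoted in the remark.

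For the converse, assume $l,\rho\in\zz$, $0\le\rho\equiv l\pmod{2\zz}$, $\rho\le l$, and that $(B,A)$ lies in the boundary of the $\rho$-th phase-lock area and is not an adjacency. Since $l\in\zz$ and $B>0$ we have $l\in\nn$ and $B=l\omega$, so Theorems \ref{talt} and \ref{tadj} again apply. As a boundary point, $(B,A)$ has integer rotation number $\rho$ and does not lie in the interior of a phase-lock area; by the analysis in the proof of Proposition \ref{roteig} its M\"obius monodromy is then either parabolic or the identity, and the latter is excluded since $(B,A)$ is not an adjacency (Remark \ref{rktriv}). Hence the monodromy of (\ref{heun}) is a nontrivial Jordan cell with a double eigenvalue, which by Proposition \ref{roteig} equals $e^{\pi i(\rho-l)}$; this equals $1$ because $\rho\equiv l\pmod{2\zz}$, so the monodromy is unipotent and nontrivial. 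Consequently (\ref{heun}) has a solution holomorphic on $\cc^*$ but, not being an adjacency, no entire solution (Theorem \ref{tadj}); by Theorem \ref{talt} the only remaining possibility is that (\ref{heun2}) has a polynomial solution, as required. The concluding reformulation in terms of intersections with the lines $B=l\omega$ then follows, since a polynomial solution forces exactly $l=\frac B\omega\in\nn$.

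I expect the main obstacle to be the two places where the monodromy dictionary does not by itself suffice. The first is the bounds $0\le\rho\le l$: the eigenvalue computation yields only the parity $\rho\equiv l\pmod{2\zz}$ together with the boundary/non-adjacency characterization, whereas bounding $\rho$ uses the finer fact that a polynomial solution of (\ref{heun2}) has degree at most $l-1$, which is the external input \cite{bt0} I am quoting rather than reproving. The second, more structural point is justifying uniformly the trichotomy hyperbolic/parabolic/identity against interior/boundary/adjacency, and in particular the correct common sign of the eigenvalue across all non-interior points; this is exactly the path-connectedness and continuity argument carried out in the proof of Proposition \ref{roteig}, which must be invoked so that the formula $e^{\pi i(\rho-l)}$ holds with the stated sign everywhere and not merely up to an a priori ambiguous sign.
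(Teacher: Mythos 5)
Your argument is correct, and its converse half is exactly the paper's own proof: a boundary point with $l\in\zz$ and $\rho\equiv l\ (mod\ 2\zz)$ has unipotent monodromy by Proposition \ref{roteig}, non-adjacency rules out an entire solution by Theorem \ref{tadj}, and Theorem \ref{talt} then leaves only the possibility of a polynomial solution of (\ref{heun2}). Where you genuinely diverge is the forward direction. The paper dispatches it in one sentence by quoting it wholesale from external sources: boundary membership, integrality, parity and the bounds $0\leq\rho\leq l$ are cited from \cite[corollary 6 and theorem 5]{bt0}, and non-adjacency from \cite[theorem 3.10]{bg}. You instead re-derive almost all of it inside the paper's framework: Theorem \ref{tpol} forces $l\in\nn$; the incompatibility clause of Theorem \ref{talt} (applicable since $\mu>0$) makes the monodromy a nontrivial unipotent Jordan cell; triviality being excluded, Theorem \ref{tadj} rules out adjacency, Proposition \ref{pjcell} places the point on the boundary of a (closed) phase-lock area, whose number is then the integer $\rho(B,A)$, and Proposition \ref{roteig} yields the parity $\rho\equiv l\ (mod\ 2\zz)$ -- leaving only the bounds $0\leq\rho\leq l$ as an irreducible citation of \cite{bt0}. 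This buys a more self-contained proof with a visibly smaller external footprint, though the dependence on \cite[theorem 3.10]{bg} is rerouted rather than removed (it is what underlies the incompatibility clause of Theorem \ref{talt}), and you are right that the degree bound and the uniform sign choice in Proposition \ref{roteig} are precisely the two inputs that cannot be extracted from the eigenvalue dictionary alone.
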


 \begin{figure}[ht]
  \begin{center}
   \epsfig{file=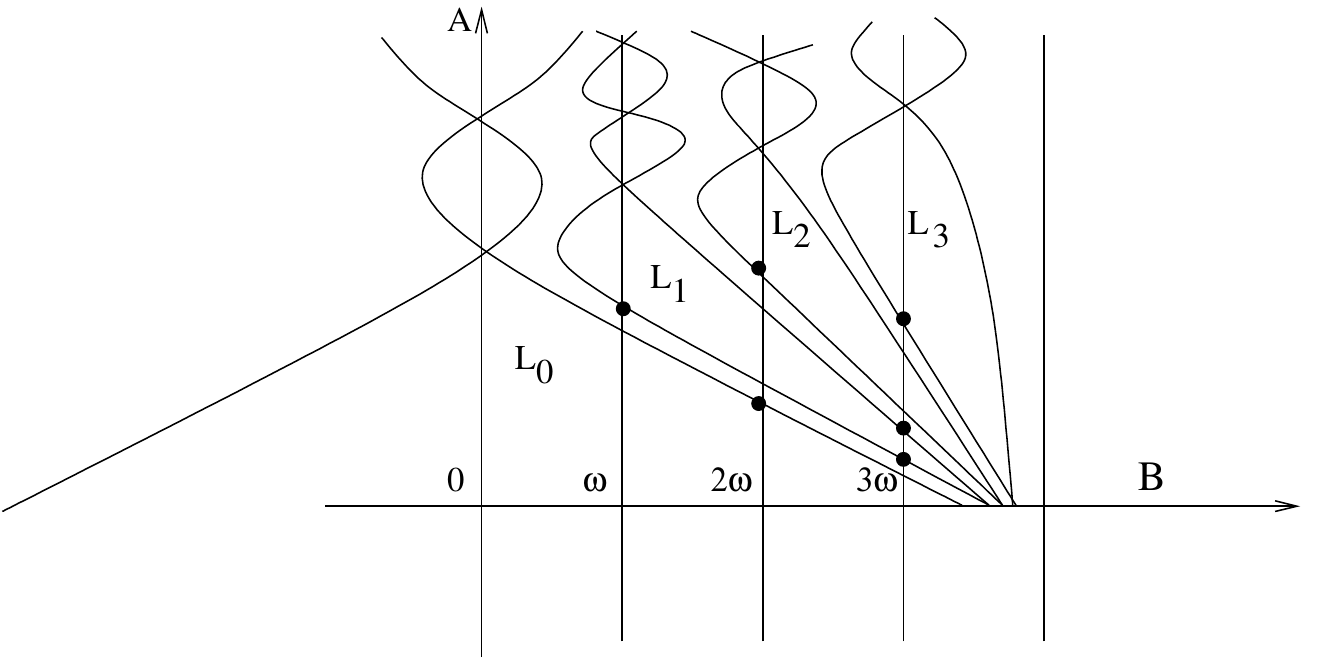}
    \caption{Approximate phase-lock areas for $\omega\simeq 0.27$; 
    the marked points correspond to equations (\ref{heun2}) with polynomial solutions.}
  \end{center}
\end{figure} 

\begin{proof} It is known  that every point $(B,A)\in\rr_+^2$ corresponding to equation (\ref{heun2}) with a polynomial solution 
lies in the boundary of the phase-lock area number $\rho$, and one has $l,\rho\in\zz$, 
$l=\frac B\omega\equiv\rho(mod 2\zz)$, $0\leq\rho\leq l$ \cite[corollary 6 and theorem 5]{bt0}. In addition, $(B,A)$ is not an adjacency 
\cite[theorem 3.10]{bg}. 
Let us prove the converse: if $(B,A)$ satisfy all the latter statements, then 
the corresponding equation (\ref{heun2}) has a polynomial solution. Indeed, if $(B,A)\in\rr^2_+$ lies in the boundary of the phase-lock area number $\rho$, $l\in\zz$ and $\rho\equiv l(mod 2\zz)$, then 
the monodromy of  Heun equation (\ref{heun})  is unipotent, by Proposition \ref{roteig}: the corresponding 
eigenvalues are equal to $e^{-\pi i(\rho+l)}=1$. Let us now suppose that $(B,A)$ is not an adjacency, or equivalently, 
equation (\ref{heun}) does not have an entire solution. Then equation (\ref{heun2}) 
has a polynomial solution, by Theorem \ref{talt}. Theorem \ref{poteig} is proved.
\end{proof} 

\subsection{Equation on non-integer level sets of rotation number}

Recall that in the classical Arnold family of circle diffeomorphisms the phase-lock areas (i.e., the Arnold tongues) exist exactly for the 
rational values of the rotation number. In  family (\ref{jos})  we have 
the quantization effect mentioned in the Subsection 5.1: phase-lock areas exist only for integer values of the rotation number. 
Therefore, in our case the rational non-integer values of the rotation number do not differ from the irrational values. 

Recall that $L_r=\{(B,A)\in\rr^2 \ | \ \rho(B,A)=r\}$, and for $r\notin\zz$ this is the graph $\{ B=g_r(A)\}$ of an analytic 
function (Proposition \ref{can}).  For given $\omega>0$ and $r>0$ set 
$$L_{<r>}=\sqcup_{v\equiv\pm r(mod 2\zz)}L_{v}.$$ 

\begin{remark} Our goal is to describe  the sets $L_r$ for $r\notin\zz$ by analytic equations in terms of the monodromy eigenvalues of 
Heun equations, by using their expressions $e^{\pi i(\pm\rho-l)}$ via the rotation number function (Proposition \ref{roteig}). 
The latter pair of eigenvalues is in a one-to-one correspondence with the value $\pm\rho(mod 2\zz)$. Therefore, 
the pair of eigenvalues corresponding to a given value $r$ of the rotation number function coincides with all the other pairs 
corresponding  to the values $r'\equiv\pm r(mod 2\zz)$, see Corollary \ref{smsl}. Our methods allows to describe 
 the union $L_{<r>}$ by analytic equation, and not each individual level set $L_r$ separately. 
\end{remark}

We consider the case, when $r\notin\zz$. Let us write down analytic equations defining the set $L_{<r>}$ in the complement 
$\rr^2\setminus\Sigma_{<r>}$, where 
 \begin{equation} \Sigma_{<r>}=\cup_{\pm}\{ (B,A)\in\rr^2 \ | \ l=\frac B{\omega}\equiv\pm r(mod 2\zz)\}.\label{neqm2}\end{equation}
For every $(B,A)\in L_{<r>}$ the corresponding 
 Heun equation (\ref{heun}) has a monodromy eigenfunction of the type 
$$E(z)=z^b\sum_{k\in\zz}a_kz^k, \ b=\frac{r-l}2, \ l=\frac B{\omega},$$
 by Proposition \ref{roteig}. On has $b, b+l\notin\zz$, if $(B,A)\notin\Sigma_{<r>}$. Therefore, the analytic subset  $L_{<r>}\setminus\Sigma_{<r>}\subset(\rr^2\setminus\Sigma_{<r>})$ 
  is described by equation (\ref{paste}). Let us write it down explicitly. The corresponding matrices $M_k$, $R_k$, $S_m$, $T_m$, 
  see (\ref{mat3}) and  (\ref{defsm}) have the form 
 $$M_k=\left(\begin{matrix}  1+\frac{\la}{(k+\frac r2)^2-\frac{l^2}4} & \frac{\mu^2}{(k+\frac r2)^2-\frac{l^2}4}\\ 
  1 & 0\end{matrix}\right), \ R_k=M_kM_{k+1}\dots,$$
 $$S_m=\left(\begin{matrix}  1+\frac{\la-l+1}{(\frac{r-l}2-m+1)(\frac{r+l}2-m-1)} & 
 \frac{\mu^2(\frac{r+l}2-m)}{(\frac{r-l}2-m+1)(\frac{r+l}2-m-1)(\frac{r+l}2-m-2)}\\
  1 & 0\end{matrix}\right),$$ 
 $$T_m=S_mS_{m+1}\dots.$$
 \begin{theorem} \label{rhonon}  Let $\omega>0$, $r\in\rr$, $r\notin\zz$.  The set $L_{<r>}\cap(\rr^2\setminus \Sigma_{<r>})$ 
 is defined by the following equation in the variables $(B,A)\in\rr^2\setminus \Sigma_{<r>}$, where $B=l\omega$, $A=2\mu$, 
 $\la+\mu^2=\frac1{4\omega^2}$: 
 \begin{equation} (\frac{r-l}2+1)(\frac{r+l}2-1)R_{1,11}T_{0,11}+\mu^2R_{1,21}T_{0,21}=0.\label{pasterho}\end{equation}
 \end{theorem}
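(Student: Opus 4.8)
The plan is to specialize the nonresonant pasting criterion of Theorem \ref{eight} to the parameter $b=\frac{r-l}2$ and then to translate both its hypothesis and its conclusion into the language of rotation numbers through Proposition \ref{roteig}. Set $n=l+1$ and $b=\frac{r-l}2$, so that $b+n=\frac{r+l}2+1$. A direct computation shows that $b\in\zz$ is equivalent to $l\equiv r\pmod{2\zz}$ and $b+n\in\zz$ is equivalent to $l\equiv -r\pmod{2\zz}$; hence the nonresonance hypothesis $b,b+n\notin\zz$ of Theorem \ref{eight} holds exactly when $(B,A)\notin\Sigma_{<r>}$, see (\ref{neqm2}). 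Substituting $b=\frac{r-l}2$, $n=l+1$ into (\ref{mat3}) and (\ref{defsm}) and using $(k+b)(k+b+n-1)=(k+\frac r2)^2-\frac{l^2}4$ together with the analogous factorizations for $S_m$ produces exactly the matrices $M_k,R_k,S_m,T_m$ written before the theorem, while $(b+1)(b+n-2)=(\frac{r-l}2+1)(\frac{r+l}2-1)$ turns equation (\ref{paste}) into (\ref{pasterho}). Thus, over $\rr^2\setminus\Sigma_{<r>}$, equation (\ref{pasterho}) holds if and only if the associated Heun equation (\ref{heun}) has a monodromy eigenfunction with eigenvalue $e^{2\pi i b}=e^{\pi i(r-l)}$.

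For the forward inclusion I would take $(B,A)\in L_{<r>}\setminus\Sigma_{<r>}$, so that $\rho:=\rho(B,A)\equiv\pm r\pmod{2\zz}$ and $\rho\notin\zz$. Then the point lies off the interiors of all phase-lock areas, Proposition \ref{roteig} applies, and the monodromy eigenvalues are $e^{\pi i(\rho-l)}$ and $e^{-\pi i(\rho+l)}$. Since $\rho\equiv r\pmod{2\zz}$ gives $e^{\pi i(\rho-l)}=e^{\pi i(r-l)}$ while $\rho\equiv -r\pmod{2\zz}$ gives $e^{-\pi i(\rho+l)}=e^{\pi i(r-l)}$, in either case $e^{\pi i(r-l)}$ is a monodromy eigenvalue, and the equivalence of the first paragraph yields (\ref{pasterho}).

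For the reverse inclusion I would suppose $(B,A)\in\rr^2\setminus\Sigma_{<r>}$ satisfies (\ref{pasterho}); then $e^{\pi i(r-l)}$ is a monodromy eigenvalue, and by Proposition \ref{monprod} the other eigenvalue is $e^{-2\pi i n}e^{-\pi i(r-l)}=e^{-\pi i(r+l)}$, so the eigenvalue ratio equals $e^{2\pi i r}$. Here lies the crux. I must first exclude that $(B,A)$ belongs to the interior of a phase-lock area: there $\rho\in\zz$ and the monodromy of the associated Riccati (M\"obius) transformation is hyperbolic, forcing the eigenvalue ratio to be a positive real number different from $1$; but $e^{2\pi i r}\notin\rr_{>0}$ since $r\notin\zz$, a contradiction. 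Hence the point lies off the interiors of the phase-lock areas, Proposition \ref{roteig} applies, and the eigenvalues are $e^{\pi i(\rho-l)}$ and $e^{-\pi i(\rho+l)}$. Because $e^{\pi i(r-l)}$ is one of them, comparing the exponents modulo $2$ forces $\rho\equiv r\pmod{2\zz}$ or $\rho\equiv -r\pmod{2\zz}$; thus $(B,A)\in L_\rho\subset L_{<r>}$.

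I expect the main obstacle to be precisely this last passage: deducing the congruence $\rho\equiv\pm r\pmod{2\zz}$ rather than the weaker $\pmod{\zz}$ from the mere existence of the eigenvalue $e^{\pi i(r-l)}$. This is exactly where the half-integer exponents $e^{\pi i(\rho\mp l)}$ of Proposition \ref{roteig} (whose sign is pinned by continuity and by triviality of the monodromy at adjacencies) become indispensable, and it explains why the natural object described is the union $L_{<r>}$ rather than a single level curve $L_v$ (cf. Corollary \ref{smsl}). The remaining work is the routine substitution recorded in the first paragraph and the convergence of the products $R_1,T_0$, which is guaranteed by Corollary \ref{clem} under $b,b+n\notin\zz$.
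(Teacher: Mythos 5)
Your proof is correct and follows essentially the same route as the paper: specializing the nonresonant pasting criterion of Theorem \ref{eight} at $b=\frac{r-l}2$, $n=l+1$, and translating monodromy eigenvalues into rotation numbers via Proposition \ref{roteig}. You are in fact more careful than the paper's three-line proof, which phrases both directions as a chain of equivalences and thereby silently assumes the eigenvalue formulas $e^{\pi i(\pm\rho-l)}$ even in the reverse inclusion; your hyperbolicity argument ruling out interiors of phase-lock areas (eigenvalue ratio positive real there, while $e^{2\pi i r}\notin\rr_{>0}$ for $r\notin\zz$) supplies exactly the justification the paper leaves implicit.
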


\begin{proof} A point $(B,A)\in\rr^2\setminus \Sigma_{<r>}$ is contained in $L_{<r>}$, if and only if 
$\rho=\rho(A,B)\equiv\pm r(mod 2\zz)$. Or equivalently, some of the corresponding monodromy 
eigenvalues $e^{\pi i(\pm\rho-l)}$ equals $e^{2\pi ib}=e^{\pi i(r-l)}$.  The latter statement is equivalent to (\ref{pasterho}), by 
Theorem \ref{eight} and since $b,b+l\notin\zz$. This proves Theorem \ref{rhonon}.
\end{proof}

\subsection{Description of boundaries of phase-lock areas}

\begin{proposition} \label{pjcell} 
 A point in the parameter space  of equation (\ref{jos}) lies in the boundary of a phase-lock area, if and only if the monodromy of the 
corresponding Heun equation (\ref{heun}) is parabolic: has multiple eigenvalue.
\end{proposition}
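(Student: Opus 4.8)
The plan is to read off the multiplicity of the monodromy eigenvalues from the rotation number by means of Proposition \ref{roteig}, and then to match the two possibilities (coinciding or distinct eigenvalues) against the partition of the parameter plane, guaranteed by Proposition \ref{can}, into the open complement $U=\{\rho\notin\zz\}$ of the phase-lock areas, the (open) interiors of the phase-lock areas, and their common boundary. Recall that the monodromy has a multiple eigenvalue precisely when the ratio of its two eigenvalues equals $1$.

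First I would treat every point that does \emph{not} lie in the interior of a phase-lock area, so that Proposition \ref{roteig} applies. It gives the monodromy eigenvalues $e^{\pi i(\rho-l)}$ and $e^{-\pi i(\rho+l)}$, whose ratio is $e^{\pi i(\rho-l)}/e^{-\pi i(\rho+l)}=e^{2\pi i\rho}$. Hence, off the interiors of the phase-lock areas, the monodromy has a multiple eigenvalue if and only if $\rho\in\zz$. On the open complement $U$ one has $\rho\notin\zz$, so the eigenvalues are distinct; on a boundary point of a phase-lock area $\{\rho=r\}$ one has $\rho=r\in\zz$ by continuity of the rotation number function, so the two eigenvalues coincide. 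This already establishes the implication ``boundary $\Rightarrow$ multiple eigenvalue'' and shows that no point of $U$ carries a multiple eigenvalue.

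It then remains to exclude multiple eigenvalues in the \emph{interior} of a phase-lock area, and this is the step I expect to be the main obstacle, since Proposition \ref{roteig} is not available there. Here I would argue through the projectivized monodromy $P$, a conformal automorphism of the unit disk whose restriction to $S^1$ is the Poincar\'e (first return) map of the field (\ref{josvect}), exactly as in the proof of Proposition \ref{can}. A multiple eigenvalue forces $P$ to be either parabolic (a single fixed point on $S^1$) or the identity, and in both cases the difference $P(x)-x$ has constant sign on $S^1$. Invoking the strict monotonicity of the Poincar\'e map in $B$ recorded in the proof of Proposition \ref{can} (increasing $B$ strictly increases $\dot\phi$, hence increases $P$ pointwise), a small change of $B$ in the appropriate direction destroys all fixed points of $P$; the perturbed map is then elliptic with non-integer rotation number, so $\rho$ leaves the value $r$. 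This contradicts the local constancy of $\rho$ at an interior point, whence interior points have distinct eigenvalues.

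Combining the two parts, a point carries a multiple eigenvalue if and only if it lies neither in $U$ nor in an interior, i.e.\ exactly on a boundary of a phase-lock area. The identity case of $P$ is realized precisely at the adjacencies, where triviality of the monodromy is already recorded in Remark \ref{rktriv}, and the same monotonicity argument places them on the boundary.
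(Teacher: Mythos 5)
Your proof is correct, but it follows a genuinely different route from the paper's. The paper's proof of Proposition \ref{pjcell} is a two-step translation: it \emph{asserts as known} that a parameter point lies in the boundary of a phase-lock area if and only if the period-$2\pi$ Poincar\'e map of (\ref{josvect}) is parabolic (has a fixed point with unit derivative), and then observes that this map is the restriction to the unit circle of the projectivized monodromy of the Riccati equation, parabolicity of a projectivization being equivalent to the linear operator having a multiple eigenvalue. You share that second, projectivization step, but instead of invoking the dynamical dichotomy as a known fact you actually prove it: non-interior points are handled by the eigenvalue formulas of Proposition \ref{roteig}, whose ratio $e^{2\pi i\rho}$ reduces multiplicity of eigenvalues to $\rho\in\zz$, and interior points are excluded by the monotonicity-in-$B$ perturbation argument extracted from the proof of Proposition \ref{can}. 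Both uses are legitimate, since both propositions precede \ref{pjcell} in the paper; note, however, that the boundary direction is in substance already embedded in the paper's own proof of Proposition \ref{roteig}, which asserts there that boundary points have parabolic or identity Riccati monodromy, so the genuinely new content you supply is the interior exclusion --- and that is exactly the justification the paper leaves implicit in its appeal to the ``known'' equivalence. One small point of care: the uniform bound $h(x)-x\geq c\,\delta B$ is stated in the proof of Proposition \ref{can} for the elliptic case after conjugation to a rotation, whereas what you need is pointwise strict monotonicity of the Poincar\'e map in $B$; this follows directly from the comparison theorem for scalar ODEs (increasing $B$ strictly increases $\dot\phi$), and strict pointwise monotonicity alone already makes the displacement of a parabolic or identity map strictly one-signed after perturbation, so no uniformity is actually required. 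In sum, the paper's proof is shorter but rests on an unproved (though standard) fact about families of disk automorphisms; yours is longer but self-contained.
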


\begin{proof} The point under question lyes in the boundary of a phase-lock area, if and only if the flow mapping of the vector field (\ref{josvect}) for the period $2\pi$ (restricted to the coordinate $\phi$-circle) is parabolic: has a fixed point 
with unit derivative. The period mapping is the restriction to the unit circle of the monodromy of the corresponding Riccati equation: 
the projectivized monodromy. Parabolicity of the projectivization of a two-dimensional linear operator is equivalent to its own 
parabolicity. The proposition is proved.
\end{proof}
 
 \begin{proposition} \label{pjcell2} Let  a Heun equation (\ref{heun})  have a parabolic monodromy. Then the monodromy either has Jordan cell type, or is  the identity. 
\end{proposition}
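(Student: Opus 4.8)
The plan is to reduce the statement to pure linear algebra together with the local theory of the irregular singular point at $z=0$ already invoked in the proof of Proposition~\ref{monprod}. A $2\times 2$ matrix with a multiple eigenvalue $\la$ has minimal polynomial dividing $(x-\la)^2$; hence it is either a single Jordan cell (minimal polynomial $(x-\la)^2$, the non-diagonalizable case) or the scalar operator $\la\,\mathrm{Id}$ (minimal polynomial $x-\la$). Thus the whole content is to show that a \emph{scalar} monodromy of equation~(\ref{heun}) is forced to be the identity, i.e. that the value $\la$ of the repeated eigenvalue cannot be different from $1$.

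To do this I would use the factorization of the monodromy recalled in the proof of Proposition~\ref{monprod}: in a canonical sectorial solution basis at $z=0$ the monodromy operator $M$ is the product of the formal monodromy $\diag(1,e^{-2\pi i n})$ and the two (inverse) Stokes matrices $S_1,S_2$, which are unipotent, one upper- and one lower-triangular. Writing $S_1=\left(\begin{matrix}1&a\\0&1\end{matrix}\right)$ and $S_2=\left(\begin{matrix}1&0\\b&1\end{matrix}\right)$ and multiplying out gives
\[
M=\diag(1,e^{-2\pi i n})\,S_1 S_2=\left(\begin{matrix}1+ab & a\\ e^{-2\pi i n}b & e^{-2\pi i n}\end{matrix}\right).
\]
If $M=\la\,\mathrm{Id}$, then its off-diagonal entries vanish; since $e^{-2\pi i n}\neq0$ this forces $a=b=0$, so $M$ equals the diagonal formal monodromy. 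A diagonal matrix is scalar only when its two entries coincide, i.e. $e^{-2\pi i n}=1$, and then $\la=1$ and $M=\mathrm{Id}$. The same conclusion is reached with the roles of $S_1,S_2$ interchanged, so the ordering and the choice of which Stokes factor is upper-triangular are immaterial; this yields the asserted dichotomy.

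I expect the only genuinely nontrivial ingredient to be the appeal to the local normal form of the monodromy at the irregular point — that $M$ is a diagonal formal monodromy multiplied by unipotent triangular Stokes factors. This is exactly the input already used (and cited to \cite{4}) in Proposition~\ref{monprod}, so no new analytic work is required, and everything else is the elementary $2\times2$ computation above. A self-contained alternative avoiding Stokes theory would combine the determinant relation $\det M=e^{-2\pi i n}$ of Proposition~\ref{monprod} with the uniqueness up to scale of the forward and backward convergent solutions of the recurrence~(\ref{recur}) from Theorem~\ref{conv}: in the non-resonant case $b,b+n\notin\zz$ this uniqueness forces the eigenspace of each eigenvalue to be one-dimensional, excluding a nontrivial scalar, while the resonant cases are then settled from $\la^2=\det M=e^{-2\pi i n}$ together with the fact that the two eigenvalues are $e^{2\pi i b}$ and $e^{-2\pi i(b+n)}$. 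I would nonetheless keep the Stokes argument as the main line, since it treats all cases uniformly in one short computation.
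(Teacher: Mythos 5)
Your proof is correct and follows essentially the same route as the paper: reduce the parabolic-but-not-Jordan-cell case to the scalar case, invoke the factorization of the monodromy into the formal monodromy $\diag(1,e^{-2\pi i n})$ times unipotent triangular (inverse) Stokes factors — the same input, cited to \cite{4}, that Proposition \ref{monprod} uses — and conclude that a scalar monodromy forces trivial Stokes factors, hence equals the diagonal formal monodromy, which is scalar only when $e^{-2\pi i n}=1$, i.e.\ only when it is the identity. The sole difference is cosmetic: where the paper cites \cite[proof of lemma 3.3]{4} for the implication ``scalar $\Rightarrow$ trivial Stokes matrices,'' you verify it directly by the explicit $2\times2$ multiplication, which makes the step self-contained.
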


\begin{proof} The monodromy matrix is the product of the formal monodromy matrix $\diag(1,e^{-2\pi i n})$ and a pair of unipotent 
matrices: the inverse to the Stokes matrices at 0, cf. \cite[formulas (2.15) and (3.2)]{4}.   If the monodromy of 
a Heun equation (\ref{heun}) is parabolic but not a Jordan cell, then it is a multiplication by scalar number. If the 
monodromy is  scalar, 
then the Stokes matrices are trivial, and the monodromy coincides with the formal one, see \cite[proof of lemma 3.3]{4}. 
Hence, both monodromies are scalar and  given by the above diagonal matrix with unit eigenvalue. 
Thus, they are trivial. The proposition is proved.
\end{proof}


%

The condition saying that the monodromy has multiple eigenvalue 
is equivalent to the statement that it has eigenvalue $\pm e^{-\pi il}$, 
by Proposition \ref{monprod}. This is equivalent to the statement that there exists a multivalued solution $z^b\sum_{k\in\zz}a_kz^k$ 
of Heun equation with $b\in\{ -\frac l2, \ -\frac{l+1}2\}$: a monodromy eigenfunction with the above eigenvalue. The corresponding 
parameter set of Heun equations (\ref{heun}) for $b\notin\zz$ 
 will be described below (Cases 1 and 2) by using the following proposition. Afterwards we 
 immediately  obtain the description of boundaries of phase-lock areas.

\begin{proposition} \label{dinvar} Let a Heun equation (\ref{heun}) have a Jordan cell monodromy. Then its eigenfunction is  either invariant, or anti-invariant under the involution $\#$: 
$$(\# E)(z)=2\omega z^{-l-1}(E'(z^{-1})-\mu E(z^{-1})).$$
\end{proposition}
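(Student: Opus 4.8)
The plan is to combine the determinant formula for the monodromy (Proposition \ref{monprod}) with the eigenvalue transformation property of $\#$ already established in the proof of Proposition \ref{propdiez}, and then exploit one-dimensionality of the eigenspace together with the involutivity of $\#$.

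First I would record the eigenvalue data. Since the monodromy is a Jordan cell, both its eigenvalues coincide; call the common value $\kappa$. By Proposition \ref{monprod} the product of the eigenvalues equals $e^{-2\pi i n}$, so $\kappa^2=e^{-2\pi i n}$, i.e. $\kappa=\pm e^{-\pi i n}$. Let $E$ be the eigenfunction, which is unique up to a constant factor in the Jordan cell case, with monodromy eigenvalue $\kappa$.

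Next I would invoke the computation in the proof of Proposition \ref{propdiez}: for any monodromy eigenfunction with eigenvalue $\kappa$, the function $\#E$ is again a monodromy eigenfunction, now with eigenvalue $\kappa^{-1}e^{-2\pi i n}$. That computation only uses that $\#$ is the composition of the derivative-combination $E\mapsto E'-\mu E$, the inversion $z\mapsto z^{-1}$ (which reverses the circuit around $0$, hence inverts the eigenvalue), and multiplication by $z^{-n}$ (contributing the factor $e^{-2\pi i n}$); it does not use distinctness of the two eigenvalues, so it applies verbatim here. Substituting $\kappa^2=e^{-2\pi i n}$ gives $\kappa^{-1}e^{-2\pi i n}=\kappa$. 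Thus $\#E$ is a monodromy eigenfunction with the very same eigenvalue $\kappa$ as $E$.

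Since the monodromy is a Jordan cell, its $\kappa$-eigenspace is one-dimensional; hence $\#E=cE$ for some $c\in\cc$, with $c\neq0$ because $\#$ is invertible (being an involution) and $E\neq0$. Finally, as $\#$ is linear and $\#^2=\mathrm{id}$, applying $\#$ to the identity $\#E=cE$ yields $E=\#(\#E)=c\,\#E=c^2E$, so $c^2=1$ and $c=\pm1$. This is precisely the assertion that $E$ is invariant ($c=1$) or anti-invariant ($c=-1$) under $\#$. I expect the only point needing care to be the verification that the eigenvalue computation borrowed from Proposition \ref{propdiez} is genuinely independent of the eigenvalues being distinct, so that it governs the single eigenfunction of a Jordan cell; everything else is the one-dimensionality of the eigenspace and the involution property.
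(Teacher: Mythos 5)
Your proof is correct and follows essentially the same route as the paper, whose own (one-sentence) proof simply cites the fact that $\#$ sends monodromy eigenfunctions to eigenfunctions (Proposition \ref{propdiez}); you have merely made explicit the steps the paper leaves implicit, namely that the eigenvalue computation in the proof of Proposition \ref{propdiez} does not use distinctness of the eigenvalues, that $\kappa^{-1}e^{-2\pi i n}=\kappa$ when $\kappa$ is the double eigenvalue, and that one-dimensionality of the Jordan-cell eigenspace plus $\#^2=\mathrm{id}$ forces $\#E=\pm E$.
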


The proposition follows from the fact that the involution $\#$ sends monodromy eigenfunctions to eigenfunctions (Proposition \ref{propdiez}). 



{\bf Case 1: $l\notin 2\zz$, $b=-\frac l2$: the monodromy eigenvalue equals $e^{-\pi i l}\neq1$.}  
Then the monodromy operator of Heun equation (\ref{heun}) is a Jordan cell, by Proposition \ref{pjcell2}.  
Consider the matrices 
\begin{equation}M_k=\left(\begin{matrix}  1+\frac{\la}{k^2-\frac{l^2}4} & \frac{\mu^2}{k^2-\frac{l^2}4}\\  1 & 0\end{matrix}
\right), \ R_k=M_kM_{k+1}\dots.\label{rkmk2}\end{equation}
\begin{theorem} \label{heun4} Let  $\la+\mu^2=\frac1{4\omega^2}$, $\omega,\mu\neq0$, $n=l+1$, $l\notin 2\zz$.  
The monodromy operator of the 
corresponding Heun equation (\ref{heun}) has eigenvalue $e^{-\pi il}$, if and only if 
\begin{equation} R_{0,21}\pm\omega l(R_{0,21}-R_{0,11})=0.\label{r0eql}\end{equation}
with some choice of sign.
\end{theorem}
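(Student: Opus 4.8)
The plan is to reduce the statement to the non-resonant pasting criterion already available and then to collapse it to the one-sided condition (\ref{r0eql}) using the involution $\#$. First I would put $b=-\frac l2$ and note that $l\notin2\zz$ forces $b,\,b+n=\frac l2+1\notin\zz$, so we are in the non-resonant case of Subsection 4.1. By Corollary \ref{cordon} and Theorem \ref{eight}, the monodromy of (\ref{heun}) has eigenvalue $e^{-\pi il}=e^{2\pi ib}$ if and only if the equation admits an eigenfunction $E(z)=z^b\sum_{k\in\zz}a_kz^k$, and the matrices $M_k$, $R_k$ of (\ref{rkmk2}) are exactly those of (\ref{mat3}) for these $b,n$. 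Since the determinant of the monodromy is $e^{-2\pi in}=e^{-2\pi il}$ (Proposition \ref{monprod}), the eigenvalue $e^{-\pi il}$ is then double; as $e^{-\pi il}\neq1$ for $l\notin2\zz$, Proposition \ref{pjcell2} shows the monodromy is a Jordan cell, and hence by Proposition \ref{dinvar} its eigenfunction is $\#$-invariant or $\#$-anti-invariant.

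The computational core is the coefficient form of $\#$. Writing $E=\sum_k a_kz^{k+b}$ and using (\ref{defdi}), the choice $b=-\frac l2$ is exactly what makes $\#$ preserve the exponent $b$, and expanding gives $\#E=\sum_k a_k^{\#}z^{k+b}$ with $a_k^{\#}=2\omega(b-k)a_{-k}-2\omega\mu a_{-k-1}$; one checks that $\#^2=\mathrm{id}$ holds precisely because $\la+\mu^2=\frac1{4\omega^2}$. Thus $\#E=\epsilon E$ with $\epsilon=\pm1$ means $a_k^{\#}=\epsilon a_k$ for all $k$. When the eigenfunction exists it is, up to scale, the forward solution of Theorem \ref{th24}, so from the rescaling $c_k=\mu^{-k}(b)_{k+1}a_k$ together with $\binom{c_{-1}}{c_0}=R_0\binom10$ I read off $a_0=R_{0,21}/b$ and $a_{-1}=R_{0,11}/\mu$. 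Evaluating $a_0^{\#}=\epsilon a_0$ gives $2\omega(R_{0,21}-R_{0,11})=\epsilon R_{0,21}/b$, which after substituting $b=-\frac l2$ is exactly (\ref{r0eql}); the sign $\pm$ records whether $E$ is $\#$-invariant or $\#$-anti-invariant. This settles the ``only if'' direction.

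For the converse I would argue inside the two-dimensional space of formal solutions of the recurrence (\ref{recur}) with exponent $b=-\frac l2$, on which $\#$ acts as an involution. Because $z\mapsto z^{-1}$ exchanges the behaviour at $0$ and $\infty$, the formula for $a_k^{\#}$ converts the super-exponential decay of the forward solution as $k\to+\infty$ into decay as $k\to-\infty$; hence $\#$ sends the forward line to the backward line, and an eigenfunction exists if and only if these two lines coincide, i.e.\ if and only if the forward solution is itself a $\#$-eigenvector. Splitting the forward solution into its $\#$-invariant and $\#$-anti-invariant parts, equation (\ref{r0eql}) with a given sign says exactly that one of these two components, say $w$, vanishes at index $0$.

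The step I expect to be the main obstacle — and the reason a single scalar equation can encode the two-sided pasting — is to show that $w_0=0$ forces $w\equiv0$. Here one uses that $w$ is a $\#$-eigenvector, $w^{\#}=\delta w$ with $\delta=\pm1$: its relation at $k=0$ reads $2\omega b\,w_0-2\omega\mu w_{-1}=\delta w_0$, so $w_0=0$ immediately gives $w_{-1}=0$ since $\omega,\mu\neq0$. Two consecutive vanishing coefficients annihilate every solution of the second-order recurrence (\ref{recur}), whence $w\equiv0$. Therefore the forward solution is a pure $\#$-eigenvector, the forward and backward lines agree, and the eigenfunction exists. Combining the two directions proves the equivalence, with ``some choice of sign'' meaning that (\ref{r0eql}) holds for at least one of the two signs.
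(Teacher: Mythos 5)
Your proposal is correct and follows essentially the same route as the paper's own proof: the identification $a_0=R_{0,21}/b$, $a_{-1}=R_{0,11}/\mu$ via the forward solution, the use of $\#$-(anti)-invariance of the eigenfunction (Proposition \ref{dinvar}) to extract the zero-power coefficient relation equivalent to (\ref{r0eql}), and, for the converse, the action of $\#$ as an involution on the two-dimensional space of formal solutions together with the transfer of super-exponential decay from $k\to+\infty$ to $k\to-\infty$. Your eigencomponent splitting (showing $w_0=0\Rightarrow w_{-1}=0\Rightarrow w\equiv0$) is just a repackaging of the paper's characterization of the $\pm1$-eigenspaces by the initial-condition equations (\ref{eql}), so the two arguments coincide in substance.
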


\begin{proof} The corresponding eigenfunction $E$ has the form 
\begin{equation} E(z)=z^{-\frac l2}f(z), \ f(z)=\sum_{k\in\zz}a_kz^k \text{ is holomorphic on } \cc^*.\label{eheun}\end{equation}
Equation (\ref{heun}) is equivalent to recurrence equations (\ref{recur}) with $b=-\frac l2$:  
\begin{equation} (k^2-\frac{l^2}4+\la) a_k-\mu(k+\frac l2)a_{k-1}+\mu(k-\frac l2+1)a_{k+1}=0.\label{l2}\end{equation}
The series $f(z)$ should converge on $\cc^*$. The above matrices $M_k$ and $R_k$ coincide with those constructed in 
(\ref{mat3}), and they are well-defined for all  $k\in\zz$. Therefore, the coefficients $a_k$, $k\geq0$ are given by formulas (\ref{ak}) 
up to common constant factor, by Theorems \ref{conv} and \ref{th24}: 
\begin{equation} a_0=-\frac2lR_{0,21}, \ a_{-1}=\mu^{-1}R_{0,11}.\label{a0l1}\end{equation}

Now we will use the  condition of (anti-) invariance $\# E=\pm E$ (Proposition \ref{dinvar}), which takes the form
$$\sum_{k\in\zz}a_kz^{k-\frac l2}=\pm2\omega(\sum_{k\in\zz}(k-\frac l2)a_kz^{-k-\frac l2}-\mu\sum_{k\in\zz}a_kz^{-\frac l2-k-1}),$$
or equivalently, 
\begin{equation}\sum a_kz^k=\pm2\omega(\sum(k-\frac l2)a_kz^{-k}-\mu\sum a_kz^{-k-1}).\label{dinv}\end{equation}
The free (zero power) term of the latter equation is equivalent to the relation 
\begin{equation} (1\pm l\omega)a_0\pm2\omega\mu a_{-1}=0,\label{eql}\end{equation}
which is in its turn  equivalent to (\ref{r0eql}), by (\ref{a0l1}). Therefore, existence of the above solution $E$ implies (\ref{r0eql}). 

Let us prove the converse: each equation (\ref{r0eql}) implies the existence of a solution (\ref{eheun}) of  Heun equation. 
 To do this, consider the action of the 
transformation $\#$  on the {\it formal} series (\ref{eheun}) (with $f$ not necessarily converging). It  sends formal 
solutions of Heun equation (equivalently, formal solutions of (\ref{l2})) to formal solutions. (The proof 
of  symmetry of Heun equation under the transformation $\#$ uses only Leibniz differentiation rule and remains valid for formal series.) 
The space of formal solutions is two-dimensional, and it is identified with the space of its initial conditions $(a_{-1},a_0)$. The 
transformation $\#$ is its involution. Its eigenvalues are equal to $\pm1$, and the corresponding 
eigenspaces are defined by initial conditions that satisfy (\ref{eql}). Therefore, both eigenspaces are one-dimensional and are exactly 
characterized by equations (\ref{eql}), since both equations (\ref{eql}) are nontrivial. Thus, {\it a formal solution $(a_k)_{k\in\zz}$ of 
recurrence relations (\ref{l2}) is $\#$-(anti)-invariant, if and only if its coefficients $a_{-1}$, $a_0$ satisfy (\ref{eql}) with the corresponding 
sign.}

Fix the one-sided   solution $\sum_{k\geq-1}a_kz^k$ of recurrence relations (\ref{l2}) for $k\geq0$. It satisfies (\ref{eql}), by 
(\ref{r0eql}). The sequence $(a_k)_{k\geq-1}$ extends uniquely  to a two-sided formal solution $(a_k)_{k\in\zz}$  of (\ref{l2}) (a priori, not necessarily presenting a converging series for $k\to-\infty$), since the coefficients at $a_{k\pm1}$ in (\ref{l2}) do not vanish. The 
latter formal solution should be $\#$-(anti-) invariant, by (\ref{eql}) and the previous statement. Hence, 
 $$a_k=\pm2\omega((-k-\frac l2)a_{-k}-\mu a_{-(k+1)})$$
 by (\ref{dinv}). The series $\sum_{k<0}a_kz^k$ converges on $\cc^*$, since it is bounded from above by converging series 
 $2\omega\sum_{k\geq0}(k+|l|+\mu+1)|a_{k}z^k|$, by the latter formula. This together with the above argument proves the theorem.
 \end{proof}  

{\bf Case 2: $l\notin 2\zz+1$ and $b=-\frac{l+1}2$: the monodromy eigenvalue equals $-e^{-\pi i l}\neq1$.} 
Then the monodromy of equation (\ref{heun}) is a Jordan cell, as above.  
Consider the matrices 
\begin{equation}M_k=\left(\begin{matrix}  1+\frac{\la}{(k-\frac12)^2-\frac{l^2}4} & \frac{\mu^2}{(k-\frac12)^2-\frac{l^2}4} \\  1 & 0\end{matrix}\right), \ 
R_k=M_kM_{k+1}\dots.\label{mkrk1}\end{equation}

\begin{theorem} \label{heun5} Let  $\la+\mu^2=\frac1{4\omega^2}$, $\omega,\mu\neq0$, $n=l+1$, $l\notin 2\zz+1$.  The monodromy operator of the 
corresponding Heun equation (\ref{heun}) has eigenvalue $-e^{-\pi il}$, if and only if 
\begin{equation}  R_{1,11}\pm2\omega\mu(R_{1,11}-R_{1,21})=0\label{r1eql}\end{equation}
with some choice of sign. 
\end{theorem}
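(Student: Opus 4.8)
The plan is to repeat the proof of Theorem \ref{heun4} almost verbatim, with $b=-\frac l2$ replaced by $b=-\frac{l+1}2$. First I would write the desired monodromy eigenfunction as $E(z)=z^{-\frac{l+1}2}f(z)$ with $f(z)=\sum_{k\in\zz}a_kz^k$ holomorphic on $\cc^*$, so that equation (\ref{heun}) is equivalent to the recurrence (\ref{recur}) with $b=-\frac{l+1}2$ and $n=l+1$. A one-line computation gives $(k+b)(k+b+n-1)=(k-\frac12)^2-\frac{l^2}4$, so these relations read $((k-\frac12)^2-\frac{l^2}4+\la)a_k-\mu(k+\frac{l-1}2)a_{k-1}+\mu(k-\frac{l-1}2)a_{k+1}=0$, and the matrices attached to them are exactly the $M_k$ of (\ref{mkrk1}), i.e. (\ref{mat3}) for the present $b$. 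The hypothesis $l\notin 2\zz+1$ ensures $(k-\frac12)^2-\frac{l^2}4\neq0$ for all $k\in\zz$, so every $M_k$ and every product $R_k=M_kM_{k+1}\dots$ is well-defined; moreover the eigenvalue $-e^{-\pi il}\neq1$ forces the monodromy to be a Jordan cell (Proposition \ref{pjcell2}), hence its eigenfunction is $\#$-invariant or $\#$-anti-invariant (Proposition \ref{dinvar}).

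Next I would record the forward solution. By Theorems \ref{conv} and \ref{th24} the coefficients $a_k$, $k\geq0$, are given by (\ref{ak}) up to a common factor; extending (\ref{ak}) to $k=0$ as in the proof of Theorem \ref{eight} and using $b(b+1)=\frac{l^2-1}4$ gives $a_0=\frac1bR_{1,11}=-\frac2{l+1}R_{1,11}$ and $a_1=\frac\mu{b(b+1)}R_{1,21}=\frac{4\mu}{l^2-1}R_{1,21}$.

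The heart of the argument is to extract the analogue of (\ref{eql}) from $\#E=\pm E$. Substituting $E=\sum_ka_kz^{k-\frac{l+1}2}$ into the formula (\ref{defdi}) for $\#$ (with $n=l+1$) and dividing by $z^{-\frac{l+1}2}$ yields the counterpart of (\ref{dinv}), namely $\sum_ka_kz^k=\pm2\omega\left(\sum_ka_k(k-\frac{l+1}2)z^{1-k}-\mu\sum_ka_kz^{-k}\right)$. Reading off the free (zero power) coefficient gives $(1\pm2\omega\mu)a_0\mp\omega(1-l)a_1=0$. Substituting the two expressions for $a_0,a_1$ above and clearing the common factor $-\frac2{l+1}$ (using $\frac{1-l}{l^2-1}=-\frac1{l+1}$) transforms this into $(1\pm2\omega\mu)R_{1,11}\mp2\omega\mu R_{1,21}=0$, which is precisely (\ref{r1eql}). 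This shows that existence of the eigenfunction implies (\ref{r1eql}).

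For the converse I would copy the Case 1 reasoning: $\#$ is an involution of the two-dimensional space of formal solutions of the recurrence, and each of its $\pm1$-eigenspaces is one-dimensional, cut out by a single nontrivial linear condition on $(a_0,a_1)$ equivalent to (\ref{r1eql}) (nontrivial because the coefficient $\mp\omega(1-l)$ of $a_1$ does not vanish for $l\neq1$, which holds since $l\notin2\zz+1$). Starting from the convergent forward series $\sum_{k\geq0}a_kz^k$, which satisfies (\ref{r1eql}) by assumption, I extend it uniquely to a two-sided formal solution; this extension is automatically $\#$-(anti)invariant, so comparing coefficients of $z^m$ gives $a_m=\pm\omega(1-2m-l)a_{1-m}\mp2\omega\mu a_{-m}$, expressing each negative-index $a_m$ through the positive-index coefficients $a_{1-m},a_{-m}$. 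Since the forward series is entire, this bound forces $\sum_{k<0}a_kz^k$ to converge on $\cc^*$, completing the eigenfunction. I expect the only delicate point to be the sign and index bookkeeping in the free-term comparison; it is a careful but routine repetition of Theorem \ref{heun4}, so no genuinely new obstacle should arise.
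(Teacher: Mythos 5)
Your proposal is correct and takes essentially the same route as the paper: its own proof likewise reduces to the recurrence (\ref{l+12}), takes $a_0=-\frac2{l+1}R_{1,11}$, $a_1=\frac{4\mu}{l^2-1}R_{1,21}$ from (\ref{ak}), extracts the free-term relation $(1\pm2\omega\mu)a_0\pm\omega(l-1)a_1=0$ from $\#E=\pm E$, and then states that the rest is ``analogous to the proof of Theorem \ref{heun4}'' --- which is precisely the formal $\#$-eigenspace argument and negative-index bound you spell out. Your sign and index bookkeeping (in particular $\mp\omega(1-l)=\pm\omega(l-1)$ and the reduction to (\ref{r1eql})) checks out.
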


\begin{proof} 
We are looking for a double-infinite  solution of Heun equation  (\ref{heun}) of the type 
\begin{equation} E(z)=z^{-\frac{l+1}2}f(z), \ f(z)=\sum_{k\in\zz}a_kz^k\label{eheun1}\end{equation}
 of Heun equation (\ref{heun}) with $f$ holomorphic on $\cc^*$.  That is, with $a_k$ satisfying 
 recurrence relations (\ref{recur}) for $b=-\frac{l+1}2$, which take the form 
\begin{equation} ((k-\frac12)^2-\frac{l^2}4+\la)a_k-\mu(k+\frac{l-1}2)a_{k-1}+\mu(k-\frac{l-1}2)a_{k+1}=0.\label{l+12}\end{equation}
The above matrices $M_k$ and $R_k$ coincide with those constructed in (\ref{mat3}), and they 
are well-defined for all  $k\in\zz$. Therefore, the coefficients $a_k$, $k\geq0$ are given by formulas (\ref{ak}) 
up to common constant factor, by Theorems \ref{conv} and \ref{th24}. In particular, 
$$a_0=\frac1bR_{0,21}=\frac1bR_{1,11}=-\frac 2{l+1}R_{1,11}, \ a_1=\frac{\mu}{b(b+1)}R_{1,21}=\frac{4\mu}{l^2-1}R_{1,21}.$$
The condition of (anti-) invariance under the involution $\#$ of the solution takes the form 
$$\sum a_kz^{k-\frac{l+1}2}=\pm2\omega(\sum(k-\frac{l+1}2)a_kz^{-k-\frac{l-1}2}-\mu\sum a_kz^{-k-\frac{l+1}2}),$$
or equivalently, 
$$\sum a_kz^k=\pm2\omega(\sum(k-\frac{l+1}2)a_kz^{-k+1}-\mu\sum a_kz^{-k}).$$
The free term  of the latter equation is given by the  relation 
\begin{equation} (1\pm2\omega\mu)a_0\pm\omega(l-1)a_1=0,\label{l12}\end{equation}
which is equivalent to (\ref{r1eql}). The rest of proof of Theorem \ref{heun5} is analogous to the proof of Theorem \ref{heun4}. 
\end{proof}

%
%

\begin{corollary} \label{cboun} 
Let $\omega,\mu>0$, $\la+\mu^2=\frac1{4\omega^2}$, $l\geq0$, $n=l+1$, $B=l\omega$, $A=2\mu\omega$.  
The point $(B,A)$ lies in the boundary of a phase-lock area, if and only if one of the following four incompatible statements holds:

1)  $(B,A)$ is an adjacency:  $l\in\zz$ and $\xi_l(\la,\mu)=0$; 

2) Heun equation (\ref{heun2}) has a polynomial solution: $l\in\nn$ and  
$\det(H+\la Id)=0$, where $H$ is the $l\times l$-matrix from (\ref{defh});

3) $l\notin2\zz$ and equation (\ref{r0eql}) holds;

4) $l\notin2\zz+1$ and equation (\ref{r1eql}) holds.
\end{corollary}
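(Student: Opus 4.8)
The plan is to reduce membership in a phase-lock boundary to a condition on the eigenvalues of the monodromy of the associated Heun equation (\ref{heun}), and then to read that condition off from the four theorems already proved. First I would invoke Proposition \ref{pjcell}: the point $(B,A)$ lies in the boundary of a phase-lock area if and only if the monodromy of (\ref{heun}) is parabolic, i.e. has a multiple eigenvalue. Since the product of the two monodromy eigenvalues equals $e^{-2\pi i n}=e^{-2\pi i l}$ by Proposition \ref{monprod}, a multiple eigenvalue $\lambda$ must satisfy $\lambda^2=e^{-2\pi i l}$, hence $\lambda=\pm e^{-\pi i l}$. Conversely, if $e^{-\pi i l}$ (resp. $-e^{-\pi i l}$) is an eigenvalue, the product relation forces the other eigenvalue to coincide with it, so the monodromy is automatically parabolic. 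Thus parabolicity is equivalent to the existence of a monodromy eigenfunction with eigenvalue $e^{-\pi i l}=e^{2\pi i b}$, $b=-\tfrac l2$, or with eigenvalue $-e^{-\pi i l}=e^{2\pi i b}$, $b=-\tfrac{l+1}2$.

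Next I would dispose of each eigenvalue condition by splitting on whether the corresponding $b$ is resonant. For the eigenvalue $e^{-\pi i l}$ with $b=-\tfrac l2$: if $l\notin 2\zz$ then $b,b+n\notin\zz$ and Theorem \ref{heun4} says this eigenvalue is present exactly when (\ref{r0eql}) holds, which is Case 3; if $l\in 2\zz$ then $b\in\zz$, the eigenvalue is $(-1)^l=1$, and ``the monodromy has eigenvalue $1$'' is precisely the unipotency condition of Theorem \ref{talt}, equivalent to Case 1 (entire solution, $\xi_l=0$) or Case 2 (polynomial solution of (\ref{heun2}), $\det(H+\la Id)=0$). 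Symmetrically, for the eigenvalue $-e^{-\pi i l}$ with $b=-\tfrac{l+1}2$: if $l\notin 2\zz+1$ then $b,b+n\notin\zz$ and Theorem \ref{heun5} gives the eigenvalue exactly when (\ref{r1eql}) holds, which is Case 4; if $l\in 2\zz+1$ then $b\in\zz$, the eigenvalue is $-(-1)^l=1$, and again Theorem \ref{talt} converts this into Cases 1 or 2. Running through the three alternatives $l\notin\zz$, $l$ even, $l$ odd, one checks that in each alternative exactly the applicable subset of the four cases occurs and their disjunction captures parabolicity: for $l\notin\zz$ the two conditions are Cases 3 and 4; for $l$ even they are Cases 1,2 (eigenvalue $1$) and Case 4 (eigenvalue $-1$); for $l$ odd they are Cases 1,2 (eigenvalue $1$) and Case 3 (eigenvalue $-1$). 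Hence ``boundary'' $\Longleftrightarrow$ ``(1) or (2) or (3) or (4)''.

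Finally I would prove pairwise incompatibility. The eigenvalue carried by a witnessing eigenfunction is $e^{-\pi i l}$ in Case 3 and $-e^{-\pi i l}$ in Case 4; since these differ by a sign while the product of eigenvalues is fixed at $e^{-2\pi i l}$, the monodromy cannot have both as eigenvalues, so Cases 3 and 4 never hold together. Cases 1 and 2 both force eigenvalue $1$, whereas whichever of Cases 3, 4 can co-occur with them (Case 3 for odd $l$, Case 4 for even $l$) forces eigenvalue $-1$; a $2\times2$ matrix of determinant $1$ cannot have a double eigenvalue $1$ and a double eigenvalue $-1$ simultaneously, ruling out these overlaps. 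The only remaining pair is Cases 1 and 2 themselves, whose incompatibility for real $\mu>0$ is exactly the last assertion of Theorem \ref{talt} (an entire solution of (\ref{heun}) and a polynomial solution of (\ref{heun2}) cannot coexist), resting on the positivity of the Bessel determinants. The main obstacle is not any single deduction but the careful bookkeeping of which value of $b$ is resonant for each parity of $l$, so that the four cases are matched to the correct eigenvalue and theorem without gaps or double counting; the one genuinely external input is the incompatibility of Cases 1 and 2, borrowed from Theorem \ref{talt}.
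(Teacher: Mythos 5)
Your proof is correct and follows essentially the same route as the paper's: reduce to parabolicity of the monodromy via Proposition \ref{pjcell}, pin the multiple eigenvalue to $\pm e^{-\pi i l}$ via the determinant relation of Proposition \ref{monprod}, and then invoke Theorems \ref{talt}, \ref{heun4}, \ref{heun5} according to parity/resonance, with the incompatibility of 1) and 2) borrowed from the last statement of Theorem \ref{talt}. The only cosmetic difference is organizational: the paper splits the parabolic case as ``unipotent versus Jordan cell with eigenvalue $\neq 1$'' (citing Propositions \ref{pjcell2}, \ref{roteig} and Theorem \ref{tadj}), whereas you split by candidate eigenvalue and resonance of $b$, which carries the same content.
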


\begin{proof} If one of the above statements holds, then $(B,A)$ lies in the boundary of a phase-lock area, by Proposition \ref{pjcell} and 
Theorems \ref{talt}, \ref{heun4}, \ref{heun5}.  Conversely, let  $(B,A)$ lie in the boundary of a phase-lock area. The monodromy 
of the corresponding Heun equation (\ref{heun}) is parabolic, by Proposition \ref{pjcell}. If it is uniponent, then $l\in\zz$, since 
its determinant $e^{-2\pi i l}$ should be unit.  If $l\in\zz$, then it is unipotent, if and only if  some of 
the two incompatible statements 1) or 2) holds, by Theorems \ref{talt} and \ref{tadj}. Otherwise, the monodromy has 
Jordan cell type with eigenvalue $\pm e^{-\pi i l}\neq1$, by Propositions \ref{roteig} and \ref{pjcell2}. 
Therefore, one of the statements 3) or 4) holds, by Theorems 
\ref{heun4} and \ref{heun5}. Statements 3) and 4) are incompatible: they correspond to Heun equation (\ref{heun}) with monodromy having 
multiple eigenvalue $e^{-\pi i l}$ or $-e^{-\pi i l}$ respectively. This proves the corollary.
\end{proof}

\begin{proposition} \label{newex} Let $l\in\zz$. For given $\omega,\mu>0$ and $n=l+1$ the corresponding Heun equation (\ref{heun}) has a monodromy eigenfunction with eigenvalue $-1$, if and only if the corresponding point $(B,A)\in\rr^2$ lies in the 
boundary  of a phase-lock area with a rotation number $\rho\equiv l+1(mod 2\zz)$. 
\end{proposition}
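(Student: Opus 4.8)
The plan is to combine the determinant computation of Proposition \ref{monprod}, the expression of the monodromy eigenvalues through the rotation number (Proposition \ref{roteig}), and the boundary criterion of Proposition \ref{pjcell}, so that the whole statement reduces to elementary parity bookkeeping on the integers $l$ and $\rho$. First I would record that, since $n=l+1$ with $l\in\zz$, the monodromy operator of the corresponding Heun equation (\ref{heun}) has determinant $e^{-2\pi i n}=e^{-2\pi i(l+1)}=1$, so its two eigenvalues multiply to $1$.

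For the forward implication I would suppose that the monodromy admits an eigenfunction with eigenvalue $-1$. Then $-1$ is an eigenvalue, and because the eigenvalues multiply to $1$ the second one is also $-1$; hence the monodromy has the double eigenvalue $-1$ and is in particular parabolic. By Proposition \ref{pjcell} the point $(B,A)$ then lies in the boundary of a phase-lock area, so it is not interior to one and $\rho=\rho(B,A)\in\zz$. Proposition \ref{roteig} now applies and identifies the two (coinciding) eigenvalues with $e^{\pi i(\rho-l)}$ and $e^{-\pi i(\rho+l)}$. From $e^{\pi i(\rho-l)}=-1$ I would conclude that $\rho-l$ is an odd integer, that is $\rho\equiv l+1\ (\mathrm{mod}\ 2\zz)$, which is exactly the asserted congruence for the rotation number of the phase-lock area whose boundary contains $(B,A)$.

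For the converse I would assume that $(B,A)$ lies in the boundary of a phase-lock area with $\rho\equiv l+1\ (\mathrm{mod}\ 2\zz)$. Since phase-lock areas occur only at integer rotation numbers, this means $\rho\in\zz$ with $\rho-l$ odd. As a boundary point is not interior to a phase-lock area, Proposition \ref{roteig} supplies the eigenvalues $e^{\pi i(\rho-l)}$ and $e^{-\pi i(\rho+l)}$; the parity $\rho-l\in 2\zz+1$, together with $l\in\zz$ (so that $\rho+l$ is odd as well), makes both eigenvalues equal to $-1$. Thus $-1$ is an eigenvalue of the monodromy operator, and any nonzero solution in the associated eigenspace is the desired monodromy eigenfunction with eigenvalue $-1$.

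The step that I expect to need the most care, and the only real obstacle, is the passage forcing the double eigenvalue while keeping the invocation of Proposition \ref{roteig} legitimate: one must first use $\det=1$ to promote ``eigenvalue $-1$'' to ``double eigenvalue $-1$'', and only then, after Proposition \ref{pjcell} certifies that $(B,A)$ is a genuine boundary point rather than an interior one, may one read off the eigenvalues from the rotation-number formula. Once $(B,A)$ is known to be on the boundary, $\rho$ is automatically an integer and the congruence $\rho\equiv l+1\ (\mathrm{mod}\ 2\zz)$ becomes literally equivalent to $e^{\pi i(\rho-l)}=-1$, which is what turns both implications into genuine equivalences.
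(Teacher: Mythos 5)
Your proposal is correct and follows essentially the same route as the paper's own proof: unit determinant from Proposition \ref{monprod} promotes the eigenvalue $-1$ to a double eigenvalue, Proposition \ref{pjcell} then places $(B,A)$ on a boundary, and Proposition \ref{roteig} converts the eigenvalue condition into the parity congruence $\rho\equiv l+1\ (\mathrm{mod}\ 2\zz)$, with the converse read off directly from the same eigenvalue formulas. The ordering of steps you flag as delicate (determinant first, then boundary, then the rotation-number formula) is exactly the ordering used in the paper.
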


\begin{proof} For $l\in\zz$  the monodromy has unit determinant (Proposition \ref{monprod}). Therefore, if it has eigenvalue $-1$, then 
its other eigenvalue is also $-1$. Hence, the point $(B,A)$ lies in the boundary of the phase-lock area number $\rho=\rho(B,A)$ (Proposition \ref{pjcell}). Thus, $e^{\pi i(\pm \rho-l)}=-1$ for both signs, 
 by Proposition \ref{roteig}. The latter equality holds if and only if  $\rho\equiv l+1(mod 2\zz)$.  Conversely, if a point $(B,A)$ with 
 $l=\frac B{\omega}\in\zz$ lies in the boundary of a phase-lock area, and $\rho(B,A)$ satisfied the above equality, then the monodromy 
 eigenvalues are equal to $-1$, by Proposition \ref{roteig}.  The proposition is proved.
\end{proof}


\def\mca{\mathcal A}

\subsection{Conjectures on geometry of phase-lock areas}

Here we state conjectures that are motivated by numerical simulations and theoretical results of \cite{bt0, bt1, bg, 4}. Recall that  
for every $r\in\zz$ we denote 
$$L_r=\text{ the phase-lock area number } r.$$
The next five conjectures are due to the first author (V.M.Buchstaber) and S.I.Tertychnyi. 

\begin{conjecture} \label{garl} 
The upper part $L_r^+=L_r\cap\{ A\geq0\}$ of each phase-lock area $L_r$ 
is a garland of infinitely many connected components separated by adjacencies 
$\mathcal A_{r,1}, \mathcal A_{r,2}\dots$ lying in the line $\{ B=r\omega\}$ and 
ordered by their $A$-coordinates. 
\end{conjecture}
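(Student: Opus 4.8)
\medskip
\noindent We outline a possible approach to Conjecture~\ref{garl}; fix $r\in\zz$ and $\omega>0$. By the symmetry of $L_r$ with respect to the horizontal $B$-axis (Subsection 5.2) it suffices to analyse $L_r^+=L_r\cap\{A\geq0\}$. Recall (Subsection 5.1 and Proposition~\ref{can}) that the boundary of $L_r$ consists of two analytic graphs $B=g_{r,\pm}(A)$ with $g_{r,-}\leq g_{r,+}$, defined for all $A\in\rr$, that $L_r$ is the region $\{g_{r,-}(A)\leq B\leq g_{r,+}(A)\}$ between them, and that both curves have Bessel asymptotics $g_{r,\pm}(A)\to r\omega$ as $A\to+\infty$. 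The plan is to study the analytic width function
\[
w(A)=g_{r,+}(A)-g_{r,-}(A)\geq0,
\]
which is not identically zero since $L_r$ has non-empty interior. The interior of $L_r^+$ fibres over the open set $\{A>0\ |\ w(A)>0\}$ with interval fibres, so its connected components correspond to the maximal open intervals of that set, and the closures of two consecutive components meet exactly at a zero of $w$ (the bottom bead being closed off at $A=0$ by an exceptional point). Thus the whole conjecture reduces to three statements about $Z=\{A>0\ |\ w(A)=0\}$: (i) every point of $Z$ is an adjacency lying on $\{B=r\omega\}$; (ii) $Z$ is infinite and discrete, so its points are ordered $A_1<A_2<\cdots\to+\infty$; (iii) $w>0$ strictly between consecutive points of $Z$.

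\noindent I would establish (i) first. At $A_*\in Z$ both boundary curves coincide, $g_{r,+}(A_*)=g_{r,-}(A_*)=:B_*$, so $L_r$ pinches to the single point $(B_*,A_*)$, a separation point lying off the $B$-axis. Being a boundary point, the corresponding Heun equation has parabolic monodromy (Proposition~\ref{pjcell}). Because the interior of $L_r$ lies on both vertical sides of a pinch, one can pass through $(B_*,A_*)$ from one side of $L_r$ to the other; as in Remark~\ref{rktriv} and the proof of Proposition~\ref{roteig} this forces the period-$2\pi$ flow map to be the identity, i.e.\ the monodromy is trivial. Hence $(B_*,A_*)$ is an adjacency, so $l_*=B_*/\omega\in\zz$ and $\xi_{l_*}(\la,\mu)=0$ by Theorem~\ref{tadj}. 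That $l_*=r$ would then follow from the strict monotonicity $\partial\rho/\partial B>0$ (proof of Proposition~\ref{can}), which places $L_r$ between its neighbours $L_{r\pm1}$ and confines $B_*$, together with the parity constraint $l_*\equiv r\ (\mathrm{mod}\ 2)$ coming from Proposition~\ref{roteig}; these leave $l_*=r$ as the only admissible value, giving $B_*=r\omega$. The same correspondence shows conversely that every adjacency of $L_r$ belongs to $Z$, and (using Theorem~\ref{tadj}) identifies $Z$ with the positive zero set of the one-variable analytic function
\[
\Xi(A)=\xi_r\!\left(\tfrac1{4\omega^2}-\tfrac{A^2}{4\omega^2},\ \tfrac{A}{2\omega}\right),
\]
where $\xi_r$ is the function $\xi_l$ of (\ref{xil}) with $l=r$.

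\noindent It then remains to prove (ii) and (iii), which are the analytic core. Statement (iii) is automatic once $Z$ is discrete: $w$ is analytic and $\geq0$, so at an isolated zero it cannot change sign and is strictly positive on both adjacent intervals. For (ii) discreteness is again automatic ($\Xi\not\equiv0$ is analytic), so everything reduces to showing that $\Xi$ has \emph{infinitely many} positive zeros tending to $+\infty$. The plan is to combine two inputs: the Bessel asymptotics of $g_{r,\pm}$, which make the boundary curves oscillate about $B=r\omega$ and cross it infinitely often as $A\to+\infty$, and a direct asymptotic analysis of the infinite matrix product $R_k=M_kM_{k+1}\cdots$ defining $\xi_r$. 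Concretely, one would expand $\xi_r$ along the curve $\la+\mu^2=\tfrac1{4\omega^2}$ as $\mu\to+\infty$ and identify its leading behaviour with an oscillating modified-Bessel-type model (in the spirit of Proposition~\ref{pbes}, which settles $l=1$), whose sign changes produce genuine zeros of $\Xi$. Ordering these by $A$-coordinate yields the adjacencies $\mca_{r,1},\mca_{r,2},\dots$ of the garland.

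\noindent The main obstacle is precisely this asymptotic step: one must control $\xi_r$ sharply enough along $\la+\mu^2=\tfrac1{4\omega^2}$ to exhibit infinitely many sign changes as $\mu\to+\infty$, rather than mere decay to $0$. This demands uniform asymptotics of the product $R_k$ as the large parameter $\mu$ enters the off-diagonal entries of $M_k$, and is the delicate heart of the argument. A secondary, more technical obstacle is to make rigorous the two geometric reductions in step (i): that a pinch of $L_r$ forces \emph{trivial} monodromy (and not a mere Jordan cell, which by Corollary~\ref{cboun} occurs only at the non-pinching boundary points), and that the admissible integer is exactly $l_*=r$; the latter should follow from the monotonicity and confinement argument sketched above, but must be verified uniformly for all $\omega>0$.
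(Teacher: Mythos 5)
You are attempting to prove what the paper itself presents as an \emph{open conjecture} (due to the first author and S.I.Tertychnyi): the paper contains no proof of Conjecture~\ref{garl}. What it contains is a chain of conditional reductions resting on known results: by Remark~\ref{nwrem} (from \cite{4}) the abscissa of every adjacency of $L_r$ equals $l\omega$ with $l\in\zz$, $l\equiv r\,(mod\,2\zz)$, $0\leq l\leq r$; by Remark~\ref{graph}, Conjecture~\ref{c2} (that $L_r$ does not meet the line $\Lambda_{r-2}$) would then force $l=r$ and yield Conjecture~\ref{garl}; Proposition~\ref{prconj2} reduces Conjecture~\ref{c2} to Conjecture~\ref{c4} (no double tangency of the two boundary branches at an adjacency), and the last proposition of Subsection~5.7 reduces all of these to Conjecture~\ref{c5} (simplicity of the zeros of $\zeta_l$), via the asymptotic boundary equations (\ref{r0as}), (\ref{r1as}). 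All of these remain unproved; they are known only for $\omega\geq1$ (Chaplygin-type argument). So a correct proof of the statement would have to contain mathematics genuinely beyond the paper, and yours does not.

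The concrete gap in your proposal is the sentence claiming that the strict monotonicity $\partial\rho/\partial B>0$ together with the parity constraint ``leave $l_*=r$ as the only admissible value.'' This is exactly the open crux, and the argument you give for it fails. The inequality $\partial\rho/\partial B>0$ is established (in the proof of Proposition~\ref{can}) only on $U=\rho^{-1}(\rr\setminus\zz)$, i.e.\ \emph{off} the phase-lock areas; inside $L_r$ the rotation number is constant, and the monotonicity in no way prevents $L_r$ from stretching leftwards across the fixed vertical lines $\Lambda_{r-2},\Lambda_{r-4},\dots$ (the ordering of the sets $L_{r-1},L_r,L_{r+1}$ along horizontal lines says nothing about their position relative to these lines). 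The constraints you may legitimately cite (Remark~\ref{nwrem}) allow every $l_*\in\{r,r-2,\dots\}\cap[0,r]$, and excluding $l_*\leq r-2$ \emph{is} Conjecture~\ref{c2}. The same gap reappears in your converse identification of $Z$ with the positive zero set of $\Xi$: by Theorem~\ref{tadj} a zero of $\Xi$ is an adjacency, but a priori it could be an adjacency of $L_{r+2}$, $L_{r+4},\dots$ (for which the abscissa $r\omega$ is also admissible), so attaching it to $L_r$ again requires Conjecture~\ref{c2} for those areas. Two further remarks: your reduction of the pinch points to identity monodromy is essentially correct but should simply cite \cite[proposition 2.2]{4} (as in Remark~\ref{rktriv}) rather than the heuristic ``passing through the pinch''; and you place the main analytic difficulty on proving that there are infinitely many pinch points, whereas the infinite-chain structure of each $L_r$ is stated in Subsection~5.1 as already known --- the genuinely open content of Conjecture~\ref{garl} is the location of all adjacencies on the single line $\{B=r\omega\}$, which is the very point your argument assumes.
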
 

\begin{remark} \label{nwrem} It was shown in \cite[theorems 1.2, 3.17]{4} that for every $r\in\zz$ the abscissa of each adjacency in $L_r$ 
equals $l\omega$, $l\in\zz$, $l\equiv r(mod 2\zz)$; $0\leq l\leq r$ if $r\geq0$; $r\leq l\leq0$ if $r\leq0$. 
\end{remark}

\begin{conjecture} \label{interv} For every $k\geq2$ the $k$-th component in $L_r^+$ contains the interval 
$(\mathcal A_{r,k-1},\mathcal A_{r,k})$. 
\end{conjecture}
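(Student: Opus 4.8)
The plan is to prove Conjecture~\ref{interv} using Conjecture~\ref{garl}, reducing it to a statement about the rotation number along the central line $\ell_r:=\{B=r\omega\}$. By Conjecture~\ref{garl} the upper adjacencies $\mathcal{A}_{r,1},\mathcal{A}_{r,2},\dots$ all lie on $\ell_r$ and are the only points separating the components of $L_r^+$; hence it suffices to show that each open vertical segment $I_k\subset\ell_r$ between $\mathcal{A}_{r,k-1}$ and $\mathcal{A}_{r,k}$ is contained in $\mathrm{int}\,L_r$. Indeed $I_k$ is connected and disjoint from all adjacencies, so once $I_k\subset L_r$ it must lie in a single component, and the ordering of the $\mathcal{A}_{r,j}$ by their $A$-coordinates identifies this component as the $k$-th one.

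First I would show that the whole half-line $\ell_r^+=\ell_r\cap\{A\ge0\}$ lies in $\overline{L_r}$, i.e. $\rho\equiv r$ on $\ell_r^+$. On $\ell_r$ one has $l=B/\omega=r\in\zz$, so by Proposition~\ref{monprod} the monodromy of the associated Heun equation~(\ref{heun}) lies in $SL_2$, and by Proposition~\ref{roteig} its eigenvalues at a non-interior point are $e^{\pi i(\rho-r)}$ and $e^{-\pi i(\rho+r)}$; at each adjacency the monodromy is trivial and $\rho=r$ (Remark~\ref{rktriv}). The natural way to propagate $\rho=r$ along $\ell_r$ is a reflection symmetry of the rotation number about $\ell_r$, namely $\rho(2r\omega-B,A)=2r-\rho(B,A)$, equivalently $g_{r,+}(A)+g_{r,-}(A)=2r\omega$ for the two analytic boundary graphs $B=g_{r,\pm}(A)$. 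Granting this, $\rho(r\omega,A)=r$ at once, so $\ell_r^+\subset\overline{L_r}$; and between two consecutive adjacencies, where $g_{r,-}=g_{r,+}=r\omega$, one obtains $g_{r,-}(A)<r\omega<g_{r,+}(A)$, placing $I_k$ strictly inside $L_r$, as required.

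\textbf{The main obstacle} is establishing this reflection symmetry. The elementary symmetries of~(\ref{josvect}) yield only $\rho(-B,A)=-\rho(B,A)$ and evenness in $A$; they do not produce a reflection about $\ell_r$, which is exactly what is missing. I would attempt to derive it from the involution $\#$ and the transformation $\Diamond$ of the Heun equation (the latter interchanging the data of~(\ref{heun}) and~(\ref{heun2}), i.e. $l\leftrightarrow-l$-type parameters), combined with the eigenvalue bookkeeping of Propositions~\ref{propdiez} and~\ref{roteig}, or else from a sharpened form of the Klimenko symmetry argument that already produces the two-graph structure of $\partial L_r$. It is conceivable that the exact reflection fails and must be replaced by a weaker monotone statement, used together with the monotonicity $\partial_B\rho>0$ of Proposition~\ref{can}, that still pins $r\omega$ strictly between $g_{r,-}(A)$ and $g_{r,+}(A)$; constructing such a substitute is the crux.

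Finally, to guarantee that $I_k$ meets $\partial L_r$ only at the adjacencies, I would classify $\partial L_r\cap\ell_r$ through Corollary~\ref{cboun}. Since $l=r\in\zz$, a parabolic monodromy is unipotent with determinant $1$ (Propositions~\ref{pjcell},~\ref{pjcell2}), so both eigenvalues equal $1$; the one of cases 3),4) of Corollary~\ref{cboun} not excluded by its parity hypothesis corresponds to monodromy eigenvalue $-1$ and is therefore impossible, leaving only case 1) (an adjacency, $\xi_r(\la,\mu)=0$) and case 2) (equation~(\ref{heun2}) with a polynomial solution). The remaining point is to rule out the case-2 boundary points on the open segments; by Theorem~\ref{poteig} such a point on $\partial L_r\cap\ell_r$ has $\rho=l=r$ and is not an adjacency, so its absence is precisely the assertion that $\partial L_r$ touches $\ell_r$ only at pinch points. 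I expect this exclusion to follow from the reflection symmetry of the previous paragraph (under which $\partial L_r$ meets $\ell_r$ solely at the $g_{r,-}=g_{r,+}$ pinches), so that the two difficulties are in fact one and the same; absent the symmetry, one would instead have to exclude these points directly via Theorem~\ref{talt} and the Bessel-determinant positivity it rests on.
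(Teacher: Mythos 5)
You are attempting to prove Conjecture \ref{interv}, which the paper states as an \emph{open} conjecture (due to Buchstaber and Tertychnyi): the paper contains no proof of it, only implications among the neighbouring conjectures (Remark \ref{graph} gives \ref{c1}$\Rightarrow$\ref{c2}, Proposition \ref{prconj2} gives \ref{c4}$\Rightarrow$\ref{c2}$\Rightarrow$\ref{garl}, and Subsection 5.7 shows \ref{c5}$\Rightarrow$\ref{c4}), none of which yields \ref{interv}. So there is no proof to compare yours with, and a correct argument would be a new theorem. Your proposal is not such an argument, as you yourself acknowledge: it is conditional on Conjecture \ref{garl} (equally open), and once \ref{garl} is granted, the entire remaining content of \ref{interv} is precisely your middle step --- that each open segment of $\ell_r=\{B=r\omega\}$ between consecutive adjacencies lies in $L_r$ --- which you derive only from the hypothetical reflection symmetry $\rho(2r\omega-B,A)=2r-\rho(B,A)$, equivalently $g_{r,+}(A)+g_{r,-}(A)\equiv 2r\omega$.

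That symmetry is the gap, and it is not merely unproven: for $r\geq1$ it is \emph{false}, refutably so from results quoted in the paper. Take $r=l=1$ and any $\omega>0$. For $l=1$ the matrix (\ref{defh}) is the $1\times1$ zero matrix, so by Theorem \ref{tpol} (see also Proposition \ref{pbes}) equation (\ref{heun2}) has a polynomial (constant) solution exactly when $\la=0$, i.e. $\mu=\frac1{2\omega}$, i.e. at the point $(B,A)=(\omega,1)$ of the line $\Lambda_1=\ell_1$. By Theorem \ref{poteig} this point lies on $\partial L_1$ (the congruence and inequality there force $\rho=1$) and is \emph{not} an adjacency. Hence one of the two boundary graphs satisfies $g_{1,\pm}(1)=\omega$; if your symmetry held, the other one would equal $\omega$ as well, so $(\omega,1)$ would be a pinch point of $L_1^+$ with $A=1>0$, i.e. an adjacency --- contradicting the incompatibility of entire and polynomial solutions for real parameters (\cite[theorem 3.10]{bg}, restated in Theorem \ref{talt}). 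The same example shows that your strict inequalities $g_{r,-}(A)<r\omega<g_{r,+}(A)$ between adjacencies fail, and also kills the ``weaker monotone substitute'' you suggest, since that was to yield the same strict pinning: the central line genuinely touches $\partial L_r$ at non-adjacency (polynomial-solution) points, and absent a symmetry nothing in the paper prevents $\ell_r$ from locally \emph{crossing} $\partial L_r$ there; ruling that out is exactly the hard content of the conjecture. (A further sanity check: your symmetry applied to all $r$, combined with $\rho(-B,A)=-\rho(B,A)$, would give $\rho(B+2\omega,A)=\rho(B,A)+2$, which already fails on the axis $\{A=0\}$, where by the Example in Subsection 5.6 the slice of $L_0$ is the segment $[-1,1]$ while the slice of $L_2$ is a single point.) Note finally that the tools you propose for producing the symmetry cannot do so: the involution $\#$ and the transformation $\Diamond$ relate solutions of one and the same Heun equation, or of the equations with parameters $l$ and $-l$, so via Propositions \ref{propdiez} and \ref{roteig} they constrain a single point $(B,A)$, never two points symmetric about $\Lambda_r$. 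The one case where your scheme does work is $r=0$: there the genuine symmetry $(B,A)\mapsto(-B,A)$, $\rho\mapsto-\rho$ of Subsection 5.2 is your reflection, and your argument gives \ref{garl}$\Rightarrow$\ref{interv} for the area $L_0$.
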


Let us introduce the  function $\eta(P)$ defined on the interior of each phase-lock area $L_r$: the value $\eta(P)$ 
 is defined to be the length of the Shapiro step through the point $P$, that is, the length of the  intersection of the phase-lock area 
 $L_r$ with the horizontal line 
 $\{ A=A(P)\}$. For physical applications it is important to know how to find the 
maxima of the function $\eta(P)$ on the connected components of the interior $Int(L_r)$. 
\begin{problem} Find the maximal value of the function $\eta(P)$ on each connected component of every phase-lock area.
\end{problem}


\begin{problem} Is it true that for every given $k\in\mathbb N$ all the adjacencies $\mca_{r,k}$, $r=1,2,\dots$, 
lie on the same line  with azimuth depending on $k$, see Fig. 1--5? 
\end{problem}

\begin{proposition} The first component of the zero phase-lock area contains the interior of the 
square with vertices $(0,\pm1)$, $(\pm1,0)$.
\end{proposition}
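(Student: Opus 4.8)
The plan is to show directly that $\rho(B,A)=0$ on the open square $\{|B|+|A|<1\}$ by exhibiting an invariant band for the flow of (\ref{josvect}) that is too narrow to permit winding in $\phi$. Writing $l=\frac B\omega$ and $\mu=\frac A{2\omega}$ as in (\ref{josvect}), the condition $|B|+|A|<1$ is exactly $\omega(|l|+2|\mu|)<1$, i.e. $|l+2\mu\cos\tau|\le|l|+2|\mu|<\frac1\omega$ for every $\tau$. First I would evaluate the $\phi$-component $F(\phi,\tau)=-\frac{\sin\phi}\omega+l+2\mu\cos\tau$ of the field on the two horizontal lines $\phi=\pm\frac\pi2$. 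On $\phi=\frac\pi2$ one gets $F=-\frac1\omega+(l+2\mu\cos\tau)<0$, and on $\phi=-\frac\pi2$ one gets $F=\frac1\omega+(l+2\mu\cos\tau)>0$, both strictly and uniformly in $\tau$.

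These two inequalities say that on the boundary of the band $\{-\frac\pi2\le\phi\le\frac\pi2\}$ the field (\ref{josvect}) points strictly inward for all $\tau$: trajectories reaching $\phi=\frac\pi2$ have $\dot\phi<0$ and those reaching $\phi=-\frac\pi2$ have $\dot\phi>0$. Hence the band is positively invariant, and a trajectory that starts on either boundary line enters the interior and can never return to the boundary. Consequently the period-$2\pi$ flow map (Poincar\'e map) of the $\phi$-circle lifts to an orientation-preserving homeomorphism $\tilde P$ of $\rr$ sending $[-\frac\pi2,\frac\pi2]$ into its open interior, with $\tilde P(\pm\frac\pi2)\in(-\frac\pi2,\frac\pi2)$.

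Next I would apply the intermediate value theorem to $\tilde P(\phi)-\phi$, which is positive at $\phi=-\frac\pi2$ and negative at $\phi=\frac\pi2$, to produce a fixed point $\phi_*\in(-\frac\pi2,\frac\pi2)$ with $\tilde P(\phi_*)=\phi_*$. This fixed point corresponds to a genuinely $2\pi$-periodic (in $\tau$) solution $\phi(\tau)$ confined to the band, so it does not wind; therefore $\rho(B,A)=\lim_{k}\tilde P^{k}(\phi_*)/k=0$. Since the rotation number is independent of the initial condition, this gives $\rho\equiv0$ on the whole open square.

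Finally, the open square is an open connected subset of $\{\rho=0\}$, hence lies in the interior of the phase-lock area $L_0$; being connected and containing the origin $(B,A)=(0,0)$ (where $l=\mu=0$ and the field $\dot\phi=-\frac{\sin\phi}\omega$ manifestly has the fixed point $\phi=0$, so $\rho=0$), it is contained in the connected component of $\mathrm{Int}(L_0)$ through the origin, which is the first component. I expect no serious obstacle here: the only points needing care are the strictness and $\tau$-uniformity of the barrier inequalities on $\phi=\pm\frac\pi2$ (which is exactly what the open condition $|B|+|A|<1$ buys) and the standard fact that an orientation-preserving circle homeomorphism with a fixed point has zero rotation number.
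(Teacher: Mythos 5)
Your proof is correct and follows essentially the same route as the paper's: both evaluate the $\phi$-component of the field (\ref{josvect}) on the lines $\phi=\pm\frac{\pi}{2}$, use $|B|+|A|<1$ to get the strict barrier inequalities, and conclude that the period map sends $[-\frac{\pi}{2},\frac{\pi}{2}]$ strictly into itself and hence has a fixed point, giving $\rho=0$. You merely spell out details the paper leaves implicit (the lift, the intermediate value theorem, and the identification of the square with the first component of $L_0$), which is fine but not a different argument.
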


\begin{proof} Let $Q$ denote the interior of the square under question: it is defined by the inequality $|A|+|B|< 1$. 
Let us show that for every $(B,A)\in Q$ the $\phi$-component of vector field (\ref{josvect}) is negative whenever 
$\phi=\frac{\pi}2$ and positive whenever  $\phi=-\frac{\pi}2$. This implies that  its  flow map  for any time sends the space segment 
$[-\frac{\pi}2,\frac{\pi}2]$ strictly to itself and hence, has a fixed point there, and thus, has zero  rotation number. Indeed,  
the $\phi$-component of vector field (\ref{josvect}) equals 
$$\dot\phi=\frac1{\omega}(-\sin\phi+B+A\cos\tau).$$ 
Therefore, for every $(B,A)\in Q$ it lies strictly between $\frac1{\omega}(-\sin\phi-1)$ and $\frac1{\omega}(-\sin\phi+1)$. 
This implies the above-mentioned inequalities at $\phi=\pm\frac{\pi}2$ and proves the proposition.
\end{proof}

\begin{example} In the case, when $A=0$, the differential equation defined by vector field (\ref{josvect}) takes the form 
$$\frac{d\phi}{d\tau}=\frac1{\omega}(B-\sin\phi).$$
This is an autonomous differential equation  that can be solved explicitly, see, e.g., \cite[p. 126, formula (6.2.6)]{bar}. 
In the case, when $|B|\leq1$, its right-hand side vanishes at the 
points, where $\sin \phi=B$, and these are fixed points for the flow maps, and hence,  the rotation number equals zero. Let now 
$B>1$. Then the right-hand side is positive everywhere, hence  the rotation number is positive. 
Let $u_1$ and $u_2$ be the roots of the quadratic polynomial $B(1+u^2)-2u=0$: 
$$u_{1,2}=\frac{1\pm i\sqrt{B^2-1}}B=e^{\pm i\beta}, \ \beta=\arccos\frac1B.$$ 
 The general solution of the 
above equation is given by the implicit function 
$$\tau=\omega\int_{\phi_0}^{\phi}\frac{d\phi}{B-\sin\phi}=-i\alpha^{-1}\ln(\frac{u-u_1}{u-u_2})+c, \ 
u=\tan\frac{\phi}2, $$
$$\alpha=-i\omega^{-1}\frac{u_1-u_2}{u_1+u_2}=\omega^{-1}\sqrt{B^2-1}, \ c\equiv const.$$
Then we get 
$$\tan\frac{\phi}2=\frac{\exp(i\alpha(\tau-c))u_2-u_1}{\exp(i\alpha(\tau-c))-1}=
\frac{\sin(\frac{\alpha}2(\tau-c)-\beta)}{\sin(\frac{\alpha}2(\tau-c))},\  c\equiv const.$$
\end{example} 

For $l\in\nn$ let $\mathcal P_l\in\{ B=l\omega\}$ be the point  with the maximal $A-$coordinate, for which  the corresponding 
Heun equation (\ref{heun2*})  has a   polynomial solution. 

\begin{conjecture} All $\mathcal P_l$ lie on the same line, see Fig. 7. 
\end{conjecture}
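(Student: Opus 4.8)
The plan is to turn the conjecture into a statement about the largest root of one explicit polynomial per line, and then to study how that root depends on $l$. Fix $\omega>0$. On the vertical line $\{B=l\omega\}$ the integer $l=B/\omega$ is fixed while $\mu=A/(2\omega)$ varies, and $\la$ is determined by (\ref{param}), $\la=\frac1{4\omega^2}-\mu^2$. Substituting this into the characteristic condition of Theorem \ref{tpol} produces the single one-variable polynomial
$$P_l(\mu)=\det\Big(H+\big(\tfrac1{4\omega^2}-\mu^2\big)Id\Big),$$
where $H=H(\mu)$ is the $l\times l$ matrix (\ref{defh}). Its positive real roots are exactly the $\mu$-coordinates of the points of $\{B=l\omega\}$ at which (\ref{heun2*}) has a polynomial solution, so $\mathcal P_l=(l\omega,\,2\omega\mu_l)$ with $\mu_l$ the largest such root. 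Because the abscissa $B=l\omega$ is already affine in $l$, the points $\mathcal P_1,\mathcal P_2,\dots$ are collinear if and only if $A_l=2\omega\mu_l$ is an affine function of $l$. Thus the whole problem reduces to proving that the largest root $\mu_l$ depends affinely on $l$ and to identifying the corresponding line (its slope should come out of the asymptotic analysis below, its offset from the exactly solvable cases $l=1,2$, where $A_1=1$ and $A_2=\sqrt{1+2\omega}$).

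Next I would analyze this largest root spectrally. The off-diagonal products of $H$ equal $H_{j,j+1}H_{j+1,j}=\mu^2 j(l-j)>0$ for $1\le j\le l-1$, so $H$ is conjugate to the real symmetric tridiagonal (Jacobi) matrix with the same diagonal $(1-j)(l-j+1)$ and off-diagonal entries $\mu\sqrt{j(l-j)}$; in particular its spectrum is real. Writing $\Lambda_l(\mu)$ for the top eigenvalue, the equation $P_l(\mu)=0$ reads $\mu^2-\frac1{4\omega^2}=\Lambda_l(\mu)$: this is the relevant branch because the parabola $\mu^2-\frac1{4\omega^2}$ grows quadratically in $\mu$ while every eigenvalue grows only linearly, so the largest $\mu$ is the crossing with the uppermost eigenvalue curve. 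The structural point I would exploit is that this Jacobi matrix is, up to the additive scalar $-s(s+1)$ with $s=\frac{l-1}2$, the matrix of the spin-$s$ operator $J_z^2-J_z+2\mu J_x$, since its off-diagonal entries $\mu\sqrt{j(l-j)}$ are precisely the matrix elements of $\mu(J_++J_-)$. I would use this either to test whether the extremal eigenvector is given in closed form by a spin-coherent (binomial-type) vector, which would make $\Lambda_l(\mu)$ algebraic and the relation $\mu_l=\mu_l(l)$ explicit, or, for the leading behaviour, by the semiclassical maximization over the sphere of the symbol $s^2\cos^2\theta-s\cos\theta+2\mu s\sin\theta$ of the non-scalar part, which already forces $\mu_l$ to grow linearly in $l$. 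A complementary and independent constraint is that $\mathcal P_l$ lies on a phase-lock boundary (Theorem \ref{poteig}), hence must also satisfy the transcendental boundary equation of Corollary \ref{cboun}, i.e.\ one of (\ref{r0eql}), (\ref{r1eql}); matching this description against the polynomial one should help pin the line down.

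The hard part will be establishing the \emph{exact} affine dependence of $\mu_l$ on $l$. The defining relation $\mu^2-\frac1{4\omega^2}=\Lambda_l(\mu)$ couples $\mu$ and $l$ nonlinearly, and the top eigenvalue of a parameter-dependent Jacobi matrix has in general no closed form, so exact linearity cannot follow from soft asymptotic estimates alone: it must come either from an explicit extremal eigenvector of $J_z^2-J_z+2\mu J_x$ along the critical branch, or from a hidden factorization of $P_l$ that isolates one distinguished root which is linear in $l$. I would therefore first fix the candidate line from the solvable cases and the semiclassical slope, then examine $l=3,4$ in closed form to decide whether the discrepancy from that line is genuinely zero or only asymptotically small, and only afterwards attempt the exact algebraic identity. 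This last step — proving that the extremal branch of the family $H(\mu)$ produces an exactly affine $\mu_l$ — is where I expect the real obstacle to lie.
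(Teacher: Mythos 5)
There is no proof in the paper to compare with: the statement you were given is one of the open conjectures of Subsection 5.6, stated on the strength of the numerical picture in Fig.~7, and the paper offers neither a proof nor a sketch. Your text is likewise not a proof but a program, as you yourself say in the last paragraph, so the only question is whether the program could be completed. Its reductions are correct: by Theorem \ref{tpol} and (\ref{param}), the points of $\{B=l\omega\}$ corresponding to polynomial solutions of (\ref{heun2*}) are exactly those with $A=2\omega\mu$, $\det\bigl(H(\mu)+(\tfrac{1}{4\omega^2}-\mu^2)Id\bigr)=0$, so indeed $\mathcal{P}_l=(l\omega,2\omega\mu_l)$ with $\mu_l$ the largest positive root, and collinearity is equivalent to $l\mapsto 2\omega\mu_l$ being affine; the symmetrization of $H$ (the products $H_{j,j+1}H_{j+1,j}=\mu^2 j(l-j)$ are positive), the spin identification, the claim that the largest root lies on the top eigenvalue branch, and the solvable values $A_1=1$, $A_2=\sqrt{1+2\omega}$ all check out. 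The gap is that neither mechanism you propose can deliver the required \emph{exactness}: spin coherent states are eigenvectors only of operators linear in the generators, so the $J_z^2$ term rules out a closed-form coherent-state eigenvector, and the semiclassical maximization of the symbol controls the top eigenvalue, hence $\mu_l$, only up to corrections that are fatal when one must distinguish a genuine line from a nearly straight curve.

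Moreover, the $l=3$ test that you postpone to the very end should have come first, because it settles the matter, and negatively. For $l=3$ formula (\ref{defh}) gives $\det(H+\la Id)=\la(\la-2)^2-4\mu^2(\la-1)$; writing $c=\tfrac{1}{4\omega^2}$ and $y=\mu^2-c$, the condition becomes $y^3=\tfrac{1}{\omega^2}(y+1)$, while for $l=2$ one gets $y=\tfrac{1}{2\omega}$ and for $l=1$, $y=0$. Exact collinearity of $\mathcal{P}_1,\mathcal{P}_2,\mathcal{P}_3$ would force $\sqrt{c+y_3}=2\sqrt{c+\tfrac{1}{2\omega}}-\sqrt{c}$, where $y_3$ is the positive root of the cubic. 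At $\omega=0.3$ that root is $y_3\approx 3.7514$, giving $A_3=2\omega\sqrt{c+y_3}\approx 1.5331$, whereas the line through $\mathcal{P}_1,\mathcal{P}_2$ predicts $2\sqrt{1+2\omega}-1\approx 1.5298$; the mismatch (about $0.2\%$) is far above rounding error, although invisible at the resolution of Fig.~7. So, unless this arithmetic is off, the points are not exactly collinear already for $\omega=0.3$: the conjecture can only hold in an approximate or asymptotic sense, no completion of your program (or any other argument) can establish it as an exact identity, and the honest outcome of your approach would be a quantitative bound on the deviation of $l\mapsto A_l$ from an affine function, not a proof that the deviation vanishes.
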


\begin{figure}[ht]
  \begin{center}
   \epsfig{file=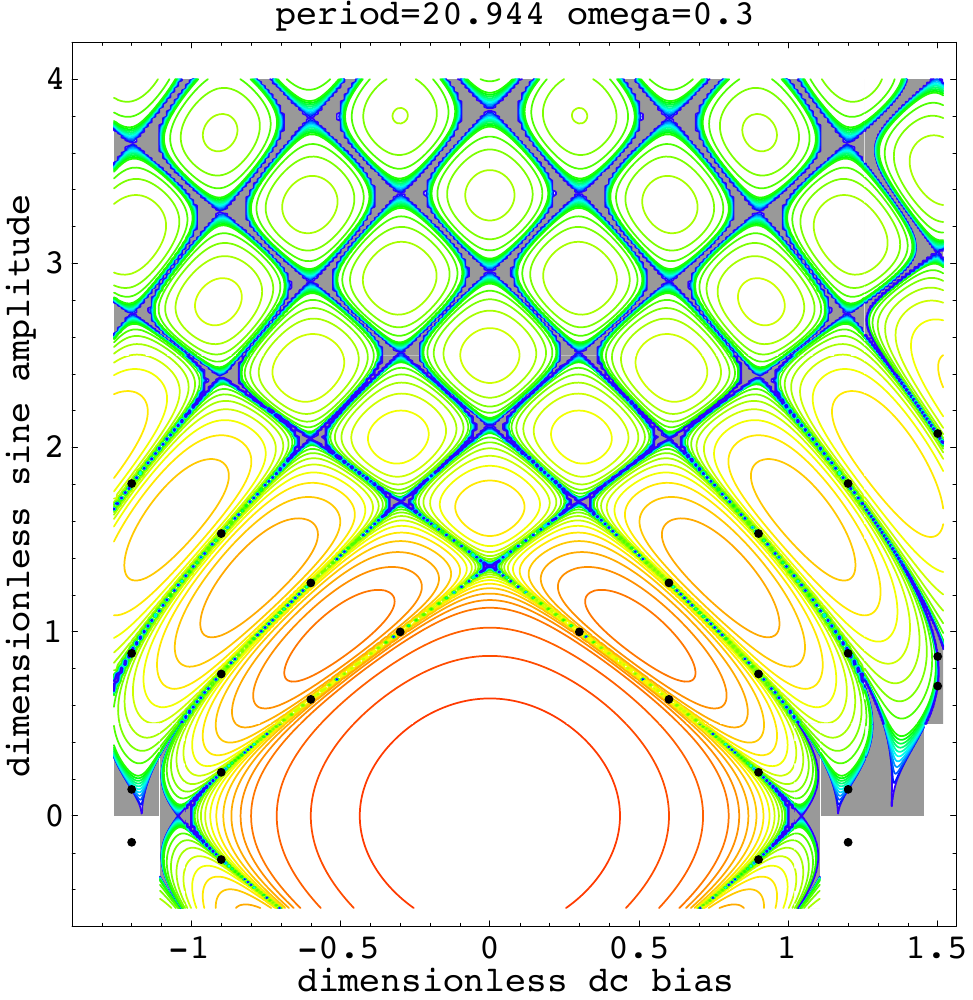, width=0.5\textwidth}
   \caption{Phase-lock areas for $\omega=0.3$; the marked points correspond to Heun equations (\ref{heun2*}) with polynomial solutions. They are described by Theorem \ref{poteig}.} 
  \end{center}
\end{figure}

\begin{conjecture} As $\omega\to0$,  for every $r$  the set $L_{r,1}:=L_r\cap\{ A\geq A(\mathcal A_{r,1})\}$ 
tends to the ray  
$\{ A\geq 1\}$ in the $A$-axis. Namely, the maximal distance of a point of the set $L_{r,1}$ to the ray $\{ A\geq1\}$ 
tends to zero.
\end{conjecture}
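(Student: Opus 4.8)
The plan is to treat the family (\ref{josvect}) as a slow--fast system in the limit $\omega\to0$ and to recover the limiting shape of the level sets of $\rho$ from an averaged (adiabatic) quantity. Writing the $\phi$--equation as $\dot\phi=\omega^{-1}(-\sin\phi+\beta(\tau))$ with the slowly varying ``frozen'' coefficient $\beta(\tau)=B+A\cos\tau$, the fast variable $\phi$ is to leading order slaved to the frozen field: on the $\tau$--set where $|\beta(\tau)|<1$ it relaxes to the stable rest point $\sin\phi_\ast=\beta(\tau)$ and produces no winding, whereas on the set where $|\beta(\tau)|>1$ it winds monotonically at the instantaneous rate $\omega^{-1}\sqrt{\beta^2-1}$ with the sign of $\beta$. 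Setting $\Psi(x)=\operatorname{sgn}(x)\sqrt{(x^2-1)_+}$ and $\Theta(B,A)=\int_0^{2\pi}\Psi(B+A\cos\tau)\,d\tau$, I therefore expect $\omega\,\rho(B,A)\to c\,\Theta(B,A)$ as $\omega\to0$, uniformly on compact subsets of $\{\Theta\ne0\}$, where $c>0$ is a universal constant fixed by the normalization of $\rho$; the autonomous slice $A=0$, where $\rho\asymp\omega^{-1}\sqrt{B^2-1}$ by the explicit integration of the autonomous case, both fixes $c$ (note $\Theta(B,0)=2\pi\sqrt{B^2-1}$ for $B>1$) and provides a consistency check.

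Next I would study $\Theta$. The change $\tau\mapsto\tau+\pi$ together with the oddness of $\Psi$ gives $\Theta(-B,A)=-\Theta(B,A)$, so $\Theta(0,A)\equiv0$; this matches the exact identity $\rho(0,A)\equiv0$ forced by the symmetry $B\mapsto-B,\ \rho\mapsto-\rho$ of Subsection 5.2. Since $\Psi$ is nondecreasing and vanishes on $[-1,1]$, the function $\Theta$ is strictly increasing in $B$ wherever $\max_\tau|B+A\cos\tau|=|B|+A>1$, and $\Theta\equiv0$ on the closed region $\{|B|+A\le1\}$ (already known to lie in $L_0$). Hence, for $B,A\ge0$, one has $\{\Theta>0\}=\{B>0,\ A+B>1\}$, whose boundary is the union of the segment $\sigma=\{A+B=1,\ 0\le B\le1\}$ and the vertical ray $\gamma=\{B=0,\ A\ge1\}$, meeting at the corner $(0,1)$. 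Because $\rho\to+\infty$ on every compact subset of $\{\Theta>0\}$ while $\rho\equiv0$ on $\{|B|+A<1\}$, each phase--lock area $L_r$ with $r\ge1$ must, as $\omega\to0$, collapse onto $\sigma\cup\gamma$: both boundary graphs $B=g_{r,\pm}(A)$ are squeezed into an $o(1)$--neighbourhood of this curve.

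It remains to split $\sigma\cup\gamma$ between the first domain (tongue) of $L_r$ and its continuation $L_{r,1}$, and to control the width. The tongue tip lies on $\{A=0\}$ at $B\to1$ by the autonomous computation, so the first domain accumulates on $\sigma$; the part of $L_r$ over $\{A\ge1+\delta\}$ accumulates on $\gamma$. For the quantitative hug of $\gamma$ I would linearize, $\Theta(B,A)=B\,\partial_B\Theta(0,A)+O(B^2)$ with $\partial_B\Theta(0,A)=\int_{|A\cos\tau|>1}\Psi'(A\cos\tau)\,d\tau>0$ for $A>1$, so the relation $\Theta=c^{-1}r\omega$ yields $B\asymp r\omega/\partial_B\Theta(0,A)\to0$ uniformly for $A$ in compact subsets of $(1,\infty)$; uniformity up to $A=\infty$ is supplied by the known Bessel asymptotics (Subsection 5.1 and the proof of Proposition \ref{can}), by which $L_r$ approaches the line $B=r\omega\to0$ at infinity. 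For the lower endpoint I would prove $A(\mathcal A_{r,1})\to1$ in two steps. The inequality $\liminf A(\mathcal A_{r,1})\ge1$ is elementary: the abscissa of every adjacency equals $l\omega\to0$ (Remark \ref{nwrem}), so if $A(\mathcal A_{r,1})$ stayed below $1-\delta$ the adjacency would lie, for small $\omega$, inside $\{|B|+A<1\}\subset L_0$ where $\rho=0$, contradicting its membership in $L_r$. The reverse inequality $\limsup A(\mathcal A_{r,1})\le1$ amounts to showing that the first tongue closes up by height $1+o(1)$; combined with the collapse onto $\sigma\cup\gamma$ and the vanishing width, this yields that $L_{r,1}$ converges to $\gamma=\{A\ge1\}$ in the asserted Hausdorff sense.

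The principal difficulty is the rigorous justification of Step 1, because the naive averaging breaks down exactly on the transition set $\{|\beta(\tau)|=1\}$, where the frozen field undergoes a saddle--node bifurcation and $\phi$ creeps through a bottleneck. Establishing $\omega\rho\to c\Theta$ thus requires a genuine geometric singular--perturbation argument: one must bound the time spent near each bottleneck (of order a positive power of $\omega$, hence negligible in the winding count) and control this uniformly in $(B,A)$ on the relevant regions, as well as through the two delicate limits $A\to\infty$ and $A\to1^+$. The remaining obstacle is the upper bound $\limsup A(\mathcal A_{r,1})\le1$, i.e. converting the soft accumulation ``$L_{r,1}$ piles onto $\gamma$'' into the exact location of the first adjacency; an alternative route is to feed the adiabatic expansion of the monodromy eigenvalues into the explicit boundary equation of Corollary \ref{cboun}, but extracting the asymptotics of its transcendental solutions near $A=1$ seems no easier than the dynamical argument above.
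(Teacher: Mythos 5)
First, a point of fact: the paper does \emph{not} prove this statement. It is one of the open conjectures of Subsection 5.6, motivated by numerical experiments, and Subsection 5.8 explicitly poses the small-$\omega$ behavior of the phase-lock areas as an open question. So there is no argument in the paper to compare yours against; the only question is whether your sketch itself constitutes a proof, and it does not. The central claim, $\omega\rho(B,A)\to c\,\Theta(B,A)$, is only asserted (``I therefore expect''), and --- more seriously --- the form in which you assert it, uniform convergence on compact subsets of $\{\Theta\neq0\}$, is structurally incapable of yielding the conjecture. For fixed $r$ the level set $L_r$ leaves every such compact as $\omega\to0$ (that is exactly what ``collapsing'' means); it lies in the transition strip $\{\Theta=O(\omega)\}$ hugging $\partial\{\Theta>0\}$, which is precisely where averaging is least controlled: each slow passage through a saddle-node (the $\tau$-intervals of width $\sim\omega^{2/3}$ around the roots of $|B+A\cos\tau|=1$) produces an $O(1)$ discrepancy in the winding count per period, i.e.\ a discrepancy of the \emph{same order} as the main term $\Theta/(2\pi\omega)\asymp r$ on the set you are trying to locate. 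What a proof requires is a two-sided estimate of the form $c\,\Theta/\omega - C\le \rho\le c\,\Theta/\omega + C$ with constants uniform up to the boundary of $\{\Theta>0\}$; the lower bound, combined with the paper's (unlabeled) proposition of Subsection 5.6 that the open square $\{|B|+|A|<1\}$ lies in $L_0$, would already give collapse of every bounded piece of $L_r$ onto $\sigma\cup\gamma$. Your sketch contains no mechanism for producing such uniform bounds, and you acknowledge as much.

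The second genuine gap is uniformity in $A$: the conjecture bounds the distance to the ray over \emph{all} of $L_{r,1}$, including points with $A\to\infty$, and for these you invoke ``known Bessel asymptotics''. But the asymptotics of \cite{RK} cited in Subsection 5.1 are for fixed $\omega$ as $A\to\infty$, with no uniformity in $\omega$; to control the supremum in the conjecture one needs an estimate on $B$ along $L_r$ valid simultaneously for large $A$ and small $\omega$, which is nowhere supplied. On the positive side, two of your concerns can be discharged. Your lower bound $\liminf_{\omega\to0}A(\mathcal{A}_{r,1})\ge1$ is correct and complete as stated (Remark \ref{nwrem} plus the square proposition). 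And the upper bound $\limsup_{\omega\to0} A(\mathcal{A}_{r,1})\le1$, which you single out as a principal remaining obstacle, is in fact not needed: as clarified by its second sentence, the conjecture is one-sided --- it asserts only that every point of $L_{r,1}$ is close to the ray $\{A\ge1\}$, not Hausdorff convergence --- and a set contained in $\{A\ge A(\mathcal{A}_{r,1})\}$ with $A(\mathcal{A}_{r,1})\ge1-o(1)$, all of whose points have small abscissa, automatically has this property regardless of where $A(\mathcal{A}_{r,1})$ ends up. Thus the conjecture reduces exactly to the two uniform estimates above, which are the parts your proposal leaves open.
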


In what follows we will discuss in detail the next two conjectures that are closely related to Conjectures \ref{garl} and \ref{interv}. 

\begin{conjecture} \label{c1} For $r\in\nn$ the phase-lock area with rotation number $r+1$ does not intersect the line  
$$\Lambda_r=\{ B=\omega r\}\subset\rr^2.$$
\end{conjecture}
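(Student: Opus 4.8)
The plan is to work on the line $\Lambda_r=\{B=r\omega\}$, where $l=\frac B\omega=r\in\zz$, and to rule out the value $\rho=r+1$ both dynamically and through the monodromy of the associated Heun equation. First I would argue directly on the torus. If $(r\omega,A)\in L_{r+1}$ then the time-$2\pi$ map of the vector field (\ref{josvect}) with $l=r$ has rotation number $r+1$, so there is a solution with $\phi(2\pi)=\phi(0)+2\pi(r+1)$; equivalently $\psi=\phi-(r+1)\tau$ is $2\pi$-periodic, and $\dot\psi=-\frac{\sin\phi}\omega-1+2\mu\cos\tau$. Integrating over one period and using periodicity of $\psi$ yields the identity $\int_0^{2\pi}\sin\phi(\tau)\,d\tau=-2\pi\omega$. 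Since $|\sin|\le1$, this forces $\omega\le1$, and the equality case $\omega=1$ would require $\sin\phi\equiv-1$, incompatible with $\phi$ increasing by $2\pi(r+1)$. Hence Conjecture \ref{c1} holds whenever $\omega\ge1$; more generally the same averaging shows that on any vertical line $\{B=l\omega\}$ an integer rotation number $\rho_0$ must satisfy $|\rho_0-l|\le\frac1\omega$.

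For $\omega<1$ the averaging bound no longer excludes $\rho=r+1$, and I would pass to the Heun equation. Since $\rho<r+1$ at $A=0$ on $\Lambda_r$ (the autonomous case solved explicitly in the Example above) and $L_{r+1}$ is closed, any nonempty intersection $L_{r+1}\cap\Lambda_r$ contains a boundary point $P^\ast=(r\omega,A^\ast)\in\partial L_{r+1}$. By Proposition \ref{roteig} the monodromy of the corresponding equation (\ref{heun}) has both eigenvalues equal to $e^{\pi i(\rho-l)}=e^{\pi i}=-1$, and by Propositions \ref{pjcell}, \ref{pjcell2} (together with the impossibility of scalar monodromy established there) it is a genuine Jordan cell with eigenvalue $-1$, whose eigenfunction has the form $z^{1/2}f(z)$ with $b=\frac12$, so that $b,b+n\notin\zz$. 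Since $l=r\in\zz$ and the monodromy is not unipotent, cases 1) and 2) of Corollary \ref{cboun} are excluded (an adjacency or a polynomial solution of (\ref{heun2}) would force eigenvalue $+1$, cf. Theorem \ref{talt}); hence $P^\ast$ falls under case 3) when $r$ is odd and case 4) when $r$ is even. In other words $L_{r+1}\cap\Lambda_r$ is exactly the solution set, on $\Lambda_r$, of equation (\ref{r0eql}) for $r$ odd and of (\ref{r1eql}) for $r$ even, with $\la=\frac1{4\omega^2}-\mu^2$.

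It therefore remains to prove that for $0<\omega<1$, $\mu>0$ and $l=r$ the transcendental equation (\ref{r0eql}) (respectively (\ref{r1eql})) has no solution, and this is the crux and the expected main obstacle. I would attack it by controlling the signs of the entries $R_{0,11},R_{0,21}$ (respectively $R_{1,11},R_{1,21}$) of the convergent products $R_k=M_kM_{k+1}\cdots$ from (\ref{rkmk2}) (respectively (\ref{mkrk1})): the matrices $M_k$ differ from the projector by terms of order $\mu^2/k^2$, so for $\la=\frac1{4\omega^2}-\mu^2$ the entries should be expressible and estimable through modified Bessel functions of $2\mu$, exactly as the quantities $\xi_l$ and the Bessel determinants of \cite{bt1,bt2,bg}. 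The aim is a monotonicity or positivity statement keeping the left-hand side of (\ref{r0eql}) (respectively (\ref{r1eql})) of definite sign on $\Lambda_r$ for $\omega<1$, in the spirit of the positivity of Bessel determinants proved in \cite{bg}. The hard part is precisely this sign control in the regime $\omega<1$: by the averaging identity any putative intersection point has large amplitude (roughly $A\gtrsim 1+r\omega$), so the $\mu^2$-perturbations in $M_k$ are not small and the naive estimates break down; handling this regime, presumably via the Bessel-function machinery of \cite{bg}, is where the real work lies.
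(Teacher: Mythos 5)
You should note first that Conjecture \ref{c1} is exactly that in the paper: an \emph{open conjecture}. The paper contains no proof of it, only a proposed strategy (the paragraph ``A possible strategy for Conjecture \ref{c1}'', Proposition \ref{newex}, and Conjecture \ref{c3}, which is shown to imply Conjecture \ref{c1}). So your proposal is an attempt at an open problem, and you yourself concede at the end that the key analytic step is missing. Within that limitation, your first half is sound: for a point of $L_{r+1}\cap\Lambda_r$ the Poincar\'e map has integer rotation number, hence there is a solution with $\phi(2\pi)=\phi(0)+2\pi(r+1)$, and the averaging identity $\int_0^{2\pi}\sin\phi\,d\tau=-2\pi\omega$ indeed rules this out for $\omega\geq1$; this regime is known (via a Chaplygin-type comparison, see \cite[lemma 4]{buch1} and \cite[proposition 3.4]{4}), and your averaging argument is a legitimate alternative proof of it. Your reduction of a putative boundary point $P^\ast\in\partial L_{r+1}\cap\Lambda_r$ to equations (\ref{r0eql}) ($r$ odd) or (\ref{r1eql}) ($r$ even), via Propositions \ref{pjcell}, \ref{pjcell2}, \ref{roteig} and Theorems \ref{heun4}, \ref{heun5}, is also correct and coincides with the first step of the paper's strategy.

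The fatal gap is the statement you reduce everything to: that for $0<\omega<1$, $l=r$, $\la=\frac1{4\omega^2}-\mu^2$, equation (\ref{r0eql}) (resp.\ (\ref{r1eql})) has \emph{no} solutions on $\Lambda_r$. That statement is false, so no sign-definiteness, monotonicity, or Bessel-determinant positivity can ever establish it. Indeed, by Theorems \ref{heun4}, \ref{heun5} (equivalently Proposition \ref{newex}), the solution set of these equations on $\Lambda_r$ consists of \emph{all} points where the monodromy of (\ref{heun}) has eigenvalue $-1$, i.e.\ the union of $\partial L_\rho\cap\Lambda_r$ over all integers $\rho\equiv r+1\ (\mathrm{mod}\ 2\zz)$ --- not only $\rho=r+1$, but also $\rho=r-1,r-3,\dots$ --- and for small $\omega$ these other intersections genuinely occur. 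Concretely, take $\omega=0.3$, $r=1$: the point $(\omega,0)=(0.3,0)$ lies in the interior of $L_0$ (it satisfies $|A|+|B|<1$, see the proposition on the unit square in Subsection 5.6), while for large $A$ the boundary curves $B=g_{0,\pm}(A)$ of $L_0$ tend to $0<0.3$; hence $\partial L_0$ crosses $\Lambda_1$. At each such crossing the rotation number is $0\equiv r+1\ (\mathrm{mod}\ 2\zz)$, both monodromy eigenvalues equal $e^{\pi i(0-1)}=-1$ by Proposition \ref{roteig}, and therefore (\ref{r0eql}) holds by Theorem \ref{heun4}. So the left-hand side of (\ref{r0eql}) really does vanish at points of $\Lambda_1$, and your program of keeping it of one sign collapses. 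What is actually needed is to distinguish \emph{which} phase-lock area boundary a given zero of (\ref{r0eql})/(\ref{r1eql}) belongs to; that is precisely the content of the paper's Conjecture \ref{c3} (every such zero produces a periodic solution with rotation number between $0$ and $r$, never $r+1$), which implies Conjecture \ref{c1} and remains open. Your heuristic that putative intersection points have $A\gtrsim 1+r\omega$ is likewise unjustified, but it is immaterial next to the main point: the equation you propose to show has no roots provably has roots.
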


\begin{conjecture} \label{c2} For $r\in\nn$ the phase-lock area with rotation number $r$ does not intersect the line $\Lambda_{r-2}$. 
\end{conjecture}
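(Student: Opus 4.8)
The plan is to reduce Conjecture \ref{c2} to Conjecture \ref{c1} by a continuity argument in the vertical direction, and then to isolate the genuinely hard point. Fix $r\in\nn$ and set $m=r-2$, so that $\Lambda_{r-2}=\{B=m\omega\}$ is the line $l=m$. By the symmetry $\rho(B,-A)=\rho(B,A)$ (invariance of \eqref{josvect} under $(\phi,\tau)\mapsto(\phi,\tau+\pi)$) it suffices to work in the half-plane $A\ge0$. The cases $r\le2$ are immediate: for $r=2$ one has $m=0$ and $\rho(0,A)\equiv0$ (from $\rho(-B,A)=-\rho(B,A)$), so $\rho\ne2$ on $\Lambda_0$; for $r=1$ the strict monotonicity $\partial\rho/\partial B>0$ on the complement of the phase-lock areas (Proposition \ref{can}), together with $\rho(0,A)\equiv0$, gives $\rho(B,A)\le0<1$ for all $B\le0$, so $L_1$ avoids $\Lambda_{-1}$.

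For $r\ge3$ (so that $m=r-2\in\nn$) I would argue as follows. The restriction $A\mapsto\rho(m\omega,A)$ is continuous, and at the root of the $m$-th Arnold tongue on the $B$-axis one has $\rho(m\omega,0)=m$. Suppose, for contradiction, that $(m\omega,A_0)\in L_r$ for some $A_0>0$, i.e.\ $\rho(m\omega,A_0)=r=m+2$. By the intermediate value theorem there is $A_1\in(0,A_0)$ with $\rho(m\omega,A_1)=m+1$, so $(m\omega,A_1)\in L_{m+1}\cap\Lambda_m$. This contradicts Conjecture \ref{c1} with its parameter equal to $m$, which asserts that $L_{m+1}$ does not meet $\Lambda_m$. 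Hence $L_r\cap\Lambda_{r-2}=\emptyset$. Thus Conjecture \ref{c2} follows from Conjecture \ref{c1} and the base value $\rho(m\omega,0)=m$; the whole difficulty is concentrated in ruling out the single crossing $\rho=m+1$.

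It is instructive to see where a direct attack stalls, since this points to the obstacle. If $L_r$ met $\Lambda_{r-2}$, then, $L_r$ being closed with interior, the line would contain a boundary point $(m\omega,A_*)$ of $L_r$, at which $\rho=r$ and $l=m=r-2$. By Proposition \ref{roteig} the monodromy of the corresponding Heun equation \eqref{heun} has the double eigenvalue $e^{\pi i(\rho-l)}=e^{-\pi i(\rho+l)}=e^{2\pi i}=1$, hence is unipotent. By Theorem \ref{talt} the equation then has either an entire solution (an adjacency, by Theorem \ref{tadj}) or a polynomial solution of \eqref{heun2}. The polynomial case is excluded by Theorem \ref{poteig}: it would force $0\le\rho\le l$, whereas here $\rho=r>l=r-2$. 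We are thus left with the adjacency case, $\xi_{m}(\la,\mu)=0$ with $n=m+1$, and the statement becomes that such an adjacency on the line $l=m$ cannot have rotation number $r=m+2$, i.e.\ that it belongs to $L_m$ rather than $L_{m+2}$. This is precisely the content of Conjecture \ref{garl}.

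The main obstacle is therefore the determination of the exact integer rotation number at the adjacency parameters, where the Heun monodromy equals the identity and the monodromy matrix carries no information distinguishing $\rho=m$ from $\rho=m+2$ (by Remark \ref{nwrem} only the congruence $\rho\equiv m\ (\mathrm{mod}\,2)$ and the bound $m\le\rho$ are known a priori). To pin $\rho$ down to $m$ I would approach the adjacency from the complement of the phase-lock areas, where Proposition \ref{roteig} yields the analytic trace identity $\operatorname{tr}M=(-1)^m\,2\cos(\pi\rho)$ on the line $l=m$, and combine it with the monotonicity $\partial\rho/\partial B>0$ and a comparison (barrier) estimate for the rotation number of the flow of \eqref{josvect} along $\Lambda_m$, aiming to prove the one-sided bound $\rho(m\omega,A)\le m$ for all $A$; together with $\rho\ge m$ at adjacencies this would give $\rho=m$ and settle both conjectures simultaneously. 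Establishing such a bound on the tongue-root line — equivalently, showing that the eigenvalue-$(-1)$ boundary curve cut out by \eqref{r0eql} or \eqref{r1eql} (Theorems \ref{heun4}, \ref{heun5}, selected by the parity of $m$) never reaches $\Lambda_m$ — is where new ideas are needed and is the crux of the problem.
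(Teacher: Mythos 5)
Your proposal is not, and cannot be, a proof of this statement: Conjecture \ref{c2} is left \emph{open} in the paper. The paper's own treatment consists entirely of conditional implications --- Remark \ref{graph} (Conjecture \ref{c1} implies Conjecture \ref{c2}), Proposition \ref{prconj2} (Conjecture \ref{c4} implies Conjectures \ref{c2} and \ref{garl}), and the chain Conjecture \ref{c5} $\Rightarrow$ \ref{c4} $\Rightarrow$ \ref{c2} --- and you likewise only reduce \ref{c2} to the unproven Conjecture \ref{c1}, which you state honestly. Judged as a reduction, your argument is correct and genuinely different from the paper's. You work vertically: along $\Lambda_{r-2}$ the function $A\mapsto\rho((r-2)\omega,A)$ is continuous and starts below $r-1$ at $A=0$, so a point of $L_r$ on this line would force, by the intermediate value theorem, a point of $L_{r-1}\cap\Lambda_{r-2}$, contradicting \ref{c1} with parameter $r-2$. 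The paper (Remark \ref{graph}) works horizontally: the boundary of $L_r$ consists of two graphs $B=g_{r,\pm}(A)$ with $g_{r,\pm}(A)\to r\omega$ as $A\to+\infty$ (Bessel asymptotics of \cite{RK}), so if $L_r$ avoids $\Lambda_{r-1}$ it stays entirely to its right and a fortiori avoids $\Lambda_{r-2}$; this invokes \ref{c1} with parameter $r-1$. Your route needs only continuity and monotonicity of the rotation number; the paper's needs the graph structure and asymptotics of the boundary, but localizes the obstruction one line closer to $L_r$.

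Two further points. First, a factual error: the root of the $m$-th tongue on the $B$-axis is \emph{not} at $B=m\omega$, so $\rho(m\omega,0)\neq m$ in general. By the paper's own Example in Subsection 5.6, at $A=0$ one has $\rho(B,0)=\sqrt{B^2-1}/\omega$ for $B\geq1$ and $\rho(B,0)=0$ for $|B|\leq1$, whence $\rho(m\omega,0)=\sqrt{m^2-1/\omega^2}<m$ (or $0$); the lines $\Lambda_m$ carry the adjacencies, not the tongue roots. This is harmless for your argument, since the intermediate value step only needs $\rho(m\omega,0)<m+1$, but the sentence should be corrected. Second, your ``direct attack'' paragraph essentially rediscovers the paper's own ``possible strategy for Conjecture \ref{c2}'': a boundary point of $L_r$ on $\Lambda_{r-2}$ has unipotent monodromy (Proposition \ref{roteig}), hence by Theorem \ref{talt} corresponds either to a polynomial solution of (\ref{heun2}) --- excluded since $\rho=r>l=r-2$ (Theorem \ref{poteig}, \cite[theorem 4]{bt0}) --- or to an adjacency, and the entire difficulty is to show that an adjacency with abscissa $(r-2)\omega$ cannot carry rotation number $r$, i.e.\ Conjecture \ref{garl}. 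Where you stop at a proposed ``trace identity plus barrier estimate,'' the paper pushes this obstacle into concrete analytic form: the deformation-in-$\omega$ argument reduces it to the non-tangency Conjecture \ref{c4}, and the expansions (\ref{r0as}), (\ref{r1as}) of the boundary equations (\ref{r0eql}), (\ref{r1eql}) near an adjacency reduce that in turn to Conjecture \ref{c5} on simplicity of the zeros of $\zeta_l$. So: a valid alternative reduction to \ref{c1}, with one misstated base value, but, exactly as in the paper, the conjecture itself remains open.
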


\begin{remark} \label{graph} Conjecture \ref{c1}  implies Conjecture \ref{c2}. Indeed, the points $(B,A)$ with $A>0$ large enough 
of the phase-lock area $L_r$,  $r\in\zz$ 
lie close to $\Lambda_r$, i.e., they are separated from the line $\Lambda_{r-2}$ by $\Lambda_{r-1}$. This follows from the fact 
that its boundary consists of graphs of two functions $B=g_{\pm r}(A)$ and $g_{\pm r}(A)\to r\omega$, 
as $A\to+\infty$ (follows from results of \cite{RK}). Therefore, if the phase-lock area $L_r$ does not intersect the line $\Lambda_{r-1}$, 
then it also does not intersect $\Lambda_{r-2}$. 
Each one of Conjectures \ref{c1}, \ref{c2} together with \cite[theorems 1.2, 3.17]{4} (see Remark \ref{nwrem}) 
imply Conjecture \ref{garl}. 
\end{remark}

{\bf A possible strategy for Conjecture \ref{c1}.} If the boundary of the phase-lock area with rotation number $r+1$ intersects the line 
 $\Lambda_r$, 
then the intersection points correspond to parabolic monodromy operator of Jordan cell type with both eigenvalues equal to -1 
(Proposition \ref{newex}). That is, some of equations (\ref{r0eql}) or (\ref{r1eql}) should hold 
at each intersection point. 

\begin{conjecture} \label{c3} Let $l\in\nn$, and let the parameter $\mu$  satisfy some of equations  (\ref{r0eql}) if $l\notin 2\zz$, or (\ref{r1eql}) if $l\in 2\zz$. Consider
the  corresponding monodromy eigenfunction of Heun equation (\ref{heun}) from Proposition \ref{newex} with eigenvalue $-1$. 
Then the corresponding solution of the Riccati equation from the beginning of Subsection 5.3
 gives  a periodic solution of the corresponding equation (\ref{josvect}) on two-torus
 having rotation number between 0 and $l$. 
\end{conjecture}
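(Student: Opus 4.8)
The plan is to first convert the eigenfunction into a genuine periodic orbit on the torus, and then to compute the winding number of that orbit and bound it above and below.

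First I would record that the hypothesis gives, via Theorems \ref{heun4} and \ref{heun5}, a solution $E(z)=z^bf(z)$ of (\ref{heun}) with $f$ holomorphic on $\cc^*$ and $b\in\{-\frac l2,\ -\frac{l+1}2\}$, so that $e^{2\pi ib}=-1$. Passing to the associated solution $(u,v)$ of the linear system (\ref{tty}) through $E=e^{\mu z}v$, the monodromy around $z=0$ multiplies the whole vector $(u,v)$ by $e^{2\pi ib}=-1$; hence the corresponding Riccati solution $\Phi=v/u$ is single-valued on $\cc^*$. By Proposition \ref{newex} the point $(B,A)$ lies on the boundary of a phase-lock area, so the period $2\pi$ map of the field (\ref{josvect}) is parabolic and its unique fixed point on the circle $\{|\Phi|=1\}$ is the projectivization of the eigenvector. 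As (\ref{josvect}) is real it preserves $\{|\Phi|=1\}$, so the orbit through that fixed point is a real periodic solution $\phi(\tau)$ of (\ref{josvect}), and its winding number $\nu=\frac1{2\pi}(\phi(2\pi)-\phi(0))$ equals the rotation number $\rho=\rho(B,A)$.

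Next I would write $\nu$ in two ways. Integrating the $\phi$-equation of (\ref{josvect}) over the period gives $\nu=l-\frac1{2\pi\omega}\int_0^{2\pi}\sin\phi(\tau)\,d\tau$, while from $\Phi=\frac{f}{2i\omega\,g}$ with $g=zf'+(b-\mu z)f$ one gets $\nu=W(f)-W(g)$, where $W$ denotes the winding along $\{|z|=1\}$ of a function holomorphic on $\cc^*$ (both $f$ and $g$ are nonvanishing there, since $|\Phi|=1$ on the unit circle). For the lower bound $\nu\ge0$ I would use that $\rho$ vanishes on $\{B=0\}$ — there the map $(\phi,\tau)\mapsto(-\phi,\tau+\pi)$ is a symmetry of (\ref{josvect}) reversing the orientation of the $\phi$-circle, forcing $\rho=-\rho$ — and that $\rho$ is non-decreasing in $B$ by Proposition \ref{can}; hence $\nu=\rho\ge0$ for $B\ge0$.

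The upper bound $\nu\le l$, equivalently $\int_0^{2\pi}\sin\phi\,d\tau\ge0$, is the main obstacle. The trivial estimate $\sin\phi\ge-1$ yields $\int_0^{2\pi}\sin\phi\,d\tau\ge-2\pi$ and hence $\nu\le l$ only when $\omega\ge1$, which excludes precisely the small-$\omega$ regime of interest. To reach all $\omega$ I would exploit the extra rigidity coming from Proposition \ref{dinvar}: the relation $\#E=\pm E$ ties $E(z)$ to $E(1/z)$ and makes the zeros of $E$ symmetric under the inversion $z\mapsto1/z$ across $\{|z|=1\}$, which constrains $W(f)-W(g)$; this I would combine with a continuity argument along the segment $\{B=l\omega\}$ starting at $A=0$, where (\ref{jos}) is autonomous and solvable explicitly with $\rho=\frac{\sqrt{B^2-1}}\omega<l$, in order to propagate the inequality $\rho\le l$ to the eigenvalue-$(-1)$ point without $\rho$ jumping above $l$ as $A$ increases. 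Making this symmetry-plus-continuity argument rigorous — in particular excluding such jumps — is the hard step and is what keeps the statement conjectural; granting it, $0\le\nu\le l$ follows, and since $\nu$ is the winding of the constructed periodic orbit the conjecture is proved.
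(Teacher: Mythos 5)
You should first be aware that the statement you set out to prove is not proved anywhere in the paper: it is stated as Conjecture \ref{c3}, an open problem, and the only thing the paper establishes about it is that it would imply Conjecture \ref{c1}. Your own text in fact concedes the same thing, since everything after ``the main obstacle'' is conditional (``granting it, \dots the conjecture is proved''). Your setup is essentially sound and consistent with the paper's framework: by Theorems \ref{heun4} and \ref{heun5} the monodromy eigenvalue is $-1$; by Propositions \ref{newex}, \ref{pjcell} and \ref{pjcell2} the monodromy is a Jordan cell, its projectivization is a parabolic M\"obius map whose unique fixed point lies on the unit circle, and the orbit through it is a $2\pi$-periodic solution whose winding number equals $\rho(B,A)$; the identity $\nu=l-\frac1{2\pi\omega}\int_0^{2\pi}\sin\phi(\tau)\,d\tau$ and the lower bound $\rho\geq0$ are correct (though for the latter, note that Proposition \ref{can} gives monotonicity in $B$ only on the complement of the phase-lock areas; you need the standard global monotonicity of the rotation number for monotone families of circle maps).

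The genuine gap is the upper bound, and it is not a technical residue but the entire content of the conjecture: by Proposition \ref{newex} one has $\rho\equiv l+1$ modulo $2\zz$ at such a point, so once $\rho\geq0$ is known, the assertion ``$0\leq\rho\leq l$'' is exactly the assertion that $\rho\neq l+1, l+3,\dots$, i.e. that the phase-lock areas with numbers greater than $l$ never reach the line $\Lambda_l$ --- which is Conjectures \ref{c1}/\ref{c2} themselves. Neither of your two proposed tools closes this. First, the symmetry of Proposition \ref{dinvar} gives $\# E=\pm E$, which relates $E(z)$ to $E'(z^{-1})-\mu E(z^{-1})$, \emph{not} to $E(z^{-1})$; so it does not make the zero set of $E$ (or of your $f$ and $g$) invariant under $z\mapsto z^{-1}$, and no constraint on $W(f)-W(g)$ follows in the way you suggest. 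Second, the continuity argument along the segment $\{B=l\omega\}$ starting from $A=0$ is circular: to ``propagate $\rho\leq l$ without jumps'' as $A$ increases, you must already know that the sets $\{\rho=l+1\},\{\rho=l+3\},\dots$ never meet that line; the rotation number is continuous but not monotone in $A$, and a boundary point of the phase-lock area number $l+1$ lying on $\Lambda_l$ is precisely the configuration you are trying to exclude, not one that continuity forbids. So what you have is a correct reduction and setup (winding-number identity, lower bound, and the observation that the trivial estimate handles $\omega\geq1$, where the paper already knows Conjecture \ref{c1} by the Chaplygin-type argument), followed by an unproved step that coincides with the open problem itself.
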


Conjecture \ref{c3} would imply Conjecture \ref{c1}. 

\medskip

{\bf A possible strategy for Conjecture \ref{c2}.} We know that for $\omega\geq1$ the statements of Conjecture \ref{c1} and hence Conjecture \ref{c2} 
hold (Chaplygin Theorem argument, see \cite[lemma 4]{buch1} and \cite[proposition 3.4]{4}). The adjacencies of a phase-lock area with rotation number $\rho$ cannot lie on lines  $\Lambda_l$ with $l\not\equiv\rho(mod2\zz)$, see \cite[theorem 3.17]{4}; this also follows from 
Proposition \ref{newex}. 
 Suppose that for a certain ``critical'' value $\omega=\omega_0<1$  the boundary of the phase-lock area number $l+2>0$  moves from the right to the left,  as $\omega$ decreases to $\omega_0$,  and touches the line $\Lambda_l$ at some point $(B,A)$, 
as $\omega=\omega_0$. Then there are two possibilities for the corresponding Heun equation:

- the associated Heun equation (\ref{heun2}) (equation (\ref{heun}) with $l$ replaced by $-l$) 
has a polynomial solution. But this case is forbidden by Buchstaber--Tertychnyi 
result \cite[theorem 4]{bt0}, which states that then the corresponding rotation number cannot be greater than $l$. 

- the point $(B,A)$ an adjacency:  
Heun equation (\ref{heun}) has a solution holomorphic on $\cc$. This together with the above-mentioned known fact that the 
boundaries of phase-lock areas   are graphs of functions (Remark \ref{graph}) implies that both boundary components of 
the phase-lock area with rotation number $l+2$ are tangent to the line $\Lambda_l$ at the point $(B,A)$.

\begin{conjecture} \label{c4} For every $\omega>0$ for every adjacency $(B_0,A_0)\in\rr_+^2$ of any phase-lock area the branches of its boundary at $(B_0,A_0)$ cannot be both 
tangent to the vertical line $\{ B=B_0\}$. 
\end{conjecture}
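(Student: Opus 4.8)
The plan is to localise at the adjacency and read off the two boundary branches from the first-order variation of the period map, thereby reducing the conjecture to a single non-degeneracy statement. Fix an adjacency $(B_0,A_0)\in\rr_+^2$ of a phase-lock area; by Remark \ref{rktriv} the time-$2\pi$ flow map (Poincar\'e map) $\mathcal P_{B_0,A_0}$ of the field (\ref{josvect}) on the $\phi$-circle is the identity. For $(B,A)$ near $(B_0,A_0)$ let $\delta(\cdot\,;B,A)\colon S^1\to\rr$ be the displacement $\mathcal P_{B,A}(\phi)-\phi$ of the corresponding Poincar\'e map, computed in a fixed lift. By Proposition \ref{pjcell} a parameter lies in the phase-lock area exactly when $\delta$ has a zero, and in its boundary exactly when $\delta$ has a tangential (degenerate) zero; at the adjacency itself $\delta\equiv0$. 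First I would expand $\delta(\phi;B,A)=(B-B_0)\,\delta_B(\phi)+(A-A_0)\,\delta_A(\phi)+o(|B-B_0|+|A-A_0|)$ and compute the two variations by variation of constants along the (now closed) orbits: writing $f(\phi,\tau;B,A)=\frac1\omega(-\sin\phi+B+A\cos\tau)$ for the right-hand side of (\ref{josvect}) and $G_\phi(s)=\exp\big(\int_s^{2\pi}\partial_\phi f\,d\tau\big)$ along the orbit issued from $\phi$ at the adjacency, one gets $\delta_B(\phi)=\frac1\omega\int_0^{2\pi}G_\phi(s)\,ds$ and $\delta_A(\phi)=\frac1\omega\int_0^{2\pi}G_\phi(s)\cos s\,ds$, since $\partial_B f=\frac1\omega$ and $\partial_A f=\frac1\omega\cos\tau$.

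The one clean input is monotonicity in $B$. Because the orbits are closed and $\mathcal P_{B_0,A_0}$ is the identity, its derivative is $1$, i.e. $\int_0^{2\pi}\partial_\phi f\,d\tau=0$ along every orbit; hence $G_\phi(s)>0$ for all $s$ and $\delta_B(\phi)>0$ for every $\phi$ (the infinitesimal form of $\partial\rho/\partial B>0$, Proposition \ref{can}). Consequently, to first order $\delta$ vanishes at some $\phi$ iff $(B-B_0)=(A-A_0)\,\psi(\phi)$ for some $\phi$, where $\psi=-\delta_A/\delta_B$; so the first-order phase-lock sector is $\{(B-B_0)/(A-A_0)\in[\psi_{\min},\psi_{\max}]\}$ and its two boundary branches have slopes $dB/dA=\psi_{\min}$ and $dB/dA=\psi_{\max}$. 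A branch is tangent to the vertical line $\{B=B_0\}$ precisely when its slope is $0$; hence both branches are tangent to it iff $\psi_{\min}=\psi_{\max}=0$, i.e. iff $\psi\equiv0$, i.e. iff $\delta_A\equiv0$. This already shows that whenever $\delta_A\not\equiv0$ the two branches are not both vertical, so the conjecture is equivalent to the single assertion that at every adjacency $\delta_A\not\equiv0$ along the closed-orbit family. (Equivalently, passing to the $SL_2$-monodromy $M$ with $\det M=e^{-2\pi i(l+1)}=1$ by Proposition \ref{monprod}, the adjacency is a critical point of $h=\operatorname{tr}M-2$ with $dh=0$, and $\delta_A\equiv0$ is exactly the statement that the Hessian of $h$ is proportional to $dB^2$.)

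The hard part will be proving this uniform non-cancellation: that the oscillatory integral $\int_0^{2\pi}G_\phi(s)\cos s\,ds$ does not vanish for all $\phi\in S^1$ simultaneously. Here $\log G_\phi(s)=-\frac1\omega\int_s^{2\pi}\cos\phi_*(\tau)\,d\tau$ is an explicit positive weight determined by the closed orbit $\phi_*$, and $\delta_A\equiv0$ is a highly overdetermined family of conditions (one per orbit). Two routes seem promising. The first is to exploit the time-reversing symmetry $\#$ of (\ref{heun}) (equivalently $(\phi,\tau)\mapsto(\pi-\phi,-\tau)$, see \cite{RK}), which at the adjacency should relate $G_\phi(s)$ to the weight of the reversed orbit at $2\pi-s$; this ought to impose a parity on the integrand forcing $\int_0^{2\pi}G_\phi(s)\cos s\,ds$ to be a sum of same-sign contributions, hence nonzero for suitable $\phi$. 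The second route bypasses the dynamics and computes the branch slopes directly from the explicit analytic boundary equations: near an adjacency one has simultaneously $\xi_l(\la,\mu)=0$ (trivial monodromy, Theorem \ref{talt}) and the parabolicity equation (\ref{r0eql}) or (\ref{r1eql}); differentiating the latter in the $A$-direction and showing this derivative is nonzero — using non-degeneracy of $\xi_l$ and of the infinite products $R_k$ underlying the Bessel-determinant positivity of \cite{bg} — would give $\psi_{\min}\neq0$ or $\psi_{\max}\neq0$ outright. I expect the uniform statement $\delta_A\not\equiv0$ to be the genuine obstacle, which is why the assertion remains a conjecture; the reduction above, however, isolates it as a clean self-contained analytic problem and, via the strategy preceding Conjecture \ref{c2}, shows exactly why it is needed.
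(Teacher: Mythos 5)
You have not proved this statement, and you could not have been expected to: it is Conjecture \ref{c4}, one of the paper's open conjectures, and the paper itself contains no proof. The closest the paper comes is Subsection 5.7, where the asymptotic factorizations (\ref{r0as}), (\ref{r1as}) of the boundary equations (\ref{r0eql}), (\ref{r1eql}) near a resonant line $\Lambda_{l_0}$ show that Conjecture \ref{c5} (simplicity of the zeros of $\zeta_l$, i.e.\ $\partial\zeta_l/\partial\mu\neq0$ at zeros) implies Conjecture \ref{c4}, via non-degeneracy of the matrices $X$. Your proposal is in the same situation: as you acknowledge, everything reduces to the unproved non-vanishing $\delta_A\not\equiv0$ at every adjacency, and since your reduction makes the conjecture \emph{equivalent} to that statement, the entire difficulty is reformulated rather than removed. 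Note also that your ``second route'' (differentiate the explicit boundary equations and use non-degeneracy of $\xi_l$) is not an alternative at all: it is precisely the paper's conditional argument, and the non-degeneracy it requires is exactly Conjecture \ref{c5}, which is equally open. So the genuine gap is that the crux remains conjectural -- in your treatment and in the paper's.

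Assessed as a reduction, your argument is essentially sound and is a genuinely different, dynamical counterpart to the paper's analytic one. The variation-of-constants formulas for $\delta_B,\delta_A$ and the positivity $\delta_B>0$ are correct. The one step stated too loosely is ``the two boundary branches have slopes $\psi_{\min}$ and $\psi_{\max}$'': when $\psi_{\min}=\psi_{\max}$ the first-order data do not determine the branch slopes, so the clean formulation is through tangent cones -- directions with ratio strictly inside $(\psi_{\min},\psi_{\max})$ lie in the tangent cone of the phase-lock area (sign change of $(c-\psi)\delta_B$), directions strictly outside do not, each branch's tangent direction lies in the cone, and the cone is nontrivial because the adjacency is a limit of the two adjacent domains; then ``both branches vertical $\Leftrightarrow$ cone contained in the vertical line $\Leftrightarrow\psi\equiv0\Leftrightarrow\delta_A\equiv0$'' does follow. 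This equivalence is seen even more transparently in Lie-algebra terms, which you gesture at in your parenthetical: since the Poincar\'e map is the restriction of a M\"obius transformation (Subsection 5.3), $\delta_B,\delta_A$ are elements $V_B,V_A$ of $sl_2(\rr)$, the tangent cone is $\{\det(uV_B+vV_A)\le0\}$, the form $\det$ has Lorentzian signature on $sl_2(\rr)$, and $V_B$ is elliptic (timelike) because $\delta_B>0$; the Lorentz-orthogonal complement of a timelike vector contains no nonzero null vectors, which is exactly why this cone collapses to the vertical line only when $V_A=0$, confirming your claim. As for what each reduction buys: the paper's lands on a single explicit transcendental function $\xi_l$ per integer $l$ (formula (\ref{xil})), amenable to the Bessel-determinant techniques of \cite{bg} and to numerics; yours lands on a family of weighted oscillatory integrals indexed by the closed orbits at the adjacency, which stays close to the dynamics and to the symmetry of \cite{RK}. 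Both are legitimate; neither non-degeneracy is established, so the conjecture stands.
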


\begin{proposition} \label{prconj2} Conjecture \ref{c4} implies Conjecture \ref{c2}, and hence, \ref{garl}. 
\end{proposition}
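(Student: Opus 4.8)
The plan is to prove that Conjecture \ref{c4} implies Conjecture \ref{c2}; the passage from Conjecture \ref{c2} to Conjecture \ref{garl} is then exactly Remark \ref{graph}. I would fix $r\in\nn$, write $l=r-2$, and dispose of the small cases $r=1,2$ at the end by symmetry, so that the core argument treats $l\geq1$. The heart of the plan is a continuation in $\omega$: Conjecture \ref{c2} is already known for $\omega\geq1$ by the Chaplygin argument (\cite[lemma 4]{buch1}, \cite[proposition 3.4]{4}), and I would decrease $\omega$ until the phase-lock area $L_{l+2}$ first touches the line $\Lambda_l=\{B=\omega l\}$, then show that this first contact occurs at an adjacency with both boundary branches vertical -- precisely the configuration ruled out by Conjecture \ref{c4}.

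Concretely, using that $\partial L_{l+2}$ is the union of two graphs $B=g_{l+2,\pm}(A)$ analytic on $\rr$ with $g_{l+2,\pm}(A)\to(l+2)\omega$ as $A\to\pm\infty$ (Bessel asymptotics, \cite{RK}), I would introduce
$$\Psi(\omega)=\min_{A\in\rr}\bigl(g_{l+2,-}(A;\omega)-\omega l\bigr),$$
noting that the minimum is attained because the expression tends to $2\omega>0$ at $A=\pm\infty$, and that $\Psi(\omega)>0$ is equivalent to $L_{l+2}\cap\Lambda_l=\emptyset$. Assuming Conjecture \ref{c2} fails, the set $\{\omega\ |\ \Psi(\omega)\leq0\}$ is nonempty and contained in $(0,1)$; I would let $\omega_0$ be its supremum and use continuity of $\Psi$ in $\omega$ to conclude $\Psi(\omega_0)=0$. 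Then the minimum $0$ is attained at some $A_0$, the left boundary satisfies $g_{l+2,-}(\cdot;\omega_0)\geq\omega_0 l$ with equality at $A_0$, so the contact point $P=(B_0,A_0)$, $B_0=\omega_0 l$, is a genuine first (tangential) contact and $\partial_A g_{l+2,-}(A_0;\omega_0)=0$.

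Next I would identify $P$. As a boundary point of $L_{l+2}$ lying on $\Lambda_l$ (so $B_0/\omega_0=l\in\zz$), the monodromy of its Heun equation (\ref{heun}) is parabolic (Proposition \ref{pjcell}) with determinant $e^{-2\pi i l}=1$ (Proposition \ref{monprod}); Proposition \ref{roteig} with $\rho=l+2$ gives both eigenvalues equal to $e^{\pi i(\rho-l)}=e^{-\pi i(\rho+l)}=1$, so it is unipotent. By Theorem \ref{talt} either equation (\ref{heun2}) has a polynomial solution -- impossible, since that forces $\rho\leq l$ (\cite[theorem 4]{bt0}, cf. Theorem \ref{poteig}) while here $\rho=l+2>l$ -- or (\ref{heun}) has an entire solution, so $P$ is a separation point of the garland (Remark \ref{rktriv}). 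To see $P$ is an adjacency rather than an exceptional point, I would note that on $\{A=0\}$ the area $L_{l+2}$ meets the $B$-axis only at abscissa $\sqrt{\omega_0^2(l+2)^2+1}>(l+2)\omega_0>l\omega_0=B_0$ (the autonomous rotation number on the $B$-axis equals $\omega^{-1}\sqrt{B^2-1}$ for $B>1$ and $0$ for $|B|\leq1$), whence $A_0\neq0$ and $P\in\rr_+^2$.

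Finally I would produce the contradiction: at the adjacency the two branches meet, $g_{l+2,+}(A_0;\omega_0)=g_{l+2,-}(A_0;\omega_0)=B_0$, and since $g_{l+2,+}\geq g_{l+2,-}\geq\omega_0 l$ with equality at $A_0$, the point $A_0$ is also a minimum of $g_{l+2,+}$, so $\partial_A g_{l+2,+}(A_0;\omega_0)=0$ too. Thus both boundary branches at $P$ are tangent to the vertical line $\{B=B_0\}=\Lambda_l$, contradicting Conjecture \ref{c4}; hence $\Psi>0$ throughout and Conjecture \ref{c2} holds for $l\geq1$. For $r=2$ the line $\Lambda_0$ is the $A$-axis and $L_2\cap\{B=0\}=\emptyset$ follows from $L_2\cap L_{-2}=\emptyset$ and the symmetry $B\mapsto-B$; $r=1$ is analogous using Remark \ref{nwrem}. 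The main obstacle I anticipate is the continuation step -- rigorously establishing continuous dependence of $g_{l+2,\pm}(\cdot;\omega)$ on $\omega$ and the attainment of the minimum, so that the supremum $\omega_0$ delivers a tangential first contact and not a transversal crossing; the monodromy identification, by contrast, is handled by the already-proved Theorems \ref{talt}, \ref{poteig} and Propositions \ref{monprod}, \ref{roteig}, \ref{pjcell}.
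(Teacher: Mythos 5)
Your proof is correct and follows essentially the same route as the paper: the paper's proof of this proposition simply invokes its preceding ``possible strategy for Conjecture \ref{c2}'' (Chaplygin argument for $\omega\geq1$, deformation in $\omega$ to a first, necessarily tangential, contact of the phase-lock area $L_{l+2}$ with $\Lambda_l$, the dichotomy polynomial solution/adjacency via unipotent monodromy with the polynomial case excluded by the Buchstaber--Tertychnyi bound $\rho\leq l$, and the resulting contradiction with Conjecture \ref{c4}), which is exactly your argument. Your version supplies details the paper leaves implicit --- the function $\Psi$, the supremum/continuity formalization of ``first contact'', the verification $A_0\neq0$ so the contact point is a genuine adjacency in $\rr_+^2$, and the small cases $r=1,2$ --- but the skeleton is identical.
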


The proposition follows from the above argument and Remark \ref{graph}. 

A possible approach to Conjecture \ref{c4} could be studying 
equations (\ref{r0eql}) and (\ref{r1eql}) defining the boundaries and to see what happens with them 
when the ``non-resonant'' parameters approach the resonant ones. A first step is done below.

\subsection{Description of  boundaries of phase-lock areas near adjacencies. Relation to Conjecture \ref{c2}}
Let us write down equation (\ref{r0eql}) on the boundaries in a neighborhood of a line $\Lambda_{l_0}$, $l_0\in2\zz$. 
Let us recall the formulas for the corresponding matrices:
$$
M_k=M_k(\la,\mu,l)=\left(\begin{matrix}  1+\frac{\la}{k^2-\frac{l^2}4} & \frac{\mu^2}{k^2-\frac{l^2}4}\\  1 & 0\end{matrix}
\right), \ R_k=M_kM_{k+1}\dots.$$
Equation (\ref{r0eql})  for the boundaries  is  
$$ R_{0,21}\pm\omega l(R_{0,21}-R_{0,11})=0.$$
Note that the matrices $M_k$ are analytic in a neighborhood of the hyperplane $\{ l=l_0\}$ except for the matrix $M_{\frac{l_0}2}$, which has pole of order one along the latter hyperplane. One has 
$$\frac{l_0^2-l^2}4M_{\frac{l_0}2}=\left(\left(\begin{matrix} & \la & \mu^2\\ & 0 & 0\end{matrix}\right)+\frac{l_0^2-l^2}4\left(\begin{matrix} & 1 & 0\\ & 1 & 0
\end{matrix}\right)\right).$$

Set 
$$\mathcal R=\frac{l_0^2-l^2}4R_0, \ X=M_0\dots M_{\frac{l_0}2-1}, \ X=Id \text{ for } l_0=0.$$
 One has 
 $$\mathcal R=X\left(\left(\begin{matrix}  \la & \mu^2\\  0 & 0\end{matrix}\right)+
 \frac{l_0^2-l^2}4\left(\begin{matrix}  1 & 0\\  1 & 0
\end{matrix}\right)\right)R_{\frac{l_0}2+1},$$
$$\left(\begin{matrix}  \la & \mu^2\\  0 & 0\end{matrix}\right)R_{\frac{l_0}2+1}|_{l=l_0}=\left(\begin{matrix}  \xi_{l_0}(\la,\mu) & 0\\
 0 & 0\end{matrix}\right),$$
by (\ref{xil}), the equality $R_{s,12}=R_{s,22}=0$ for every $s$ (Corollary \ref{clem}) and since the matrices $M_{\frac{l_0}{2}+k}(\la,\mu,l_0)$, $R_{\frac{l_0}{2}+k}(\la,\mu,l_0)$ coincide with the matrices 
$M_k$, $R_k$ preceding (\ref{xil}) with $l=l_0$.
Therefore, 
\begin{equation}\mcr=\xi_{l_0}(\la,\mu)\left(\begin{matrix}  X_{11} & 0 \\  X_{21} & 0\end{matrix}\right)+(l-l_0)\chi(l,\la,\mu),
\label{rzeta}\end{equation}
where $\chi(l,\la,\mu)$ is a holomorphic matrix-valued function on a neighborhood of the hyperplane $\{ l=l_0\}$.
Now  equation (\ref{r0eql}) can be rewritten as 
\begin{equation}\mcr_{21}\pm\omega l(\mcr_{21}-\mcr_{11})=0.\label{r0l0}\end{equation}
Taking into account asymptotics (\ref{rzeta}) one gets asymptotic form of equation (\ref{r0l0}): 
\begin{equation}\xi_{l_0}(\la,\mu)(X_{21}\pm\omega l(X_{21}-X_{11}))+(l-l_0)(\chi_{21}\pm\omega l(\chi_{21}-\chi_{11}))=0.\label{r0as}\end{equation}

Now let us consider the case, when $l_0\in 2\zz+1$, and let us write down equation (\ref{r1eql}) in a neighborhood of the line 
$\Lambda_{l_0}$. The  corresponding matrices from (\ref{mkrk1}) are 
$$M_k=\left(\begin{matrix}  1+\frac{\la}{(k-\frac12)^2-\frac{l^2}4} & \frac{\mu^2}{(k-\frac12)^2-\frac{l^2}4} \\  1 & 0\end{matrix}\right), \ 
R_k=M_kM_{k+1}\dots.$$
Set 
$$\mcr=\frac{l_0^2-l^2}4R_1, \ X=M_1\dots M_{\frac{l_0+1}2-1}; \ X=Id \text{ for } l_0=1.$$
Analogously to the above calculations, we get asymptotic relation (\ref{rzeta}). Together with (\ref{r1eql}), it implies
\begin{equation}\xi_{l_0}(\la,\mu)(X_{11}\pm2\omega \mu(X_{11}-X_{21}))+
(l-l_0)(\chi_{11}\pm2\omega \mu(\chi_{11}-\chi_{21}))=0,\label{r1as}\end{equation}
where $\chi(l,\la,\mu)$ is a holomorphic matrix-valued 
function on a  neighborhood of the hyperplane $\{ l=l_0\}$. Set 
$$\zeta_l(\omega,\mu)=\xi_l(\la,\mu)=\xi_l(\frac1{4\omega^2}-\mu^2,\mu).$$

\begin{conjecture} \label{c5} For every $\omega>0$ and $l\in\zz$, $l\geq0$ the zeros of the function $\zeta_l$ are simple, that is, 
 $\frac{\partial\zeta_l}{\partial\mu}\neq0$ {\it at zeros of the function $\zeta_l$.}
\end{conjecture}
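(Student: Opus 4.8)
The plan is to treat $\zeta_l$ as the ``obstruction to an entire solution'' and to prove a \emph{variational formula} for its derivative along the curve $\la+\mu^2=\frac1{4\omega^2}$, reducing simplicity of a zero to the non-degeneracy of a bilinear pairing between the entire solution and a solution of the adjoint equation. Fix $\omega>0$ and $n=l+1$, and restrict to the curve $\la=\la(\mu)=\frac1{4\omega^2}-\mu^2$, so that $\mcl=\mcl_\mu:=\mcl_{\la(\mu),\mu,n}$ becomes a one-parameter family. By Theorem \ref{xi=0} there is an entire function $E=E_\mu$, depending holomorphically on $\mu$, with $\mcl_\mu E_\mu=\zeta_l(\omega,\mu)$, a constant in $z$. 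Differentiating this identity in $\mu$ gives
\begin{equation}\dot{\mcl}_\mu E_\mu+\mcl_\mu\dot E_\mu=\dot\zeta_l,\label{varid}\end{equation}
where the dot denotes $\frac{d}{d\mu}$. Since $\frac{\partial\mcl}{\partial\mu}=(1-z^2)\partial_z-nz$, $\frac{\partial\mcl}{\partial\la}=\mathrm{Id}$ and $\frac{d\la}{d\mu}=-2\mu$, one has $\dot{\mcl}_\mu=(1-z^2)\partial_z-nz-2\mu$, so that $\dot{\mcl}_\mu E_\mu=V$ with
\begin{equation}V:=(1-z^2)E'-nzE-2\mu E,\label{Vdef}\end{equation}
an entire function (as both $E$ and $\dot E_\mu$ are entire).

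Next I would extract $\dot\zeta_l$ from (\ref{varid}) at a zero $\mu_0$ of $\zeta_l$ by pairing with the adjoint equation. At such $\mu_0$ the function $E_{\mu_0}$ is a genuine entire solution, $\mcl_{\mu_0}E_{\mu_0}=0$, and (for $l\in\zz_{\geq0}$) the monodromy of equation (\ref{heun}) is trivial; hence so is the monodromy of the formal adjoint $\mcl^*_{\mu_0}$, and its solution space consists of functions single-valued on $\cc^*$. Using the Lagrange identity $\psi\,\mcl F-F\,\mcl^*\psi=\frac{d}{dz}B[\psi,F]$ with $B$ single-valued, the residue functional $\Lambda(g):=\oint\psi g\,dz$ attached to a single-valued solution $\psi$ of $\mcl^*_{\mu_0}\psi=0$ annihilates $\mcl_{\mu_0}(\text{entire functions})$. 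Applying $\Lambda$ to (\ref{varid}) at $\mu_0$ kills the term $\mcl_{\mu_0}\dot E_{\mu_0}$ and yields
\begin{equation}\dot\zeta_l(\mu_0)=\frac{\oint\psi V\,dz}{\oint\psi\,dz}.\label{ratio}\end{equation}
A suitable $\psi$ with $\oint\psi\,dz\neq0$ exists: if every single-valued adjoint solution had zero residue, then $1$ would lie in the image of $\mcl_{\mu_0}$, i.e. $\mcl_{\mu_0}G=1$ for some entire $G$, contradicting the uniqueness in Theorem \ref{xi=0} (the only entire functions with $\mcl_{\mu_0}(\cdot)=\mathrm{const}$ are the multiples of $E_{\mu_0}$, for which the constant is $\zeta_l(\mu_0)=0$).

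Formula (\ref{ratio}) reduces the conjecture to the statement that $V\notin\mathrm{Im}\,\mcl_{\mu_0}$, i.e. that there is \emph{no} entire $G$ solving $\mcl_{\mu_0}G=V$; equivalently $\oint\psi V\,dz\neq0$ for the above $\psi$. The same computation with $\la$ and $\mu$ varied separately expresses $\frac{\partial\xi_l}{\partial\la}=\oint\psi E\,dz/\oint\psi\,dz$ as the bilinear concomitant pairing of the primal entire solution with the adjoint solution, so the whole problem is the non-degeneracy of this pairing twisted by the field $(1-z^2)\partial_z-nz-2\mu$. This non-vanishing is the main obstacle. I would attack it in two complementary ways: (i) translate (\ref{ratio}) into the three-term recurrence picture of Section 2, writing $\oint\psi V\,dz$ and $\oint\psi\,dz$ as convergent bilinear forms $\sum_k\psi_k v_k$ in the Laurent coefficients, where $(\psi_k)$ solves the transposed recurrence and is the recessive solution furnished by Theorem \ref{cons}; and (ii) use the representation of $\xi_l$ through modified Bessel functions and the positivity of the Bessel determinants established in \cite{bg}, which should force the pairing to have a definite (hence nonzero) sign, at least for real $\mu>0$. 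The delicate point is that (\ref{ratio}) is an \emph{indefinite} bilinear pairing in general, so controlling its sign (or merely its non-vanishing for all complex zeros $\mu_0$) is exactly where the Bessel positivity of \cite{bg}, rather than soft functional-analytic arguments, seems indispensable.
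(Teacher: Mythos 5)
First, a point of fact: the paper does not prove this statement. Conjecture \ref{c5} is posed in Subsection 5.7 as an open problem; all the paper does with it is show (via the asymptotic equations (\ref{r0as}), (\ref{r1as}) and Proposition \ref{prconj2}) that it would imply Conjectures \ref{c4}, \ref{c2} and \ref{garl}. So there is no proof of the paper's to compare yours against, and your argument has to stand on its own. It does not, for two reasons, one of which you admit and one of which you do not.

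What is sound is the variational formula. At a zero $\mu_0$ of $\zeta_l$ the equation has an entire solution (Theorem \ref{thol}), hence trivial monodromy (Theorem \ref{talt}), hence the adjoint equation also has trivial monodromy and its solutions are single-valued on $\cc^*$; differentiating $\mcl_\mu E_\mu=\zeta_l$ along $\la=\frac1{4\omega^2}-\mu^2$ and integrating against such a $\psi$ over a loop around $0$ does kill the term $\mcl_{\mu_0}\dot E_{\mu_0}$ by the Lagrange identity, giving
$$\dot\zeta_l(\mu_0)\oint\psi\,dz=\oint\psi\,\left((1-z^2)E'-nzE-2\mu_0E\right)dz.$$
But the conjecture is precisely a non-degeneracy statement, and this formula merely trades it for two other non-degeneracy statements, neither of which you establish. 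The decisive one, the non-vanishing of the right-hand side, you yourself call ``the main obstacle'' and support only with two unexecuted suggestions (the recurrence picture of Section 2; the Bessel positivity of \cite{bg}). So the analytic content of the conjecture is untouched: what you have is a reformulation, not a proof.

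There is moreover a concrete gap in the step you do claim to prove, namely the existence of $\psi$ with $\oint\psi\,dz\neq0$. Your contradiction needs: if all single-valued adjoint solutions have zero residue, then $\mcl_{\mu_0}G=1$ has an \emph{entire} solution $G$. What the vanishing of those residues actually yields --- by variation of parameters, $G=-u_1\int\psi_2\,dz+u_2\int\psi_1\,dz$, where $u_1,u_2$ is a basis of solutions, $W$ their Wronskian, and $\psi_i=u_i/(z^2W)$ the corresponding basis of adjoint solutions, all single-valued on $\cc^*$ at an adjacency --- is a solution $G$ of $\mcl_{\mu_0}G=1$ holomorphic on $\cc^*$ only. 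Since $z=0$ is an irregular singular point, such a $G$ may perfectly well have an essential singularity at $0$, and then it contradicts neither the existence nor the uniqueness part of Theorem \ref{xi=0}, both of which concern functions holomorphic in a neighborhood of $0$. To close this hole you would have to exclude solvability of $\mcl_{\mu_0}G=1$ in functions holomorphic on $\cc^*$, i.e. to analyze the inhomogeneous recurrence $(k(k+l)+\la)g_k-\mu(k+l)g_{k-1}+\mu(k+1)g_{k+1}=\delta_{k,0}$ for two-sided convergent solutions, as in the pasting analysis of Section 4 --- again a non-degeneracy question of exactly the kind the conjecture itself asks.
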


\begin{remark} The matrices $X$ in both cases treated above are non-degenerate for $l=l_0$.  
This implies that in formulas (\ref{r0as}) and (\ref{r1as}) the multiplier at $\xi_{l_0}$ is non-zero for at least one choice of sign. 
\end{remark}

Conjecture \ref{c5} together with the above remark 
would imply that {\it for every $l\in\zz$ at every adjacency in the line $\Lambda_l$  at least one  boundary component  of the 
corresponding phase-lock area (depending on the above-chosen sign) 
is transversal to the line $\Lambda_l$.}

\begin{proposition} Conjecture \ref{c5} implies Conjectures \ref{c4},  \ref{c2}, \ref{garl}.
\end{proposition}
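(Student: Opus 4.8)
(a proposal)
The plan is to reduce the whole statement to the single implication \ref{c5}$\Rightarrow$\ref{c4} and then to quote Proposition \ref{prconj2}, which already yields \ref{c4}$\Rightarrow$\ref{c2}$\Rightarrow$\ref{garl}. So it suffices to show that simplicity of the zeros of $\zeta_l$ forces, at every adjacency $(B_0,A_0)\in\rr_+^2$, at least one of the two boundary branches to meet the vertical line $\{B=B_0\}$ transversally. I would fix such an adjacency and set $l_0=B_0/\omega$, $\mu_0=A_0/(2\omega)$; by Remark \ref{nwrem} the abscissa of an adjacency has the form $l_0\omega$ with $l_0\in\zz$, and since $B_0>0$ this gives $l_0\geq1$, which conveniently avoids the degenerate value $l_0=0$. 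By Theorem \ref{tadj} the adjacency condition reads $\xi_{l_0}(\la_0,\mu_0)=0$, i.e. $\zeta_{l_0}(\omega,\mu_0)=0$, and by Propositions \ref{dinvar} and \ref{propdiez} the two boundary components issuing from the adjacency are the zero loci $\{F_+=0\}$ and $\{F_-=0\}$ of the two sign choices in (\ref{r0eql}) (for $l_0$ even) or (\ref{r1eql}) (for $l_0$ odd), written in the asymptotic forms (\ref{r0as}), (\ref{r1as}).

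Next I would compute the gradient of $F_\pm$ at the adjacency. Restricting to the surface $\la+\mu^2=\frac1{4\omega^2}$ and using the coordinates $(l,\mu)$, one has
\[
F_\pm(l,\mu)=\zeta_{l_0}(\omega,\mu)\,G_\pm(l,\mu)+(l-l_0)\,H_\pm(l,\mu),
\]
where $G_\pm$, $H_\pm$ are the holomorphic multipliers occurring in (\ref{r0as})/(\ref{r1as}). Since $\zeta_{l_0}(\omega,\mu_0)=0$ and $(l-l_0)$ vanishes at $l_0$, the gradient at $(l_0,\mu_0)$ collapses to
\[
\partial_l F_\pm(l_0,\mu_0)=H_\pm(l_0,\mu_0),\qquad
\partial_\mu F_\pm(l_0,\mu_0)=\partial_\mu\zeta_{l_0}(\omega,\mu_0)\,G_\pm(l_0,\mu_0).
\]
Because $(l,\mu)\mapsto(B,A)$ is an affine rescaling, the branch $\{F_\pm=0\}$ is transversal to $\{B=B_0\}=\{l=l_0\}$ precisely when $\partial_\mu F_\pm(l_0,\mu_0)\neq0$. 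Conjecture \ref{c5} supplies $\partial_\mu\zeta_{l_0}(\omega,\mu_0)\neq0$, so transversality is governed entirely by $G_\pm(l_0,\mu_0)$: the branch $\{F_\pm=0\}$ crosses $\{B=B_0\}$ transversally if and only if $G_\pm(l_0,\mu_0)\neq0$.

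It then remains to invoke the Remark preceding the proposition. In the even case $G_\pm(l_0,\mu_0)=X_{21}\pm\omega l_0(X_{21}-X_{11})$; if both signs vanished we would get $X_{21}=0$ and, since $l_0\geq1$, also $X_{11}=X_{21}=0$, so the first column of $X$ would vanish, contradicting $\det X\neq0$. In the odd case $G_\pm(l_0,\mu_0)=X_{11}\pm2\omega\mu_0(X_{11}-X_{21})$; both vanishing forces $X_{11}=0$, which for $l_0=1$ (where $X=Id$) contradicts $X_{11}=1$, and for $l_0\geq3$ again forces the first column of $X$ to vanish. Hence $G_\pm(l_0,\mu_0)\neq0$ for at least one sign, so at least one branch is transversal to $\{B=B_0\}$ and the two branches cannot both be tangent to it. This is exactly Conjecture \ref{c4}, and Proposition \ref{prconj2} then delivers Conjectures \ref{c2} and \ref{garl}.

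The main obstacle I anticipate lies not in the above computation but in its two structural inputs. First, I must justify carefully that the two boundary components emanating from an adjacency are indeed the sign-branches $\{F_+=0\}$ and $\{F_-=0\}$: away from the adjacency the parabolic eigenfunction is either $\#$-invariant or $\#$-anti-invariant by Proposition \ref{dinvar}, and I need that the two cases fill out precisely the two graphs $B=g_{r,\pm}(A)$ furnished by Proposition \ref{can}. Second, I must check that the nondegeneracy of $X$ genuinely covers all relevant $l_0\geq1$; the only delicate point is the odd value $l_0=1$ with $X=Id$, where transversality is rescued by the constant term $X_{11}=1$ rather than by $\det X\neq0$, whereas the even value $l_0=0$ is simply excluded because adjacencies with $B_0=0$ do not occur in $\rr_+^2$.
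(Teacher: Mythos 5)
Your proposal is correct and follows essentially the same route as the paper: use the asymptotic forms (\ref{r0as})/(\ref{r1as}) near $\Lambda_{l_0}$, combine Conjecture \ref{c5} (giving $\partial_\mu\zeta_{l_0}\neq0$) with the non-degeneracy of $X$ from the preceding Remark to get transversality of at least one boundary branch to $\Lambda_{l_0}$, conclude Conjecture \ref{c4}, and then invoke Proposition \ref{prconj2} for Conjectures \ref{c2} and \ref{garl}. The paper's own proof is just a terser version of this (it cites the transversality statement and the Remark without writing out the gradient computation or the sign analysis that you supply, and it likewise leaves implicit the identification of the boundary branches with the sign-loci that you flag as an obstacle).
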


\begin{proof} 
Conjecture \ref{c5} implies  that no adjacency can be born 
at a tangency of both boundary components 
with a line $l=l_0$, $l_0\in\zz$, by transversality of one of them to the latter line
(the above statement). In other words, it implies Conjecture \ref{c4}, and hence,  
Conjectures \ref{c2}, \ref{garl}, by Proposition \ref{prconj2}. 
\end{proof}

{\bf Open Question.} 
Study the degeneracy of equation (\ref{pasterho}) on non-integer level curves of rotation number, as the latter 
number tends to an integer value. The level curves should tend  to  boundaries of  phase-lock areas. 
How to retrieve equations (\ref{r0eql}), (\ref{r1eql}), (\ref{r0as}) and (\ref{r1as}) on the boundaries and equation $\xi_l(\la,\mu)=0$ on the adjacencies 
from asymptotics of degenerating equation (\ref{pasterho}), as $r$ tends to an integer number? Find a method to calculate the 
coordinates of the adjacencies corresponding to an integer rotation number $r_0$ via the level curves $\{\rho=r\}$, as $r\to r_0$. 

\subsection{Asymptotic behavior of phase-lock areas for small $\omega$} 
The problem to describe the behavior of the phase-lock areas, as $\omega\to0$, is known and motivated by physical 
applications. Concerning this problem V.M.Buchstaber and S.I.Tertychnyi, and later D.A.Filimonov, V.A.Kleptsyn and 
I.V.Schurov  have done numerical experiments. These experiments  have shown that after appropriate rescaling of the variables $(B,A)$, 
   the phase-lock areas tend to  open sets (which we will call the {\it limit rescaled phase-lock areas}) whose components  form a 
   partition of the whole plane similar  to a chess table turned by $\frac{\pi}4$, see Fig. 5 for $\omega=0.3$.

{\bf Open Question 7.} Obtain theoretical results on the problem described above. 


\section{Acknowledgements}

The authors are grateful to S.I.Tertychnyi for helpful discussions and to R.R.Gontsov and A.V.Klimenko for careful reading the paper and helpful remarks. The authors are grateful to R.R.Gontsov for translating the paper into Russian.

\end{document}